\newcommand{\ad}{\operatorname{ad}}
\newcommand{\ima}{\operatorname{im}}
\newcommand{\rank}{\operatorname{rank}}
\newcommand{\Id}{\operatorname{Id}}
\newcommand{\an}{\operatorname{an}}
\newcommand{\pr}{\operatorname{pr}}
\newcommand{\Ker}{\operatorname{Ker}}
\newcommand{\End}{\operatorname{End}}
\newcommand{\Ad}{\operatorname{Ad}}
\newcommand{\Aut}{\operatorname{Aut}}
\newcommand{\Sk}{\operatorname{Sk}}
\newcommand{\fg}{\mathfrak g}
\newcommand{\fc}{\mathfrak c}
\newcommand{\fh}{\mathfrak h}
\newcommand{\ft}{\mathfrak t}
\newcommand{\fsl}{\mathfrak{sl}}
\newcommand{\nen}{\newenvironment}
\newcommand{\ol}{\overline}
\newcommand{\lra}{\longrightarrow}
\newcommand{\iso}{\overset{\sim}{\lra}}
\newcommand{\Thm}[1]{Theorem~\ref{#1}}
\newcommand{\Prop}[1]{Proposition~\ref{#1}}
\newcommand{\Lem}[1]{Lemma~\ref{#1}}
\newcommand{\Cor}[1]{Corollary~\ref{#1}}
\newcommand{\Claim}[1]{Claim~\ref{#1}}
\newcommand{\Rem}[1]{Remark~\ref{#1}}
\title{Conjugation theorem for Affine Kac-Moody superalgebras}
\author{Peleg Bar-Sever}
\begin{document}
\begin{abstract}
   In this thesis, we extend a conjugacy Theorem of Cartan subalgebras, originally established for symmetrizable Kac-Moody algebras, to the broader context of affine Kac-Moody superalgebras. Along the way, we obtain several results that deepen our understanding of the structure of affine Kac-Moody superalgebras. Additionally, we achieve findings regarding the relations within the imaginary root spaces of symmetrizable affine Kac-Moody superalgebras, providing new insights into their algebraic properties.   
\end{abstract}
\maketitle  
\section{Introduction}

In 1983, Peterson and Kac established a fundamental result for symmetrizable Kac-Moody algebras: any two Cartan subalgebras are conjugated by the group of inner automorphisms (See ~\cite{Kac and Peterson}). A Cartan subalgebra, in this context, is defined as a maximal commutative and ad-diagonalizable subalgebra.  In some sense, together with Proposition 5.9 in ~\cite{Kbook}, this completes the work of classification of  Kac-Moody algebras: it demonstrates that isomorphic Kac-Moody algebras necessarily have the same Cartan matrix, up to a permutation of indices and left multiplication by an invertible diagonal matrix. 

In this thesis, we aim to extend this conjugation theorem to the setting of affine Kac-Moody superalgebras, which naturally brings new challenges. Our main result is as follows: 

\begin{thm}{Conjugacy}
    Let $\mathfrak{g}$ be an indecomposable affine Kac-Moody superalgebra, $\mathfrak{h}$  its Cartan subalgebra, and let $\mathfrak{a}$  be any Kac-Moody superalgebra with the Cartan subalgebra $\mathfrak{h}_{\mathfrak{a}}.$ If $\iota:\mathfrak{g}\stackrel{\sim}{\rightarrow}\mathfrak{a}$  is an isomorphism and $\mathfrak{h}'=\iota^{-1}\left(\mathfrak{h}_{\mathfrak{a}}\right)\subset\mathfrak{g},$  then $\mathfrak{h}'=\phi(\mathfrak{h})$ for some inner automorphism of $\mathfrak{g}.$ 
\end{thm}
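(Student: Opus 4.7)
The plan is to adapt the Peterson-Kac strategy to the super affine setting. First, observe that $\mathfrak{h}'=\iota^{-1}(\mathfrak{h}_{\mathfrak{a}})$ is automatically a Cartan subalgebra of $\mathfrak{g}$ in the sense of the introduction (maximal commutative and ad-diagonalizable), since both properties are preserved by any isomorphism. The theorem therefore reduces to showing that any two Cartan subalgebras of an indecomposable affine Kac-Moody superalgebra are conjugate by an inner automorphism. Following the classical proof, I would first locate a ``regular enough'' ad-semisimple element $h'\in\mathfrak{h}'$, then conjugate $h'$ into $\mathfrak{h}$, and finally use that conjugate to drag the remainder of $\mathfrak{h}'$ into $\mathfrak{h}$.

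For the regular element, I would pick $h'$ in the even part $\mathfrak{h}'_{\bar 0}$, using that $\mathfrak{h}'=\mathfrak{h}'_{\bar 0}\oplus\mathfrak{h}'_{\bar 1}$ splits by ad-diagonalizability, and choose $h'$ so that with respect to the root decomposition of $\mathfrak{g}$ relative to $\mathfrak{h}$ one has $\alpha(h')\neq 0$ for every real root $\alpha$ and $n\delta(h')\neq 0$ for every nonzero $n\in\mathbb{Z}$, where $\delta$ denotes the minimal imaginary root. A general-position argument on $(\mathfrak{h}'_{\bar 0})^{*}$, combined with the fact that $\mathfrak{h}'$ detects all roots of $\mathfrak{g}$ (being itself a Cartan subalgebra), guarantees such an $h'$ exists.

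Next, even real root vectors $e_{\alpha}$ act locally nilpotently on $\mathfrak{g}$, so their exponentials generate a distinguished subgroup of inner automorphisms. Via a Weyl-group/Tits-cone argument parallel to Peterson-Kac, I expect to produce an inner automorphism $\phi$ of $\mathfrak{g}$ with $\phi(h')\in\mathfrak{h}$. The affine setting retains an honest Tits cone structure on the even root system, so this step should go through with mostly formal adjustments; one subtlety is that in a Kac-Moody superalgebra the even real Weyl group can be strictly smaller than the full super Weyl group, which limits the orbits one can directly reach, and odd reflections do not in general come from inner automorphisms.

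The main obstacle is the last step: deducing $\phi(\mathfrak{h}')=\mathfrak{h}$. Since $\phi(\mathfrak{h}')$ commutes with $\phi(h')\in\mathfrak{h}$ and $h'$ was chosen regular, the centralizer $C_{\mathfrak{g}}(\phi(h'))$ is the sum of $\mathfrak{h}$ with the root spaces for roots whose restriction to $\mathbb{C}\phi(h')$ vanishes; for the chosen $h'$ these are confined to integer multiples of $\delta$ together with any odd roots $h'$ fails to separate. Controlling these residual contributions is precisely where the new relations inside the imaginary root spaces announced in the abstract should earn their keep: they must preclude the existence of any maximal commutative ad-diagonalizable subalgebra of $C_{\mathfrak{g}}(\phi(h'))$ other than $\mathfrak{h}$ itself, possibly after an additional inner conjugation fixing $\phi(h')$ pointwise. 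Once this is in hand, maximality of $\mathfrak{h}$ forces $\phi(\mathfrak{h}')=\mathfrak{h}$, completing the proof.
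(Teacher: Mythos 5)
Your reduction to conjugacy of Cartan subalgebras and your final step (analyzing the centralizer and using the structure of the imaginary root spaces to rule out ad-semisimple elements outside $\mathfrak{h}$) are both in the right spirit --- the paper's Lemma \ref{lemghh} does essentially what you sketch at the end, showing that any ad-semisimple element of $\mathcal{K}_{\overline{0}}$ outside $\mathfrak{h}$ fails to act locally finitely. But your middle step contains a genuine gap, and it is exactly the step on which the whole proof turns. You write that ``via a Weyl-group/Tits-cone argument parallel to Peterson--Kac'' you expect to conjugate a regular element $h'$ into $\mathfrak{h}$. There is no such argument available for $\mathfrak{g}$ itself: the Peterson--Kac conjugacy theorem is a theorem about \emph{symmetrizable Kac--Moody algebras} and is proved using the minimal Kac--Moody group, Bruhat/Birkhoff decompositions and integrable highest weight modules --- none of which exist for the superalgebra $\mathfrak{g}$, and, crucially, the even part $\mathfrak{g}_{\overline{0}}$ is in general \emph{not} a Kac--Moody algebra (for $\mathfrak{sl}(m|n)^{(1)}$ it contains an infinite Heisenberg piece and a ``merged'' derivation, so $\mathfrak{sl}_m^{(1)}\times\mathfrak{sl}_n^{(1)}$ does not even embed into it). The missing idea is the principal Lie algebra $\mathfrak{g}_{pr}$: it is a central quotient of the derived algebra of an honest symmetrizable Kac--Moody algebra $\mathfrak{g}(B_\pi)$, inner automorphisms of $\mathfrak{g}$ are \emph{defined} through its minimal group, and Peterson--Kac applies there. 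The paper conjugates $\mathfrak{h}'\cap\mathfrak{g}_{pr}$ (not a single regular element) into $\mathfrak{h}$, runs the same argument in the other direction to get the dimension count $\dim\mathfrak{h}=\dim\mathfrak{h}'$ via Lemma \ref{Codim of g_pr}, and only then invokes the centralizer/Heisenberg analysis. Without routing the conjugation through $\mathfrak{g}(B_\pi)$ (including the case where $B_\pi$ is decomposable, which needs a separate gluing of two or three Kac--Moody groups), your step two has no proof.

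A second, smaller omission: your plan does not address the non-symmetrizable affine superalgebras $\mathfrak{q}_n^{(2)}$ and $S(2|1;b)$, which the theorem covers and for which the paper gives separate arguments (an explicit realization and a Jordan-decomposition argument for $\mathfrak{q}_n^{(2)}$; a Virasoro-based local-finiteness analysis for $S(2|1;b)$). In particular for $S(2|1;b)$ the centralizer picture you describe is wrong as stated, since $\mathfrak{g}_{\overline{0}}$ contains a Virasoro subalgebra and the ad-locally-finite elements are confined to $\mathfrak{g}_{pr}\oplus\mathbb{C}d\oplus\mathbb{C}L_0$; no Tits-cone argument is available there at all.
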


This result is particularly significant because it applies to all affine Kac-Moody superalgebras, including the two families of non-symmetrizable algebras: $S(2|1;b)$  and $\mathfrak{q}^{(2)}_n.$ 

Extending the conjugation theorem to the affine superalgebra case, introduces additional subtleties. Unlike the classical case, where conjugacy implies that the Cartan matrices of isomorphic Kac-Moody algebras differ only by a permutation of indices and left multiplication by an invertible diagonal matrix, the superalgebra case requires a broader perspective. Specifically, one must also consider isotropic reflexions. A detailed formulation of this refinement will be provided in Chapter 5. 

This thesis is organized as follows: 

\textbf{Chapter 2} lays the groundwork by providing an overview of Kac-Moody Lie superalgebras, including their definitions and fundamental properties. We introduce a slightly modified approach to their construction, which is shorter and designed to offer greater intuitive clarity. In addition, we introduce the notion of the principal subalgebra, which plays a central role in the proof of the conjugacy theorem. Lastly, we describe the construction of the minimal Kac-Moody group, originally developed by Kac and Peterson (\cite{Kac and Peterson}) and later explored extensively in various textbooks and papers (e.g., ~\cite{Kumar}). 

\textbf{Chapter 3} focuses on the structural properties of affine Kac-Moody superalgebras, covering key concepts such as their triangular decomposition, root systems, and the process of affinization.  Alongside other related results, we provide a uniform proof, avoiding case-by-case analysis, of a theorem originally established by Serganova (see ~\cite{Sint}), regarding the structure of the even part of symmetrizable affine Kac-Moody superalgebras, expressed in terms of principal algebra modules. 

In \textbf{Chapter 4}  we explore the relations within the imaginary root spaces of affine symmetrizable Kac-Moody superalgebras, and compute their dimensions. In the specific case of  $\mathfrak{g}=A\left(2k,2l\right)^{\left(4\right)},$ we reveal a strikingly distinct structure for $\mathcal{K},$  where $\mathcal{K}$  denotes the subalgebra spanned by the imaginary root spaces and the Cartan subalgebra.  We establish the following theorem: 

\begin{thm}{sqdelta}
    Let $\fg$ be an indecomposable affine symmetrizable Kac-Moody superalgebra. 

    \begin{enumerate}
        \item  The subalgebra $\mathcal{K}_{\overline{0}}=\mathcal{K}\cap\mathfrak{g}_{\overline{0}},$ is an infinite-dimensional Heisenberg Lie algebra.
        \item If $\mathfrak{g}\neq A\left(2k|2l\right)^{\left(4\right)},$  then $\mathcal{K}=\mathcal{K}_{\overline{0}}$  and thus by $(i)$:  
    $$\left[\mathfrak{g}_{s\delta},\mathfrak{g}_{q\delta}\right]=\begin{cases}
    0 & s+q\neq0\\
    \mathbb{C}K & s+q=0
    \end{cases}$$
        \item If $A\left(2k|2l\right)^{\left(4\right)},$  then $\mathfrak{g}_{s\delta}\subset\mathcal{K}\cap\mathfrak{g}_{\overline{1}}$  if and only if $\overline{s}=1$  or $\overline{s}=3,$  where $$\overline{s}=s\mod 4.$$  

    If $i+j=0,$  then $\left[\mathfrak{g}_{i\delta},\mathfrak{g}_{j\delta}\right]=\mathbb{C}K.$  Otherwise, we have: 
     $$\left[\mathfrak{g}_{i\delta},\mathfrak{g}_{j\delta}\right] =
        \begin{cases}
        0 & i \neq 0, \, \overline{i} = 0 \\
        0 & \overline{i+j} = 0 \\
        \mathfrak{g}_{(i+j)\delta} & \overline{i} = 2, \, \overline{j} = 1,3 \\
        \mathbb{C} x, \quad x \text{ is a nonzero element in } \mathfrak{g}_{(i+j)\delta} & \overline{i} = \overline{j} = 1,3
    \end{cases}$$
    \end{enumerate}  
\end{thm}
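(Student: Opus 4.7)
The approach combines symmetrizability of $\fg$ with the (possibly twisted) loop realization of affine Kac-Moody superalgebras developed in Chapter 3. The main analytic tool is the standard identity valid in any symmetrizable Kac-Moody superalgebra: for $x\in\fg_\alpha$ and $y\in\fg_{-\alpha}$,
\[
[x,y]=(x,y)\,t_\alpha,\qquad t_\alpha=\nu^{-1}(\alpha)\in\fh,
\]
where $(\cdot,\cdot)$ is the invariant supersymmetric form. Since $(\delta,\delta)=0$, the element $t_{n\delta}$ is a multiple of $K$, so $[\fg_{n\delta},\fg_{-n\delta}]\subset\CC K$ for every $n\in\ZZ$. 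For (i), the Chapter 3 description of $\fg_{\overline 0}$ identifies $\fg_{n\delta}^{\overline 0}$ with $t^n\otimes\fh_0$ inside a (twisted) loop algebra, where $\fh_0$ is commutative. The loop bracket
\[
[t^i\otimes h_1,t^j\otimes h_2]=i\,\delta_{i+j,0}\,(h_1,h_2)\,K
\]
then exhibits $\mathcal{K}_{\overline 0}$ as a Heisenberg algebra with center $\CC K$; infinite-dimensionality is immediate because $\fg_{n\delta}^{\overline 0}\neq 0$ for every $n\in\ZZ\setminus\{0\}$, and non-degeneracy of the form on $\fh_0$ gives non-degeneracy of the induced symplectic pairing on $\mathcal{K}_{\overline 0}/\CC K$.

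For (ii), one invokes the Chapter 3 classification: outside of $A(2k|2l)^{(4)}$, every symmetrizable affine Kac-Moody superalgebra has purely even imaginary root spaces, so $\mathcal{K}=\mathcal{K}_{\overline 0}$ and the asserted bracket table is just the Heisenberg formula from (i).

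Part (iii) requires more work. I would realize $\fg=A(2k|2l)^{(4)}$ as the fixed points of an order-4 automorphism $\sigma$ of a loop algebra attached to the basic classical Lie superalgebra $\fa=A(2k|2l)$, with eigenspace decomposition $\fa=\bigoplus_{\overline s\in\ZZ/4}\fa^{(\overline s)}$. Then $\fg_{s\delta}$ is identified with $t^s\otimes(\fa^{(\overline s)}\cap\fh_\fa)$. The construction of $\sigma$ intertwines the $\ZZ/4$-grading with the parity involution of $\fa$, so that $\fa^{(\overline 0)},\fa^{(\overline 2)}\subset\fa_{\overline 0}$ while $\fa^{(\overline 1)},\fa^{(\overline 3)}\subset\fa_{\overline 1}$, yielding the parity assertion. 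The bracket identities in (iii) reduce, via
\[
[t^i\otimes x,t^j\otimes y]=t^{i+j}\otimes[x,y]+i\,\delta_{i+j,0}(x,y)K,
\]
to computations of $[\fa^{(\overline i)},\fa^{(\overline j)}]$ restricted to the Cartan subspace, which are finite-dimensional and explicit.

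The main obstacle I anticipate is distinguishing the last two lines of the table in (iii): for $\overline i=2$ and $\overline j\in\{1,3\}$ the bracket is asserted to fill all of $\fg_{(i+j)\delta}$, whereas for $\overline i=\overline j\in\{1,3\}$ the image is only a one-dimensional, nonzero subspace. This amounts to computing the exact ranks of the induced bilinear maps on the Cartan eigenspaces of $\fa$, which requires careful tracking of the explicit form of $\sigma$ together with the dimensions of the spaces $\fa^{(\overline s)}\cap\fh_\fa$ for each $\overline s$; it is precisely at this step that the $A(2k|2l)^{(4)}$ case departs from every other affine type and produces the richer non-Heisenberg behaviour recorded in the theorem.
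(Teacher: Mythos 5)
Your parts (i) and (ii) follow essentially the paper's route (loop realization plus the invariant form forcing $[\fg_{n\delta},\fg_{-n\delta}]\subset\CC K$), but part (iii) contains a genuine error in the setup that would collapse the whole argument. You identify $\fg_{s\delta}$ with $t^s\otimes(\fa^{(\overline s)}\cap\fh_{\fa})$. The correct identification (the paper's Lemma on the centralizer) is $\fg_{s\delta}=\CC t^s\otimes(\mathfrak{z}\cap\dot{\fg}^{\overline s})$, where $\mathfrak{z}$ is the centralizer of $\dot{\fh}^{\sigma}$ in $\dot{\fg}$ --- and $\mathfrak{z}$ is strictly larger than $\dot{\fh}$ precisely in the $A(2k|2l)^{(4)}$ case. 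Indeed the paper shows $\dot{\fh}=\mathfrak{z}^{0}\oplus\mathfrak{z}^{2}$, so $\mathfrak{z}^{1}$ and $\mathfrak{z}^{3}$ meet $\dot{\fh}$ trivially; with your identification you would get $\fg_{\delta}=\fg_{3\delta}=0$ (the intersection of an odd eigenspace with the even Cartan is zero), whereas in fact these are one-dimensional odd spaces spanned by the zero-weight vectors of $\dot{\fg}^{1},\dot{\fg}^{3}$ viewed as $\dot{\fg}^{\sigma}$-modules. Relatedly, your parity claim $\fa^{(\overline 1)},\fa^{(\overline 3)}\subset\fa_{\overline 1}$ is false: by the weight tables, $\dot{\fg}^{1}\cong\Pi(L(\epsilon_1))$ contains even weight vectors (those of weight $\pm\delta_g$); only its zero-weight space is odd, which is what actually yields $\fg_{s\delta}\subset\fg_{\overline 1}$ for $\overline s=1,3$.

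Because of this misidentification, your proposed reduction of the bracket table to ``ranks of bilinear maps on the Cartan eigenspaces of $\fa$'' does not apply: the vectors generating $\fg_{\delta}$ and $\fg_{3\delta}$ are not Cartan elements, and the two hard lines of the table are proved in the paper by representation theory rather than linear algebra on $\fh_{\fa}$. Concretely, the paper first identifies $\dot{\fg}^{1}\cong\dot{\fg}^{3}\cong\Pi(L(\epsilon_1))$ and $\dot{\fg}^{2}\cong S^2(L(\epsilon_1))$ (or $L(2\epsilon_1)$ when $k=l$) as $B(k|l)$-modules --- this consumes the entire appendix on characters and complete reducibility of $S^2(V_{st})$ --- and then (a) for $\overline i=2$, $\overline j\in\{1,3\}$ uses that an odd zero-weight vector $a$ of $\dot{\fg}^{j}$ cannot centralize all of $\dot{\fh}=\mathfrak{z}^0\oplus\mathfrak{z}^2$, producing $h\in\mathfrak{z}^2$ with $[h,a]\neq 0$ and a surjection onto $\fg_{(i+j)\delta}$; and (b) for $\overline i=\overline j\in\{1,3\}$ shows $[a_0,a_0]\neq 0$ by writing $\dot{\fg}^1_{-\theta}=\dot{\fg}^0_{-\theta}a_0$ with $(\ad\dot{\fg}^0_{-\theta})^2a_0=0$ and comparing with $[e_0,e_0]\neq0$. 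None of this is recoverable from your framework as stated, so part (iii) needs to be rebuilt on the correct description of the imaginary root spaces.
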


In this chapter, we rely on several results we establish, concerning the representations of $B(k|l)$  and $D(k|l).$  These results are explicitly formulated and proved in the appendix.

Finally, in \textbf{Chapter 5}, we prove the main theorem of this thesis, addressing both the symmetrizable and non-symmetrizable cases. We provide a detailed discussion of the motivation behind this conjugacy result and its corollaries. 

\subsection{Acknowledgement.}
First and foremost, I extend my deepest gratitude to my advisor, professor Maria Gorelik. Through her boundless patience, kindness, and remarkable guidance, she dedicated an incredible amount of time to help me navigate this vast and profound subject. Her mentorship has not only made it possible for me to understand and engage deeply with this challenging topic, but also allowed me to contribute to the ongoing efforts in this field.
 
I would like also to thank my life partner, family, and friends for their constant love and support. Knowing I can always rely on them has been a source of great strength and encouragement throughout this journey.

Finally, I am grateful to the Weizmann Institute and my peers, for making an exceptional and pleasant environment for study and research. The collaborative and inspiring atmosphere they have created, was more than I could have ever hoped for. 
Supported by the Minerva foundation with funding from the Federal German Ministry for Education and Research.

\section{Kac-Moody Superalgebras}
Broadly speaking, Kac-Moody superalgebras are a generalization of Kac-Moody algebras to the "super" case.
There are several equivalent ways for defining this notion. The approach used in ~\cite{Sint} is to define a Cartan datum consists of a square matrix $A,$ and a set of indices $\tau.$ We impose certain axioms on this Cartan datum and introduce formulas for refelxions, under which we require the axioms to hold for the new matrices resulting from implying the reflexions on the matrix $A$ and the set $\tau$. In this context, we introduce a realization of this Cartan datum, from which we construct the Kac-Moody superalgebra, similarly to the process in the non-super case as described in Chapter 1 of ~\cite{Kbook}, with some adjustments. In particular, we treat the generators $e_i,f_i$ as odd if and only if $i\in\tau.$ However, at first glance, the axioms we impose might seem a bit arbitrary. 

Another approach for defining Kac-Moody superalgebras is presented in ~\cite{GHS}. This approach is similar, but broader in scope. In this chapter, we present a different perspective, which offers a shorter alternative to the definition in ~\cite{GHS}.  Additionally, it may provide a clearer intuition for the axioms we impose on the matrix $A$ in the previous definitions described above. Our approach is similar to one used by H. Yamane for symmetrizable affine Kac-Moody superalgebras (see ~\cite{Yamane}).
\subsection{Set up}
\textit{Cartan datum} is a pair $(A,\tau),$ where $A=(a_{ij})$ is an arbitrary complex $\ell\times \ell$-matrix , and 
 $\tau\subset\{1,\ldots,\ell\}$ is a subset.

 \textit{Realization of a Cartan datum} is a triple $(\fh, \Sigma,\Sigma^{\vee}),$  assigned to the Cartan datum,
where $\fh$ is an even vector space of dimension $2l-\text{rank}A$, $\Sigma=\{\alpha_i\}_{i=1}^{\ell}$ is a linearly independent subset of
$\fh^*$,  $\Sigma^{\vee}=\{\alpha_i^{\vee}\}_{i=1}^{\ell}$ is a linearly independent subset of
$\fh$ and $\langle \alpha^{\vee}_i,\alpha_j\rangle=a_{ij}$.  

 Equipped with realization of a Cartan datum, we construct the so-called \textit{half-baked Lie superalgebra} $\widetilde{\mathfrak{g}}\left(A,\tau\right).$ This is a Lie superalgebra generated by the symbols 
 $\widetilde{e}_{i},\widetilde{f}_{i}, i=1,\ldots,\ell$, which are odd if and only if  $i\in\tau,$ and by $\mathfrak{h},$ subject to the following defining relations:
 \begin{enumerate}
     \item $\mathfrak{h}$  is commutative.
     \item $\left[h,\widetilde{e}_{i}\right]=\alpha_{i}\left(h\right)\widetilde{e}_{i}\,\,\,\,\forall h\in\mathfrak{h}$
     \item $\left[h,\widetilde{f}_{i}\right]=-\alpha_{i}\left(h\right)\widetilde{f}_{i}\,\,\,\,\forall h\in\mathfrak{h}$
     \item $\left[\widetilde{e}_{i},\widetilde{f}_{j}\right]=\delta_{ij}\alpha_{i}^{\vee}$
 \end{enumerate}
 
 We then define the \textit{contragredient 
 Lie superalgebra} $\fg(A,\tau):=\widetilde{\mathfrak{g}}\left(A,\tau\right)/\mathfrak{I},$ 
 where $\mathfrak{I}$  is the maximal ideal in $\widetilde{\mathfrak{g}}\left(A,\tau\right)$ that has a zero intersection with $\mathfrak{h}.$  Frequently, we omit $\tau$ in the notation and write $\mathfrak{g}(A)$ instead of $\fg(A,\tau).$ We denote by $e_{i} \text{ and }\,f_{i}$ the images of $\widetilde{e}_{i},\widetilde{f}_{i}$ in $\mathfrak{g}(A,\tau)$ and identify $\mathfrak{h}$ with its image in $\mathfrak{g}(A,\tau).$ The elements $e_i, f_i$  are called \textit{Chevalley
generators} and the subalgebra $\mathfrak{h}$  of $\mathfrak{g}(A,\tau)$ is called the \textit{Cartan subalgebra}. 

Every definition and lemma in  ~\cite{Kbook}, Chapter I,  remains valid for $\mathfrak{g}(A,\tau)$ (except Proposition 1.7 (b) which is valid for $A\not=(0)$), including Theorem 1.2, which leads to the \textit{triangular decomposition} $\mathfrak{g}\left(A,\tau\right)=\mathfrak{n}^{-}\oplus\mathfrak{h}\oplus\mathfrak{n},$  where $\mathfrak{n}$ (resp. $\mathfrak{n}^{-}$)  is generated by $\left\{ e_{i}\right\} _{i=1}^{l}$  (resp. $\left\{ f_{i}\right\} _{i=1}^{l}$), and gives the \textit{root space decomposition} with respect to $\mathfrak{h}:$ $$\mathfrak{g}\left(A,\tau\right)=\bigoplus_{\alpha\in\Delta}\mathfrak{g}_{\alpha},$$ where $\mathfrak{g}_{\alpha}=\left\{ x\in\mathfrak{g}\left(A\right)\,|\,\left[h,x\right]=\alpha\left(h\right)x\,\,\forall h\in\mathfrak{h}\right\},$  and $\Delta=\left\{ \alpha\in\mathfrak{h^{*}\,}|\,\mathfrak{g}_{\alpha}\neq0, \alpha\neq0 \right\}.$   
The set $\Delta$  is called \textit{root system}, elements of $\Delta$  are called \textit{roots} and $\mathfrak{g}_\alpha$  is called the \textit{root space of  $\alpha.$} One can see that for every root $\alpha,$ $\mathfrak{g}_\alpha\subset\mathfrak{g}(A,\tau)_{\overline{0}}$  or $\mathfrak{g}_\alpha\subset\mathfrak{g}(A,\tau)_{\overline{1}}.$ A root $\alpha$ is called \textit{even} in the former case, and \textit{odd} in the latter. 

The existence of the triangular decomposition implies: 

\begin{equation}\label{eq:root_disjoint_union} 
\Delta = \Delta^+(\Sigma) \coprod (-\Delta^+(\Sigma)) \quad \text{where} \quad \Delta^+(\Sigma) = \Delta \cap \mathbb{N}\Sigma.\ \footnotemark
\end{equation}
\footnotetext{\(\mathbb{N}\) stands for the set of strictly positive integers.}

Roots in $\Delta^+(\Sigma)$ (resp. $-(\Delta^+(\Sigma))$ are called \textit{positive} (resp. \textit{negative}).  

We call $(\fg(A,\tau),\fh,\Sigma,\Sigma^{\vee})$  a \textit{quadruple
associated to the Cartan datum}. Two quadruples $(\fg(A,\tau),\fh,\Sigma,\Sigma^{\vee})$  and $(\fg(A_1,\tau),\fh_1,\Sigma_1,\Sigma_1^{\vee})$ are called \textit{isomorphic}, if there exists a Lie superalgebra isomorphism  $\phi: \fg(A,\tau)\iso\fg(A_1,\tau)$ such that
$\phi(\fh)=\fh_1,$  $\phi\left(\Sigma^{\vee}\right)=\Sigma_{1}^{\vee},$ and $\phi^*(\Sigma_1)=\Sigma$.

One can see that $\fg(A)=\fg(A_1)\times \fg(A_2)$ if 
 $A=\left(\begin{array}{cc}
A_{1} & 0\\
0 & A_{2}
\end{array}\right).$ We call $\mathfrak{g}(A)$ \textit{indecomposable}, if after any index permutation (i.e. a permutation of rows of the matrix and the same permutation of the columns), $A$ can not be written in the above form. 

$\mathfrak{g}(A,\tau)$  is called\textit{ quasisimple} if for any ideal $i\subset\mathfrak{g}(A,\tau)$ either $i\subset\mathfrak{h}$  or $i+\mathfrak{h}=\mathfrak{g}(A,\tau).$ Equivalently, it is quasisimple if every ideal of $\mathfrak{g}(A,\tau)$ is either in the center of $\mathfrak{g}(A,\tau),$ or contains $[\mathfrak{g}(A,\tau),\mathfrak{g}(A,\tau)].$ 

We call $A$ \textit{regular}  if for any $i,j\in\tau,$ $a_{ij}=0$  implies $a_{ji}=0.$ By ~\cite{Sint}  Lemma 2.4, If $A$  is regular, then $\mathfrak{g}(A,\tau)$ is quasisimple if and only if $A$  is indecomposable and not zero. 

We say that $\mathfrak{g}(A,\tau)$ is \textit{symmetrizable} if there exists an invertible diagonal matrix $D,$ and a symmetric matrix $B,$  such that $A=DB.$ There is an isomorphism between $\mathfrak{g}(A,\tau)$  and $\mathfrak{g}(DA,\tau)$, defined by taking $\alpha_i^\vee$  to $d_{ii}\alpha_i^\vee,$  where $\alpha_i^{\vee}\in\Sigma^\vee$  and $D=\text{diag}(d_{11},...,d_{ll}).$ 

For every Lie superalgebra $\mathfrak{g},$ we denote by $\mathfrak{c}(\mathfrak{g})$  its centre.  
One has:
\begin{equation} \label{gggcg}  
\mathfrak{g}^{[\mathfrak{g},\mathfrak{g}]}=\mathfrak{c}(\mathfrak{g}),  
\end{equation}
  
where $$\mathfrak{g}^{\left[\mathfrak{g},\mathfrak{g}\right]}=\left\{ x\in\mathfrak{g}\,|\,\left[x,\left[\mathfrak{g},\mathfrak{g}\right]\right]=0\right\} .$$

\subsection{Reflexions and contragredient base}
      The subset $\Sigma_1\subset\Delta$  is called a \textit{root base} if it is linearly independent and $\Delta=\Delta^+(\Sigma_1)\coprod (-\Delta^+(\Sigma_1)).$ A root base $\Sigma_1$ is further termed a \textit{contragredient base} if there exists a quadruple $(\fg(A'),\fh',\Sigma',\Sigma'^{\vee})$ and a Lie superalgebra isomorphism $\phi:\mathfrak{g}\left(A\right)\longrightarrow\mathfrak{g}\left(A'\right),$  such that $\phi\left(\mathfrak{h}\right)=\mathfrak{h'}$  and $\phi^{*}\left(\Sigma'\right)=\Sigma_1.$ 

      We say that a base $\Sigma'\subset\Delta$ is \textit{reflectable} at a root $\alpha\in\Delta^{+}(\Sigma'),$ if
 there exists a root base  $\Sigma''$ with the property:
 $$\Delta^+(\Sigma')\setminus (\mathbb{N}\alpha)=\Delta^+(\Sigma'')\setminus (-\mathbb{N}\alpha).$$
In this case, we write $\Sigma''=r_{\alpha}\Sigma'$  and call the correspondence $r_{\alpha}:\Sigma'\rightarrow\Sigma''$ \textit{reflexion}. 
If $\Sigma''$  exists, is unique. The term reflexion (rather than reflection) is used in ~\cite{GHS}.
\subsubsection{}
\begin{claim}{Properties of reflexions}
Let $\Sigma'$ be a root base, reflectable at a root $\alpha,$ and  $\Sigma''=r_{\alpha}\Sigma'.$  Then:
\begin{enumerate}
    \item $\alpha\in\Sigma',$ $-\alpha\in\Sigma''$  and $r_{-\alpha}\Sigma''=\Sigma'.$  
    \item Let $\alpha\not=\beta\in\Sigma'.$  Then $\beta+s\alpha\in\Sigma'',$  where  $s=\max\left\{ n\,|\,\left(\beta+n\alpha\right)\in\Delta\right\} .$  
 
\end{enumerate} 
\end{claim}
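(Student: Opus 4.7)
The approach is to pin down $\Delta^+(\Sigma'')$ explicitly and then to identify the two claimed simple roots by coordinate arguments in the bases $\Sigma'$ and $\Sigma''$. Combining the defining identity $\Delta^+(\Sigma')\setminus\mathbb{N}\alpha=\Delta^+(\Sigma'')\setminus(-\mathbb{N}\alpha)$ with the facts that $\Sigma''$ is a base and that positive multiples of $\alpha$ that are roots automatically lie in $\Delta^+(\Sigma')$, one obtains
$$\Delta^+(\Sigma'')=\bigl(\Delta^+(\Sigma')\setminus\mathbb{N}\alpha\bigr)\sqcup\bigl(-(\Delta\cap\mathbb{N}\alpha)\bigr),$$
so every element of $\Sigma''$ is either of \emph{type (a)}, a positive root of $\Sigma'$ whose $\Sigma'$-expansion has some nonzero coefficient on $\alpha_j'\neq\alpha$, or of \emph{type (b)}, of the form $-k\alpha$ with $k\geq 1$ and $k\alpha\in\Delta$.

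For part (i): the formula yields $-\alpha\in\Delta^+(\Sigma'')$ immediately. To show $-\alpha\in\Sigma''$, expand $-\alpha=\sum d_i\alpha_i''$ with $d_i\geq 0$. Matching the $\alpha_j'$-coordinate for each $\alpha_j'\neq\alpha$ forces every type-(a) summand with $d_i\geq 1$ to have zero coefficient on all such $\alpha_j'$, contradicting the definition of type (a); so all type-(a) coefficients vanish, and matching the $\alpha$-coordinate then yields $\sum_{(b)}d_ik_i=1$, forcing a single term with $d_i=k_i=1$, i.e.\ $-\alpha\in\Sigma''$. Conversely, if $\alpha\notin\Sigma'$, the $\Sigma'$-expansion of $\alpha$ involves only $\alpha_i'\neq\alpha$, each of which lies in $\Delta^+(\Sigma'')$ by the formula, so $\alpha\in\mathbb{N}\Sigma''\cap\Delta=\Delta^+(\Sigma'')$; together with $-\alpha\in\Delta^+(\Sigma'')$ this contradicts the disjointness of $\Delta^+(\Sigma'')$ and $-\Delta^+(\Sigma'')$. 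Hence $\alpha\in\Sigma'$. The identity $r_{-\alpha}\Sigma''=\Sigma'$ then follows from the symmetry of the defining identity under $(\Sigma',\Sigma'',\alpha,-\alpha)\leftrightarrow(\Sigma'',\Sigma',-\alpha,\alpha)$ together with the uniqueness of the reflexion.

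For part (ii): set $\gamma=\beta+s\alpha$, so $\gamma\in\Delta^+(\Sigma')\setminus\mathbb{N}\alpha\subset\Delta^+(\Sigma'')$ (using $s\geq 0$ and the linear independence of $\beta$ and $\alpha$). Expand $\gamma=\sum c_i\alpha_i''$. Matching the $\beta$-coordinate, to which type-(b) summands contribute $0$, yields a nonnegative integer sum equal to $1$, so exactly one type-(a) summand $\alpha_{i_0}''$ appears with $c_{i_0}=1$ and $\beta$-coordinate $1$; matching every non-$\alpha$, non-$\beta$ coordinate then forces all other type-(a) coefficients to vanish. Writing $\alpha_{i_0}''=\beta+t\alpha$, the maximality of $s$ gives $t\leq s$, while matching the $\alpha$-coordinate gives $s=t-\sum_{(b)}c_ik_i\leq t$; this forces $t=s$ and all type-(b) coefficients to vanish, whence $\gamma=\alpha_{i_0}''\in\Sigma''$.

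The main obstacle is the coordinate bookkeeping itself, in particular keeping track of which roots are ``flipped'' (those in $\mathbb{Z}\alpha\cap\Delta$, where $\Delta^+(\Sigma')$ and $\Delta^+(\Sigma'')$ disagree) versus which are common to both positive root sets. The decisive combinatorial input in part (ii) is the maximality of $s$: the coordinate match on $\alpha$ produces $s\leq t$, and the maximality supplies the reverse inequality $t\leq s$, which is exactly what collapses the expansion of $\gamma$ to a single simple root rather than an artificially larger one of the form $\beta+t\alpha$ with $t>s$.
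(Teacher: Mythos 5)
Your proof is correct and follows essentially the same route as the paper's: both arguments rest on the defining identity $\Delta^+(\Sigma')\setminus\mathbb{N}\alpha=\Delta^+(\Sigma'')\setminus(-\mathbb{N}\alpha)$, on counting the total coefficient of $\beta+s\alpha$ on simple roots other than $\alpha$ (which equals $1$), and on the maximality of $s$ to exclude $-\alpha$ from the $\Sigma''$-expansion, with your explicit decomposition of $\Delta^+(\Sigma'')$ into types (a) and (b) being a more systematic packaging of the paper's preliminary observation. One small ordering point: your coordinate argument for $-\alpha\in\Sigma''$ uses that the only nonzero $\Sigma'$-coordinate of $-\alpha$ is the one on $\alpha$ itself, which presupposes $\alpha\in\Sigma'$, so that (independent) sub-argument should be placed first.
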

\begin{proof}
    By definition of reflexion, one has: $$\Delta^{+}\left(\Sigma'\right)\backslash\left(\mathbb{N}\alpha\right)=\Delta^{+}\left(\Sigma''\right)\backslash\left(-\mathbb{N}\alpha\right)$$
Clearly, $\alpha\notin\Delta^{+}\left(\Sigma'\right)\backslash\left(\mathbb{N}\alpha\right)$  and therefore $\alpha\notin\Delta^{+}\left(\Sigma''\right)\backslash\left(-\mathbb{N}\alpha\right).$ Since $\alpha\notin-\mathbb{N}\alpha,$  it follows that $\alpha\notin\Delta^{+}\left(\Sigma''\right),$  which implies $-\alpha\in\Delta^{+}\left(\Sigma''\right).$ Therefore $r_{-\alpha}\Sigma''=\Sigma'$. Assume, by contrary, that $\alpha\notin\Sigma'.$  Then, we may write $\alpha=\Sigma_{i=1}^{n}k_{i}\beta_{i},$  where $\beta_{i}\in\Sigma', k_{i}\in\mathbb{Z}_{>0}.$  Since $\alpha\notin\Sigma',$  it follows that $\beta_{i}\not\in\mathbb{Z}\alpha.$   Hence, $\beta_{i}\in\Delta^{+}\left(\Sigma'\right)\backslash\left(\mathbb{N}\alpha\right)=\Delta^{+}\left(\Sigma''\right)\backslash\left(-\mathbb{N}\alpha\right).$   Thus, $\beta_{i}\in\Delta^{+}\left(\Sigma''\right),$ so $\alpha\in \Delta^{+}(\Sigma'')$, a contradiction.  This gives (i).

We start the proof of (ii) from the following observation: if $\beta+j\alpha=\sum_{i=1}^l \nu_i,$  where $\nu_i\in\mathbb{N}\Sigma'\setminus\mathbb{N}\alpha,$ then $l=1.$ 
Note that, since $\beta\neq\alpha,$ we have $\beta+s\alpha\notin\mathbb{\mathbb{Z}\alpha},$ which implies by definition of reflexion that $\beta+s\alpha\in\Delta^{+}\left(\Sigma''\right).$ 
Thus, we may write:
$$\beta+s\alpha=\sum_{i=1}^{l}k_{i}\mu_{i} ,\ k_{i}\in\mathbb{Z}_{>0},\ \mu_{i}\in\Sigma''.$$
If all $\mu_i$ are not proportional to $\alpha$, then $\mu_i\in\Delta^+(\Sigma')$ and the above observation gives
$\beta+s\alpha\in \Sigma''$ as required. Now assume that $\mu_1$ is proportional to $\alpha$. Then, $\mu_i\not\in\mathbb{N}\alpha$ for $i\not=1$ and, 
by (i),
$\mu_1=-\alpha$. Therefore $\beta+(s+k_1)\alpha=\sum_{i=1}^2 k_i\mu_i$ where 
$\mu_i\in\Delta^+(\Sigma')$ and the above observation gives $\beta+(s+k_1)\alpha=\mu_2,$
that is $\beta+(s+k_1)\alpha\in\Delta,$ which contradicts the definition of $s$.
\end{proof}

With the notation introduced in the claim above, we write:  $r_{\alpha}\left(\alpha\right)=-\alpha,$  and $r_{\alpha}\left(\beta\right)=\beta+s\alpha.$ 
We say that a root base $\Sigma'$  is \textit{fully reflectable} if it is reflectable at every $\alpha\in\Sigma'.$

\subsection{The subalgebras $\mathfrak{g}\left\langle \alpha_{i}\right\rangle$} 

Let $\mathfrak{g}(A,\tau)$ be an arbitrary contragredient superalgebra with $\Sigma=\{\alpha_i\}_{i=1}^{\ell}$ and let $\mathfrak{g}\left\langle\alpha_i \right\rangle $ be the subalgebra which is generated by
$\fg_{\pm\alpha_i}$ (these are one-dimensional spaces).  Note that:
\begin{equation}\label{eq:fsi}
\mathfrak{g}\left\langle\alpha_i \right\rangle\cong\left\{\begin{array}{lll}
\fsl_2 & & \text{ for } a_{ii}\not=0,\ i\not\in\tau\\
\mathfrak{osp}(1|2) & & \text{ for } a_{ii}\not=0,\ i\in\tau\\
\text{Heisenberg Lie algebra} & & \text{ for } a_{ii}=0, i\not\in\tau\\
\fsl(1|1) & & \text{ for } a_{ii}=0, i\in\tau\\
\end{array}\right.\end{equation}
If $\left\langle \alpha_{i}^{\vee},\alpha_{i}\right\rangle =a_{ii}\not=0$, we define $s_{\alpha_i}\in GL(\fh), GL(\fh^*)$  by  the usual formulas:
\begin{equation} \label{eq:fsalpha}
s_{\alpha_i}\mu = \mu - \frac{2 \langle \alpha_{i}^{\vee},\mu\rangle }{\left\langle \alpha_{i}^{\vee},\alpha_{i}\right\rangle } \alpha_i, \quad 
s_{\alpha_i}h = h - \frac{2 \langle h, {\alpha_i}\rangle }{\left\langle \alpha_{i}^{\vee},\alpha_{i}\right\rangle } \alpha_{i}^{\vee}, \quad \text{for } \mu\in\fh^*,\ h\in\fh.
\end{equation}

\subsection{Kac-Moody superalgebra definition}

Let $\Sk$ (``skeleton'') be the set of all root bases obtained 
from $\Sigma$ by all possible chains of reflexions. 
We say that a contragredient superalgebra $\fg(A,\tau)$ 
is a \textit{Kac-Moody superalgebra} if all root bases in $\Sk$ are 
contragredient and fully reflectable, and in addition $a_{ii}\neq0$  for every $i\notin\tau.$

\subsubsection{Properties of Cartan datum}
Consider the following conditions:
\begin{itemize}
\item[(a)] $\Sigma$ is reflectable at $\alpha_i$ and $r_{\alpha_i}\Sigma$ is  a contragredient base;
\item[(b)]  $\Sigma$ is reflectable at $\alpha_i$;
\item[(c)] $e_i, f_i$ are $\ad$-locally nilpotent;
\item[(d)] \textbf{Matrix conditions.} 
\begin{itemize}
    \item (A1) If $a_{ii}=0,\ i\not\in\tau,$  then  $a_{ij}=0$ for all $j.$   
    \item (A2) $a_{ij}=0 \Longrightarrow a_{ji}=0$
    \item (A3) If $a_{ii}\not=0,$ then:$$\begin{cases}
\frac{2a_{ij}}{a_{ii}}\in\mathbb{Z}_{\leq0} & i\notin\tau\\
\frac{a_{ij}}{a_{ii}}\in\mathbb{Z}_{\leq0} & i\in\tau
\end{cases}$$
\end{itemize}

\end{itemize}

\begin{claim}{}
\begin{enumerate}
\item
One has $ (a)\ \Longrightarrow\ (b)\ \Longrightarrow (c)$.
\item If $a_{ii}\not=0$, then
$(a)\ \Longleftrightarrow\ (b)\Longleftrightarrow\ (c)\ \Longleftrightarrow (d)$.
\item  If $a_{ii}=0$, then: \\
  $(P1)$ If $i\not\in \tau$, then $(c)
 \Longleftrightarrow a_{ij}=0$ for all $j$.\\  
 $(P2)$ If  $i\in \tau$, then (c) automatically holds. \\
 $(P3)$ $(a)\ \Longleftrightarrow (d)$.
\end{enumerate}
\end{claim}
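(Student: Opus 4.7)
The plan is to prove part (i) first and then use it as a springboard for parts (ii) and (iii), splitting the latter by the parity of $i.$

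\textbf{Part (i).} The implication $(a)\Rightarrow(b)$ is immediate from the definition. For $(b)\Rightarrow(c),$ for every $\beta\notin\ZZ\alpha_i$ the $\alpha_i$-string $\{\beta+n\alpha_i:n\in\ZZ\}\cap\Delta$ lies in $\Delta^+(\Sigma)\setminus\mathbb{N}\alpha_i=\Delta^+(r_{\alpha_i}\Sigma)\setminus(-\mathbb{N}\alpha_i),$ so positivity in $\Sigma$ bounds the string from below and positivity in $r_{\alpha_i}\Sigma$ bounds it from above; the resulting finiteness of every $\alpha_i$-string yields local $\ad$-nilpotency of $e_i,f_i$ on each root space, and hence on all of $\fg(A,\tau).$

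\textbf{Part (ii): $a_{ii}\neq 0.$} By \eqref{eq:fsi}, $\fg\langle\alpha_i\rangle$ is $\fsl_2$ or $\fosp(1|2),$ and each root-string submodule $\bigoplus_n\fg_{\beta+n\alpha_i}$ carries a representation of it. Local nilpotency $(c)$ is equivalent to integrability of all these modules, which by the classical theory forces $\langle\alpha_i^\vee,\beta\rangle$ into the appropriate integer lattice; specialising to $\beta=\alpha_j$ gives (A3). Condition (A1) is vacuous. For (A2), observe that $a_{ij}=0$ together with $a_{ji}\neq 0$ would make $e_j$ a lowest-weight vector of weight $0$ for $\fg\langle\alpha_i\rangle$ while forcing $[e_i,e_j]\neq 0$ in $\fg(A,\tau)$ (witnessed by $[f_j,[e_i,e_j]]=a_{ji}e_i$), producing an infinite-dimensional Verma submodule and violating $(c).$ For $(d)\Rightarrow(a),$ (A3) ensures $\ad e_i,\ad f_i$ are locally nilpotent on the Chevalley generators, and the usual triple exponential (or its $\fosp(1|2)$-analogue in the odd case) produces the inner automorphism that implements an isomorphism $\fg(A,\tau)\iso\fg(A',\tau)$ with $A'$ the reflected matrix.

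\textbf{Part (iii): $a_{ii}=0.$} For $(P1)$ with $i\notin\tau,$ $\fg\langle\alpha_i\rangle$ is the $3$-dimensional Heisenberg algebra with central $\alpha_i^\vee.$ Assume $(c)$ and fix $j;$ then $V_j=\sum_{n\geq 0}\CC(\ad e_i)^n e_j$ is finite-dimensional, and the identity
\[
(\ad f_i)(\ad e_i)^n e_j = -n\,a_{ij}\,(\ad e_i)^{n-1}e_j,
\]
proved by induction from $[\ad e_i,\ad f_i]=\ad\alpha_i^\vee$ together with $a_{ii}=0,$ shows that $V_j$ is also $\ad f_i$-invariant. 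On the finite-dimensional $V_j,$ the commutator $\ad\alpha_i^\vee=[\ad e_i,\ad f_i]$ has trace zero but acts as the scalar $a_{ij}$ on every weight component, forcing $a_{ij}=0.$ Conversely, if $a_{ij}=0$ for all $j,$ then $\alpha_i^\vee$ is central, $\ad e_i$ commutes with every $\ad f_k,$ and a direct computation yields $(\ad f_k)(\ad e_i)^n e_j=0$ for $n\geq 2;$ maximality of $\mathfrak{I}$ then forces $(\ad e_i)^2 e_j=0$ in $\fg(A,\tau),$ establishing $(c).$ For $(P2)$ with $i\in\tau,$ super Jacobi together with $[f_k,[e_i,e_i]]=0$ for every $k$ places $[e_i,e_i]$ in $\mathfrak{I},$ so $[e_i,e_i]=0$ in $\fg(A,\tau)$ and $(\ad e_i)^2=\tfrac{1}{2}\ad[e_i,e_i]=0$ gives $(c)$ automatically. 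For $(P3),$ one verifies directly, using the parity-dependent reflection formulas ($r_{\alpha_i}\alpha_j=\alpha_j$ when $a_{ij}=0,$ and $r_{\alpha_i}\alpha_j=\alpha_j+\alpha_i$ when $i\in\tau$ and $a_{ij}\neq 0$), that the reflected base is contragredient precisely when $(d)$ holds.

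\textbf{Expected obstacle.} The main technical difficulty is $(d)\Rightarrow(a)$ in part (ii) and in $(P3),$ where one must produce a genuine Lie superalgebra isomorphism $\fg(A,\tau)\iso\fg(A',\tau)$ taking $\Sigma$ to $r_{\alpha_i}\Sigma.$ In the classical even case the standard triple exponential suffices, but in the odd and isotropic settings one must invent a substitute inner automorphism and then invoke the maximality of the ideal $\mathfrak{I}$ to conclude that the intertwiner between the half-baked algebras descends to the contragredient quotients.
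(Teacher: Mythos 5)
Most of your proposal tracks the paper's own proof closely: part (i) via finiteness of the $\alpha_i$-strings read off from the two positive systems; part (ii) via the representation theory of $\fsl_2$ and $\fosp(1|2)$ on the string modules (your derivation of (A2) from local nilpotency is exactly the missing detail behind the paper's terse ``$(c)\Leftrightarrow(d)$''); and (P1), (P2) via killing elements by all $f_k$ and invoking the maximality of $\mathfrak{I}$, exactly as in the paper. Your trace argument for the forward direction of (P1) --- that $[\ad e_i,\ad f_i]$ restricted to the finite-dimensional $\ad e_i$-string through $e_j$ is both traceless and the scalar $a_{ij}$ --- is a correct and slightly slicker variant of the paper's evaluation of the identity $\ad e_i(\ad^s f_i(f_j))=-s\,a_{ij}\,\ad^{s-1}f_i(f_j)$ at the minimal $s$.

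The genuine gap is (P3), which you compress to ``one verifies directly \dots that the reflected base is contragredient precisely when (d) holds'' and then defer to your ``expected obstacle'' paragraph; neither direction is actually proved. For $(a)\Rightarrow(d)$ the nontrivial point is (A2): assuming $a_{ij}=0$, $a_{ji}\neq0$, one must show $\alpha_j+n\alpha_i\in\Delta$ exactly for $n\in\{0,1\}$, so that $r_{\alpha_i}\alpha_j=\alpha_i+\alpha_j$ with root vector proportional to $[e_i,e_j]$, and then that $[e_i',e_j']=[f_i,[e_i,e_j]]=0$ while $\alpha_i'+\alpha_j'=\alpha_j\in\Delta$ --- a contradiction with contragredience of $r_{\alpha_i}\Sigma$, since in a contragredient algebra $\fg_{\alpha_i'+\alpha_j'}$ would be spanned by $[e_i',e_j']$. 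Your one sentence contains none of this. For $(d)\Rightarrow(a)$, your plan to ``invent a substitute inner automorphism'' is off-target: an isotropic reflexion changes the Cartan matrix, so it cannot be implemented by any automorphism of $\fg$ preserving $\fh$. The paper instead exhibits the new Chevalley generators explicitly ($e_j'=[e_i,e_j]$, $f_j'=-[f_i,f_j]/a_{ij}$ when $a_{ij}\neq0$, etc.), checks $[e_s',f_j']=\delta_{sj}h_s'$, the linear independence of the $h_j'$, and that these elements generate $\fg$; only then does the maximality of $\mathfrak{I}$ (which you do correctly invoke) upgrade the surjection $\widetilde{\fg}(A',\tau')\to\fg$ to an isomorphism $\fg\cong\fg(A',\tau')$, exhibiting $r_{\alpha_i}\Sigma$ as a contragredient base. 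This explicit construction is the core of the paper's proof of (P3) and is absent from yours.
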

\begin{proof}
By \Claim{Properties of reflexions}, if $\Sigma$ is reflectable at $\alpha_i$, then 
 $r_{\alpha_i}\Sigma=\{\alpha'_j\}$, where $\alpha'_i=-\alpha_i$ and for $j\not=i$ one has 
$\alpha'_j=\alpha_j+s_j\alpha_i$ where $s_j=\max \{n|\ (\alpha_j+n\alpha_i)\in\Delta\}$.
This gives (i).

Assume $a_{ii}\not=0.$ The equivalence  $(c)\Longleftrightarrow  (d),$ follows from ~(\ref{eq:fsi}), and the representation theory of $\mathfrak{sl}_2$  and of $\mathfrak{osp}(1|2).$

Thus, for $(ii),$ it suffices to show that $\left(c\right)\Longrightarrow\left(a\right).$ 
If $e_i, f_i$ are $\ad$-locally nilpotent, then 
$$\phi:=\exp(\ad f_i)\exp(-\ad e_i)\exp(\ad f_i)$$
is an automorphism of $\fg(A)$ with the property $\phi|_{\fh}=s_{\alpha_i}$ (see~\cite{Kbook}, Lemma 3.8),
which gives a quadruple $(\fg(A), \fh,s_{\alpha_i}\Sigma, s_{\alpha_i}\Sigma^{\vee}).$
Thus, $\Sigma$ is reflectable at $\alpha_i$ and $r_{\alpha_i}\Sigma=s_{\alpha_i}\Sigma$
is a contragredient base. This establishes (ii).

Next, assume $a_{ii}=0.$ 

For proving $(P1),$ suppose $i\not\in\tau,$  condition $(c)$ holds and let $1\leq j\leq l.$ By assumption, there exists a minimal $s\in\mathbb{N}$ such that $\text{ad}^{s}f_{i}\left(f_{j}\right)=0.$ Using induction, we can establish the following relation:

\begin{equation}\label{eq:a{ij}=0_formula}
\text{ad}e_i\left(\text{ad}^{s}f_{i}\left(f_{j}\right)\right)=-s\cdot a_{ij}\cdot\text{ad}^{s-1}f_{i}\left(f_{j}\right)    
\end{equation}

Hence, $a_{ij}=0.$ Conversely, suppose $a_{ij}=0$  for all $j.$ A direct computation shows that, for any $j,$ 

$$\text{ad}e_{j}\left(\text{ad}^{2}f_{i}\left(f_{j}\right)\right)=\text{ad}e_{i}\left(\text{ad}^{2}f_{i}\left(f_{j}\right)\right)=0.$$

Consequently, $\text{ad}e_{\alpha}\left(\text{ad}^{2}f_{i}\left(f_{j}\right)\right)=0$ for all $\alpha\in\Sigma.$ 
Thus, $\text{ad}^{2}f_{i}\left(f_{j}\right)$ generates an ideal that has a zero intersection with the Cartan subalgebra, and so $\text{ad}^{2}f_{i}\left(f_{j}\right)=0.$ Therefore, $f_i$  is ad-locally nilpotent. Applying the same reasoning, we deduce that $e_i$  is also ad-locally nilpotent. It gives $(P1).$

$(P2)$ is a direct consequence of (\ref{eq:fsi}).

Finally, we aim to prove $(P3)$. We show $(a)$ implies $a_{ij}=0\Longrightarrow a_{ji}=0$  for all $j.$ Assume, for sake of contradiction that there exists $j$ such that $a_{ij}=0$, $a_{ji}\not=0$
and (a) holds.
The conditions $a_{ij}=0$, $a_{ji}\not=0$ imply
$[e_i,e_j]\not=0$ (since $\left[f_{j},\left[e_{j},e_{i}\right]\right]=a_{ji}e_{i}\neq0),$ and $[e_i,[e_i, e_j]]=0$ (this we saw for the case $i\not\in\tau,$ and it clearly holds for the case $i\in\tau$  as well). Thus 
 $\alpha_j+n\alpha_i\in\Delta$ if and only if $n\in\{0,1\}$.
By \Claim{Properties of reflexions}, if $r_{\alpha_i}\Sigma$ exists, then $\alpha'_i=-\alpha_i$ and
$\alpha'_j=\alpha_i+\alpha_j$,
so $e'_i$, $e'_j$ are proportional to $f_i$ and $[e_i,e_j]$ respectively.
Since $a_{ij}=0,$  by (\ref{eq:a{ij}=0_formula}) we have $[e'_i,e'_j]=0.$ But  $\alpha'_i+\alpha'_j\in\Delta$, 
which contradicts to (a). Hence (a) implies 
$a_{ij}=0\ \Longrightarrow\ a_{ji}=0$ for all $j$,  thereby establishing $(A2)$. Additionally, in the case where $i\not\in\tau,$  since $\left(a\right)\Longrightarrow\left(c\right),$  $(P1)$ implies $(A1).$  Thus, $\left(a\right)\Longrightarrow\left(d\right).$ 
Finally, assume that (d) holds (and $a_{ii}=0$). We proved already that for any $j,$ this gives $\alpha_j+2\alpha_i\not\in\Delta.$  In addition, we know that
$\alpha_i+\alpha_j\in\Delta$ if and only if $a_{ij}\neq0$. We set 
\begin{equation}\label{eq:isoralphaSigma}
\alpha'_j=\left\{\begin{array}{ll}
-\alpha_i\ & \text{ if } i=j,\\
\alpha_j\ & \text{ if } i\not=j,\ a_{ij}=0\\
\alpha_j+\alpha_i \ & \text{ if } a_{ij}\not=0\end{array}\right.
\end{equation}
Note that $\{\alpha'_j\}$ is a linearly independent subset $\Delta^+$.

We introduce $e'_j,f'_j$ by the formulas
$$e'_j:=\left\{\begin{array}{ll}
f_i\ & \text{ if } i=j,\\
e_j\ & \text{ if } i\not=j,\ a_{ij}=0\\
\ [e_i,e_j] \ & \text{ if } a_{ij}\not=0\end{array}\right.,
\ \ \  \ \ \ 
f'_j:=\left\{\begin{array}{ll}
-e_i\ & \text{ if } i=j,\\
f_j\ & \text{ if } i\not=j,\ a_{ij}=0\\
\frac{-[f_i,f_j]}{a_{ij}} \ & \text{ if } a_{ij}\not=0\end{array}\right.
$$
and take   $h'_j:=[e'_j,f'_j]$. Then
\begin{equation}\label{eq:isoralphah}
h'_i=\begin{cases}
-\alpha_{i}^{\vee} & i\in\tau\\
\alpha_{i}^{\vee} & i\notin\tau
\end{cases} ,\ \ h'_j=\alpha_{j}^{\vee}\ \ \text{ if } a_{ij}=0,\ \ \ 
h_{j}'=\begin{cases}
\alpha_{j}^{\vee}+\frac{a_{ji}}{a_{ij}}\alpha_{i}^{\vee} & i\in\tau\\
-\alpha_{j}^{\vee}+\frac{a_{ji}}{a_{ij}}\alpha_{i}^{\vee} & \text{else}
\end{cases} \text{if } a_{ij}\not=0,\end{equation}
so $\{h'_j\}_{i=1}^{\ell}$ are linearly independent.
The relations $[e'_s,f'_j]=\delta_{js}h'_s $
follow from the fact that $(\alpha'_s-\alpha'_j)\not\in\Delta$ for $s\not=j$.
If $a_{ij}\not=0$, then $[e'_i,e'_j]=[f_i,[e_i,e_j]]=a_{ij} e_j$,
so  $\fh$ and
$e'_1,\ldots, e'_{\ell}, f'_1,\ldots, f'_{\ell}$ generate $\fg(A,\tau)$. We set $A':=(a'_{kj})$ with $a'_{kj}:=\langle h'_k,\alpha'_j\rangle$ and $$\tau':=\{j: \alpha'_j\in\Delta_{\ol{1}}\}.$$
It is easy to see that $\rank A=\rank A'$, so $(\fh,\{\alpha'_j\}_{j=1}^n, \{h_j\}_{j=1}^n)$ is a realization of
the Cartan datum $(A',\tau')$.
Consider a half-baked Lie superalgebra $\tilde{\fg}(A',\tau')$. By above, $\fg$ is a homomorphic image
of  $\tilde{\fg}(A',\tau')$ (the homomorphism is equal to the identity on $\fh$ and sends $\tilde{e}_j$, $\tilde{f}_j$ to $e'_j$, $f'_j$ respectively). The kernel of this homomorphism has zero intersection with $\fh$, so
this induces a surjective homomorphism $\iota:\fg\to \fg(A',\tau')$ which is equal to the identity on $\fh$.
Therefore $\Ker\iota$ is an ideal of $\fg$ which has zero intersection with $\fh$. Hence $\iota$ is an isomorphism.
Hence $\Sigma':=\{\alpha_j'\}_{j=1}^n$ is a contragredient base of $\Delta$.  By above, $-\alpha_i\in\Sigma'$ and
for $\alpha_j\not=\alpha_i$ we have
$\alpha'_j=\alpha_j+s_j\alpha_i$ where $s_j=\max\{n|\ (\alpha_j+n\alpha_i)\in\Delta\}$. Hence
$\Sigma'=r_{\alpha_i}\Sigma$. This establishes the implication (d) $\Longrightarrow$ (a) and completes the proof.
\end{proof}
 
We have derived an explicit formula for the reflexion: 
\begin{itemize}
    \item If $a_{ii} \neq 0$, the formula is given by $r_{\alpha_i}(\alpha_j) = s_{\alpha_i}(\alpha_j)$. Such a reflexion is also called reflection. 
    \item If $a_{ii} = 0$, the formula is given by $r_{\alpha_i}(\alpha_j) = \alpha_j'$, where $\alpha_j'$ is defined by the expression in (\ref{eq:isoralphaSigma}). We call such a reflexion \textit{isotropic.} 
\end{itemize}

In addition, from the claim above, the following can be deduced:
\begin{enumerate}
    \item If $\mathfrak{g}\left(A,\tau\right)$ is an indecomposable contragredient superalgebra such that all root bases in its skeleton are fully reflectable and contragredient, and there exists $i\notin\tau$  with $a_{ii}=0,$ then $A=(0)$ and $\tau=\emptyset.$ In this case, $[\fg(A,\tau),\fg(A,\tau)]$  is the three dimensional Heisenberg Lie algebra.
 \item A contragredient superalgebra $\fg(A,\tau)$ is Kac-Moody superalgebra if and only if $\fg(A,\tau)$ is a direct product of indecomposable Kac-Moody superalgebras.
\item Purely even Kac-Moody superalgebras are Kac-Moody algebras.
\item  $\mathfrak{g}\left(A,\tau\right)$ is a Kac-Moody Lie superalgebra if and only if the conditions
\begin{equation}\label{eq:Aadmissible}
\begin{array}{l}
a'_{ii}\neq0 \ \text{ if } \ i\not\in\tau' ;\\
a'_{ij}=0\ \ \Longrightarrow a'_{ji}=0;\\
\frac{2a'_{ij}}{a'_{ii}}\in\mathbb{Z}_{\leq 0}\ \text{ if } a'_{ii}\not=0, \ i\not\in\tau';\\
\frac{a'_{ij}}{a'_{ii}}\in\mathbb{Z}_{\leq 0}\ \text{ if }   a'_{ii}\not=0, \ i\in\tau';\end{array}\end{equation} hold for all Cartan datum $\left(A',\tau'\right)$  that obtained from original $\left(A,\tau\right)$ by a chain of isotropic reflexions. 
Note that non-isotropic reflexions do not change the Cartan matrix (therefore, in the non-super case, $\mathfrak {g}({A})$  is a Kac-Moody algebra if and only if the above axioms hold for the Cartan matrix $A).$
\end{enumerate}
        
\subsection{Real roots} \label{realroots}
We say that $\alpha\in\Delta$  is a \textit{real root}, if $\alpha\in\Sigma'$  for some $\Sigma'\in\text{Sk}.$  The set of real roots is denoted by $\Delta^{re}.$ If $\alpha$ is real, then $\fg_{\pm\alpha}$ and $[\fg_{\alpha},\fg_{-\alpha}]$
are one-dimensional. We call a real root $\alpha$ {\em isotropic} 
if $\alpha(h)=0$ for all $h\in [\fg_{\alpha},\fg_{-\alpha}]$, and
{\em anisotropic} otherwise. 
 If $\Sigma'$
is a contragredient base, then $\alpha'_i\in\Sigma'$ is isotropic if
and only if $a'_{ii}=0$. We denote by $\Delta^{\an}$ and $\Delta^\text{iso}$
the sets of anisotropic and isotropic roots respectively. We call a Kac-Moody superalgebra $\fg(A,\tau)$ {\em anisotropic} if 
$a_{ii}\not=0$ for all $i$ (in this case all real roots are anisotropic). 

We call a root $\alpha$
{\em imaginary} if $\alpha$ and $\frac{\alpha}{2}$ are not real roots. Denote by $\Delta^{\text{im}}$  the set of imaginary roots. 

We define the \textit{Weyl group} $W$ as a subgroup of $GL(\fh^*)$ generated by
the reflection $s_{\alpha}$  where $\alpha\in\Delta^{\an}$. The formula
 (\ref{eq:Aadmissible}) ensures that $W\Sigma\subset \Delta$. 
By above, if $a_{ii}\not=0$ for all $i=1,\ldots,\ell$, then $\Sk=\left\{ w\Sigma\right\} _{w\in W}$.

If $\alpha\in\Delta^{\an}$ is odd, then $2\alpha\in\Delta;$  we
set $(2\alpha)^{\vee}=\alpha^{\vee}/2$ and 
introduce $s_{2\alpha}$ by the usual formula (see~(\ref{eq:fsalpha})). Then 
 $s_{2\alpha}=s_\alpha$ and $W$ is generated by all reflections $s_\alpha$,  where $\alpha$  is even and $\alpha\not\in\Delta^{\ima}.$

\subsection{Classification}
For $u\in\mathbb{R}^{n}$  we say that $u>0$ (resp. $u\geq0$)  if all its coordinates of $u$ are strictly positive (resp. non-negative). For $\alpha,\beta\in\mathbb{R}_{\geq0}\Sigma$  we say that $\alpha\geq\beta$  if and only if $\alpha-\beta\in\mathbb{R}_{\geq0}\Sigma.$   

The indecomposable matrices satisfying ~(\ref{eq:Aadmissible}) and $a_{ii}\not=0$ for all $i,$ were classified by Vinberg in 1971, in the following way (see also ~\cite{{Kbook}}, Chapter IV, or \cite{K78}):

(Fin) $\det A\neq0$; there exists $u>0$ s.t. $Au>0$; $Av\geq0$ implies $v>0$ or $v=0.$ 

(Aff) corank$A=1$; there exists $u>0$ s.t. $Au=0$; $Av\geq0$ implies $Av=0$.

(Ind) there exists $u>0$ s.t. $Au<0$; $Av\geq0, v\geq0$ implies $v=0.$ 

Let $\fg(A,\tau)$ be an anisotropic Kac-Moody superalgebra with an indecomposable matrix $A$.
 Since $\fg(A,\tau)$ is  isomorphic to $\fg(DA,\tau)$
for any diagonalizable invertible matrix $D$, without loss of generality we assume that $a_{ii}=2$ for all
$i=1,\ldots,l$. The above classification of matrices,
divides the anisotropic Kac-Moody superalgebras into three distinct types: finite (Fin), affine (Aff), and indefinite (Ind), see~\cite{K78}, according to the type of $A$. 

Given a Kac-Moody superalgebra $\mathfrak{g}(A,\tau),$ we define the \textit{positive cone} by: $$Q_{\mathbb{R}}^{++}=\bigcap_{\Sigma'\in Sk}\mathbb{R}_{\geq0}\left(\Sigma'\right).$$
In the anisotropic case, since $Sk=\left\{ w\Sigma\right\} _{w\in W},$ one has $Q_{\mathbb{R}}^{++}=\bigcap_{w\in W}\mathbb{R}_{\geq0}\left(w\Sigma\right).$ 

We shall present a lemma that demonstrates how, in the anisotropic case, this classification can be reformulated in terms of the positive cone. To do so, we first require the following preliminary lemma.
\subsubsection{}
\begin{lem}{lem:w}
Let $\mathfrak{g}(A,\tau)$  be an anisotropic Kac-Moody superalgebra
with  a set of simple roots $\Sigma=\{\alpha_i\}_{i=1}^l$. Assume that 
 $\lambda\in\fh^*$ satisfies $\frac{\langle \lambda,\alpha_{i}^{\vee}\rangle}{a_{ii}} \leq 0$ for every $1\leq i\leq l$. Then
  $(w\lambda- \lambda)\in\mathbb{R}_{\geq 0}\Sigma$ 
    for all $w\in W$. If, in addition, $\lambda\in\mathbb{R}_{\geq0} \Sigma$,    then $W\lambda\subset Q_{\mathbb{R}}^{++}$. 
\end{lem}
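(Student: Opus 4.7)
The plan is to prove the first assertion by telescoping along a reduced expression of $w$ in the Weyl group $W$, reducing the argument to the one-step identity
$$s_{\alpha_i}\lambda - \lambda \;=\; -\frac{2\langle\lambda,\alpha_i^\vee\rangle}{a_{ii}}\,\alpha_i$$
read off from (\ref{eq:fsalpha}). By hypothesis each scalar $-\tfrac{2\langle\lambda,\alpha_i^\vee\rangle}{a_{ii}}$ is non-negative, so a single simple-reflection move adds a non-negative real multiple of a simple root to $\lambda$. The second assertion will then follow by shuttling $\lambda$ between different simple systems via the action of $W$.

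Concretely, since $\fg(A,\tau)$ is anisotropic ($a_{ii}\neq 0$ for all $i$), $W$ is generated by the simple reflections $s_{\alpha_1},\ldots,s_{\alpha_l}$ (when $\alpha_i$ is odd we use the identification $s_{\alpha_i}=s_{2\alpha_i}$). Pick a reduced expression $w=s_{j_1}\cdots s_{j_n}$, set $w_k:=s_{j_1}\cdots s_{j_k}$ (so that $w_0=e$, $w_n=w$, and $w_k=w_{k-1}s_{j_k}$), and telescope:
$$w\lambda - \lambda \;=\; \sum_{k=1}^{n} w_{k-1}\bigl(s_{j_k}\lambda - \lambda\bigr) \;=\; -\sum_{k=1}^{n}\frac{2\langle\lambda,\alpha_{j_k}^\vee\rangle}{a_{j_k j_k}}\,w_{k-1}\alpha_{j_k}.$$
The scalars are $\geq 0$ by hypothesis, so it suffices to check that each $w_{k-1}\alpha_{j_k}$ is a positive root. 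This is the standard Coxeter-theoretic principle: because $w_k=w_{k-1}s_{j_k}$ is a reduced factorization (length increases by one), one has $w_{k-1}\alpha_{j_k}\in\Delta^{+}$ (cf.~the classical Kac-Moody argument in \cite{Kbook}, Chapter~3). Since $\Delta^{+}\subset\mathbb{R}_{\geq 0}\Sigma$ by definition, this forces $w\lambda-\lambda\in\mathbb{R}_{\geq 0}\Sigma$.

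For the second assertion, assume additionally $\lambda\in\mathbb{R}_{\geq 0}\Sigma$. Given arbitrary $w,w'\in W$, apply the first assertion to $v:=w'^{-1}w\in W$: then $v\lambda=\lambda+(v\lambda-\lambda)\in\mathbb{R}_{\geq 0}\Sigma$, and hence $w\lambda=w'(v\lambda)\in\mathbb{R}_{\geq 0}(w'\Sigma)$. Since $w'$ was arbitrary and $Q_{\mathbb{R}}^{++}=\bigcap_{w'\in W}\mathbb{R}_{\geq 0}(w'\Sigma)$ in the anisotropic case (from the paragraph preceding the lemma), we conclude $W\lambda\subset Q_{\mathbb{R}}^{++}$.

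The main subtlety is the length-positivity implication ``$w_{k-1}s_{j_k}$ reduced $\Rightarrow\ w_{k-1}\alpha_{j_k}\in\Delta^{+}$'' used in the middle of the argument, since the paper develops $W$ only lightly in the super setting. The classical Kac-Moody proof (reading positivity off from a reduced word) transfers essentially verbatim: in the anisotropic case all real roots are anisotropic, the identification $s_\alpha=s_{2\alpha}$ for odd anisotropic $\alpha$ bridges the super and non-super normalizations, and $W$ acts on $\Delta^{re}$ exactly as a Coxeter group on its root system. Once this is granted, the rest of the proof is mechanical.
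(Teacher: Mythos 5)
Your proof is correct and follows essentially the same route as the paper's: the telescoping sum over a reduced word is just the unrolled form of the paper's induction on $\ell(w)$, and both arguments hinge on the same length-positivity fact $w_{k-1}\alpha_{j_k}\in\Delta^{+}$ (the paper cites Lemma 3.11 of \cite{Kbook} for exactly this). The second assertion is also handled equivalently — you verify $w\lambda\in\mathbb{R}_{\geq 0}(w'\Sigma)$ for all $w'$ directly, while the paper first places $\lambda$ in $Q_{\mathbb{R}}^{++}$ and then invokes $W$-stability of the cone; these are the same computation.
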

\begin{proof}
Recall that $W$ is the Coxeter group generated by $s_{\alpha_i}$ for $i=1,\ldots l$.
    We prove the inequality   $w\lambda\geq \lambda$  by induction on the length of $w$.  
   If the length is zero, we have $w=\Id$ and $w\lambda=\lambda$, so the inequality holds. 
   If the length is positive, then $w=w's_{\alpha_i}$ for some $i\in\{1,\ldots,l\}$ and $w'\in W$ is of the smaller length.
 By Lemma 3.11 in ~\cite{Kbook}, we have $w'\alpha_{i}\in\Delta^+\subset\mathbb{R}_{\geq 0}\Sigma$, so 
  $$w\lambda-w'\lambda=w'(s_{\alpha_{i}}\lambda-\lambda)=-\frac{2\langle \lambda,\alpha_{i}^{\vee}\rangle}{a_{ii}}w'\alpha_{i}
\in \mathbb{R}_{\geq 0}\Sigma.$$
  By the induction hypothesis, $(w'\lambda-\lambda)\in \mathbb{R}_{\geq 0}\Sigma$, so $(w\lambda-\lambda)\in \mathbb{R}_{\geq 0}\Sigma$. This completes the induction. 

Assume, in addition, that  $\lambda\in\mathbb{R}_{\geq0} \Sigma$. Then, by above, $W\lambda\in \mathbb{R}_{\geq0} \Sigma$,
so $\lambda\in  \bigcap_{w\in W}\mathbb{R}_{\geq0}\left(w\Sigma\right)$.
Since $\fg(A,\tau)$ is anisotropic we have 
 $Q_{\mathbb{R}}^{++}=\bigcap_{w\in W}\mathbb{R}_{\geq0}\left(w\Sigma\right)$, so $\lambda\in Q_{\mathbb{R}}^{++}$. 
 Since  $Q_{\mathbb{R}}^{++}$ is $W$-stable we obtain $W\lambda\subset Q_{\mathbb{R}}^{++}$ as required.
\end{proof}

\subsubsection{}
\begin{lem}{Positive_cone}
Let $A$ be an $l\times l$  Cartan matrix with $a_{ii}\not=0$ for all $i.$  Then:
\begin{enumerate}
    \item $A$ is of type (Fin) if and only if $Q_{\mathbb{R}}^{++}=\left\{ 0\right\}.$
    \item $A$ is of type (Aff) if and only if $Q_{\mathbb{R}}^{++}=\mathbb{R}_{\geq0}\delta$ for some $\delta\in\Delta$.
    \item $A$ is of type (Ind) if and only if $\dim Q_{\mathbb{R}}^{++}>1.$  
\end{enumerate}
\end{lem}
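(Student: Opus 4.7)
My plan is to prove each forward implication of the three equivalences; since Vinberg's trichotomy is exhaustive and the three descriptions of $Q_{\mathbb{R}}^{++}$ are mutually exclusive, this yields the converses for free. After rescaling rows by a positive diagonal matrix (which preserves the Vinberg type), I assume $a_{ii}>0$ for all $i$. Writing $\lambda=\sum_j c_j\alpha_j$, one has $\langle\lambda,\alpha_i^\vee\rangle=(Ac)_i$, so \Lem{lem:w} becomes the handy statement: \emph{if $c\geq 0$ and $Ac\leq 0$ componentwise, then $W\lambda\subset Q_{\mathbb{R}}^{++}$}. I will also freely use the Vinberg conditions in their ``negated'' forms obtained by substituting $-v$ for $v$; for instance, the (Fin) condition $Av\geq 0\Rightarrow v>0$ or $v=0$ becomes $Av\leq 0\Rightarrow v<0$ or $v=0$.

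The (Ind) and (Fin) cases go by a uniform strategy. For (Ind), pick $u>0$ with $Au<0$ (from Vinberg) and set $\lambda=\sum u_j\alpha_j\in Q_{\mathbb{R}}^{++}$; then $s_{\alpha_i}\lambda-\lambda=-\tfrac{2(Au)_i}{a_{ii}}\alpha_i$ is a strictly positive multiple of $\alpha_i$, and since $\lambda$ has strictly positive coefficient on every simple root, $\lambda$ and $s_{\alpha_i}\lambda$ are linearly independent elements of $Q_{\mathbb{R}}^{++}$ (for $\ell\geq 2$), giving $\dim Q_{\mathbb{R}}^{++}\geq 2$. For (Fin), I show $Q_{\mathbb{R}}^{++}=\{0\}$ by height descent: given $\lambda\in Q_{\mathbb{R}}^{++}$, the orbit $W\lambda$ is finite (as $W$ is finite in the (Fin) case), so the height function $h$ (sum of $\Sigma$-coefficients) attains a minimum at some $\mu=w\lambda$. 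At that minimum, no reflection $s_{\alpha_i}$ can decrease $h$, which forces $\langle\mu,\alpha_i^\vee\rangle\leq 0$ for all $i$; i.e., the $\Sigma$-coefficient vector $c'$ of $\mu$ satisfies $c'\geq 0$ and $Ac'\leq 0$. The negated (Fin) Vinberg condition then forces $c'=0$, hence $\lambda=0$.

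The main obstacle is the (Aff) case. One half is easy: with $u>0,\ Au=0$ given by Vinberg, set $\delta:=\sum u_j\alpha_j$; then $\delta$ is $W$-fixed and \Lem{lem:w} puts $\delta\in Q_{\mathbb{R}}^{++}$, so $\mathbb{R}_{\geq 0}\delta\subset Q_{\mathbb{R}}^{++}$. The difficulty is the reverse inclusion, because the (Fin) height-descent argument stalls when $W$ is infinite and $W\lambda$ need not attain a height-minimum. My plan is to exploit the translation subgroup of the affine Weyl group: it contains unipotent elements $t=\mathrm{Id}+N$ with $\mathrm{im}\,N\subset\mathbb{R}\delta$, so $t^n\lambda=\lambda+nN\lambda$ for all $n\in\mathbb{Z}$. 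Requiring every $t^n\lambda$ to have nonnegative $\Sigma$-coordinates forces $N\lambda=0$ (otherwise for $n$ of the appropriate sign a coefficient becomes negative); iterating over a generating set of translations forces $\lambda$ to be $W$-fixed, whence $Ac=0$. Since $\ker A=\mathbb{R}u$ by the corank-one condition, $c\in\mathbb{R}_{\geq 0}u$, so $\lambda\in\mathbb{R}_{\geq 0}\delta$. As a backup, one can pass to the quotient $\mathbb{R}\Sigma/\mathbb{R}\delta$ (on which $W$ acts through a finite quotient), find an ``antidominant modulo $\delta$'' representative of the orbit, and then apply the negated (Aff) rigidity $Av\leq 0\Rightarrow Av=0$ to conclude.
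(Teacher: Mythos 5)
Your overall strategy is sound and correct: proving the three forward implications and invoking the exhaustiveness of Vinberg's trichotomy together with the mutual exclusivity of the three cone descriptions does yield the converses, and your (Ind) argument is essentially the paper's (both rest on \Lem{lem:w} plus the linear independence of $\mu$ and $s_{\alpha_i}\mu$). Where you genuinely diverge is in how the possibility $\dim Q_{\mathbb{R}}^{++}>1$ is excluded in types (Fin) and (Aff). The paper does this once, inside the converse of (iii): a cone of dimension $>1$ contains a ball, hence two linearly independent points of $\mathbb{Z}_{>0}\Sigma$; each $W$-orbit inside $\mathbb{Z}_{\geq 0}\Sigma$ has a height-minimal element $\nu$ with $A\nu\leq 0$, and the (Aff) rigidity $Av\geq 0\Rightarrow Av=0$ combined with $\corank A=1$ forces the two points to be proportional, a contradiction. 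This needs only \Lem{lem:w}, the $W$-stability of the cone, and the discreteness of $\mathbb{Z}_{\geq 0}\Sigma$. You instead attack each type directly: height descent over the finite orbit $W\lambda$ for (Fin), and the translation subgroup of the affine Weyl group for (Aff), where $t_{\beta}\lambda=\lambda-(\lambda,\beta)\delta$ on level-zero weights sweeps any $\lambda\notin\mathbb{R}\delta$ out of $\mathbb{R}_{\geq 0}\Sigma$. Both routes work, and yours makes the geometric mechanism more transparent, but it imports two external facts the paper avoids: finiteness of $W$ in type (Fin), and the semidirect-product structure of the affine Weyl group with the explicit translation formula (Kac, Ch.\ 6), which one must also note applies verbatim in the anisotropic super setting because after normalizing $a_{ii}=2$ the matrices occurring are ordinary affine generalized Cartan matrices. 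The paper's route is more elementary and self-contained; neither treatment, incidentally, spells out why the generator of the ray in (ii) may be taken in $\Delta$.

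One caution about your ``backup'' for the (Aff) case: it does not stand on its own. Since $A^{T}$ is also of type (Aff), it has a strictly positive null vector $(a_i^{\vee})$, so every $\lambda\in\mathbb{R}\Sigma$ satisfies $\sum_i a_i^{\vee}\langle\lambda,\alpha_i^{\vee}\rangle=0$; consequently a representative $\nu=w\lambda$ with $\langle\nu,\alpha_i^{\vee}\rangle\leq 0$ for \emph{all} affine simple coroots automatically satisfies $A\nu=0$, i.e.\ $\nu\in\mathbb{R}\delta$, i.e.\ $\lambda\in\mathbb{R}\delta$. A representative antidominant only for the finite Weyl group controls the rows $i\geq 1$ and forces $\langle\nu,\alpha_0^{\vee}\rangle\geq 0$, the wrong sign. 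So producing an antidominant representative is equivalent to the statement being proved, and the backup is circular as written; rely on your translation argument (or on the paper's two-integral-points trick).
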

\begin{proof}
Let $\mathbb{R}^l\subset\fh^*$ be a vector space with the basis $\Sigma=\{\alpha_i\}_{i=1}^l$.
Then for $u\in\mathbb{R}^l$ we have $u\geq 0$ if and only if $u\in \mathbb{R}_{\geq 0}\Sigma$.
Note that the $i$th coordinate of $A\mu$ is equal to $\langle \mu,\alpha_i^{\vee}\rangle$, so
\begin{equation}\label{eq:Au>0}
\begin{array}{l}
Au=0\ \Longleftrightarrow\ Au=u\\
Au\geq 0\ \Longleftrightarrow\ Au\in u+\mathbb{R}_{\geq 0}\Sigma.\end{array}
\end{equation}
We are starting with the proof of $(iii).$ Assume $A$  is of (Ind) type. Then the set $\left\{ u>0\,|\,Au<0\right\} $  is not empty. Take $\mu$ in this set. Then $\mu\in\mathbb{R}_{\geq 0}\Sigma$ and $\langle \mu,\alpha_{i}^{\vee}\rangle<0$
for all $i=1,\ldots,l$ (since $A\mu<0$). By \Lem{lem:w}, $W\mu\subset\mathbb{Q}_{\mathbb{R}}^{++}$.
If  $\mu$ and $s_{\alpha_i}\mu$ are linearly dependent, then
$s_{\alpha_i}\mu=-\mu$, so $\mu$ or $s_{\alpha_i}\mu$
 do not lie in the cone $\mathbb{Q}_{\mathbb{R}}^{++}$. Therefore $\mu$ and $s_{\alpha_i}\mu$ are linearly independent, 
 so  $W\mu\subset\mathbb{Q}_{\mathbb{R}}^{++}$ implies $\dim Q_{\mathbb{R}}^{++}>1$.  

Next, assume $\dim\mathbb{Q}_{\mathbb{R}}^{++}>1.$ Then, $\mathbb{Q}_{\mathbb{R}}^{++}$ contains a ball. Therefore  $\mathbb{Q}_{\mathbb{R}}^{++}$ contains $\mu=u_1\alpha_1+...+u_l\alpha_l$ with $u_1,\ldots,u_l\in\mathbb{Z}_{>0}$.
(this means that $\mu\in (\mathbb{Q}_{\mathbb{R}}^{++}\cap\mathbb{Z}_{>0}\Sigma)$).
The orbit $W\mu$ lies in $\mathbb{Q}_{\mathbb{R}}^{++}$.  By~\ref{realroots}, $W\Sigma\subset \mathbb{Z}\Sigma$, so
$W\mu\subset (\mathbb{Q}_{\mathbb{R}}^{++}\cap\mathbb{Z}\Sigma).$ In particular,
$W\mu\subset \mathbb{Z}_{\geq 0}\Sigma$. Therefore the orbit $W\mu$ contains
$\nu$ such that  $(w\nu-\nu)\not\in \mathbb{Z}_{\geq 0}\Sigma\setminus\{0\}$.
Then for any $i=1,\ldots,l$ we have 
 $s_{\alpha_{i}}\left(\nu\right)-\nu=\langle \nu,\alpha_{i}^{\vee}\rangle\alpha_{i}\in \mathbb{Z}_{\leq 0}\alpha_i$,
 so $\langle \nu,\alpha_{i}^{\vee}\rangle\leq 0$. Thus, $A\nu\leq 0$. Assume, for sake of contradiction, that $A$  is of type (Aff).
Since $\mathbb{Q}_{\mathbb{R}}^{++}$ contains a ball,
 we can choose $\mu'\in (\mathbb{Q}_{\mathbb{R}}^{++}\cap\mathbb{Z}_{>0}\Sigma)$ such that $\mu'\not\in\mathbb{C}\mu$
 and construct $\nu'\in W\mu'$ such that $A\nu'\leq 0$.
 Since $A$ is of type (Aff), we have $A\nu=A\nu'=0$ which, by~(\ref{eq:Au>0}) implies $W\mu=W\nu=\nu$ and $W\nu'=W\mu'=\nu'$.
 Since 
  $\text{corank}A=1,$  we get that $\mu,\mu'$ are linearly dependent, a contradiction. Hence, $A$  is of type (Ind).
   This gives $(iii)$.

Now take $A$ of type (Aff).  
 If $\mu\geq 0$ is a vector such that $A\mu=0$, then $\mu\in\mathbb{R}_{\geq 0}\Sigma$ and, 
by~(\ref{eq:Au>0}),   $W\mu=\mu$, so 
$\mu\in \bigcap_{w\in W}\mathbb{R}_{\geq 0}\left(w\Sigma\right)=Q_{\mathbb{R}}^{++}$.  In particular,
$Q_{\mathbb{R}}^{++}\not=0$. By (iii) we have $\dim Q_{\mathbb{R}}^{++}=1$. 

Now assume that $\dim Q_{\mathbb{R}}^{++}=1$ that is
$\mathbb{Q}_{\mathbb{R}}^{++}=\mathbb{R}_{\geq0}\delta$ for some $\delta\not=0$.
Since $\mathbb{Q}_{\mathbb{R}}^{++}$ is $W$-invariant, this gives $W\delta\subset \mathbb{R}\delta$ which implies
 $\langle\delta,\alpha_i^{\vee}\rangle=0$ for all $i$ such that $\delta\not\in\mathbb{R}\alpha_i$.
If $\delta\in \mathbb{R}\alpha_i$ for some $i$, then $s_{\alpha_i}\delta=-\delta$ which means that 
the cone $\mathbb{Q}_{\mathbb{R}}^{++}$ contains $\delta$ and $-\delta$ which is impossible.
Therefore $\langle\delta,\alpha_i^{\vee}\rangle=0$ for all $i$ that is $A\delta=0$. 
Since $\delta\in \mathbb{Q}_{\mathbb{R}}^{++}$ we obtain $\delta\geq 0$ and $A\delta=0$, so $A$ is type (Aff).
This completes the proof.
\end{proof}

\subsubsection{Classification in terms of the positive cone } 
The Lemma above naturally leads to the classification of Kac-Moody superalgebras (including those that are not anisotropic) in terms of the positive cone.

We divide indecomposable Kac-Moody superalgebras into three types:

(Fin) $Q_{\mathbb{R}}^{++}=\left\{ 0\right\}.$

(Aff)  $Q_{\mathbb{R}}^{++}=\mathbb{\mathbb{R}}_{\geq0}\delta.$ 

(Ind) $\dim Q_{\mathbb{R}}^{++}>1.$ 

\subsubsection{Classification in terms of Gelfand-Kirillov dimension}
Consider the natural filtration $\left\{ \mathcal{F}^{1}\subset\mathcal{F}^{2}\subset...\right\}$ of $\mathfrak{g}(A,\tau)$ constructed with respect to a chosen set of generators $S.$  The Gelfand-Kirillov dimension (denoted $GK\dim$) of $\mathfrak{g}(A,\tau)$ is defined as: $$\text{GKdim}(\mathfrak{g}(A,\tau)) = \limsup_{i \to \infty} \frac{\log \dim \mathcal{F}^i}{\log i}.$$ 
This definition do not depend on the choice of generators. 

The classification of indecomposable $\fg(A,\tau)$ can be made also by the growth (see ~\cite{vdL}, ~\cite{H} and ~\cite{H-V}): 

(Fin)  $\dim\fg(A,\tau)<\infty.$  

(Aff)  $\fg(A,\tau)$  is infinite-dimensional, and of finite Gelfand-Kirillov-dimension 

(Ind) $\fg(A,\tau)$  is of infinite Gelfand-Kirillov-dimension.

All superalgebras of type (Fin) are symmetrizable and 
all purely anisotropic superalgebras of type (Aff) are symmetrizable. Surprisingly,
there are no symmetrizable superalgebras among indecomposable  non-anisotropic in the (Ind) case.

\subsubsection{Finite type}
A finite-dimensional simple Lie superalgebra is called \textit{classical} if its even part, $\mathfrak{g}\left(A,\tau\right)_{\overline{0}},$ is reductive. It is called \textit{basic} if it is classical and admits a non-degenerate even super-symmetric invariant bilinear form. All indecomposable finite-dimensional Kac-Moody superalgebras are both basic and classical, with one exceptional - $\mathfrak{g}(n|n).$ These algebras fall into the following list: $$\mathfrak{gl}(n|n)\ n\geq1 
 ,\mathfrak{sl}\left(m|n\right) \ m\neq n,\mathfrak{osp}\left(m|2n\right),F\left(4\right),G\left(3\right),D\left(2|1;\alpha\right)\,\left(\alpha\in\mathbb{C}\backslash\left\{ 0,1\right\} \right)$$
Frequently, we used the following type notation (see ~\cite{Ksuper}): 

$A(m|n)=\fsl(m+1|n+1)$ for $m\not=n$,
$B(m|n)=\mathfrak{osp}(2m+1|2n)$, $C(n)=\mathfrak{osp}(2|2n-2)$,
$D(m|n)=\mathfrak{osp}(2m|2n)$ (with $m\geq 1$).
For the exact definition of each algebra, see \cite{Ksuper}. 

Note that $\mathfrak{sl}\left(n+1|n+1\right)$  has a one-dimensional center, denoted by $\mathbb{C}I.$ We define  $\mathfrak{psl}\left(n+1|n+1\right):=\mathfrak{sl}\left(n+1|n+1\right)/\mathbb{C}I$ and denote $A\left(n|n\right)=\mathfrak{psl}\left(n+1|n+1\right).$  This is a finite-dimensional simple and basic Lie superalgebra, but not a Kac-Moody superalgebra.  
\subsubsection{(Aff) and (Ind) type}
The full classification of symmetrizable Kac-Moody superalgebras of type (Aff) was carried out by Johan van de Leur (see ~\cite{vdL} or \cite{vdLbook}). The classification of non-symmetrizable affine algebras was done by Crystal Hoyt (see ~\cite{Sint}, or ~\cite{H}, and ~\cite{H-V}). This topic will be further discussed in details in Chapter 3 of this thesis. 

The full classification of (Ind) type was conducted by Crystal Hoyt, who showed that the only non-anisotropic (Ind) type algebras are $Q^{\pm}(m,n,q)$
(see ~\cite{H}  and ~\cite{Sint}).

\subsection{The principal Lie algebra $\mathfrak{g}_{pr}$} 
Given a Kac-Moody superalgebra $\mathfrak{g},$ we introduce its so-called \textit{principal Lie algebra} $\mathfrak{g}_{pr}\subset{\mathfrak{g}}_{\overline{0}},$ which serves as the best approximation to the "largest almost contragredient Lie superalgebra" within $\mathfrak{g}_{\overline{0}}.$ 

\subsubsection{}
We say that a root $\alpha$ is \textit{principal,}  if there exists a root base $\Sigma',$ obtained from $\Sigma$  by a chain of isotropic reflexions, such that $\alpha\in\Sigma'.$ The set of principal roots is denoted by $\Sigma_{pr}.$ By ~\cite{H}, $\Sigma_{pr}$  is finite.
We set $\Sigma_{pr}=\left\{ \alpha_{i}\right\} _{i\in J}$ and denote by $B_{pr}$  the matrix $\left\langle \alpha_{i}^{\vee},\alpha_{j}\right\rangle .$ We introduce
$$\pi:=\left\{ \alpha\in\Sigma_{pr}\,|\,\alpha\in\Delta_{\overline{0}}\right\} \cup\left\{ 2\alpha\,|\alpha\in\Sigma_{pr}\cap\Delta_{\overline{1}}\right\}.$$ Note that $\pi\subset\Delta_{\overline{0}}.$ Set $\pi=\left\{ \alpha_{i}'\right\} _{i\in I}$  and denote by $B_\pi$  the matrix $\left\langle \left(\alpha_{i}'\right)^{\vee},\alpha'_{j}\right\rangle .$ Notice that $B_\pi=A$ if $a_{ii}\neq0$ for all $i.$

We define $\fg_{pr}$ as the subalgebra generated by $\mathfrak{g}_{\pm\alpha}$, with $\alpha\in\pi$. 

\subsubsection{} \label{surjective_homomorphism}
We have a surjective homomorphism $\psi:\left[\mathfrak{g}\left(B_{\pi}\right),\mathfrak{g}\left(B_{\pi}\right)\right]\rightarrow\mathfrak{g}_{pr},$ which maps the intersection of the Cartan subalgebra of $\mathfrak{g}(B_\pi)$  with $\left[\mathfrak{g}\left(B_{\pi}\right),\mathfrak{g}\left(B_{\pi}\right)\right]$  to $\mathfrak{h}$ and Chevalley generators of the weights $\pm\alpha$ to elements in $\mathfrak{g}_{\pm\alpha}.$ By ~\cite{Sint}, Lemma 3.7 (see ~\cite{Shay} 3.5.1 and 3.5.4 for details), if $B_\pi$  is symmetrizable, then $\fc:=\ker\psi$ lies in the centre of $\mathfrak{g}\left(B_{\pi}\right)$, so $\psi$  induces an isomorphism between  $\left[\mathfrak{g}\left(B_{\pi}\right),\mathfrak{g}\left(B_{\pi}\right)\right]/\mathfrak{c}$
  and $\fg_{pr}$.
\subsubsection{}
\begin{defn}{}
We say that $\mathfrak{g}(A,\tau)$ is\textit{ principally symmetrizable} if $B_{\pi}$  is symmetrizable.
\end{defn}

By~\cite{H},
an indecomposable Kac-Moody superalgebra $\fg(A,\tau)$ is not principally symmetrizable if and only if
$A$ is not a symmetrizable matrix and
$a_{ii}\not=0$ for all $i$.

\subsubsection{}
\begin{prop}{properties of surjective homomorphism}
  Let $\mathfrak{g}(A,\tau)$ be a principally symmetrizable Kac-Moody superalgebra.   
\begin{enumerate}
\item Let $B_1,\ldots, B_k$ be indecomposable blocks of
the matrix $B_{\pi}$ and $I_j:=\psi\bigl([\fg(B_j),\fg(B_j)]\bigr)$ 
for $j=1,\ldots,k$. Then 
$$\mathfrak{g}_{pr}=\sum\limits_{i=1}^k I_i,\ \ (I_i\cap I_j)\subset \mathfrak{c}(\mathfrak{g}_{pr})
\ \ \text{ for all }\ 1\leq i\not=j\leq k$$
and each $I_i$ is  an ideals in $\fg_{pr}+\fh$.
\item  $\psi^{-1}(\mathfrak{c}(\mathfrak{g}_{pr}))\subset \fc(\fg(B_{\pi}))$.
\end{enumerate}
\end{prop}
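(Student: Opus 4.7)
The plan is to exploit the product decomposition of $\fg(B_\pi)$ induced by the block decomposition of $B_\pi$, together with the fact established in 2.5.1 that $\ker \psi$ lies in the centre of $\fg(B_\pi)$ under the principally symmetrizable hypothesis.

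For (i), the earlier remark that $\fg(A_1\oplus A_2)=\fg(A_1)\times \fg(A_2)$ iterates to $\fg(B_\pi)=\prod_{j=1}^k \fg(B_j)$, whence $[\fg(B_\pi),\fg(B_\pi)]=\prod_j [\fg(B_j),\fg(B_j)]$ and, applying the surjection $\psi$, $\fg_{pr}=\sum_j I_j$. Because the factors of $\fg(B_\pi)$ commute, $[I_i,I_j]=0$ for $i\neq j$; so any $z\in I_i\cap I_j$ with $i\neq j$ commutes with every $I_l$ (membership in $I_i$ handles $l\neq i$, membership in $I_j$ handles $l=i$), which puts $z\in\fc(\fg_{pr})$. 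Each $I_j$ is $\fh$-stable because it is generated by images of Chevalley generators, which are $\fh$-weight vectors in $\fg$; together with $[I_l,I_j]\subset I_j$ for every $l$, this shows $I_j$ is an ideal in $\fg_{pr}+\fh$.

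For (ii), I first reduce to a single block. Since $\fc(\fg(B_\pi))=\prod_j \fc(\fg(B_j))$, the inclusion $\ker\psi\subset\fc(\fg(B_\pi))$ splits as $\ker\psi=\prod_j \fc_j$ with $\fc_j\subset\fc(\fg(B_j))$, and $\psi_j:=\psi|_{[\fg(B_j),\fg(B_j)]}$ is a surjection onto $I_j$ with central kernel. Writing $x=\sum_j x_j$ with $x_j\in[\fg(B_j),\fg(B_j)]$, the vanishing $[I_i,I_j]=0$ yields $\psi(x)\in\fc(\fg_{pr})$ iff $\psi_j(x_j)\in\fc(I_j)$ for every $j$, and symmetrically $x\in\fc(\fg(B_\pi))$ iff $x_j\in\fc(\fg(B_j))$ for every $j$. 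Thus (ii) reduces to the block-level statement: if $x\in[\fg(B_j),\fg(B_j)]$ satisfies $[x,[\fg(B_j),\fg(B_j)]]\subset\fc_j$, then $x\in\fc(\fg(B_j))$.

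Here a weight argument closes the proof. Since the Cartan of $\fg(B_j)$ is finite-dimensional and the simple roots are linearly independent, $\fc(\fg(B_j))$ lies in the Cartan, and so $\fc_j\cap\fg(B_j)_\nu=0$ for every $\nu\neq 0$. Decomposing $x=\sum_\mu x_\mu$ by Cartan weights, the requirements $[x,e_i],[x,f_i]\in\fc_j$ force each component $[x_\mu,e_i]$, $[x_\mu,f_i]$ to vanish unless the resulting weight is zero. If $\mu\neq 0$ is not of the form $\pm\alpha_i$, then $x_\mu$ commutes with every Chevalley generator of $\fg(B_j)$, hence is central and has weight zero, a contradiction. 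If $\mu=\alpha_i$, then $x_\mu=c\, e_i$ and $c[e_i,e_k]=0$ for all $k\neq i$; since $B_j$ is symmetrizable and indecomposable some $b_{ik}\neq 0$ and therefore $[e_i,e_k]\neq 0$, so $c=0$ (the $1\times 1$ case is $\fsl_2$ with zero centre, where the claim is trivial). The case $\mu=-\alpha_i$ is symmetric, so $x\in\fh\cap\fg(B_j)$, and then $[x,e_i]\in\fg(B_j)_{\alpha_i}\cap\fc_j=0$ for every $i$ gives $x\in\fc(\fg(B_j))$. The main obstacle is precisely this weight analysis: one must rule out non-central weight vectors whose brackets with the whole derived algebra land inside $\ker\psi$, and this rests on $\ker\psi$ being contained in the Cartan, a consequence of principal symmetrizability via 2.5.1.
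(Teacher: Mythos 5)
Your argument is correct and, once unpacked, follows essentially the same route as the paper's (very terse) proof: part (i) rests on the product decomposition $\fg(B_\pi)=\prod_j\fg(B_j)$ and the vanishing of brackets between distinct factors, and part (ii) is ultimately a weight argument --- forcing the relevant element into the Cartan subalgebra and then killing all pairings $\langle\,\cdot\,,\alpha_i\rangle$ --- combined with the identity $\fg^{[\fg,\fg]}=\fc(\fg)$ of (\ref{gggcg}); the paper asserts in one line that $\psi^{-1}(\fc(\fg_{pr}))$ lies in the Cartan subalgebra, whereas you actually prove it, block by block, which is a worthwhile addition. One assertion in your reduction is false as stated: $\ker\psi$ need \emph{not} split as $\prod_j\fc_j$ with $\fc_j\subset\fc(\fg(B_j))$. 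For $\fg=\fsl(m|n)^{(1)}$ (see the example in 3.5.3) one has $\ker\psi=\mathbb{C}(K_1-aK_2)$, a diagonal line meeting each factor trivially, so the product of the intersections is zero while the kernel is not. Fortunately this costs you nothing: the only fact your block-level argument uses is that $[x_j,y_j]\in\ker\psi\cap\fg(B_j)\subset\fc(\fg(B_j))$, which sits inside the Cartan subalgebra of $\fg(B_j)$ and hence in weight zero; and the inclusion $\ker\psi\cap\fg(B_j)\subset\fc(\fg(B_j))$ follows directly from $\ker\psi\subset\fc(\fg(B_\pi))=\prod_j\fc(\fg(B_j))$ without any splitting. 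So simply redefine $\fc_j:=\ker\psi\cap\fg(B_j)$ (or replace $\fc_j$ by $\fc(\fg(B_j))$ throughout your weight analysis, which is all it needs) and the proof stands.
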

\begin{proof}
     (i) follows from the fact that $$\ker\psi\subset\mathfrak{c}\left(\mathfrak{g}\left(B_{\pi}\right)\right).$$
     For (ii) recall that
 $\psi^{-1}(\fc(\fg_{pr}))$  lies in the intersection of the Cartan subalgebra of 
 $\fg(B_{\pi})$ with $[\fg(B_{\pi}),\fg(B_{\pi})]$. Moreover,
 $\psi^{-1}(\fc(\fg_{pr}))$ commutes with $[\fg(B_{\pi}),\fg(B_{\pi})]$.
Now the assertion follows from~(\ref{gggcg}).
\end{proof}
\subsubsection{}\label{principal_anisotropic}
Recall that $\Delta^{an}$  is the set of anisotropic real roots. 
By ~\cite{GHS}, 4.3.12, the Weyl group $W$ is the Coxeter group generated by the reflections $s_\alpha$  with $\alpha\in\Sigma_{pr}.$ Moreover, any anisotropic root is $W$  conjugated to a principal root. That is, $W\Sigma_{pr}=\Delta^{an}.$ 
This leads to the following corollary.
\subsubsection{}
\begin{cor}{principal_anisotropic}
    One has $\mathfrak{g}_\alpha\subset\mathfrak{g}_{pr}$  for any even anisotropic root $\alpha$  and $\mathfrak{g}_{2\alpha}\subset\mathfrak{g}_{pr}$  for any odd anisotropic root $\alpha.$ 
\end{cor}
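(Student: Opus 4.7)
The plan is to reduce the statement to the case of principal roots using the identification $W\Sigma_{pr}=\Delta^{\an}$ recorded in~\ref{principal_anisotropic}, and then to lift the Weyl group action to automorphisms of $\fg$ that preserve the subalgebra $\fg_{pr}$.

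First I would observe that the Weyl group $W$ preserves the parity of roots: indeed, $W$ is generated by reflections $s_\gamma$ with $\gamma$ even anisotropic, and such reflections send $\Delta_{\ol{0}}$ to $\Delta_{\ol{0}}$ and $\Delta_{\ol{1}}$ to $\Delta_{\ol{1}}$. Given an even anisotropic root $\alpha$, write $\alpha=w\beta$ with $\beta\in\Sigma_{pr}$, and note that $\beta$ is then forced to be even, so $\beta\in\pi$ and $\fg_\beta\subset \fg_{pr}$ by definition. If $\alpha$ is odd anisotropic, the same argument gives $\beta\in\Sigma_{pr}\cap\Delta_{\ol{1}}$, hence $2\beta\in\pi$ and $\fg_{2\beta}\subset\fg_{pr}$.

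The second step is to lift each generator of $W$ to an automorphism of $\fg$ that preserves $\fg_{pr}$. For any $\gamma\in\pi$ the root $\gamma$ is even and anisotropic, so by~(\ref{eq:fsi}) together with the characterization of ad-nilpotency in the earlier Claim, the elements $e_\gamma,f_\gamma\in\fg_{pr}$ are ad-locally nilpotent, and the usual formula
$$\phi_\gamma:=\exp(\ad f_\gamma)\exp(-\ad e_\gamma)\exp(\ad f_\gamma)$$
defines an automorphism of $\fg$ with $\phi_\gamma|_\fh=s_\gamma$. Crucially, since $\phi_\gamma$ is built from elements of $\fg_{pr}$, it preserves $\fg_{pr}$. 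Because $s_\alpha=s_{2\alpha}$ for odd anisotropic $\alpha$, the reflections attached to elements of $\pi$ generate all of $W$, so any $w\in W$ can be lifted to an automorphism $\Phi$ of $\fg$ that stabilizes $\fg_{pr}$ and satisfies $\Phi|_\fh=w$.

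Combining the two steps finishes both cases: for $\alpha$ even anisotropic with $\alpha=w\beta$, $\beta\in\pi$, one gets $\fg_\alpha=\Phi(\fg_\beta)\subset\Phi(\fg_{pr})=\fg_{pr}$; for $\alpha$ odd anisotropic with $\alpha=w\beta$, $2\beta\in\pi$, one gets $\fg_{2\alpha}=\Phi(\fg_{2\beta})\subset\fg_{pr}$. The main technical point to be careful about is the verification that the lifted automorphisms really do stabilize $\fg_{pr}$, which rests on the observation that the $e_\gamma,f_\gamma$ used to construct them lie in $\fg_{pr}$ itself; everything else is an immediate consequence of the identity $W\Sigma_{pr}=\Delta^{\an}$.
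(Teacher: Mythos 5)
Your proposal is correct and follows essentially the same route as the paper: both use the identity $W\Sigma_{pr}=\Delta^{\an}$ and lift the generating reflections to automorphisms $\exp(\ad f)\exp(-\ad e)\exp(\ad f)$ built from root vectors lying in $\fg_{pr}$ (the paper phrases the composition of these lifts as an induction on the length of $w$, which is the same argument). The parity and stabilization observations you flag are exactly the points the paper relies on as well.
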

\begin{proof}
    Let $\alpha\in\Delta^{an}.$ By above argument, there exists $w\in W$  and $\beta\in\Sigma_{pr}$ such that $\alpha=w\beta.$  We prove the claim by induction on the length of $w.$ 
    If the length is $0,$  then $\alpha\in\Sigma_{pr},$  and thus result follows by the definition of $\mathfrak{g}_{pr}.$ 
    Assume the length of $w$  is $n.$ Then there exists an even root $\alpha_i\in\Sigma_{pr}$ and $w'\in W$ such that $w=s_{\alpha_i}w',$  where $w'$  has shorter length.  Define $\beta'=w'\beta.$ 
    If $\beta$  is even, then $\alpha$  is even, and by induction hypothesis, we have $$\mathfrak{g}_{\beta'}\subset\mathfrak{g}_{pr}.$$
    Since $\alpha_i$  is an even principal root, we also have $\mathfrak{g_{\pm\alpha_{i}}}\subset\mathfrak{g}_{pr}.$ 
    Since $\alpha=s_{\alpha_{i}}\beta',$ we can use ~\cite{Kbook} Lemma 3.8 to obtain: 
    $$\mathfrak{g}_{\alpha}=\exp f_{i}\left(\exp\left(-e_{i}\right)\right)\exp\left(f_{i}\right)\mathfrak{g}_{\beta'},$$ where $f_i,e_i\in\mathfrak{g_{\pm\alpha_{i}}}$.  Therefore, $\mathfrak{g}_{\alpha}\subseteq\mathfrak{g}_{pr}$  as wished.  
    
    If $\beta$  is odd, then $\alpha$  is odd, and by induction hypothesis, $\mathfrak{g}_{2\beta'}\subset\mathfrak{g}_{pr}.$
    Since $2\alpha=s_{\alpha_i}(2\beta'),$ again by  ~\cite{Kbook} Lemma 3.8, we obtain:
$$\mathfrak{g}_{2\alpha}=\exp f_{i}\left(\exp\left(-e_{i}\right)\right)\exp\left(f_{i}\right)\mathfrak{g}_{2\beta'},$$
where $f_i,e_i\in\mathfrak{g}_{\alpha_i}.$ Therefore we have $\mathfrak{g}_{2\alpha}\subset\mathfrak{g}_{pr},$ as desired.  
\end{proof}
\subsubsection{}
From the corollary above we get that \begin{equation} \label{root_space_in_pr}
    \mathfrak{g}_{\alpha} \subset \mathfrak{g}_{pr} \quad \text{for all } \alpha \in \Delta_{\overline{0}} \backslash \Delta^{im},
\end{equation}
and $\mathfrak{g}_{pr}$  is a subalgebra generated by the root spaces $\mathfrak{g}_\alpha$
 for all $\alpha\in\Delta_0\backslash\Delta^{im}.$ 
\subsubsection{}
\begin{prop}{Cartan_intersect_pr}

\begin{equation} \label{lcl}
    \begin{aligned}
        &\text{(i)} \quad \left(\mathfrak{h} \cap \mathfrak{g}_{pr}\right) = \sum_{\alpha\in\pi} \mathbb{C}\alpha^{\vee} = \sum_{\alpha\in\Delta^{an}} \mathbb{C}\alpha^{\vee} \\[8pt]
        &\text{(ii)} \quad \left(\mathfrak{h} \cap \mathfrak{g}_{pr} \right) + \mathfrak{c} \left(\mathfrak{g}_{\overline{0}}\right) =
        \begin{cases}
            \mathfrak{h} & \text{in type } \text{(Fin)} \\
            \mathfrak{h} \cap \left[\mathfrak{g}, \mathfrak{g} \right] & \text{in types } \text{(Aff), (Ind)}
        \end{cases}
    \end{aligned}
\end{equation}        
\end{prop}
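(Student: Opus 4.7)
For (i), the plan is to analyze the weight decomposition of $\mathfrak{g}_{pr}$. Since $\mathfrak{g}_{pr}$ is generated by the root spaces $\mathfrak{g}_{\pm\alpha}$ with $\alpha\in\pi$ and is stable under $\ad(\mathfrak{h})$, it inherits a weight decomposition whose zero-weight component is exactly $\mathfrak{h}\cap\mathfrak{g}_{pr}$. Every element of $\pi$ is an anisotropic even root, so $[\mathfrak{g}_\alpha,\mathfrak{g}_{-\alpha}]=\mathbb{C}\alpha^{\vee}$, and any iterated bracket of generators having weight zero already lies in $\sum_{\alpha\in\pi}\mathbb{C}\alpha^{\vee}$ by the standard argument for the Cartan part of a contragredient Lie (super)algebra. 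This yields the first equality. For the second equality, the inclusion $\sum_{\alpha\in\pi}\mathbb{C}\alpha^{\vee}\subseteq\sum_{\alpha\in\Delta^{\an}}\mathbb{C}\alpha^{\vee}$ is immediate since $\pi\subseteq\Delta^{\an}$; for the reverse inclusion I would apply \Cor{principal_anisotropic}, which gives $\mathfrak{g}_{\pm\alpha}\subseteq\mathfrak{g}_{pr}$ for even anisotropic $\alpha$ and $\mathfrak{g}_{\pm 2\alpha}\subseteq\mathfrak{g}_{pr}$ for odd anisotropic $\alpha$, so that $\alpha^{\vee}$ (respectively $(2\alpha)^{\vee}=\alpha^{\vee}/2$) belongs to $\mathfrak{h}\cap\mathfrak{g}_{pr}$.

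The plan for (ii) begins with the structural identity $\mathfrak{g}_{pr}=[\mathfrak{g}_{\overline{0}},\mathfrak{g}_{\overline{0}}]$. The inclusion $\mathfrak{g}_{pr}\subseteq[\mathfrak{g}_{\overline{0}},\mathfrak{g}_{\overline{0}}]$ is immediate because each generator $e_{\pm\alpha}$, with $\alpha\in\pi$ anisotropic even, satisfies $e_{\pm\alpha}=\alpha(h)^{-1}[h,e_{\pm\alpha}]\in[\mathfrak{g}_{\overline{0}},\mathfrak{g}_{\overline{0}}]$ for any $h\in\mathfrak{h}$ with $\alpha(h)\neq 0$. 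For the reverse inclusion I would show that every even root space $\mathfrak{g}_\beta$ lies in $\mathfrak{g}_{pr}$: real even roots are anisotropic so \Cor{principal_anisotropic} applies, while even imaginary root spaces are generated by iterated brackets of real root vectors. Granted this identity, we get $\mathfrak{h}\cap\mathfrak{g}_{pr}=\mathfrak{h}\cap[\mathfrak{g}_{\overline{0}},\mathfrak{g}_{\overline{0}}]$. In the (Fin) case, the classification ensures that $\mathfrak{g}_{\overline{0}}$ is reductive, hence $\mathfrak{g}_{\overline{0}}=\mathfrak{c}(\mathfrak{g}_{\overline{0}})\oplus[\mathfrak{g}_{\overline{0}},\mathfrak{g}_{\overline{0}}]$; since $\mathfrak{h}$ is its own centralizer in $\mathfrak{g}_{\overline{0}}$ we have $\mathfrak{c}(\mathfrak{g}_{\overline{0}})\subseteq\mathfrak{h}$, and intersecting the decomposition with $\mathfrak{h}$ yields $\mathfrak{h}=\mathfrak{c}(\mathfrak{g}_{\overline{0}})+(\mathfrak{h}\cap\mathfrak{g}_{pr})$.

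The (Aff)/(Ind) case is where I expect the main obstacle. Here $\mathfrak{h}\cap[\mathfrak{g},\mathfrak{g}]=\sum_i\mathbb{C}\alpha_i^{\vee}$, and by (i) the anisotropic simple coroots already sit in $\mathfrak{h}\cap\mathfrak{g}_{pr}$; the sensitive point is that for every isotropic simple root $\alpha_i$, the coroot $\alpha_i^{\vee}=[e_i,f_i]$ lies in $[\mathfrak{g}_{\overline{1}},\mathfrak{g}_{\overline{1}}]$ but need not belong to $[\mathfrak{g}_{\overline{0}},\mathfrak{g}_{\overline{0}}]$. My plan is to produce, for every such $\alpha_i$, an element $c_i\in\mathfrak{c}(\mathfrak{g}_{\overline{0}})$ with $c_i-\alpha_i^{\vee}\in\mathfrak{h}\cap\mathfrak{g}_{pr}$; equivalently, the image of $\alpha_i^{\vee}$ under $\mathfrak{g}_{\overline{0}}\to\mathfrak{g}_{\overline{0}}/[\mathfrak{g}_{\overline{0}},\mathfrak{g}_{\overline{0}}]$ should lie in the image of $\mathfrak{c}(\mathfrak{g}_{\overline{0}})$. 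In the symmetrizable affine setting this is supplied by the canonical central element $K=\sum_j a_j^{\vee}\alpha_j^{\vee}$, which by inspection of the affine Dynkin diagrams has nonzero coefficient on every isotropic simple coroot, so suitable linear combinations of $K$ with anisotropic coroots give the $c_i$; parallel explicit constructions should handle the non-symmetrizable affine families $S(2|1;b)$ and $\mathfrak{q}_n^{(2)}$ and the indefinite family $Q^{\pm}(m,n,q)$. Verifying this lifting---uniformly or case by case---across all non-finite types is the hardest single step. The reverse inclusion $(\mathfrak{h}\cap\mathfrak{g}_{pr})+\mathfrak{c}(\mathfrak{g}_{\overline{0}})\subseteq\mathfrak{h}\cap[\mathfrak{g},\mathfrak{g}]$ is more routine: $\mathfrak{c}(\mathfrak{g}_{\overline{0}})\subseteq\mathfrak{h}$ as before, and any central direction transverse to $\sum_i\mathbb{C}\alpha_i^{\vee}$ would act nontrivially on some simple root vector, contradicting centrality.
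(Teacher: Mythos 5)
Part (i) of your plan is essentially the paper's own argument: the first equality is exactly what the paper extracts from \cite{Sint}, Lemma 3.6 (the nontrivial point being that $[\fg_\alpha,\fg_{-\beta}]=0$ for distinct $\alpha,\beta\in\pi$, which is not automatic since $\alpha-\beta$ could a priori be a root --- your ``standard argument'' is hiding precisely this), and the second equality uses \Cor{principal_anisotropic} just as the paper does.

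Part (ii) has a genuine gap, in two places. First, the identity $\mathfrak{g}_{pr}=[\mathfrak{g}_{\overline{0}},\mathfrak{g}_{\overline{0}}]$ you take as the starting point is false in general. It rests on your claim that every even imaginary root space is obtained from iterated brackets of real (hence anisotropic, hence principal) root vectors, but the paper explicitly warns that $\Delta_{pr}\neq\Delta_{\overline{0}}$ can happen: for $\mathfrak{osp}(2|2)^{(2)}$ one has $\mathfrak{g}_{\overline{0}}=\mathfrak{sl}_2^{(1)}\oplus N$ with $N=\bigoplus_i\fg_{(2i+1)\delta}\neq 0$ and $N\cap\mathfrak{g}_{pr}=0$, yet $N=[\fh,N]\subset[\mathfrak{g}_{\overline{0}},\mathfrak{g}_{\overline{0}}]$. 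Indeed \Thm{prop1} gives $[\mathfrak{g}_{\overline{0}},\mathfrak{g}_{\overline{0}}]=\mathfrak{g}_{pr}\oplus N$, and only the \emph{double} commutator recovers $\mathfrak{g}_{pr}$. The equality of the intersections with $\fh$ survives (because $N$ has no zero-weight component), but not by the route you give.

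Second, and more seriously, the step you yourself flag as hardest --- lifting each isotropic simple coroot into $(\fh\cap\mathfrak{g}_{pr})+\mathfrak{c}(\mathfrak{g}_{\overline{0}})$ --- is not actually carried out, and the mechanism you propose cannot suffice as stated: the single element $K$ accounts for at most one dimension of $\bigl(\fh\cap[\fg,\fg]\bigr)/\bigl(\fh\cap\mathfrak{g}_{pr}\bigr)$, whereas this quotient can be $2$-dimensional (e.g.\ $\mathfrak{sl}(m|n)^{(1)}$, where one also needs the extra central element coming from $z\in\mathfrak{c}(\mathfrak{sl}(m|n)_{\overline{0}})$), and for $Q^{\pm}(m,n,q)$ the Cartan matrix is invertible so there is no canonical central element $K$ at all. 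Your closing argument for the reverse inclusion is also unsound: an element of $\mathfrak{c}(\mathfrak{g}_{\overline{0}})$ need only commute with \emph{even} root vectors, so ``acts nontrivially on some simple root vector'' is not a contradiction when that simple root is odd. The paper sidesteps all of this: in the symmetrizable case it identifies $(\fh\cap\mathfrak{g}_{pr})+\mathfrak{c}(\mathfrak{g}_{\overline{0}})$ and $\fh\cap[\fg,\fg]$ as orthogonal complements (for the invariant form) of $\mathfrak{c}(\mathfrak{g}_{pr})$ and $\mathfrak{c}(\fg)$ respectively, reducing the claim to $\mathfrak{c}(\mathfrak{g}_{pr})=\mathfrak{c}(\fg)$ (with $\mathfrak{psl}(n|n)^{(1)}$ treated separately), and it disposes of the few non-symmetrizable families from their explicit realizations. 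You would need either to adopt that form-theoretic argument or to supply the full center of $\mathfrak{g}_{\overline{0}}$ case by case.
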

\begin{proof}
    By ~\cite{Sint}, Lemma 3.6, $[\mathfrak{g}_\alpha,\mathfrak{g}_{-\beta}]=\delta_{\alpha\beta}\alpha^{\vee}$  for $\alpha,\beta\in\pi.$ This implies the formula  $\left(\mathfrak{h} \cap \mathfrak{g}_{pr}\right) = \sum_{\alpha\in\pi}\mathbb{C}\alpha^{\vee}.$ By ~(\ref{root_space_in_pr}), $$\sum_{\alpha\in\Delta^{an}} \mathbb{C}\alpha^{\vee}\subset(\mathfrak{h}\cap\mathfrak{g}_{pr}).$$ 
    Clearly $\sum_{\alpha\in\pi}\mathbb{C}\alpha^{\vee}\subset\sum_{\alpha\in\Delta^{an}}\mathbb{C}\alpha^{\vee},$ completing $(i).$ 

    For $(ii)$ note that in the case when $a_{ii}\neq0$ for all $i,$ we have $\Sigma=\Sigma_{pr},$ so $$\mathfrak{h}\cap\mathfrak{g}_{pr}=\mathfrak{h}\cap\left[\mathfrak{g},\mathfrak{g}\right].$$ 
    Consider the case when $A$  is symmetrizable. Let $(-,-)$  be the restriction of a non-degenerate invariant bilinear form on $\mathfrak{h}.$ Then $\left(\mathfrak{h}\cap\mathfrak{g}_{pr}\right)+\mathfrak{c}\left(\mathfrak{g}_{\overline{0}}\right)$ is the orthogonal compliment to $$\mathfrak{h}\cap\mathfrak{g}_{pr}\cap\mathfrak{c}\left(\mathfrak{g}_{\overline{0}}\right)=\mathfrak{c}\left(\mathfrak{g}_{pr}\right),$$
and $\mathfrak{h}\cap[\mathfrak{g},\mathfrak{g}]$ is the orthogonal compliment to $\mathfrak{c}(\mathfrak{g})$  in case  $\mathfrak{g}\neq\mathfrak{psl}^{\left(1\right)}\left(n|n\right)$ (see ~\ref{psl_affine} for definition), and $\mathbb{C}K\subset\mathfrak{c}\left(\mathfrak{g}\right)$  otherwise (see Chapter 3 for definition of $K$).   

In (Fin) case, $\mathfrak{g}(B_\pi)$ is a finite-dimensional Kac-Moody algebra, so $\mathfrak{g}_{pr},$ which is a holomorphic image of $\mathfrak{g}(B_\pi),$ is also a finite dimensional Kac-Moody algebra. Thus, $\mathfrak{c}(\mathfrak{g}_{pr})=0,$  and therefore $\left(\mathfrak{h}\cap\mathfrak{g}_{pr}\right)+\mathfrak{c}\left(\mathfrak{g}_{\overline{0}}\right)=\mathfrak{h}.$ 

Consider the symmetrizable (Aff) case. If $\mathfrak{g}\neq\mathfrak{psl}^{\left(1\right)}\left(n|n\right),$ then $\mathfrak{c}(\mathfrak{g})$ is one dimensional, spanned by the central extension $K.$ Now, $2\delta$  is an even imaginary root in which $\mathfrak{g}_{\pm2\delta}\subseteq\mathfrak{g}_{pr}.$  Thus, $[\mathfrak{g}_{2\delta},\mathfrak{g}_{-2\delta}]=\mathbb{C}K\subseteq\mathfrak{g}_{pr}.$ It follows from \Prop{properties of surjective homomorphism} that $\mathfrak{c}\left(\mathfrak{g}_{pr}\right)\subseteq\mathfrak{c}\left(\mathfrak{g}\right).$    Hence, $\mathfrak{c}\left(\mathfrak{g}_{pr}\right)=\mathfrak{c}\left(\mathfrak{g}\right).$ If $\mathfrak{g}=\mathfrak{psl}^{\left(1\right)}\left(n|n\right),$  then $\dim\mathfrak{c}\left(\mathfrak{g}\right)=2,$  and $\mathfrak{c}\left(\mathfrak{g}_{pr}\right)$ is a proper subspace of $\mathfrak{c}(\mathfrak{g}).$ Hence, $\mathfrak{c}\left(\mathfrak{g}_{pr}\right)=\mathbb{C}K.$
This implies $\left(\mathfrak{h}\cap\mathfrak{g}_{pr}\right)+\mathfrak{c}\left(\mathfrak{g}_{\overline{0}}\right)=\mathfrak{h} \cap \left[\mathfrak{g}, \mathfrak{g} \right]$ as required.

The remaining cases are non-symmetrizable Kac-Moody superalgebras with $a_{ii}=0$ for some $i.$ 
For the non (Aff) case,  $Q(m,n,q)^{\pm},$  note that both $A$  and $B_\pi$  are invertible $3\times3$ matrices. In particular, $\mathfrak{c}(\mathfrak{g}(B_\pi))=0.$ Thus, $\mathfrak{g}_{pr}\simeq\mathfrak{g}(B_{\pi})$ and $\dim\mathfrak{h}=3=\dim(\mathfrak{h}\cap\mathfrak{g}_{pr}).$ Thus, $\mathfrak{h}\cap\mathfrak{g}_{pr}=\mathfrak{h}=\mathfrak{h}\cap\left[\mathfrak{g},\mathfrak{g}\right].$  

Finally,  for (Aff) non-symmetrizable Kac-Moody superalgebras, i.e. $\mathfrak{q}_n^{(2)}$ and $S(2
|1;b),$ the desired formula follows from the description presented in ~\ref{queer_twisted} and ~\ref{S(2|1;b)}.       
\end{proof}

\subsubsection{}
By ~\cite{H}, the matrix $B_\pi$ is symmetrizable for every Kac-Moody superalgebra $\mathfrak{g}=\mathfrak{g}(A,\tau)$  such that $a_{ii}=0$  for some $i.$ Moreover, $B_{pr}$ and $B_\pi$ are generalized Cartan matrices.  Since all Cartan matrices with non-zero diagonal entries of types (Fin) and (Aff) are symmetrizable, the matrix $B_{\pi}$  is symmetrizable for every Kac-Moody superalgebra of type (Fin) or (Aff).  

\subsubsection{}
From case-by-case description of $\mathfrak{g}_{pr}$  given in ~\cite{Sint} Section 9, it follows that the matrix $B_\pi$  is indecomposable if A is of type (Ind). Moreover, for each indecomposable block $B_s$ of the matrix $B_\pi,$  $B_\pi$  is a Cartan matrix of the same type (i.e., (Fin), (Aff) or (Ind)) as $A,$  and $\text{corank}B_s=\text{corank}A.$  

\subsubsection{} 
Consider the case when $\mathfrak{g}$  is symmetrizable. Then, $\mathfrak{g}$ admits an even and non-degenerate bilinear form and $\mathfrak{g}=\mathfrak{g}(A,\tau),$ where $A$ is symmetric. 

Let $\iota:\mathfrak{h}\rightarrow\mathfrak{h^{*}}$ be the isomorphism induced by $(-,-)$. By the construction of the bilinear form, described in ~\cite{Kbook} Chapter 2, one has $\left\langle \alpha_i,h\right\rangle =\left(\alpha_{i}^{\vee},h\right)$  for any $h\in\mathfrak{h}$ and $\alpha_i\in\Sigma.$ It is easy to check, using the formulas described in ~(\ref{eq:isoralphaSigma}) and ~(\ref{eq:isoralphah}), that  $\iota(\alpha^{\vee})=\alpha$  for any real root. In particular, $B_\pi=\left(\left(\alpha,\beta\right)\right)_{\alpha,\beta\in\pi}.$ 

Let $\left(-,-\right)_{\pi}$ be a non-degenerate invariant bilinear form on $\mathfrak{g}(B_\pi)$ constructed as in ~\cite{Kbook}, Section 2. Since this form is invariant, the centre of $\mathfrak{g}(B_\pi)$ is orthogonal to $\left[\mathfrak{g}\left(B_{\pi}\right),\mathfrak{g}\left(B_{\pi}\right)\right].$ So $\mathfrak{c}$  lies in the kernel of $(-,-)_\pi$ when restricted to $\left[\mathfrak{g}\left(B_{\pi}\right),\mathfrak{g}\left(B_{\pi}\right)\right].$ Hence, $(-,-)_\pi$ induces an invariant bilinear form on $\mathfrak{g}_{pr}$ which we denote by $(-,-)'.$ 

\subsubsection{}
\begin{lem}{Coincide_bilinear}
    If $\mathfrak{g}$ is symmetrizable, then the restriction of $\left(-,-\right)$  to $\mathfrak{g}_{pr}$  coincides with $(-,-)'.$ 
\end{lem}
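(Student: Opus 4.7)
The plan is to verify that both invariant bilinear forms agree on the Cartan piece $\fh\cap\fg_{pr}=\sum_{\alpha\in\pi}\mathbb{C}\alpha^{\vee}$, and then to propagate this equality to every root space of $\fg_{pr}$ using invariance, exploiting the fact that $\fg_{pr}$ is generated by $\{\fg_{\pm\alpha}\}_{\alpha\in\pi}$.

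For the Cartan agreement, I would invoke the setup of the preceding subsubsection: since $\fg$ is symmetrizable we may assume $A$ is symmetric, so that $\iota(\alpha^{\vee})=\alpha$ for every real root. Consequently, $(\alpha^{\vee},\beta^{\vee})=\langle\alpha,\beta^{\vee}\rangle$ for $\alpha,\beta\in\pi$, which is the $(\alpha,\beta)$-entry of the symmetric matrix $B_\pi=((\alpha,\beta))_{\alpha,\beta\in\pi}$. On the other hand, Kac's construction of $(-,-)_\pi$ on $\fg(B_\pi)$ applied to the symmetric matrix $B_\pi$ gives $((\alpha_i')^{\vee},h)_\pi=\langle\alpha_i',h\rangle$ on the Cartan of $\fg(B_\pi)$. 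Pushing forward through $\psi$, which by its definition sends each $(\alpha_i')^{\vee}\in\fg(B_\pi)$ to the corresponding element $(\alpha_i')^{\vee}\in\fh$, yields $((\alpha_i')^{\vee},(\alpha_j')^{\vee})'=(B_\pi)_{ij}$, matching the value under $(-,-)$.

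To extend the equality to all of $\fg_{pr}$, I fix $\alpha\in\pi$ and choose $e_\alpha\in\fg_\alpha$, $f_\alpha\in\fg_{-\alpha}$ with $[e_\alpha,f_\alpha]=\alpha^{\vee}$. Invariance of either bilinear form $F\in\{(-,-),(-,-)'\}$ gives $F(\alpha^{\vee},h)=\alpha(h)\,F(e_\alpha,f_\alpha)$, so $F(e_\alpha,f_\alpha)$ is determined by the restriction of $F$ to $\fh\cap\fg_{pr}$ upon choosing $h$ with $\alpha(h)\neq 0$. Cartan agreement therefore forces $(e_\alpha,f_\alpha)=(e_\alpha,f_\alpha)'$. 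An induction on the height of the positive root, repeatedly applying $F([x,y],z)=F(x,[y,z])$ to reduce a pairing on $\fg_\beta\otimes\fg_{-\beta}$ to pairings among simple root vectors and Cartan elements, then yields agreement on every root space of $\fg_{pr}$.

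The main subtlety, to my mind, is the Cartan step: confirming that the normalization of Kac's form on $\fg(B_\pi)$, when transported via $\psi$, really coincides with the ambient form $(-,-)$ restricted to $\fh\cap\fg_{pr}$. This rests on the symmetry of $B_\pi$ (automatic from the symmetry of $(-,-)$) together with the tautological identification $\psi((\alpha_i')^{\vee})=(\alpha_i')^{\vee}$ built into the definition of $\psi$, with care that the normalization is consistent across the several indecomposable blocks of $B_\pi$ --- this latter point being handled block-by-block, since the Cartan pairing is itself block-diagonal.
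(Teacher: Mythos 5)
Your proof is correct, and the overall engine is the same as the paper's: both forms vanish on pairs of root spaces whose weights do not sum to zero, so everything reduces to the Cartan pairing and the $\left(\fg_{pr}\cap\fn^{+}\right)\times\left(\fg_{pr}\cap\fn^{-}\right)$ pairing, which are handled by a seed computation plus an induction by invariance (your induction on height is the paper's induction on the filtration $\mathcal{F}^{i}\left(\fg_{pr}\cap\fn^{\pm}\right)$ in different clothing). The genuine difference is where the seed is planted, and the order is reversed. You compute the Cartan restriction first and explicitly: both forms restrict to the symmetric matrix $B_\pi$ on $\sum_{\alpha\in\pi}\CC\alpha^{\vee}$, using $\iota(\alpha^{\vee})=\alpha$ on one side and Kac's normalization pushed through $\psi$ on the other, and you then recover $(e_\alpha,f_\alpha)$ from $F(\alpha^{\vee},\alpha^{\vee})=\alpha(\alpha^{\vee})F(e_\alpha,f_\alpha)$ with $\alpha(\alpha^{\vee})\neq 0$ for $\alpha\in\pi$. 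The paper instead seeds at the root vectors: since $[e_\alpha,e_{-\alpha}]=(e_\alpha,e_{-\alpha})\alpha^{\vee}=(e_\alpha,e_{-\alpha})'\alpha^{\vee}$ is one and the same element of $\fg$ and $\alpha^{\vee}\neq 0$, the two coefficients agree with no matrix computation at all, and the Cartan case is deduced last from $\fg_{pr}\cap\fh=\left[\fg_{pr}\cap\fn^{+},\fg_{pr}\cap\fn^{-}\right]\cap\fh$. Both arguments rest on the same normalization fact you rightly single out as the crux --- that $(-,-)_\pi$ is the form attached to the symmetric matrix $B_\pi$ with trivial symmetrizing factors, uniformly over its indecomposable blocks --- the paper's bracket identity just absorbs it more silently. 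The only things you elide are minor: the weight-reason vanishing mentioned above, and the fact that in your height induction the inner bracket may land in $\fh$, at which point your already-established Cartan agreement (the paper's equation for $\left(\fh\cap\fg_{pr},\fg_{pr}\cap\fn^{\pm}\right)$) is what closes the loop.
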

\begin{proof}
    By the definition of $\psi,$ we have $\psi{(\mathfrak{n}^{\pm}(B_\pi)})\subset\mathfrak{n}^{\pm},$ and $\psi$ sends the Cartan subalgebra of $\mathfrak{g}(B)_\pi$ to $\mathfrak{h}\cap\mathfrak{g}_{pr}.$ 
    Therefore
    \begin{equation}
    \left( \mathfrak{h} \cap \mathfrak{g}_{pr}, \mathfrak{g}_{pr} \cap \mathfrak{n}^{\pm} \right) 
    = \left( \mathfrak{h} \cap \mathfrak{g}_{pr}, \mathfrak{g}_{pr} \cap \mathfrak{n}^{\pm} \right)' = 0.
    \label{eq:intersection_bil_0}
\end{equation}
and
\begin{align*}
    \left( \mathfrak{g}_{pr} \cap \mathfrak{n}^{+}, \mathfrak{g}_{pr} \cap \mathfrak{n}^{+} \right) 
    &= \left( \mathfrak{g}_{pr} \cap \mathfrak{n}^{+}, \mathfrak{g}_{pr} \cap \mathfrak{n}^{+} \right)' = 0, \notag \\
    \left( \mathfrak{g}_{pr} \cap \mathfrak{n}^{-}, \mathfrak{g}_{pr} \cap \mathfrak{n}^{-} \right) 
    &= \left( \mathfrak{g}_{pr} \cap \mathfrak{n}^{-}, \mathfrak{g}_{pr} \cap \mathfrak{n}^{-} \right)' = 0.
\end{align*}
It remains to verify:
\begin{enumerate}
    \item the restrictions of $(-,-)$  and of $(-,-)'$  to $\mathfrak{g}_{pr}\cap\mathfrak{n}^{+}\otimes\mathfrak{g}_{pr}\cap\mathfrak{n}^{-}$  are equal;
    \item the restrictions of $(-,-)$  and of $(-,-)'$  to $\left(\mathfrak{g}_{pr}\cap\mathfrak{h}\right)\otimes\left(\mathfrak{g}_{pr}\cap\mathfrak{h}\right)$  are equal. 
\end{enumerate}
For all $\alpha\in\pi,$ we fix $e_{\pm\alpha}\in\mathfrak{g}_{\pm\alpha}.$ 
For $(i),$  recall that $\mathfrak{g}_{pr}\cap\mathfrak{n}^{\pm}$  is generated by $e_{\pm\alpha}$  with $\alpha\in\pi.$ We set $\mathcal{F}^{1}\left(\mathfrak{g}_{pr}\cap\mathfrak{n}^{+}\right)$  to be the span of $e_\alpha$  for $\alpha\in\pi$  and 
$$\mathcal{F}^{i+1}\left(\mathfrak{g}_{pr}\cap\mathfrak{n}^{+}\right):=\mathcal{F}^{i}\left(\mathfrak{g}_{pr}\cap\mathfrak{n}^{+}\right)+\left[\mathcal{F}^{1}\left(\mathfrak{g}_{pr}\cap\mathfrak{n}^{+}\right),\mathcal{F}^{i}\left(\mathfrak{g}_{pr}\cap\mathfrak{n}^{+}\right)\right].$$
One readily sees that $\left\{ \mathcal{F}^{i}\left(\mathfrak{g}_{pr}\cap\mathfrak{n}^{+}\right)\right\} _{i\in\mathbb{\mathbb{Z}}_{>0}}$  is an exhausting increasing filtration of $\mathfrak{g}_{pr}\cap\mathfrak{n}^{+}.$ Similarly we define a filtration $\left\{ \mathcal{F}^{i}\left(\mathfrak{g}_{pr}\cap\mathfrak{n}^{-}\right)\right\} _{i\in\mathbb{\mathbb{Z}}_{>0}}$  for $\mathfrak{g}_{pr}\cap\mathfrak{n}^{-}.$
Note that all terms $\mathcal{F}^{i}\left(\mathfrak{g}_{pr}\cap\mathfrak{n}^{\pm}\right)$  are $\ad\mathfrak{h}$-invariant and 
\begin{equation}
    \left[ \mathcal{F}^{1} \left( \mathfrak{g}_{pr} \cap \mathfrak{n}^{+} \right), 
    \mathcal{F}^{j+1} \left( \mathfrak{g}_{pr} \cap \mathfrak{n}^{-} \right) \right] 
    \subset \mathcal{F}^{j} \left( \mathfrak{g}_{pr} \cap \mathfrak{n}^{-} \right), 
    \quad \text{for } j \geq 1,
    \label{eq:filtration_bracket}
\end{equation}
 since $\left[e_{\alpha},f_{\beta}\right]\in\mathfrak{h}.$ 

Let us verify that the restiction of the two bilinear forms to $\mathcal{F}^{i}\left(\mathfrak{g}_{pr}\cap\mathfrak{n}^{+}\right)\otimes\mathcal{F}^{j}\left(\mathfrak{g}_{pr}\cap\mathfrak{n}^{-}\right)$
are equal. We have 
$$0\neq[e_\alpha,e_{-\alpha}]=(e_\alpha,e_{-\alpha})\alpha^{\vee}=(e_\alpha,e_{-\alpha})'\alpha^{\vee}$$
so $\left(e_{\alpha},e_{-\alpha}\right)=\left(e_{\alpha},e_{-\alpha}\right)'.$ 
For $\beta\neq\alpha,$  $\alpha,\beta\in\pi,$  we have $\left(e_{\alpha},e_{-\beta}\right)=\left(e_{\alpha},e_{-\beta}\right)'=0.$ Thus the assertion holds for $i=j=1.$ For $x_{\pm}\in\left(\mathfrak{g}_{pr}\cap\mathfrak{n}^{\pm}\right),$ since the bilinear forms are invariant, we have: 
$$\left(\left[e_{\alpha},x_{+}\right],x_{-}\right)=-\left(x_{+},\left[e_{\alpha},x_{-}\right]\right),\,\left(\left[e_{\alpha},x_{+}\right],x_{-}\right)'=-\left(x_{+},\left[e_{\alpha},x_{-}\right]\right)'.$$
Combining with ~(\ref{eq:filtration_bracket}), we conclude that the assertion for the pair $(i,j),$ where $i,j>1,$  follows from the assertion for the pair $(i-1,j-1).$ Similarly, for $j>1,$  the assertion for $(i,j)$ follows from ~(\ref{eq:intersection_bil_0}). Using induction, this establishes the assertion for all $(i,j)$  and proves $(i).$ 

For $(ii)$ note that: 
$$\mathfrak{g}_{pr}\cap\mathfrak{h}=\left[\mathfrak{g}_{pr}\cap\mathfrak{n}^{+},\mathfrak{g}_{pr}\cap\mathfrak{n}^{-}\right]\cap\mathfrak{h}.$$
Take any $h\in(\mathfrak{g}_{pr}\cap\mathfrak{h})$ and $a_{\pm}\in\left(\mathfrak{g}_{pr}\cap\mathfrak{n}^{\pm}\right).$ One has $\left(h,\left[a_{+},a_{-}\right]\right)=\left(\left[h,a_{+}\right],a_{-}\right)$ and similarly for $(-,-)'.$ Since $\left[h,a_{+}\right]\in(\mathfrak{g}_{pr}\cap\mathfrak{n}^{+}),$  the formula $\left(\left[h,a_{+}\right],a_{-}\right)=\left(\left[h,a_{+}\right],a_{-}\right)'$  follows from $(i).$ This proves $(ii)$ and completes the proof.  
\end{proof}
\subsubsection{}

\begin{cor}{restriction_of_bili}
    If $\mathfrak{g}$  is symmetrizable, then the kernel of the restriction of $(-,-)$ to $\mathfrak{g}_{pr}$ coincides with the centre of $\mathfrak{g}_{pr}$ and lies in $\mathfrak{h}\cap\mathfrak{g}_{pr}.$   
\end{cor}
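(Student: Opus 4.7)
The plan is to reduce the statement, via \Lem{Coincide_bilinear}, to a claim about the induced form $(-,-)'$ on $\mathfrak{g}_{pr}$, and then to deduce that claim from standard properties of the invariant form on the symmetrizable Kac-Moody algebra $\mathfrak{g}(B_\pi)$.

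Setting $L:=[\mathfrak{g}(B_\pi),\mathfrak{g}(B_\pi)]$, I would use \ref{surjective_homomorphism} to present $\mathfrak{g}_{pr}$ as $L/\mathfrak{c}$ with $\mathfrak{c}=\ker\psi\subseteq\mathfrak{c}(\mathfrak{g}(B_\pi))$. Since $\mathfrak{g}$ is symmetrizable, $B_\pi$ is symmetrizable, so the construction in ~\cite{Kbook} Chapter 2 gives a non-degenerate invariant form $(-,-)_\pi$ on $\mathfrak{g}(B_\pi)$ whose centre $\mathfrak{c}(\mathfrak{g}(B_\pi))$ lies inside $\mathfrak{h}(B_\pi)$. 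The main technical step is to identify the radical of $(-,-)_\pi|_L$ with $\mathfrak{c}(\mathfrak{g}(B_\pi))\cap L$. By the root-space decomposition and the fact that every nonzero root space of $\mathfrak{g}(B_\pi)$ sits inside $L$, any element $x$ of this radical must be $\mathfrak{h}(B_\pi)$-weight-zero (otherwise it would pair nontrivially with a suitable root vector in $L$). Then invariance gives, for any simple root $\alpha_i$, the identity $(x,\alpha_i^\vee)_\pi=\alpha_i(x)(e_i,f_i)_\pi$, which forces $\alpha_i(x)=0$ for all $i$, hence $x\in\mathfrak{c}(\mathfrak{g}(B_\pi))$; the reverse inclusion is automatic from invariance. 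Applying~(\ref{gggcg}) to $\mathfrak{g}(B_\pi)$, one has $\mathfrak{c}(\mathfrak{g}(B_\pi))\cap L=\mathfrak{c}(L)$.

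Passing to the quotient, the induced form $(-,-)'$ on $\mathfrak{g}_{pr}$ has radical $\psi(\mathfrak{c}(L))$, which lies in $\psi(\mathfrak{h}(B_\pi)\cap L)\subseteq\mathfrak{h}\cap\mathfrak{g}_{pr}$ by the description of $\psi$. To match this radical with $\mathfrak{c}(\mathfrak{g}_{pr})$, the inclusion $\psi(\mathfrak{c}(L))\subseteq\mathfrak{c}(\mathfrak{g}_{pr})$ is immediate from surjectivity of $\psi$, while the reverse uses \Prop{properties of surjective homomorphism}(ii), which gives $\psi^{-1}(\mathfrak{c}(\mathfrak{g}_{pr}))\subseteq\mathfrak{c}(\mathfrak{g}(B_\pi))\cap L=\mathfrak{c}(L)$. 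Finally \Lem{Coincide_bilinear} lets me replace $(-,-)'$ by the restriction of $(-,-)$ to $\mathfrak{g}_{pr}$, which yields both assertions of the corollary. The main obstacle I anticipate is the characterization of the radical of $(-,-)_\pi|_L$, since it requires an explicit use of non-degeneracy on the full algebra $\mathfrak{g}(B_\pi)$ together with care about the strict inclusion $\mathfrak{h}(B_\pi)\cap L\subsetneq\mathfrak{h}(B_\pi)$; everything else is a formal transfer through the surjection $\psi$.
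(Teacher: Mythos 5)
Your proposal is correct and follows essentially the same route as the paper: the paper's proof is a one-line reduction via \Lem{Coincide_bilinear} to the fact that the kernel of the invariant form on $\left[\mathfrak{g}\left(B_{\pi}\right),\mathfrak{g}\left(B_{\pi}\right)\right]$ equals $\mathfrak{c}\left(\mathfrak{g}\left(B_{\pi}\right)\right)$, which is exactly the statement you prove in detail and then transfer through $\psi$ using \Prop{properties of surjective homomorphism}(ii). Your write-up simply supplies the weight-space and invariance arguments that the paper leaves implicit.
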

\begin{proof}
    This follows from \Lem{Coincide_bilinear} and the fact that the kernel of the invariant bilinear form on $\left[\mathfrak{g}\left(B_{\pi}\right),\left(B_{\pi}\right)\right],$ is equal to $\mathfrak{c}(\mathfrak{g}(B_\pi)).$
\end{proof}
\subsection{Integrable modules}
We call a $[\mathfrak{g},\mathfrak{g}]-$module $V$ \textit{integrable} if for each real root $\alpha$  and each $u\in\mathfrak{g}_{\alpha},$  the action of $u$  on $V$  is locally nilpotent. We call a $\mathfrak{g}-$module integrable if it is integrable as $[\mathfrak{g},\mathfrak{g}]-$module. 
The adjoint representation of any Kac-Moody superalgebra is integrable (this follows from the definition). 

The following lemma is well-known. 

\subsubsection{}
\begin{lem}{Integrable_prinicpal}
    A $[\mathfrak{g},\mathfrak{g}]-$module $V$ is integrable if and only if for each $\alpha\in\pi$  and each $u\in\mathfrak{g}_{\pm\alpha},$  the action of $u$ on $V$ is locally nilpotent. 
\end{lem}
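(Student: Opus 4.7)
The plan is to transfer local nilpotency from the principal-simple root spaces $\mathfrak{g}_{\pm\alpha}$, $\alpha\in\pi$, to every real-root space via operators of the form $\exp\rho(f_\alpha)\exp(-\rho(e_\alpha))\exp\rho(f_\alpha)$, exploiting $W\Sigma_{pr}=\Delta^{an}$ from~\ref{principal_anisotropic}. The ``only if'' direction is immediate: each even $\alpha\in\pi$ is a real root, so the integrability hypothesis applies directly; for $2\gamma\in\pi$ with $\gamma\in\Sigma_{pr}$ odd anisotropic, the identity $\rho(e_{2\gamma})=c\,\rho(e_\gamma)^2$ (from $[e_\gamma,e_\gamma]=2e_\gamma^2$ in the associative envelope, with $c$ a nonzero scalar) shows that local nilpotency of $\rho(e_\gamma)$ forces that of $\rho(e_{2\gamma})$.

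For the ``if'' direction I would first upgrade the hypothesis from $\pi$ to all of $\Sigma_{pr}$: the even case is already covered; for odd isotropic $\alpha\in\Sigma_{pr}$ one has $\rho(e_\alpha)^2=\tfrac12\rho([e_\alpha,e_\alpha])=0$ automatically; for odd anisotropic $\alpha\in\Sigma_{pr}$ the relation $\rho(e_\alpha)^{2n}=c^n\rho(e_{2\alpha})^n$ transfers local nilpotency from $\mathfrak{g}_{\pm 2\alpha}$ (given since $2\alpha\in\pi$) to $\mathfrak{g}_{\pm\alpha}$. Next, for each anisotropic $\alpha\in\Sigma_{pr}$ I would introduce the invertible operator
\[
\Phi_\alpha := \exp(\rho(f_\alpha))\exp(-\rho(e_\alpha))\exp(\rho(f_\alpha))
\]
on $V$, well-defined by the previous upgrade, and establish, via the standard identity $\exp(\rho(a))\rho(x)\exp(-\rho(a))=\rho(\exp(\ad a)(x))$ --- valid because $\ad e_\alpha,\ad f_\alpha$ are locally nilpotent on $\mathfrak{g}$ by the adjoint integrability noted immediately before the lemma --- the intertwining property $\Phi_\alpha\rho(x)\Phi_\alpha^{-1}=\rho(\phi_\alpha(x))$, where $\phi_\alpha$ is the automorphism of $\mathfrak{g}$ lifting $s_\alpha$ (cf.~\cite{Kbook}, Lemma~3.8).

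To conclude, let $\beta\in\Delta^{re}$. If $\beta$ is isotropic (necessarily odd, since the Kac--Moody conditions force $a_{ii}\neq 0$ for $i\notin\tau$), then $[e_\beta,e_\beta]=0$ yields $\rho(e_\beta)^2=0$, giving local nilpotency automatically. Otherwise $\beta=w\alpha$ with $\alpha\in\Sigma_{pr}$ anisotropic and $w=s_{\gamma_1}\cdots s_{\gamma_k}$, $\gamma_i\in\Sigma_{pr}$; setting $\Phi:=\Phi_{\gamma_1}\cdots\Phi_{\gamma_k}$ and $\bar u:=\phi_{\gamma_k}^{-1}\cdots\phi_{\gamma_1}^{-1}(u)\in\mathfrak{g}_\alpha$ for $u\in\mathfrak{g}_\beta$, the intertwining gives $\rho(u)^n=\Phi\rho(\bar u)^n\Phi^{-1}$, which vanishes for $n$ large on any given $v\in V$ by the upgraded hypothesis. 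The main technical point I expect to require care is the verification of the intertwining formula on a general (possibly infinite-dimensional) integrable $V$, reconciling the local nilpotency of $\rho(e_\alpha),\rho(f_\alpha)$ on $V$ with that of $\ad e_\alpha,\ad f_\alpha$ on $\mathfrak{g}$; once this is in place, the remaining bookkeeping is routine.
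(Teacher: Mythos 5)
Your proposal is correct and follows essentially the same route as the paper's proof: both reduce to $\Sigma_{pr}$ via $W\Sigma_{pr}=\Delta^{\an}$, kill the isotropic real roots with $\rho(e_\beta)^2=\tfrac12\rho([e_\beta,e_\beta])=0$, and transport local nilpotency along a reduced word for $w$ by conjugating with $\exp(\rho(f))\exp(-\rho(e))\exp(\rho(f))$ and invoking Kac's identity (3.8.1); the paper phrases the last step as an induction on $\ell(w)$ while you compose all the operators $\Phi_{\gamma_i}$ at once, which is the same argument. Your explicit handling of the ``only if'' direction and of the upgrade from $\pi$ to $\Sigma_{pr}$ via $\rho(e_{2\alpha})=c\,\rho(e_\alpha)^2$ fills in details the paper leaves implicit.
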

\begin{proof}
   Assume that for each $\alpha\in\pi$  and each $u\in\mathfrak{g}_{\pm\alpha}$ the action of $u$ on $V$  is locally nilpotent. Then for each $\alpha\in\Sigma_{pr}$  and each $u\in\mathfrak{g}_{\pm\alpha},$  the action is locally nilpotent. Let $\beta$  be a real root and let $u\in\mathfrak{g}_\beta$ be a non-zero element. If $\beta$  is isotropic, then by ~(\ref{eq:fsi}), $\mathfrak{g}\left\langle \beta\right\rangle \simeq\mathfrak{sl}\left(1|1\right).$ Thus, $u^2=0.$ If $\beta\in\Delta^{an},$  then by ~\ref{principal_anisotropic}, $\beta=w\gamma,$ where $w$ lies in the Weyl group and $\gamma\in\Sigma_{pr}.$ Moreover, the Weyl group $W$  is the Coxeter group generated by the reflections $s_\alpha$  with $\alpha\in\Sigma_{pr}.$ We will prove that $u$ acts locally nilpotent on $V$  by induction on the length of $w.$ 

   If the length is zero, then $u\in\mathfrak{g}_\gamma,$  and thus $u$  acts locally nilpotent. Otherwise, for some $\alpha\in\Sigma_{pr},$  we have $w=s_{\alpha}w',$  where the length of $w'$  is less than the length of $w.$  Then, $s_{\alpha}\beta=w'\gamma,$  and,  by induction, for any $u'\in\mathfrak{g}_{s_\alpha \beta}$  the action of $u'$  on $V$  is locally nilpotent. By ~\cite{Kbook}, Lemma 3.8, $$u=\left(\text{exp}\ad f\right)\left(\text{exp}\ad\left(-e\right)\right)\left(\text{exp}\ad f\right)\left(u'\right),$$ for some $u'\in\mathfrak{g}_{s_\alpha \beta},$ $e\in\mathfrak{g}_\alpha$  and $f\in\mathfrak{g}_{-\alpha}.$  

   Let $E,F,U,U'$  be the images of $e,f,u,u'$  in $\End(V).$ By (3.8.1) in ~\cite{Kbook},  one has $\left(\text{exp}a\right)b\left(\text{exp}-a\right)=\text{exp}\left(\text{ad}a\right)\left(b\right)$  for any operator $b$ on $V$  such that $\left(\ad a\right)^{N}b=0$  for some $N.$  Therefore we obtain 
\begin{align*}
    &\quad \quad \quad U = \left(\text{exp} \ad F\right)\left(\text{exp} \ad \left(-E\right)\right) 
    \left(\text{exp} \ad f\right)\left(U'\right) \\
    &\quad = \left(\text{exp} F\right)\left(\text{exp} \left(-E\right)\right) 
    \left(\text{exp} F\right)\left(U'\right) 
    \left(\text{exp} -F\right)\left(\text{exp} \left(-E\right)\right) 
    \left(\text{exp} -F\right).
\end{align*}
Take any $v\in V.$  Then 
$$U^{s}\left(v\right)=\left(\text{exp}F\right)\left(\text{exp}\left(-E\right)\right)\left(\text{exp}F\right)\left(U'\right)^{s}\left(\text{exp}-F\right)\left(\text{exp}\left(-E\right)\right)\left(\text{exp}-F\right)\left(v\right).$$
Since $E,F'U'$  acts locally nilpotent, $U^sv=0$  for $s$  large enough, so $U$  acts locally nilpotent as required. 
\end{proof}
\subsubsection{}
\label{Integrable for derived B_pi}
Recall that if $B_\pi$  is symmetrizable, then $\mathfrak{g}_{pr}\simeq\left[\mathfrak{g}\left(B_{\pi}\right),\mathfrak{g}\left(B_{\pi}\right)\right]/\mathfrak{c}.$ Thus, by \Lem{Integrable_prinicpal}, any integrable $\mathfrak{g}$-module has a structure of an integrable $\left[\mathfrak{g}\left(B_{\pi}\right),\mathfrak{g}\left(B_{\pi}\right)\right]-$module (which is annihilated by $
\mathfrak{c}).$ 
\subsection{Inner automorphisms}
Let $\mathfrak{s}$  be a Kac-Moody algebra with a Cartan subalgebra $\mathfrak{t}.$  An $[\mathfrak{s},\mathfrak{s}]-$module is called \textit{semisimply integrable} if it is integrable and the Cartan algebra acts semisimply. 

In ~\cite{Kac and Peterson}, the authors constructed a so-called minimal Kac-Moody group $\mathcal{S}_{min}$  associated to $\mathfrak{s}.$ 

The group $\mathcal{S}_{min}$  is constructed in the following way. First, we define $\mathcal{S}^{*}$ as the free product of the additive groups $\mathfrak{s}_\alpha$  for $\alpha\in\Delta^{re}$. 
Next, for any semisimply integrable representation $(V,\gamma)$ of the derived Lie algebra $\left[\mathfrak{s},\mathfrak{s}\right],$ we construct a corresponding representation of $\mathcal{S}^{*},$ denoted by $\gamma^{*}:\mathcal{S}^{*}\longrightarrow\text{Aut}\left(V\right),$ and defined as follows: $$\gamma^{*}\left(i_{\alpha}\left(e\right)\right)=\text{exp }\gamma\left(e\right),$$ where $i_{\alpha}:\mathfrak{s}_{\alpha}\longrightarrow\mathcal{S}^{*}$ is the natural embedding.  
Now, we define the subgroup $\mathcal{R}^{*}\subset\mathcal{S}^{*}$ as the intersection of all $\ker\gamma^{*}$ (running over all semisimply integrable representations of $\left[\mathfrak{s},\mathfrak{s}\right]$).
Finally, we define  $\mathcal{S}_{min}:=\mathcal{S}^{*}/\mathcal{R}^{*}.$ 

For any semisimply integrable representation $(V,\gamma)$ of the derived Lie algebra $\left[\mathfrak{s},\mathfrak{s}\right],$ we associate a representation $\left(V,\Gamma\right)$ of the group $\mathcal{S}_{min},$  taking for every $e\in\mathfrak{s}_\alpha,$  $$\Gamma\left(q\left(i_{\alpha}\left(e\right)\right)\right)=\exp\gamma\left(e\right),$$
 where $q:\mathcal{S}^{*}\longrightarrow\mathcal{S}_{min}$ is the canonical homomorphism.

 For the adjoint representation we denote the corresponding representation of $\mathcal{S}_{min}$ by $\Ad.$ For each $x\in\mathcal{S}_{min},$  the element $\Ad(x)$ is an algebra automorphism of $\mathfrak{s}.$  We call such automorphism an \textit{inner automorphism}.    
 \subsubsection{Conjugacy theorem for symmetrizable Kac-Moody algebra}

The following theorem is proven in ~\cite{Kac and Peterson}  and in ~\cite{Kumar}, Chapter 10. 

\begin{thm}{Conjugacy-KM}
    Let $\mathfrak{s}$ be a symmetrizable Kac-Moody algebra with a Cartan subalgebra $\mathfrak{t}.$ If $\mathfrak{t}'$  is an ad-diagonalizable commutative subalgebra of $\mathfrak{s},$ then there exists $x\in\mathcal{S}_{min}$  such that $\Ad(x)(\mathfrak{t}')\subset\mathfrak{t}.$
\end{thm}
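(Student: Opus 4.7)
The plan is to follow the classical Peterson--Kac strategy, whose core idea is to reduce the conjugacy of Cartan subalgebras to the conjugacy of a single ``regular'' element. First I would pick a \emph{regular} $h'_0 \in \mathfrak{t}'$, i.e.\ one whose centralizer $\{x\in\mathfrak{s}\,|\,[h'_0,x]=0\}$ is as small as possible. Since $\mathfrak{t}'$ is ad-diagonalizable and commutative, $\mathfrak{s}$ decomposes into weight spaces for $\ad\mathfrak{t}'$; choosing $h'_0$ in $\mathfrak{t}'$ outside the countable family of hyperplanes on which some non-zero weight vanishes (this is possible since the ground field is uncountable) ensures that the centralizer of $h'_0$ coincides with the centralizer of all of $\mathfrak{t}'$, in particular contains $\mathfrak{t}'$. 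If $h'_0$ can be conjugated into $\mathfrak{t}$ by some $\Ad(x)$, then $\Ad(x)(\mathfrak{t}')$ commutes with $\Ad(x)(h'_0)$, and provided $\Ad(x)(h'_0)$ is $\mathfrak{t}$-regular, its centralizer in $\mathfrak{s}$ is exactly $\mathfrak{t}$, forcing $\Ad(x)(\mathfrak{t}')\subset\mathfrak{t}$.

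The main technical step is therefore: \emph{every ad-diagonalizable $h'\in\mathfrak{s}$ is $\mathcal{S}_{min}$-conjugate to an element of $\mathfrak{t}$}. I would attack this through integrable highest weight modules $L(\lambda)$, $\lambda$ dominant integral. Since $h'$ acts locally finitely on each such $L(\lambda)$, we may associate to $h'$ a ``formal character'' recording the eigenvalues of $h'$ on the weight spaces of $L(\lambda)$, simultaneously for all $\lambda$. This formal character is $\Ad(\mathcal{S}_{min})$-invariant by construction. The Weyl--Kac character formula, available precisely because $\mathfrak{s}$ is symmetrizable, together with its standard consequence that the family $\{\ch L(\lambda)\}$ separates $\mathfrak{t}$-points modulo the Weyl group $W$, would then let me show (i) that the formal character of $h'$ coincides with that of some $h_1\in\mathfrak{t}$, and (ii) that coincidence of formal characters implies $\mathcal{S}_{min}$-conjugacy.

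To realise the conjugation concretely, I would carry out a descent argument in the Tits cone. Iteratively applying real-root reflections $s_\alpha$, realised as $\Ad$ of suitable elements of $\mathcal{S}_{min}$ via the triple product $\exp(\ad f_\alpha)\exp(-\ad e_\alpha)\exp(\ad f_\alpha)$ from the proof of \Claim{Properties of reflexions}, reduces the (finite, thanks to integrability) set of positive real roots whose eigenvalue on $h'$ has negative real part; the length function on $W$ guarantees termination in a ``dominant'' representative. Once dominant, I would apply unipotent one-parameter subgroups $\exp(\ad e_\alpha)$ to eliminate the off-Cartan components of $h'$ relative to the triangular decomposition $\mathfrak{s}=\mathfrak{n}^-\oplus\mathfrak{t}\oplus\mathfrak{n}$, until the resulting element actually lies in $\mathfrak{t}$.

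The hard part will be the \textbf{character-rigidity step}: proving that an arbitrary ad-diagonalizable $h'\in\mathfrak{s}$ has the same formal character as some element of $\mathfrak{t}$, and that coincidence of formal characters forces $\mathcal{S}_{min}$-conjugacy rather than merely equality of some coarse invariant. Symmetrizability enters essentially at both points, through the Weyl--Kac formula and through the invariant nondegenerate bilinear form on $\mathfrak{s}$ that underpins the whole character theory. This is precisely the place where the superalgebra extension pursued in the thesis demands genuinely new arguments, since for the non-symmetrizable affine superalgebras $S(2|1;b)$ and $\mathfrak{q}^{(2)}_n$ the classical character method is not directly available.
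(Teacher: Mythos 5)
The paper offers no proof of this statement: Theorem~\ref{Conjugacy-KM} is imported as a black box, with the reader referred to Kac--Peterson and to Chapter~10 of Kumar's book. So the only meaningful comparison is between your sketch and the cited proofs, and against those your outline has a genuine gap at its core. The reduction in your first paragraph (choose a regular $h_0'\in\mathfrak{t}'$ whose centralizer equals that of all of $\mathfrak{t}'$, then conjugate $h_0'$ alone) is the standard opening move and is essentially sound, modulo two points you should not gloss over: you need $\mathfrak{t}'$ to decompose $\mathfrak{s}$ into countably many weight spaces so that a regular element exists, and you need to explain why the image $\Ad(x)(h_0')\in\mathfrak{t}$ is again regular in $\mathfrak{t}$ --- regularity of $h_0'$ relative to $\mathfrak{t}'$ does not by itself force the centralizer of its image to be exactly $\mathfrak{t}$ rather than $\mathfrak{t}$ plus some root spaces.

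The real problem is the step you yourself label ``the hard part'': that every ad-diagonalizable element of $\mathfrak{s}$ is $\mathcal{S}_{min}$-conjugate into $\mathfrak{t}$. That assertion \emph{is} the theorem, and your proposed mechanism --- attaching to $h'$ a ``formal character'' on all the $L(\lambda)$, invoking the Weyl--Kac formula to match it with the formal character of some $h_1\in\mathfrak{t}$, and then claiming that equality of formal characters forces conjugacy --- is not established anywhere and is not the route taken in the sources the paper cites. Two concrete difficulties: first, you assume $h'$ acts locally finitely on each $L(\lambda)$, but for an arbitrary ad-diagonalizable element this is itself a nontrivial fact whose standard proof (Kumar, \S10.3) already passes through showing that ad-locally finite elements lie in $\Ad(g)(\mathfrak{b})\cup\Ad(g)(\mathfrak{b}^-)$, so your argument risks circularity; second, even granting the local finiteness, you give no reason why coincidence of eigenvalue data on the $L(\lambda)$ should imply $\mathcal{S}_{min}$-conjugacy rather than being a weaker invariant. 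The actual proof instead uses the Jordan decomposition of ad-locally finite elements (Kumar, Theorem~10.2.12, which this thesis itself invokes elsewhere), the Birkhoff decomposition of $\mathcal{S}_{min}$, and a fixed-point argument on the flag variety to place $h'$ inside a conjugate of a Borel subalgebra before pushing it into $\mathfrak{t}$. Your closing descent-in-the-Tits-cone and unipotent-elimination steps are plausible finishing moves, but they presuppose that $h'$ has already been placed in a Borel, which is precisely what is missing.
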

\subsubsection{Inner automorphisms of an Kac-Moody superalgebras}
Let $\mathfrak{g}$  be a Kac-Moody superalgebra such that $B_\pi$  is symmetrizable. We construct $\mathcal{S}_{min}$  for $\mathfrak{g}(B_\pi).$ By ~\ref{Integrable for derived B_pi}, $\mathcal{S}_{min}$  admits a natural action on any integrable $\mathfrak{g}-$module. Note that this action preserves the $\mathbb{Z}_{2}-$grading on the module. In particular, $\mathcal{S}_{min}$ acts on the adjoint $\mathfrak{g}-$module and we denote this action by $\Ad.$ 
\subsubsection{}
\begin{lem}{Properties of Ad(x)}
\begin{enumerate}
        \item For each $x \in \mathcal{S}_{\min}$, the element  $\Ad(x)$ is an automorphism of the Lie superalgebra $\mathfrak{g}.$
        \item If $(-,-)$ is an invariant bilinear form on $\mathfrak{g}$, then $\Ad(x)$ preserves $(-,-)$.
\end{enumerate}
\end{lem}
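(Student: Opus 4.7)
The plan is to reduce both statements to the distinguished generators of $\mathcal{S}_{min}$, exploit the fact that each such generator acts on $\mathfrak{g}$ via the exponential of an even, $\ad$-locally-nilpotent derivation, and then invoke two classical facts: the exponential of an even locally nilpotent derivation of a Lie superalgebra is a Lie superalgebra automorphism, and the exponential of a derivation that is anti-symmetric with respect to an invariant bilinear form preserves that form.

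First I would unwind the construction recalled in the preceding subsection. Fix a real root $\alpha$ of $\mathfrak{g}(B_\pi)$ and $e\in\mathfrak{g}(B_\pi)_\alpha$. By definition, the generator $q(i_\alpha(e))\in\mathcal{S}_{min}$ acts on the adjoint $\mathfrak{g}$-module by $\exp\ad(\psi(e))$, where $\psi$ is the surjective homomorphism of \ref{surjective_homomorphism}. Since $\psi$ takes values in $\mathfrak{g}_{pr}\subset\mathfrak{g}_{\overline{0}}$, the element $\psi(e)$ is even; and since the adjoint representation of $\mathfrak{g}$ is integrable, \Lem{Integrable_prinicpal} together with the integrability of the $\mathfrak{g}(B_\pi)$-module $\mathfrak{g}$ guarantees that $\ad(\psi(e))$ is locally nilpotent, so $\exp\ad(\psi(e))$ is well defined as an endomorphism of $\mathfrak{g}$.

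For (i), the standard identity
$$\exp(D)[a,b]=[\exp(D)(a),\,\exp(D)(b)],$$
valid for any even, locally nilpotent derivation $D$ of a Lie superalgebra (proved by expanding both sides as series and matching coefficients using the Leibniz rule), shows that each $\Ad\bigl(q(i_\alpha(e))\bigr)$ is a Lie superalgebra automorphism of $\mathfrak{g}$. Since the Lie superalgebra automorphisms form a group and $\mathcal{S}_{min}$ is generated as a group by the elements $q(i_\alpha(e))$, the statement propagates from the generators to all of $\mathcal{S}_{min}$.

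For (ii), invariance of $(-,-)$ in superalgebra form gives, for any even $y\in\mathfrak{g}$, the anti-symmetry $(\ad(y)(a),b)=-(a,\ad(y)(b))$; iterating yields $(\ad(y)^k(a),b)=(-1)^k(a,\ad(y)^k(b))$. Expanding
$$\bigl(\exp\ad(y)(a),\,\exp\ad(y)(b)\bigr)=\sum_{k,l\geq 0}\frac{1}{k!\,l!}\bigl(\ad(y)^k(a),\ad(y)^l(b)\bigr)$$
and collecting terms with fixed $m=k+l$, one finds the factor $\sum_{k=0}^{m}\binom{m}{k}(-1)^k=\delta_{m,0}$, so the right-hand side collapses to $(a,b)$. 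Applied to $y=\psi(e)$, this verifies (ii) on generators, and since the set of automorphisms preserving $(-,-)$ is a subgroup of $\Aut(\mathfrak{g})$, the property extends to every $x\in\mathcal{S}_{min}$.

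The only mildly delicate point is purely organizational: one must be sure the generators referred to above really produce a well-defined action of $\mathcal{S}_{min}$ (not merely of the free product $\mathcal{S}^*$), and that $\mathcal{S}_{min}$ is generated as a group by their images. Both assertions are built into the construction of $\mathcal{S}_{min}$ via $\mathcal{R}^*$ recalled just before the statement, so no further argument is needed.
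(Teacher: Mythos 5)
Your proposal is correct and follows essentially the same route as the paper: reduce to the generators $\exp\ad u$ with $u$ even and $\ad$-locally nilpotent, prove the automorphism property via the Leibniz-type expansion of $(\ad u)^n[a,b]$, and prove form-invariance from the iterated anti-symmetry $\bigl((\ad u)^k(a),b\bigr)=(-1)^k\bigl(a,(\ad u)^k(b)\bigr)$. The only cosmetic difference is that in (ii) you collapse the double sum with the binomial identity $\sum_{k=0}^{m}\binom{m}{k}(-1)^k=\delta_{m,0}$, whereas the paper moves one exponential to the other side of the pairing; these are the same computation.
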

\begin{proof}
    First note that $\Ad (x)$  is a composition of linear transformations of the form $\exp\ad  u,$  for some $u\in\mathfrak{g}_{\alpha},$ and $\alpha\in\pi.$ So without loss of generality, we may assume $\Ad\left(x\right)=\exp\ad u.$ Let $x,y\in\mathfrak{g}.$  We need to show that 
    $$\left[\exp\ad u\left(x\right),\exp\ad u\left(y\right)\right]=\exp\ad u(\left[x,y\right]).$$
The classical argument (see, for example,~\cite{Humphry}) is applicable here. We present this argument
for completeness.We prove by induction on $n$  that: 
$$\frac{\left(\ad u\right)^{n}}{n!}\left(\left[x,y\right]\right)=\sum_{i=0}^{n}\left[\frac{1}{i!}\left(\ad u\right)^{i}\left(x\right),\frac{1}{\left(n-i\right)!}\left(\ad u\right)^{n-i}\left(y\right)\right].$$
The case where $n=0$  is trivial. 
Next, we have:
\begin{align*}
    \frac{\left(\ad u\right)^{n}}{n!}\left(\left[x,y\right]\right) 
    &= \frac{1}{n!}\left[u,\left(\ad u\right)^{n-1}\left(\left[x,y\right]\right)\right] \\
    &\stackrel{*}{=} \frac{1}{n!} \left[u, (n-1)! \sum_{i=0}^{n-1} \left[\frac{1}{i!} \left(\ad u\right)^{i}(x),\frac{1}{(n-1-i)!} \left(\ad u\right)^{n-1-i}(y)\right] \right] \\
    &\stackrel{**}{=} \frac{1}{n} \sum_{i=0}^{n-1} \left( \left[\left[u,\frac{1}{i!}\left(\ad u\right)^{i}(x)\right], \frac{1}{(n-1-i)!} \left(\ad u\right)^{n-(i+1)}(y) \right] \right. \\
    &\quad + \left. \left[\frac{1}{i!} \left(\ad u\right)^{i}(x), \left[u, \frac{1}{(n-1-i)!} \left(\ad u\right)^{n-1-i}(y)\right]\right] \right) \\
    &= \sum_{i=0}^{n} \left[\frac{1}{i!} \left(\ad u\right)^{i}(x), \frac{1}{(n-i)!} \left(\ad u\right)^{n-i}(y)\right],
\end{align*}
where $*$  is by induction hypothesis, and $**$  is following from Jacobi identity and the fact that $u\in\mathfrak{g}_{\overline{0}}.$   
Now $(i)$ follows by the exact same proof as in \cite{Humphry}, Chapter 2.3. 

For $(ii),$ we wish to show $\left(x|y\right)=\left(\exp u\left(x\right)|\exp u\left(y\right)\right).$ Denote by $k$ the minimal integer such that $\left(\ad u\right)^{k+1}\left(x\right)=\left(\ad u\right)^{k+1}\left(u\right)=0.$ By a straightforward induction and the invariance of the bilinear form, we obtain: $$\left(\left(\ad u\right)^{i}\left(x\right)|\left(\ad u\right)^{j}\left(y\right)\right)=\left(-1\right)^{i+j}\left(\left(\ad u\right)^{j}\left(x\right)|\left(\ad u\right)^{i}\left(y\right)\right),$$
for every $i,j\in\mathbb{Z}_{\geq0}.$ In particular, $$\left((\ad u)^{i}(x)|y\right)=(-1)^{i}\left(x|(\ad u)^{i}(y)\right).$$ Therefore, 
\begin{align*}
    \left(\exp\ad u\left(x\right)|y\right) &= \left(x|y\right)+\left(\left[u,x\right]|y\right)+\dots+\left(\frac{\ad^{k}u\left(x\right)}{k!}|y\right) \\
    &= \sum_{i=0}^{k}\left(-1\right)^{i}\left(x|\,\ad^{i}u\left(y\right)\right) 
    = \sum_{i=0}^{k}\left(x|\,\ad^{i}\left(-u\right)\left(y\right)\right) 
    = \left(x|\exp\ad \left(-u\right)\left(y\right)\right)
\end{align*}
Thus, $\left(\exp\ad u\left(x\right)|y\right)=\left(x|\exp\ad (-u)\left(y\right)\right)$  for every $x,y\in\mathfrak{g}.$ This gives: $$\left(\exp\ad u\left(x\right)|\exp\ad u\left(y\right)\right)=\left(x|\exp\ad\left(-u\right)\left(\exp\ad u\left(y\right)\right)\right)=\left(x|y\right),$$ as wished.

\end{proof}
Using the Lemma above, from now on, for any $x\in\mathcal{S}_{min}$  we consider $\Ad(x)$  as an automorphism of $\mathfrak{g},$  and not only of $\mathfrak{g}(B_\pi),$  in case $B_\pi$ is symmetrizable. 

We say that $\phi\in\Aut(\mathfrak{g})$  is inner if $\phi=\Aut(x)$ for some $x\in\mathcal{S}_{min}.$

\section{Affine Kac-Moody Lie superalgebras}

In contrast to the (Ind) type, the affine Kac-Moody superalgebras can be constructed in a very concrete manner, making the relations within the algebra much easier to understand. For the affine symmetrizable case, this is achieved through a process known as \textit{affinization}, which will be explained in detail later in this chapter. 
As for the non-symmetrizable case, there are only one family, $S(2|1;b),$ with $b\in\mathbb{C}\backslash\mathbb{Z},$ and one series $\mathfrak{q}_{n}^{\left(2\right)},$  where $n\geq3.$ This was discovered by Crystal Hoyt (see ~\cite{H} and ~\cite{H-V}). 

\subsection{Process of affinization}

For the process of affinization, we fix a finite-dimensional Kac-Moody superalgebra $\mathfrak{\dot{g}},$ and an automorphism $\sigma$  of finite order. Additionally, this automorphism must preserve the non-degenerate invariant bilinear form on $\mathfrak{\dot{g}},$ which exists because $\mathfrak{\dot{g}}$ is of finite type. 

There is a one-to-one correspondence between affine Kac-Moody superalgebras, up to isomorphism, that can be obtained through affinization from a given $\mathfrak{\dot{g}},$ and the group $\text{Aut}_{\left(-,-\right)}\left(\mathfrak{\dot{g}}\right)/\text{Inn},$ where $\text{Aut}_{\left(-,-\right)}\left(\mathfrak{\dot{g}}\right)$  denotes the group of automorphisms of $\mathfrak{\dot{g}}$ preserving the bilinear form,  and $\text{Inn}$ is the subgroup of Inner automorphisms. The classification of $\text{Aut}\left(\mathfrak{g}\right)/\text{Inn}$ was completed by Vera Serganova (see \cite{Sint2}). This result directly translates into a classification of all possible affinizations of $\mathfrak{\dot{g}}.$ These algebras are symmetrizable, as the bilinear form on $\mathfrak{\dot{g}}$ naturally extends to its affinizations while remaining invariant and non-degenerate (details will be provided later in this chapter). 
Johan van de Leur, in his thesis, proved that all symmetrizable affine Kac-Moody superalgebras are obtained via affinization  (see ~\cite{vdL}). It follows that the classification of automorphisms in $\text{Aut}_{\left(-,-\right)}\left(\mathfrak{\dot{g}}\right)/\text{Inn},$  provides a complete classification of symmetrizable affine Kac-Moody superalgebras. In particular, all anisotropic affine Kac-Moody superalgebras are symmetrizable.  

The requirement for the automorphism to preserve the bilinear form is crucial. While every automorphism preserves the Killing form (this is easy to see from definitions), the Killing form can be degenerate in certain cases, for example, in $D\left(2|1;\alpha\right).$ In ~\cite{Sint2}, Serganova proved that, apart from the identity, there are no automorphisms of  $D\left(2|1;\alpha\right)$  that preserve the non-degenerate invariant bilinear form admitted by this algebra.   

We categorize the process of affinization into two cases:

\begin{itemize}
    \item \textbf{Non-twisted affinization}, where the automorphism is the identity. 
    \item \textbf{Twisted affinization}, where a non-trivial automorphism is used. 
\end{itemize}

\subsection{Non twisted affinization}
We begin the process with a simple finite-dimensional Kac-Moody superalgebra $\mathfrak{\dot{g}}$. Next, we define the \textit{non-twisted loop algebra} $\bigoplus_{k \in \mathbb{Z}} \mathbb{C} t^k \otimes \mathfrak{\dot{g}}$, with the relations $\left[t^i x, t^j y \right] = t^{i+j} \left[ x, y \right]$. The non-twisted Lie superalgebra of $\mathfrak{\dot{g}}$, denoted by $\mathfrak{g}$  (a notation used also for twisted affinization) and also by $\mathfrak{\dot{g}}^{\left(1\right)}$ (notation specifically for non twisted affinization), is obtained from a central extension of the loop algebra, together with a derivation $d = t \frac{d}{dt}$. Thus, we have $\mathfrak{g} = \bigoplus_{k \in \mathbb{Z}} \mathbb{C} t^k \otimes \mathfrak{\dot{g}} \oplus \mathbb{C} K \oplus \mathbb{C} d$, with the relations $\left[d, t^k a \right] = k t^k a$ and $\left[t^i x, t^j y \right] = t^{i+j} \left[ x, y \right] +i \delta_{i, -j} \left( x, y \right) \cdot K$, where $K$ is central, and $\left( \cdot, \cdot \right)$ is a fixed non-degenerate even bilinear form on $\mathfrak{\dot{g}}$ (the non-degenerate bilinear form is unique up to scalar).  

In particular, for $i\in\left\{ 0,1\right\} ,$ $\mathfrak{g}_{\overline{i}}=\bigoplus_{k\in\mathbb{Z}}\mathbb{C}t^{k}\otimes\mathfrak{\dot{g}}_{\overline{i}}\oplus\mathbb{C}K\oplus\mathbb{C}d$

We can extend the bilinear form on $\mathfrak{\dot{g}}$ to its affinization as follows:
\[
\left(t^i a, t^j b \right) = \delta_{i, -j} \left(a, b \right), \quad \left(K, K\right) = \left(d, d\right) = 0, \quad \left(K, d\right) = 1, \quad \left(a t^m, K\right) = \left(a t^m, d\right) = 0.
\]
  
\subsubsection{Root system and Cartan subalgebra}
The Cartan subalgebra of $\mathfrak{g}$ is $$\mathfrak{h} := \mathfrak{\dot{h}} \oplus \mathbb{C}K \oplus \mathbb{C}d,$$ where $\mathfrak{\dot{h}}$ is the Cartan subalgebra of $\mathfrak{\dot{g}}$. Fix a set of simple roots $\dot{\Sigma}$ for $\mathfrak{\dot{g}}$, and let $\theta$ be the highest root in $\mathfrak{\dot{g}}$ with respect to $\dot{\Sigma}$. For every $\alpha \in \dot{\Delta}$, we extend $\alpha$ to $\mathfrak{h}$ by setting $\alpha(K) = \alpha(d) = 0$.

We define a functional $\delta \in \mathfrak{h}^*$ by:
\[
\delta(\mathfrak{\dot{h}} \oplus \mathbb{C}K) = 0, \quad \delta(d) = 1.
\]
Thus, $\Sigma := \dot{\Sigma} \cup \{\delta - \theta\}$ forms a set of simple roots for $\mathfrak{g}$. We denote $\alpha_{0}:=\delta-\theta.$

The set of positive roots is:

$$\Delta_+ = \left\{ m\delta + \alpha \right\}_{m \in \mathbb{Z}_{>0}, \alpha \in \dot{\Delta}} \cup \left\{ m\delta \right\}_{m \in \mathbb{Z}_{>0}},$$

and the set of negative roots is:

$$\Delta_- = \left\{ m\delta + \alpha \right\}_{m \in \mathbb{Z}_{<0}, \alpha \in \dot{\Delta}} \cup \left\{ m\delta \right\}_{m \in \mathbb{Z}_{<0}}.$$

The even (resp. odd; isotropic) roots are of the form $m\delta + \alpha$, where $\alpha$ is either zero or an even (resp. odd; isotropic) root of $\mathfrak{\dot{g}}$.

The imaginary roots are of the form $m\delta$ for $m \neq 0$, and therefore $\Delta^{im} \subset \Delta_{\overline{0}}$.

\subsubsection{Root spaces and relations}

The root spaces are of the form $\mathfrak{g}_{\alpha + k\delta} = \mathbb{C}t^k e_{\alpha}$, and $\mathfrak{g}_{k\delta} = \{ t^k h \mid h \in \mathfrak{h} \}$, where $e_{\alpha} \in \mathfrak{\dot{g}}_{\alpha}$.

It follows immediately that $\left[\mathfrak{g}_{k\delta}, \mathfrak{g}_{m\delta}\right] = 0$ for $k \neq -m$. Since the restriction of $(-,-)$ to $\mathfrak{h}$  is non-degenerate, one has $\left[\mathfrak{g}_{k\delta},\mathfrak{g}_{-k\delta}\right]=\mathbb{C}K.$

\subsubsection{The non-twisted affine Kac-Moody algebra $\mathfrak{psl}(n|n)^{(1)}$} \label{psl_affine}
The algebra $\mathfrak{gl}\left(n|n\right)$  is the only indecomposable finite Kac-Moody superalgebra that is not simple. There are some issues when examining the non-twisted affine algebra:  $$\mathfrak{gl}\left(n|n\right)^{\left(1\right)}=\bigoplus_{k\in\mathbb{Z}}\mathbb{C}t^{k}\otimes\mathfrak{gl}\left(n|n\right)\oplus\mathbb{C}K\oplus\mathbb{C}d,$$ obtained through the same process as before. Indeed, denote $Id_{n}:=\left(\begin{array}{cc}
I_{n} & 0\\
0 & 0
\end{array}\right)$  and consider the element $tId_n.$ For a given set of simple roots for $\mathfrak{gl}\left(n|n\right),$  one can see that $tId_n\notin\widetilde{\mathfrak{n}}^{+},$  since $Id_n\notin\mathfrak{sl}\left(n|n\right)=\left[\mathfrak{gl}\left(n|n\right),\mathfrak{gl}\left(n|n\right)\right].$ Additionally, consider the element $tId.$ We claim that the ideal generated by this element has a zero intersection with the Cartan subalgebra. Indeed, since $Id$ is a central element in $\mathfrak{gl}(n|n),$ then for every $k\neq-1$ and $a\in\mathfrak{gl}(n|n),$ we have $\left[t^{k}a,tId\right]=0.$ For $k=-1,$  note that we must have $a\in \mathfrak{sl}\left(n|n\right),$  and therefore: $$\left[t^{-1}a,tId\right]=\underbrace{str\left(aId\right)}_{=0}K=0,$$  where $str$  is the supertrace. This implies that for $k\not=0$ the elements $t^kId_n$  and $t^kId$  lie in the ideal having zero intersection with $\mathfrak{h}.$ Thus, these elements have zero image in the Kac-Moody superalgebra. The affine Kac-Moody superalgebra, which we denote by
$\mathfrak{psl}\left(n|n\right)^{\left(1\right)},$ is the 
subquotient of $\mathfrak{gl}\left(n|n\right)^{\left(1\right)}$, which as
 a $\mathfrak{psl}(n|n)$-module, takes the form
$$\mathfrak{psl}(n|n)^{(1)}=\bigoplus_{k\in\mathbb{Z}\backslash\left\{ 0\right\} }\mathbb{C}t^{k}\otimes\mathfrak{psl}\left(n|n\right)\oplus\mathfrak{gl}\left(n|n\right)\oplus\mathbb{C}K\oplus\mathbb{C}d.$$

\subsection{Twisted affinization}
Let $\mathfrak{\dot{g}}$ be either a simple Kac-Moody superalgebra or $\mathfrak{psl}\left(n|n\right),$ and let $\sigma: \mathfrak{\dot{g}} \to \mathfrak{\dot{g}}$ be an automorphism of order $n$. Denote $\xi: = e^{\frac{2\pi i}{n}}$, so the eigenvalues of $\sigma$ are $$\{\xi^j \mid 0 \leq j \leq n-1\}.$$ Denote by $\mathfrak{\dot{g}}^j$ the eigenspace corresponding to $\xi^j$. For simplicity, we write $\mathfrak{\dot{g}}^0 = \mathfrak{\dot{g}}^\sigma,$ $\mathfrak{\dot{h}}^\sigma := \mathfrak{\dot{h}} \cap \mathfrak{\dot{g}}^\sigma,$ and $\overline{s} = s \mod n.$

This gives the following $\mathbb{Z}_{n}$ grading of $\mathfrak{\dot{g}}$:

$$\mathfrak{\dot{g}} = \bigoplus_{i=0}^{n-1} \mathfrak{\dot{g}}^i.$$

By \cite{vdL}, $\mathfrak{\dot{g}}^{\sigma}$ is a simple Kac-Moody superalgebra with Cartan subalgebra $\mathfrak{\dot{h}}^{\sigma}$, while each $\mathfrak{\dot{g}}^i$ is a module over $\mathfrak{\dot{g}}^{\sigma}$. Moreover, $\mathfrak{\dot{g}}^{1}$  is generated by its highest vector. 
We define the \textit{twisted loop algebra of order $n$} as

$$\bigoplus_{k \in \mathbb{Z}} \mathbb{C} t^k \otimes \mathfrak{\dot{g}}^{\overline{k}},$$

with the Lie bracket
\[
[t^i x, t^j y] = t^{i+j} [x, y].
\]

The twisted Lie superalgebra of $\mathfrak{\dot{g}}$, denoted by $\mathfrak{g}$ or $\mathfrak{\dot{g}}^{(n)}$, is obtained, similarly to the non-twisted case, by a central extension of the loop algebra, together with a derivation $d = t \frac{d}{dt}$. This gives:

$$\mathfrak{g} = \bigoplus_{k \in \mathbb{Z}} \mathbb{C} t^k \otimes \mathfrak{\dot{g}}^{\overline{k}} \oplus \mathbb{C}K \oplus \mathbb{C}d,$$

with the following relations:
\[
[d, t^k a] = k t^k a, \quad [t^i x, t^j y] = t^{i+j} [x, y] + \delta_{i, -j} (x, y) \cdot K,
\]
where $K$ is central and $(-, -)$ is a non-degenerate even bilinear form on $\mathfrak{\dot{g}}$. The bilinear form can be extended to its affinization, similarly as in the non-twisted case. 

We naturally extend the automorphism $\sigma$ to $\mathfrak{g}$ by defining:
$$\widetilde{\sigma}\left(t^{k}x\right)=\xi^{-k}t^{k}\sigma\left(x\right), \widetilde{\sigma}\left(d\right)=d, \widetilde{\sigma}\left(K\right)=K.$$ Then, the set of fixed points of $\widetilde{\sigma}$  coincides with the set of elements of the twisted algebra  $\mathfrak{g}.$ 
Indeed, $t^{k}x$  is a fixed point if and only if $t^{k}x=t^{k}\xi^{-k}\sigma\left(x\right).$ This is equivalent to $\sigma\left(x\right)=\xi^{k}x,$ that is, $x\in\mathfrak{\dot{g}}^{\overline{k}}.$ 
\subsubsection{Root system and Cartan subalgebra}
The Cartan subalgebra of $\mathfrak{g}$  is $$\mathfrak{{h}:=\mathfrak{\dot{h}}^{\sigma}\oplus\mathbb{C}}K\oplus\mathbb{C}d.$$
To construct a set of simple roots for $\mathfrak{g}$, we start with a set of simple roots $\dot{\Sigma}^\sigma$ of $\mathfrak{\dot{g}}^{\sigma}$. Let $\theta$ denote the unique highest weight of $\mathfrak{\dot{g}}^{\overline{1}}$ as a $\mathfrak{\dot{g}}^{\sigma}$-module. We define $\delta \in \mathfrak{h^{*}}$ by $\delta\left(\mathfrak{\dot{h}}^{\sigma}\oplus\mathbb{C}K\right) = 0$ and $\delta(d) = 1.$ The set of simple roots for $\mathfrak{g}$ is then given by $\Sigma := \dot{\Sigma}^\sigma \cup \left\{ \delta - \theta \right\}$. For all $\alpha\in\Sigma$, $\alpha^{\vee}\in [\fg,\fg]\cap \fh=\dot{\fh}^{\sigma}\oplus\mathbb{C}K$,
 so $\delta(\alpha^{\vee})=0$; hence $\delta$  lies in the kernel of the Cartan matrix $\mathfrak{h}.$ 

The construction of the root system in the twisted case is more complex than in the non-twisted case. Essentially, the roots take the form \( \alpha + m\delta \), where \( \alpha \) is a weight in  $\mathfrak{\dot{g}}^{\overline{m}}$  as a  $\mathfrak{\dot{g}}^{\sigma}$-module, and $m\in\mathbb{Z}$. The descriptions of roots given in ~\cite{vdL} and ~\cite{Sint} 
 imply that $\alpha+m\delta$  is a real root if and only if $\alpha$ is not the zero weight space of  $\mathfrak{\dot{g}}^{\overline{m}}$ as a $\mathfrak{\dot{g}}^{\sigma}$-module. The imaginary roots take the form $m\delta,$ where $m\in\mathbb{Z}.$ The parity of the roots is determined by the parity of the 
$\alpha$-weight vectors (where $\alpha$ can be also 0) of  $\mathfrak{\dot{g}}^{\overline{m}}$ as a  $\mathfrak{\dot{g}}^{\sigma}$-module.

A detailed, case-by-case description of the root systems is provided in~\cite{vdL}, Table 5, and explicit calculations can be found in~\cite{vdLbook}, Chapter 6.

The twisted case may exhibit significantly different properties compared to the non-twisted case. For example, in the algebra $A\left(2k,2l\right)^{\left(4\right)},$ not all imaginary roots are even. This algebra will be explored in greater detail in Chapter 4 of this thesis.

\subsubsection{Root spaces}
The real root spaces are of the form  $\mathfrak{g}_{\alpha+k\delta} = \mathbb{C} t^{k} e_{\alpha}$, where \( e_{\alpha} \) is a weight vector corresponding to the weight \( \alpha \) in  $\mathfrak{\dot{g}}^{\overline{k}}$  as a $\mathfrak{\dot{g}}^{\sigma}$-module (the non-zero weight spaces are one-dimensional).  
The imaginary root spaces are of the form:  

$$\mathfrak{g}_{k\delta} = \left\{ t^{k} v \, \middle| \, v \text{ is a zero-weight vector of } \mathfrak{\dot{g}}^{\overline{k}} \text{ as a } \mathfrak{\dot{g}}^{\sigma}\text{-module} \right\}.$$
 
In other words, $\mathfrak{g}_{m\delta}$ can be seen as the zero-weight space of $\mathfrak{\dot{g}}^{\overline{m}}$ by identifying $t^{m}\otimes v\in\mathfrak{g}_{m\delta}$ with $v$. This identification is an isomorphism of vector spaces. 

In contrast to the non-twisted case, it is not clear whether the imaginary root spaces commute when \( i + j \neq 0 \), and in fact, this is not always the case. In the next chapter, we will examine the structure of the algebra spanned by the Cartan subalgebra and the imaginary root spaces.

\subsection{Classification of all symmetrizable affine Kac-Moody superalgebras}
A full classification of the symmetrizable twisted and non-twisted affine Kac-Moody superalgebras, was provided by Johan van de Leur. Up to isomorphism, the non-twisted Kac-Moody superalgebras, which are not purly even, are:  
$$A\left(k,l\right)^{\left(1\right)},B\left(k,l\right)^{\left(1\right)},D\left(k,l\right)^{\left(1\right)},F\left(4\right)^{\left(1\right)},G\left(3\right)^{\left(1\right)}, D(2|1;\alpha)^{(1)},$$
where $A(k,k)^{(1)}=\mathfrak{psl}^{\left(1\right)}\left(k+1|k+1\right),$  as defined above. The twisted Kac-Moody superalgebras, up to isomorphism (of Kac-Moody superalgebras, but not $\mathbb{Z}_{n}-$grading), which are not purly even, are:
\begin{equation}
\begin{aligned}
A\left(2k, 2l-1\right)^{(2)}, &\quad A\left(2k-1, 2l-1\right)^{(2)} \text{, where } (k,l) \neq (1,1), \\
&\quad A\left(2k,2l\right)^{\left(4\right)}, \quad D\left(k,l\right)^{(2)}.
\end{aligned}
\label{eq:Classification affine}
\end{equation}

Originally, van de Leur included \( G\left(3\right)^{(2)} \) in the classification, however, $G(3)^2\simeq G(3)^1$ (see ~\cite{Shay}, Section 8).

Note that $C(l+1)=\mathfrak{osp}(2|2l)=D(1|l),$  so in fact $C\left(l+1\right)^{\left(2\right)}$  included in this list as well. 

A list of automorphisms that give rise to the above twisted Lie superalgebras is described in~\cite{vdL}, Table 4 (originally was found by Serganova in ~\cite{Sint2} as mentioned before). These automorphisms are explicitly defined on the Chevalley generators relative to a fixed set of simple roots, and this definition extends naturally to the entire Lie superalgebra. For a general formula describing automorphisms applicable to any chosen set of simple roots, see~\cite{vdLbook}, Chapter 6.

The automorphism of order 2 acts as a signed permutation on the Chevalley generators and can thus be interpreted as a signed automorphism of the Dynkin diagram of  $\mathfrak{\dot{g}}.$

\subsection{The principal Lie algebra $\mathfrak{g}_{pr}$ in the affine symmetrizable case} 
Assume $\mathfrak{g}$  is an indecomposable affine symmetrizable Kac-Moody superalgebra. So there exists finite dimensional Kac-Moody algebra $\mathfrak{\dot{g}},$ such that $\mathfrak{g}$ is its affinization (twisted or non-twisted). 
\subsubsection{}
\label{properties_by_section_9}
Recall $\mathfrak{g}_{pr}\simeq\left[\mathfrak{g}\left(B_{\pi}\right),\mathfrak{g}\left(B_{\pi}\right)\right]/\mathfrak{c},$ where $\mathfrak{c}$  is an ideal lies in the centre of $\mathfrak{g}(B_\pi).$
By ~\cite{Sint}, Section $9,$ the following hold:
\begin{enumerate}
    \item $\fg(B_\pi)=\fg(B_1)\times\fg(B_2)\times\cdot\cdot\cdot\times\fg(B_k),$ where $k\in\left\{ 1,2,3\right\}.$ In addition, $k=3$ if and only if $\mathfrak{g}=D\left(2|1;\alpha\right)^{\left(1\right)},$  $\mathfrak{g}=D\left(2|1;\alpha\right),$ or $D(2|n).$ 
    \item For each $i=1,...,k,$  we have that  $\psi$  maps the centre $\mathbb{C}K_i$  of $\mathfrak{g}(B_i)$ to $\mathbb{C}K.$ In particular, $\mathfrak{c}$  has zero intersection with $\left[\mathfrak{g}\left(B_{i}\right),\mathfrak{g}\left(B_{i}\right)\right],$  so we have an embedding of this algebra to $\mathfrak{g}_{pr}.$ This embedding extends to an embedding of $\mathfrak{g}(B_i)$  to $\mathfrak{g}_{pr}\oplus\mathbb{C}d.$ 
    \item If the centre of $\mathfrak{g}(B_\pi)$ is of dimension $k,$ then the central ideal $\mathfrak{c}$  is of dimension $k-1.$ 
\end{enumerate}
\subsubsection{}
\begin{lem}{non-deg}
    The kernel of the restriction of $(-,-)$ to $\mathfrak{g}_{pr}$ is $\mathbb{C}K,$  and the restriction to $\mathfrak{g}_{pr}\oplus\mathbb{C}d$  is non-degenerate. 
\end{lem}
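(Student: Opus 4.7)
The plan is to invoke Corollary~\ref{restriction_of_bili} to reduce the first assertion to a computation of the centre $\mathfrak{c}(\mathfrak{g}_{pr})$, and then to exploit the single pairing $(K,d)=1$ together with the fact that $K$ itself lies in $\mathfrak{g}_{pr}$ in order to promote non-degeneracy to $\mathfrak{g}_{pr}\oplus\mathbb{C}d$.

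I would first show that $\mathfrak{c}(\mathfrak{g}_{pr})=\mathbb{C}K$. By property (ii) of \ref{properties_by_section_9}, each $\psi(K_i)$ is a nonzero element of $\mathbb{C}K$ (it is a priori in $\mathbb{C}K$, and it is nonzero because $K_i\in[\mathfrak{g}(B_i),\mathfrak{g}(B_i)]$ while $\mathfrak{c}\cap[\mathfrak{g}(B_i),\mathfrak{g}(B_i)]=0$). In particular $K\in\mathfrak{g}_{pr}$, and since $K$ is central in $\mathfrak{g}$ it is central in $\mathfrak{g}_{pr}$, giving $\mathbb{C}K\subset\mathfrak{c}(\mathfrak{g}_{pr})$. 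For the reverse inclusion, take any $x\in\mathfrak{c}(\mathfrak{g}_{pr})$ and lift it to $\tilde{x}\in[\mathfrak{g}(B_{\pi}),\mathfrak{g}(B_{\pi})]$; by \Prop{properties of surjective homomorphism}(ii), $\tilde{x}\in\mathfrak{c}(\mathfrak{g}(B_{\pi}))$. Since $\mathfrak{g}$ is affine, each block $B_i$ is of affine type (as recorded right before the lemma), so the centre of $[\mathfrak{g}(B_i),\mathfrak{g}(B_i)]$ is $\mathbb{C}K_i$; hence $\tilde{x}\in\bigoplus_{i}\mathbb{C}K_i$ and therefore $x=\psi(\tilde{x})\in\mathbb{C}K$. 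Together with Corollary~\ref{restriction_of_bili} this identifies the kernel of $(-,-)|_{\mathfrak{g}_{pr}}$ with $\mathbb{C}K$.

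For the second claim, I first note that $\mathfrak{g}_{pr}\subset[\mathfrak{g},\mathfrak{g}]$ (the generating root spaces of $\mathfrak{g}_{pr}$ are non-zero-weight spaces, hence lie in $[\mathfrak{h},\mathfrak{g}]$), while in the affine case $d\notin[\mathfrak{g},\mathfrak{g}]$, so the sum $\mathfrak{g}_{pr}\oplus\mathbb{C}d$ is genuinely direct. Suppose $z=x+\alpha d$ with $x\in\mathfrak{g}_{pr}$ lies in the radical of $(-,-)|_{\mathfrak{g}_{pr}\oplus\mathbb{C}d}$. Testing against $K\in\mathfrak{g}_{pr}$ and using $(x,K)=0$ (which holds because $K$ is in the kernel of $(-,-)|_{\mathfrak{g}_{pr}}$, as established above), one gets $0=(z,K)=\alpha(K,d)=\alpha$, so $\alpha=0$. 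Then $x$ is orthogonal to all of $\mathfrak{g}_{pr}$, and the first part forces $x=cK$; finally, testing against $d$ gives $0=(z,d)=c(K,d)=c$, so $z=0$.

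The main subtlety in this proof is the identification $\mathfrak{c}(\mathfrak{g}_{pr})=\mathbb{C}K$: one has to combine two non-trivial structural inputs, namely \Prop{properties of surjective homomorphism}(ii) to lift centrality from $\mathfrak{g}_{pr}$ to $\mathfrak{g}(B_{\pi})$, and the classification result \ref{properties_by_section_9} guaranteeing that each block $B_i$ is again of affine type so that $\mathbb{C}K_i$ exhausts the centre of its derived subalgebra. Once this centre has been pinned down, the non-degeneracy on $\mathfrak{g}_{pr}\oplus\mathbb{C}d$ is a one-line consequence of the pairing $(K,d)=1$.
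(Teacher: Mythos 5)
Your proof is correct and follows essentially the same route as the paper: identify $\ker(-,-)|_{\mathfrak{g}_{pr}}$ with $\mathfrak{c}(\mathfrak{g}_{pr})$ via Corollary~\ref{restriction_of_bili}, show $\mathfrak{c}(\mathfrak{g}_{pr})=\mathbb{C}K$, and then use $(K,d)=1$. The only (harmless) difference is in how you pin down the centre: the paper cites the computation already done in the proof of Proposition~\ref{Cartan_intersect_pr} (which argues via $\mathfrak{c}(\mathfrak{g}_{pr})\subseteq\mathfrak{c}(\mathfrak{g})$ with a separate case for $\mathfrak{psl}^{(1)}(n|n)$), whereas you re-derive it directly from \Prop{properties of surjective homomorphism}(ii) and the fact that each affine block $\fg(B_i)$ has centre $\mathbb{C}K_i$, which avoids that case split.
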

\begin{proof}
   As already pointed out in the proof of \Prop{Cartan_intersect_pr}, one has $\mathfrak{c}\left(\mathfrak{g}_{pr}\right)=\mathbb{C}K.$ Thus, by \Cor{restriction_of_bili},
   $$\ker\left(-,-\right)|_{\mathfrak{g}_{pr}}=\mathfrak{c}\left(\mathfrak{g}_{pr}\right)=\mathbb{C}K.$$
   Finally, since $(K,d)=1,$  we get that the restriction of $(-,-)$ to $\mathfrak{g}_{pr}\oplus\mathbb{C}d$ is non-degenerate. 

\end{proof}

\subsubsection{Example: $\mathfrak{sl}^{\left(1\right)}\left(m|n\right)$} \label{ex-sl(m|n)}
    In ~\cite{Sint}, various examples are provided, showing how $\mathfrak{g}_{pr}$  and $\mathfrak{g}\left(B_\pi\right)$  looks like in different cases. We first give example in which $\mathfrak{g}\left(B_\pi\right)$ is  decomposable. Consider  $\mathfrak{g}=\mathfrak{sl}^{\left(1\right)}\left(m|n\right).$ 
    First note that $\mathfrak{sl}\left(m|n\right)_{\overline{0}}=\mathfrak{sl}_{m}\times\mathfrak{sl}_{n}\times\mathbb{C}z,$  where  $z=\left(\begin{array}{cc}
m\cdot I_{n} & 0\\
0 & n\cdot I_{m}
\end{array}\right).$
Let $K_1,$ $K_2$ and $K_3$ be the centrals and $d_1,d_2$  and $d_3$  be the derivations of the algebras $\mathfrak{sl}_{m}^{\left(1\right)},\mathfrak{sl}_{n}^{\left(1\right)}$  and $\left(\mathbb{C}z\right)^{\left(1\right)}$  respectively.
We have $\mathfrak{g}\left(B_\pi\right)=\mathfrak{sl}_{m}^{\left(1\right)}\times\mathfrak{sl}_{n}^{\left(1\right)},$  $\mathfrak{c}=\mathbb{C}\left(K_{1}-aK_{2}\right),$  for some $a\in\mathbb{C},$ and $\mathfrak{g}_{pr}=\left(\left[\mathfrak{sl}_{m}^{\left(1\right)},\mathfrak{sl}_{m}^{\left(1\right)}\right]\times\left[\mathfrak{sl}_{n}^{\left(1\right)},\mathfrak{sl}_{n}^{\left(1\right)}\right]\right)/\mathfrak{c}.$ Finally: $$\mathfrak{g}_{\overline{0}}=\left(\bigoplus_{k\in\mathbb{Z}}\mathbb{C}t^{k}z\times\left[\mathfrak{sl}_{m}^{\left(1\right)},\mathfrak{sl}_{m}^{\left(1\right)}\right]\times\left[\mathfrak{sl}_{n}^{\left(1\right)},\mathfrak{sl}_{n}^{\left(1\right)}\right]\right)/\left\langle K_{i}-K_{j}\right\rangle _{1\leq i<j\leq3}\rtimes\mathbb{C}d,$$ where  $d=d_{1}+d_{2}+d_{3}.$ We may think of this $d$  as the "merging" of derivations. By using this $d,$  we are effectively considering a subalgebra of the one we would have if we included all the three derivations  in our algebra. 
\subsubsection{Example: $\mathfrak{osp}\left(2|2\right)^{\left(2\right)}$}
\label{ex: osp(2|2)}
In this example, the Cartan datum is $\left(\left(\begin{array}{cc}
2 & -2\\
-2 & 2
\end{array}\right),\tau=\left\{ 1,2\right\} \right).$ Then $\left\{ \alpha,\delta-\alpha\right\} $  is the corresponding set of simple roots, which are odd. We see that $2\alpha,2\left(\delta-\alpha\right)$  are even roots. Moreover, $\pi=\left\{ 2\alpha,2\delta-2\alpha\right\},$  which gives $\mathfrak{g}\left(B_\pi\right)=\mathfrak{sl}_{2}^{\left(1\right)},$ $\mathfrak{g}_{pr}=\left[\mathfrak{sl}_{2}^{\left(1\right)},\mathfrak{sl}_{2}^{\left(1\right)}\right]/\mathbb{C}\left(K_{1}-K_{2}\right).$   So this is an example where $\mathfrak{g}(B)$  is indecomposable, and $\mathfrak{g}_{\overline{0}}=\mathfrak{sl}_{2}^{\left(1\right)}\oplus N,\,N=\bigoplus_{i\in\mathbb{Z}}\mathfrak{g}_{\left(2i+1\right)\delta}.$

\subsection{Structure of $\mathfrak{g}_{\overline{0}}$ in the symmetrizable case}
Given a finite dimensional Kac-Moody superalgebra $\mathfrak{\dot{g}}$ and its affinization (either twisted or non-twisted)  $\mathfrak{g},$
we aim to provide a clear and detailed description of the structure of  $\mathfrak{g}_{\overline{0}}.$  
To be more precise, we wish to find a $\mathfrak{g}_{pr}-$ module $\mathfrak{j}\subset\mathfrak{g}_{\overline{0}},$ such that $\mathfrak{g}_{\overline{0}}=\mathfrak{g}_{pr}\oplus\mathfrak{j}.$ 

\subsubsection{}
Denote: $$\mathfrak{g}_{\alpha,pr}:=\mathfrak{g}_{\alpha}\cap\fg_{pr}\text{ and }\Delta_{pr}:=\left\{ \alpha\in\Delta\,|\,\fg_{\alpha,pr}\neq0\right\}.$$
Note that we do not always have $\Delta_{pr}=\Delta_{\overline{0}}$  (see ~\ref{ex: osp(2|2)}). 
Let $h\in\mathfrak{h}$  be a generic element,  that is, $Re\left(\alpha\left(h\right)\right)\neq0$  for every $\alpha\in\Delta.$ The generic element defines a triangular decomposition on $\mathfrak{g}$, by setting $\Delta^{\pm}=\left\{ \alpha\in\Delta\,|\,\pm Re\left(\alpha\left(h\right)\right)>0\right\}.$
This induces a triangular decomposition on the Lie algebra $\mathfrak{g}_{pr}+\mathfrak{h}. $ \\  
For $\nu\in\mathfrak{h}^{*}$, we denote by $L_{\fg_{\pr}}(\nu)$ the corresponding 
highest weight simple module over $\fg_{\pr}+ \mathfrak{h}.$ 

The following theorem provides us an "invariant description" of $\mathfrak{g}_{pr},$ that is, a description which does not rely on the presentation of $\mathfrak{g}$  as a Kac-Moody superalgebra.  
\subsubsection{}
\begin{thm}{prop1}
\begin{enumerate}
\item
As $\ad (\fg_{\pr}+\mathfrak{h})$-module we have
$$\mathfrak{g}_{\overline{0}}=(\mathfrak{g}_{pr}+\mathfrak{h})\oplus\left(\bigoplus_{s\in\mathbb{Z}}L_{\mathfrak{g}_{pr}}\left(s\delta\right)^{\oplus m_{s}}\right)$$
where $m_0=0$.
\item One has $$\left[\left[\mathfrak{g}_{\overline{0}},\mathfrak{g}_{\overline{0}}\right],\left[\mathfrak{g}_{\overline{0}},\mathfrak{g}_{\overline{0}}\right]\right]=\mathfrak{g}_{pr}.$$
\end{enumerate}
\end{thm}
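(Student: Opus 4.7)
The plan is to exploit the non-degenerate invariant bilinear form $(-,-)$ on $\mathfrak{g}$, setting $V:=\mathfrak{g}_{pr}+\mathfrak{h}$ and taking the complement $M$ to be the form-theoretic orthogonal $V^{\perp}\cap \mathfrak{g}_{\overline{0}}$. The first step is to verify that the restriction of $(-,-)$ to $V$ is non-degenerate: for $v=h+x\in V$ with $h\in\mathfrak{h}$ and $x$ supported on root spaces, pairing with $\mathfrak{h}$ gives $(h,\mathfrak{h})=0$, hence $h=0$ by non-degeneracy on $\mathfrak{h}$, and then $(x,\mathfrak{g}_{pr})=0$ together with \Lem{non-deg} forces $x\in\mathbb{C}K\subset\mathfrak{h}$, so $x=0$. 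This yields $\mathfrak{g}_{\overline{0}}=V\oplus M$, with the decomposition $\ad V$-stable by invariance of the form.

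For part (i), by \Cor{principal_anisotropic} and \eqref{root_space_in_pr} every even non-imaginary root space already lies in $\mathfrak{g}_{pr}\subset V$, so the possible $\mathfrak{h}$-weights of $M$ are $s\delta$ with $s\neq 0$. I show $\mathfrak{g}_{pr}$ acts trivially on $M$: for $v\in M_{s\delta}$ and $e\in\mathfrak{g}_{\pm\alpha}$ with $\alpha\in\pi$, the bracket $[e,v]$ lies in the real root space $\mathfrak{g}_{s\delta\pm\alpha}\subset\mathfrak{g}_{pr}\subset V$ on one hand and in $M$ on the other by $\ad V$-stability, so $[e,v]\in V\cap M=0$. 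To identify the module type, observe that $\delta$ lies in the kernel of the original Cartan matrix, and that each isotropic reflection expresses the new coroots as linear combinations of the old ones by \eqref{eq:isoralphah}; hence $\delta$ vanishes on every $\alpha^{\vee}$ with $\alpha\in\pi$. Every $\alpha\in\pi$ is anisotropic (isotropic reflections do not produce isotropic even simple roots), so a standard $\mathfrak{sl}_{2}$-argument applied to each copy $\mathfrak{g}\langle\alpha\rangle$ shows that both $e_{\alpha}$ and $f_{\alpha}$ annihilate the highest-weight vector of $L_{\mathfrak{g}_{pr}}(s\delta)$, forcing each such simple module to be one-dimensional. Thus $M\cong\bigoplus_{s\neq 0}L_{\mathfrak{g}_{pr}}(s\delta)^{\oplus m_{s}}$ with $m_{s}=\dim M_{s\delta}$, and $m_{0}=0$ since non-degeneracy of the form on $\mathfrak{h}$ gives $M\cap\mathfrak{h}=0$.

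For part (ii), the inclusion $\mathfrak{g}_{pr}\subset [[\mathfrak{g}_{\overline{0}},\mathfrak{g}_{\overline{0}}],[\mathfrak{g}_{\overline{0}},\mathfrak{g}_{\overline{0}}]]$ is immediate: $\mathfrak{g}_{pr}$ is perfect and sits inside $\mathfrak{g}_{\overline{0}}$, so $\mathfrak{g}_{pr}=[\mathfrak{g}_{pr},\mathfrak{g}_{pr}]\subset [[\mathfrak{g}_{\overline{0}},\mathfrak{g}_{\overline{0}}],[\mathfrak{g}_{\overline{0}},\mathfrak{g}_{\overline{0}}]]$. For the reverse inclusion, decomposing via (i) and computing $[V,V]=\mathfrak{g}_{pr}$, $[V,M]\subset M$, and $[M,M]\subset C_{\mathfrak{g}_{\overline{0}}}(\mathfrak{g}_{pr})$ (from Jacobi and the triviality of the $\mathfrak{g}_{pr}$-action on $M$), I obtain $[\mathfrak{g}_{\overline{0}},\mathfrak{g}_{\overline{0}}]=\mathfrak{g}_{pr}+M$, using that the weight-zero part $\mathfrak{h}^{\pi}:=\{h\in\mathfrak{h}:\alpha(h)=0\ \forall\alpha\in\pi\}$ of the centralizer equals $\mathbb{C}K\subset\mathfrak{g}_{pr}$ in the affine case. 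This reduces (ii) to showing $[M,M]\subset\mathfrak{g}_{pr}$. In the non-twisted case this is the identity $[\mathfrak{g}_{k\delta},\mathfrak{g}_{m\delta}]=0$ for $k+m\neq 0$ already established in the chapter. In the twisted case the defining automorphism $\sigma$ has order $n\in\{2,4\}$, and the $\delta$-indices of $M$ lie in cosets of $\mathbb{Z}_{n}$ complementary to the principal coset $\overline{0}$ whose imaginary root spaces sit in $\mathfrak{g}_{pr}$; the classification of admissible orders forces pairwise sums of such indices to return to the principal coset, so $[M_{s\delta},M_{t\delta}]\subset\mathfrak{g}_{(s+t)\delta}\cap\mathfrak{g}_{pr}$. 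The main technical obstacle is precisely this last step: the vanishing of the $M$-component of $[M,M]$ depends on the fine structure of imaginary root spaces in the twisted cases, which is itself the subject of Chapter~4.
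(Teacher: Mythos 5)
Your part (i) is essentially the paper's argument: decompose $\mathfrak{g}_{\overline{0}}$ as $(\mathfrak{g}_{pr}+\mathfrak{h})\oplus N$ using the orthogonal complement with respect to the invariant form (this is exactly what \Lem{non-deg} is used for), observe via (\ref{root_space_in_pr}) that the weights of the complement lie in $\mathbb{Z}\delta\setminus\{0\}$, and kill the $\mathfrak{g}_{pr}$-action by noting that $[\mathfrak{g}_{\alpha,pr},N_{s\delta}]$ lands in a real even root space, hence in $\mathfrak{g}_{pr}$, hence in $(\mathfrak{g}_{pr}+\mathfrak{h})\cap N=0$. This part is fine.

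Part (ii) has a genuine gap. After reducing to $[M,M]\subset\mathfrak{g}_{pr}$, you handle the non-twisted case correctly but your twisted-case argument does not work and you acknowledge as much. The coset claim fails on two counts: first, it is not true that the $\delta$-indices occurring in $M$ avoid the principal coset (for $A(2k|2l)^{(4)}$ both $\overline{s}=0$ and $\overline{s}=2$ contribute even imaginary root spaces, and $0+2\equiv 2$, which does not return to $\overline{0}$); second, even when $\overline{s+t}=0$ you would still need to know that $\mathfrak{g}_{(s+t)\delta}\subset\mathfrak{g}_{pr}$, which is not automatic. The paper closes this exact gap by invoking \Thm{sqdelta} (proved in Chapter 4): $\mathcal{K}_{\overline{0}}$ is an infinite-dimensional Heisenberg algebra, so $[N,N]=\mathbb{C}K\subset\mathfrak{g}_{pr}$ in every case, twisted or not. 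Without that input — which is precisely "the fine structure of imaginary root spaces" you defer — the proof of (ii) is incomplete. A secondary flaw: your claim that $\mathfrak{h}^{\pi}:=\{h\in\mathfrak{h}:\alpha(h)=0\ \forall\alpha\in\pi\}$ equals $\mathbb{C}K$ is false in general (for $\mathfrak{sl}(m|n)^{(1)}$ it is two-dimensional, containing the loop of the extra central element $z$); the correct way to see $[M_{s\delta},M_{-s\delta}]\subset\mathbb{C}K$ is the standard computation $([x,y],h)=s\delta(h)(x,y)$ with $\nu^{-1}(\delta)=K$, i.e.\ the analogue of Theorem 2.2 in Kac's book, which is what \Thm{sqdelta}(i) packages.
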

\begin{proof}
By \Lem{non-deg},  we can write:
$$\mathfrak{g}_{\overline{0}}=(\fg_{\pr}+\mathfrak{h})\oplus N,$$
where $N:=\{u\in \mathfrak{g}_{\overline{0}}| \ (\fg_{\pr}+ \mathfrak{h},u)=0\}$ is a $(\fg_{\pr} +\mathfrak{h})$-submodule of
$\mathfrak{g}_{\overline{0}}$. By ~(\ref{root_space_in_pr}), we have $\Omega(N)\subset\Delta^{im}\cap\Delta_{\overline{0}}.$ Thus,
$N=\oplus_{s\in \mathbb{Z}} N_{s\delta},$  where $N_{s\delta}$  are $s\delta$  weight spaces. Furthermore, for every $\alpha\in\pi,$ we have: $$[\fg_{\alpha,\pr}, N_{s\delta}]\subset\fg_{\alpha+s\delta}\cap N=\left\{0 \right\}.$$ Thus, $[\fg_{\pr}, N_{s\delta}]=0$, and $N_{s\delta}$ is a $(\mathfrak{g}_{\pr}+\mathfrak{h})$-submodule of $N$.  
Hence, $N_{s\delta}= L_{\fg_{\pr}}(s\delta)^{\oplus m_s},$ where $m_s:=\dim N_{s\delta}$. Since  $\mathfrak{h}\subset\fg_{\pr}+\mathfrak{h}$, clearly $m_0=0$. This proves (i).

Applying \Thm{sqdelta} along with $(i),$ we obtain $\left[N,N\right]=\mathbb{C}K.$ In addition, by $(i)$ it follow that $[\fg_{\pr}, N]=0,$ and by definition of $\fg_{pr}$ we have $[\fg_{pr},\fg_{pr}]=\mathfrak{g}_{pr}.$ Therefore
$$\left[\mathfrak{g}_{\overline{0}},\mathfrak{g}_{\overline{0}}\right]=[(\fg_{\pr}+\mathfrak{h})\oplus N,
(\fg_{pr}+\mathfrak{h})\oplus  N]=\fg_{pr}+\mathbb{C}K+[\fh,N]=\fg_{\pr}\oplus N.$$
Consequently, we derive: 
$$\left[\left[\mathfrak{g}_{\overline{0}},\mathfrak{g}_{\overline{0}}\right],\left[\mathfrak{g}_{\overline{0}},\mathfrak{g}_{\overline{0}}\right]\right]=[\fg_{\pr}\oplus N,\fg_{\pr}\oplus N]=
\fg_{pr}+[N,N]=\fg_{\pr}+\mathbb{C}K=\fg_{\pr}$$
as required.

\end{proof}
\subsubsection{}
The following lemma also will be useful for us.

\begin{lem}{Codim of g_pr} 
One has:
$$\dim\mathfrak{h}-\dim\left(\mathfrak{h}\cap\mathfrak{g}_{pr}\right)=\dim\mathfrak{c}\left(\mathfrak{g}_{\overline{0}}\right),$$ where $\mathfrak{c}\left(\mathfrak{g}_{\overline{0}}\right)$ is the centre of  $\mathfrak{g}_{\overline{0}}.$ 

\end{lem}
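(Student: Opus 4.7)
The proof will proceed by computing both sides of the identity directly from the root data of $\mathfrak{g}$, thereby avoiding the subtleties of $[\mathfrak{g},\mathfrak{g}]$ (which behaves slightly differently in the $\mathfrak{psl}^{(1)}(n|n)$ case). The first input is that $\mathfrak{h}$ is its own centraliser in $\mathfrak{g}$, so the zero-weight subspace of $\mathfrak{g}_{\overline{0}}$ equals $\mathfrak{h}$; every central element of $\mathfrak{g}_{\overline{0}}$ therefore lies in $\mathfrak{h}$, and
$$\mathfrak{c}(\mathfrak{g}_{\overline{0}})=\{h\in\mathfrak{h}\ :\ \beta(h)=0\text{ for all }\beta\in\Delta_{\overline{0}}\}.$$
The perfect pairing $\mathfrak{h}^{*}\times\mathfrak{h}\to\mathbb{C}$ then yields $\dim\mathfrak{c}(\mathfrak{g}_{\overline{0}})=\dim\mathfrak{h}-\dim\operatorname{span}(\Delta_{\overline{0}})$.

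Next, I would apply \Prop{Cartan_intersect_pr}(i), which identifies $\mathfrak{h}\cap\mathfrak{g}_{pr}$ with $\sum_{\alpha\in\Delta^{\an}}\mathbb{C}\alpha^{\vee}$. Since $\mathfrak{g}$ is affine symmetrizable, the invariant bilinear form restricts to a non-degenerate form on $\mathfrak{h}$ and determines an isomorphism $\iota:\mathfrak{h}\stackrel{\sim}{\longrightarrow}\mathfrak{h}^{*}$; by the discussion at the end of Section 2.7, $\iota$ sends each real anisotropic coroot $\alpha^{\vee}$ to a non-zero scalar multiple of $\alpha$. Consequently $\iota(\mathfrak{h}\cap\mathfrak{g}_{pr})=\operatorname{span}(\Delta^{\an})$ and $\dim(\mathfrak{h}\cap\mathfrak{g}_{pr})=\dim\operatorname{span}(\Delta^{\an})$. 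At this stage the lemma reduces to the set equality $\operatorname{span}(\Delta^{\an})=\operatorname{span}(\Delta_{\overline{0}})$ inside $\mathfrak{h}^{*}$.

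The inclusion $\operatorname{span}(\Delta^{\an})\subseteq\operatorname{span}(\Delta_{\overline{0}})$ is immediate: even anisotropic roots already belong to $\Delta_{\overline{0}}$, and for an odd anisotropic root $\alpha$ we have $2\alpha\in\Delta_{\overline{0}}$ by Subsection \ref{realroots}, so $\alpha\in\mathbb{C}\cdot 2\alpha\subseteq\operatorname{span}(\Delta_{\overline{0}})$. The reverse inclusion is automatic on $\Delta^{\an}_{\overline{0}}$, so it reduces to showing $\delta\in\operatorname{span}(\Delta^{\an})$, since every even imaginary root is a scalar multiple of $\delta$.

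Establishing $\delta\in\operatorname{span}(\Delta^{\an})$ is the only step I expect to require genuine effort, and is the main obstacle. The plan is to invoke the Van de Leur--Hoyt classification of affine Kac--Moody superalgebras reviewed in Section 3 and exhibit, in every type, a pair of even real anisotropic roots whose difference is a non-zero integer multiple of $\delta$: for the non-twisted types a root $\dot{\alpha}\in\dot{\Delta}_{\overline{0}}$ together with $\dot{\alpha}+\delta$, and for the twisted types (including the exceptional $A(2k|2l)^{(4)}$) an analogous $\delta$-shift among the even real roots provided by the tables in \cite{vdL,vdLbook}. Rescaling then produces $\delta$ itself, and combining the two dimension formulas with the equality of spans gives the desired identity $\dim\mathfrak{h}-\dim(\mathfrak{h}\cap\mathfrak{g}_{pr})=\dim\mathfrak{c}(\mathfrak{g}_{\overline{0}})$.
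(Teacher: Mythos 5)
Your argument is correct, but it takes a genuinely different route from the paper's. The paper's proof is a short consequence of \Prop{Cartan_intersect_pr}(ii) together with the explicit description $\fh=\dot{\fh}^{\sigma}\oplus\mathbb{C}K\oplus\mathbb{C}d$: choosing a complement $\fh''$ of $\fc(\fg_{\overline{0}})$ inside $\fh\cap[\fg,\fg]$, it identifies $\fh\cap\fg_{pr}=\fh''\oplus\mathbb{C}K$ and reads off the dimensions. You instead invoke part (i) of that proposition and convert both sides into statements about root spans: $\dim\fc(\fg_{\overline{0}})=\dim\fh-\dim\operatorname{span}(\Delta_{\overline{0}})$ (valid because the centraliser of $\fh$ is $\fh$ itself, so the centre sits inside $\fh$), and $\dim(\fh\cap\fg_{pr})=\dim\operatorname{span}(\Delta^{an})$ via the isomorphism $\iota$ induced by the invariant form. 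What this buys is an argument that never mentions $[\fg,\fg]$ or the affinization data, so no special treatment of $\mathfrak{psl}^{(1)}(n|n)$ is needed; the price is the extra step $\delta\in\operatorname{span}(\Delta^{an})$, which the paper's route gets for free. That step is genuinely required, but it is easier than your planned case-by-case check: for any even root $\beta$ of $\dot{\fg}^{\sigma}$, both $\beta$ and $\beta+r\delta$ (with $r$ the order of $\sigma$, and $r=1$ in the untwisted case) are even real roots of $\fg$, and every even real root is automatically anisotropic, since an isotropic real root lies in some $\Sigma'\in\Sk$ with $a'_{ii}=0$, which by the Kac--Moody axioms forces it to be odd. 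You should state this last fact explicitly, as you also use it implicitly when you reduce the inclusion $\operatorname{span}(\Delta_{\overline{0}})\subseteq\operatorname{span}(\Delta^{an})$ to the single element $\delta$.
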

\begin{proof}
The Cartan subalgebra of  $\mathfrak{g}$  is given by  $\mathfrak{h}=\mathfrak{\dot{h}}^{\sigma}\oplus\mathbb{C}d\oplus\mathbb{C}K$,  where $\sigma$ is the automorphism used in the process of affinization. If $\mathfrak{g}$ is non-twisted, we can regard $\sigma$  as the identity map, in which case $\mathfrak{\dot{h}}=\mathfrak{\dot{h}}^{\sigma}.$  Let $\mathfrak{h''\subset\mathfrak{\dot{h}}^{\sigma}\oplus\mathbb{C}}K$  be a subalgebra such that $\mathfrak{\mathfrak{\dot{h}}^{\sigma}\oplus\mathbb{C}}K=\mathfrak{h''}\oplus\mathfrak{c}\left(\mathfrak{g}_{\overline{0}}\right).$ By \Prop{Cartan_intersect_pr}, we know that $\left(\mathfrak{h}\cap\mathfrak{g}_{pr}\right)+\mathfrak{c}\left(\mathfrak{g}_{\overline{0}}\right)=\mathfrak{h}\cap\left[\mathfrak{g},\mathfrak{g}\right],$ and $\left(\mathfrak{h}\cap\mathfrak{g}_{pr}\right)\cap\mathfrak{c}\left(\mathfrak{g}_{\overline{0}}\right)=\mathbb{C}K.$ Therefore,    $$\mathfrak{h}\cap\mathfrak{g}_{pr}=\mathfrak{h''}\oplus\mathbb{C}K.$$
Hence, we conclude: $$\dim\mathfrak{h}-\dim\left(\mathfrak{h}\cap\mathfrak{g}_{pr}\right)=\dim\mathfrak{c}\left(\mathfrak{g}_{\overline{0}}\right).$$
\end{proof}
\begin{rem}{}
   From the classification of symmetrizable affine Kac-Moody superalgebras, (see ~\cite{vdL}), it can be observed that $\dim\mathfrak{c}\left(\mathfrak{g}_{\overline{0}}\right)$ is either $1$ or $2,$ in almost every case, depending on the specific algebra. The only exception is $\mathfrak{psl}\left(n|n\right)^{\left(1\right)},$  where $\dim\mathfrak{c}\left(\mathfrak{g}_{\overline{0}}\right)=3.$ 
\end{rem}
\subsection{The queer twisted Lie superalgebra $\mathfrak{q}_{n}^{\left(2\right)}$}
\label{queer_twisted}
\subsubsection{Realization}\label{realization of queer twisted}
 The queer twisted Lie superalgebra is also constructed through the process of affinization, but in this case, it is applied to the queer Lie superalgebra  $\mathfrak{q}\left(n\right),$  which is not Kac-Moody. The automorphism  used in this process, is described in ~\cite{Sint2}  and defined as follow: it is acting by $id$  on the even part, and by $-id$ on the odd part (see ~\cite{Gorelik and Serganova} for details). Note that such an automorphism, when applied to a finite-dimensional Kac-Moody superalgebra, or $\mathfrak{psl}(n|n),$ is an inner automorphism. 
 
 We present the following realization of  $\mathfrak{q}_{n}^{\left(2\right)},$  as described in  ~\cite{Gorelik and Serganova}: 
 $$\mathfrak{g}_{\overline{0}}=\mathfrak{sl}_{n}\otimes\mathbb{C}\left[t^{\pm2}\right]\oplus\mathbb{C}K\oplus\mathbb{C}d,\,\mathfrak{g}_{\overline{1}}=\mathfrak{sl}_{n}\otimes\mathbb{C}\left[t^{\pm2}\right].$$  The relations are given by: $$\left[x\otimes t^{m},y\otimes t^{n}\right]=\left[x,y\right]\otimes t^{m+n}+\delta_{m,-n}\left(1-\left(-1\right)^{m}\right)\text{tr}\left(xy\right)K.$$
 One immediate consequence of these relations is that $K$  does not lie in  $\left[\mathfrak{g}_{\overline{0}},\mathfrak{g}_{\overline{0}}\right]$, but lies within $\left[\mathfrak{g}_{\overline{1}},\mathfrak{g}_{\overline{1}}\right].$
 
Moreover, as noted in ~\cite{Sint}, the even part admits the decomposition $\mathfrak{g}_{\overline{0}}= \mathfrak{g}_{pr}\oplus\mathbb{C}K\oplus\mathbb{C}D$, where  $\mathfrak{g}_{pr}\simeq\left[\mathfrak{sl}_{n}^{\left(1\right)},\mathfrak{sl}_{n}^{\left(1\right)}\right]/\mathbb{C}K.$ 
  
\subsubsection{}
\begin{claim}{form of pr}
    One has $\left[\mathfrak{g}_{\overline{0}},\mathfrak{g}_{\overline{0}}\right]=\mathfrak{g}_{pr}.$
    \begin{proof}
This result follows from a direct computation, using the realization: $$\left[\mathfrak{g}_{\overline{0}},\mathfrak{g}_{\overline{0}}\right]=\left[\mathfrak{sl}_{n}\otimes\mathbb{C}\left[t^{\pm2}\right]\oplus\mathbb{C}K\oplus\mathbb{C}d,\quad \mathfrak{sl}_{n}\otimes\mathbb{C}\left[t^{\pm2}\right]\oplus\mathbb{C}K\oplus\mathbb{C}d\right]=[\mathfrak{sl}^{(1)}_{n},\mathfrak{sl}^{(1)}_{n}]/\mathbb{C}K=\mathfrak{g}_{pr}.$$
\end{proof}
\end{claim}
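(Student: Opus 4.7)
My plan is to carry out the computation directly from the realization in~\ref{realization of queer twisted}, and then compare the result to the explicit description of $\mathfrak{g}_{pr}$ given there. The key observation that trivializes the problem is that in the realization, $\mathfrak{g}_{\overline{0}}$ uses only even powers $t^{2i}$, and the factor $(1-(-1)^m)$ in the bracket formula vanishes whenever $m$ is even; hence no central element is produced by brackets of even elements, and this is precisely what makes $K\notin[\mathfrak{g}_{\overline{0}},\mathfrak{g}_{\overline{0}}]$.

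Concretely, I would split the bracket $[\mathfrak{g}_{\overline{0}},\mathfrak{g}_{\overline{0}}]$ into its pieces. The subspace $\mathbb{C}K$ is central, so it contributes nothing; the derivation $d$ acts on $x\otimes t^{2k}$ by multiplication by $2k$, which recovers all spaces $\mathfrak{sl}_n\otimes t^{2k}$ for $k\neq 0$; and for any $i,j\in\mathbb{Z}$,
\[
[x\otimes t^{2i},y\otimes t^{2j}]=[x,y]\otimes t^{2(i+j)},
\]
since $\delta_{2i,-2j}(1-(-1)^{2i})=0$. Because $\mathfrak{sl}_n$ is perfect, taking $i=j=0$ already yields $\mathfrak{sl}_n\otimes 1$, so the sum of all these contributions gives
\[
[\mathfrak{g}_{\overline{0}},\mathfrak{g}_{\overline{0}}]=\mathfrak{sl}_n\otimes\mathbb{C}[t^{\pm 2}].
\]
In particular, $K$ and $d$ do not lie in this bracket, consistent with the remark preceding the claim.

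It remains to identify the right-hand side with $\mathfrak{g}_{pr}$. By~\ref{realization of queer twisted}, $\mathfrak{g}_{pr}\simeq[\mathfrak{sl}_n^{(1)},\mathfrak{sl}_n^{(1)}]/\mathbb{C}K$. Performing the substitution $s:=t^2$, the algebra $\mathfrak{sl}_n\otimes\mathbb{C}[t^{\pm 2}]$ becomes the plain loop algebra $\mathfrak{sl}_n\otimes\mathbb{C}[s^{\pm 1}]$, which is exactly the quotient of the derived subalgebra of the non-twisted affine algebra $\mathfrak{sl}_n^{(1)}$ (built in the variable $s$) by its one-dimensional centre $\mathbb{C}K$. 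This yields the required equality $[\mathfrak{g}_{\overline{0}},\mathfrak{g}_{\overline{0}}]=\mathfrak{g}_{pr}$.

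There is no real obstacle here; the only thing to be careful about is not to confuse the $K$ coming from the affinization of $\mathfrak{q}(n)$ (which, as recalled, lives in $[\mathfrak{g}_{\overline{1}},\mathfrak{g}_{\overline{1}}]$ but \emph{not} in $[\mathfrak{g}_{\overline{0}},\mathfrak{g}_{\overline{0}}]$) with the central element of the internal copy of $\mathfrak{sl}_n^{(1)}$ that is quotiented out when forming $\mathfrak{g}_{pr}$. Once one is careful about this, the proof is a one-line bracket computation followed by a change of variable.
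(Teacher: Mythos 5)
Your proposal is correct and is essentially the paper's proof: the paper simply asserts the "direct computation" from the realization in~\ref{realization of queer twisted}, and you have written out exactly that computation (the vanishing of the cocycle term $\delta_{m,-n}(1-(-1)^m)$ on even powers of $t$, the perfectness of $\mathfrak{sl}_n$, and the identification of $\mathfrak{sl}_n\otimes\mathbb{C}[t^{\pm 2}]$ with $[\mathfrak{sl}_n^{(1)},\mathfrak{sl}_n^{(1)}]/\mathbb{C}K$ via $s=t^2$). Your closing caution about distinguishing the central element of $\mathfrak{q}_n^{(2)}$ from the one quotiented out in forming $\mathfrak{g}_{pr}$ is exactly the right point to be careful about.
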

\subsubsection{}\label{codim_queer}
Note that \Lem{Codim of g_pr}  does not hold in the twisted queer case. Indeed, $\mathfrak{c}\left(\mathfrak{g}_{\overline{0}}\right)=\mathbb{C}K,$  and $d,K\notin\mathfrak{h}\cap\mathfrak{g}_{pr},$  so $\dim\mathfrak{h}-\dim\mathfrak{h}\cap\mathfrak{g}_{pr}=2.$  
\subsection{Affine Kac-Moody superalgebra $S(2|1;b)$}
\label{S(2|1;b)}
\subsubsection{}\label{Definition of S(2|1;b)}
By definition, $S(2|1;b)=\mathfrak{g}(A_b,\tau_b),$ where $$A_b=\left(\begin{array}{ccc}
0 & b & 1-b\\
-b & 0 & 1+b\\
-1 & -1 & 2
\end{array}\right),\quad \tau_b=\left\{ 1,2\right\} .$$
The algebra $S(2|1;b)$ is a Kac-Moody superalgebra if and only if $b\not\in\mathbb{Z}.$
The matrix $A_b$  has corank $1$ and hence the algebra has one-dimensional center. In addition, the Cartan subalgebra $\mathfrak{h}$ is of dimension $4$.

A realization of this superalgebra can be found in ~\cite{Sint}. The Kac-Moody superalgebra $S(2|1;b)$ can not be constructed via the affinization of a Lie superalgebra. 
\subsubsection{Structure of  $\mathfrak{g}_{\overline{0}}$}\label{structure even S(2|1;b)}
According to ~\cite{Sint}, the even part of $\mathfrak{g}$ decomposes as:
$$\mathfrak{g}_{\overline{0}}=\mathfrak{g}_{pr}\oplus\mathbb{C}d+\mathcal{V}ir,\,\,\,\mathfrak{g}_{pr}\cap\mathcal{V}_{ir}=\mathbb{C}K,$$
where $\mathfrak{g}_{pr}\oplus\mathbb{C}d$ is isomorphic to $\mathfrak{sl}_{2}^{\left(1\right)}$ and $\mathcal{V}_{ir}=\bigoplus_{s\in\mathbb{Z}}\mathbb{C}L_{s}\oplus\mathbb{C}K$ is isomorphic to Virasoro algebra,  with the relations: 
$$\left[L_{s},L_{t}\right]=\left(s-t\right)L_{s+t}+\frac{K}{12}\left(s^{3}-s\right)\delta_{s+t,0}.$$
Here, $L_0-d$  lies in the centre of $\mathfrak{c}\left(\mathfrak{g}_{\overline{0}}\right),$  and $\mathfrak{c}\left(\mathfrak{g}_{\overline{0}}\right)=\mathbb{C}K+\mathbb{C}(L_0-d).$

\subsubsection{}
\begin{lem}{Inside pr}
    One has $\left[\mathfrak{g}_{pr},\mathcal{V}_{ir}\right]\subset\mathfrak{g}_{pr}$
\begin{proof}
Let $x\in\mathfrak{g}_{\alpha,pr}\text{,  where }\alpha\in\Delta^{re}$ is even.  By ~\cite{Sint}, for every $i\in\mathbb{Z}$  we have $L_{i}\in\mathfrak{g}_{-i\delta}$. Thus, we can state that $\left[x,L_{i}\right]\in\mathfrak{g}_{\alpha-i\delta}.$ Note that all the imaginary roots in 
$S(2|1;b)$ are even, and so $\alpha-i\delta$  is an even real root.
By ~(\ref{root_space_in_pr}), it follows that $\left[x,L_{i}\right]\in\mathfrak{g}_{pr}$. Additionally, $\mathfrak{h}\cap\mathfrak{g}_{pr}$ commutes with $\mathcal{V}_{ir}.$ Hence, the result follows. 
\end{proof}
\end{lem}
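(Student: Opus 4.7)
The plan is to exploit the decomposition $\mathfrak{g}_{pr}=(\mathfrak{h}\cap\mathfrak{g}_{pr})\oplus\bigoplus_{\alpha\in\Delta^{an}}\mathfrak{g}_{\alpha}$, which follows from \Prop{Cartan_intersect_pr}(i) together with \Cor{principal_anisotropic} and~(\ref{root_space_in_pr}). Since $\mathcal{V}_{ir}=\mathbb{C}K\oplus\bigoplus_{s\in\mathbb{Z}}\mathbb{C}L_{s}$ and $K$ is central in $\mathfrak{g}_{\overline{0}}$, it suffices to show that $[\mathfrak{g}_{\alpha},L_{i}]\subset\mathfrak{g}_{pr}$ for every $\alpha\in\Delta^{an}$ and $i\in\mathbb{Z}$, and separately that $[\mathfrak{h}\cap\mathfrak{g}_{pr},L_{i}]\subset\mathfrak{g}_{pr}$.

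For the root-space part, I would begin by recording that $L_{i}\in\mathfrak{g}_{-i\delta}$, which is stated in~\ref{structure even S(2|1;b)}. Thus for $x\in\mathfrak{g}_{\alpha}$ with $\alpha\in\Delta^{an}$, we have $[x,L_{i}]\in\mathfrak{g}_{\alpha-i\delta}$. In $S(2|1;b)$ all imaginary roots lie in $\mathbb{Z}\delta\subset\Delta_{\overline{0}}$ (the algebra is principally symmetrizable and purely even on its imaginary part, see the general affine description of roots), so $\alpha-i\delta\in\Delta^{\mathrm{im}}$ would force $\alpha\in\mathbb{Z}\delta$, contradicting $\alpha\in\Delta^{an}\subset\Delta^{re}$. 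Hence $\alpha-i\delta\in\Delta_{\overline{0}}\setminus\Delta^{\mathrm{im}}$, and~(\ref{root_space_in_pr}) gives $\mathfrak{g}_{\alpha-i\delta}\subset\mathfrak{g}_{pr}$ as required.

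For the Cartan part, I would use \Prop{Cartan_intersect_pr}(i) to write any $h\in\mathfrak{h}\cap\mathfrak{g}_{pr}$ as a linear combination of coroots $\beta^{\vee}$ with $\beta\in\pi$, and then observe that $\delta$ vanishes on each such coroot. The original simple coroots $\alpha_{i}^{\vee}$ satisfy $\delta(\alpha_{i}^{\vee})=0$ (this is built into the affine construction; $\delta$ spans the kernel of the Cartan matrix acting from $\mathfrak{h}$). The passage to $\pi$ is achieved by a chain of isotropic reflexions, which, by~(\ref{eq:isoralphah}), only replace simple coroots with linear combinations of the original simple coroots (up to sign), so the vanishing of $\delta$ is preserved. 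Consequently $[h,L_{i}]=-i\delta(h)L_{i}=0$.

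The main obstacle is the clean identification $L_{i}\in\mathfrak{g}_{-i\delta}$ together with the claim that all imaginary roots of $S(2|1;b)$ are even; both are essentially structural input from~\cite{Sint} and from the general affine description of imaginary roots in~\ref{structure even S(2|1;b)}. Once these are granted, the argument is purely formal and reduces to the weight bookkeeping above.
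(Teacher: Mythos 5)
Your computations on root vectors and on $\mathfrak{h}\cap\mathfrak{g}_{pr}$ are correct and essentially reproduce the paper's argument, but the reduction you base them on is false. The decomposition $\mathfrak{g}_{pr}=(\mathfrak{h}\cap\mathfrak{g}_{pr})\oplus\bigoplus_{\alpha\in\Delta^{an}}\mathfrak{g}_{\alpha}$ does not hold: \Prop{Cartan_intersect_pr}, \Cor{principal_anisotropic} and~(\ref{root_space_in_pr}) only give that the right-hand side is \emph{contained} in $\mathfrak{g}_{pr}$, not that it spans it, and in fact $\mathfrak{g}_{pr}$ meets the imaginary root spaces nontrivially. For $S(2|1;b)$ one has $\mathfrak{g}_{pr}\oplus\mathbb{C}d\simeq\mathfrak{sl}_{2}^{(1)}$, whose derived algebra contains the nonzero one-dimensional imaginary root spaces; these sit inside $\mathfrak{g}_{k\delta}\cap\mathfrak{g}_{pr}$ for $k\neq 0$ and are missed by your case analysis. (A secondary point: in general $\Delta^{an}$ may contain odd roots $\alpha$, for which only $\mathfrak{g}_{2\alpha}\subset\mathfrak{g}_{pr}$ is guaranteed, though this does not arise for $S(2|1;b)$.)

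The gap is easy to close: $\mathfrak{g}_{pr}$ is by definition generated by the root spaces $\mathfrak{g}_{\pm\alpha}$ with $\alpha\in\pi$, and $\ad L_{i}$ is a derivation, so once $[\mathfrak{g}_{\pm\alpha},L_{i}]\subset\mathfrak{g}_{pr}$ is known for $\alpha\in\pi$, the Jacobi identity together with the fact that $\mathfrak{g}_{pr}$ is a subalgebra yields $[\mathfrak{g}_{pr},L_{i}]\subset\mathfrak{g}_{pr}$. With that replacement, your weight bookkeeping ($L_{i}\in\mathfrak{g}_{-i\delta}$, $\alpha-i\delta$ even and not imaginary, hence $\mathfrak{g}_{\alpha-i\delta}\subset\mathfrak{g}_{pr}$ by~(\ref{root_space_in_pr})) is exactly the paper's proof; the paper likewise checks the bracket only on even real root vectors and on $\mathfrak{h}\cap\mathfrak{g}_{pr}$, relying on the same implicit reduction to generators. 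Your verification that $\delta$ vanishes on $\mathfrak{h}\cap\mathfrak{g}_{pr}$ is correct, if more detailed than necessary.
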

Thus, we may write: 
$$\mathfrak{g}_{\overline{0}}=((\mathfrak{g}_{pr}\rtimes\mathcal{V}_{ir})\rtimes\mathbb{C}d)/\mathbb{C}\left(K_{1}-K_{2}\right).$$ 
\subsubsection{}
It is no longer true that $\left[\left[\mathfrak{g}_{\overline{0}},\mathfrak{g}_{\overline{0}}\right],\left[\mathfrak{g}_{\overline{0}},\mathfrak{g}_{\overline{0}}\right]\right]=\mathfrak{g}_{pr}.$ Indeed, one has: $\mathcal{V}_{ir}\subset\left[\left[\mathfrak{g}_{\overline{0}},\mathfrak{g}_{\overline{0}}\right],\left[\mathfrak{g}_{\overline{0}},\mathfrak{g}_{\overline{0}}\right]\right].$ 

In addition, \Lem{Codim of g_pr} hold. Indeed, $\dim\mathfrak{c}\left(\mathfrak{g}_{\overline{0}}\right)=2,$  and $\dim\mathfrak{h}-\dim\mathfrak{h}\cap\mathfrak{g}_{pr}=2.$ 

\section{Structure of $\mathcal{K}$}
\subsection{The Lie superalgebra $\mathcal{K}$ }
Let $\mathfrak{\dot{g}}$  be simple Kac-Moody superalgebra (or $\mathfrak{psl}\left(n|n\right)$), and $\mathfrak{g}$  the algebra obtained by twisted affinization. Define the following subalgebra of $\mathfrak{g:}$   $$\mathcal{K}:=\bigoplus_{m\in\mathbb{Z}}\mathfrak{g}_{m\delta},$$ where $\mathfrak{g}_{0\cdot\delta}=\mathfrak{h}.$ Note that $\mathcal{K}$ is also well-defined in case when $\mathfrak{g}$ is affine and non-symmetrizable Kac-Moody superalgebra. However, we will focus on the symmetrizable case.   

The main goal of this section is to understand the relations within  $\mathcal{K}$  and to calculate the dimensions of the imaginary root spaces that constitute  $\mathcal{K}.$ We have already discussed these relations in the non-twisted case. For most of the twisted Kac-Moody superalgebras, $\mathcal{K}$ behaves the same. The only exception is $\mathfrak{g}=A(2k,2l)^{(4)},$  for which $\mathcal{K}$ has a surprisingly different structure. 

By ~\cite{vdL}, $\mathfrak{\dot{g}}^{\sigma}$  is isomorphic to either $B(k|l)$  or $D(k|l).$ If not stated otherwise, from now on, whenever we consider modules over $B(k|l)$  or $D(k|l),$  we will always use the fixed set of simple roots described in ~\ref{notation appendix}. Consequently, when treating $\mathfrak{\dot{g}}^{i}$  as $\mathfrak{\dot{g}}^{\sigma}$ module, we will adopt the corresponding fixed set of simple roots for $\mathfrak{\dot{g}}^{\sigma}.$

In this section will prove the following theorem:
\subsection{}
\begin{thm}{sqdelta}
Let $\fg$ be an indecomposable affine symmetrizable Kac-Moody superalgebra. 

    \begin{enumerate}
        \item  The subalgebra $\mathcal{K}_{\overline{0}}=\mathcal{K}\cap\mathfrak{g}_{\overline{0}},$ is an infinite-dimensional Heisenberg Lie algebra.
        \item If $\mathfrak{g}\neq A\left(2k|2l\right)^{\left(4\right)},$  then $\mathcal{K}=\mathcal{K}_{\overline{0}}$  and thus by $(i)$:  
    $$\left[\mathfrak{g}_{s\delta},\mathfrak{g}_{q\delta}\right]=\begin{cases}
    0 & s+q\neq0\\
    \mathbb{C}K & s+q=0
    \end{cases}$$
        \item If $A\left(2k|2l\right)^{\left(4\right)},$  then $\mathfrak{g}_{s\delta}\subset\mathcal{K}\cap\mathfrak{g}_{\overline{1}}$  if and only if $\overline{s}=1$  or $\overline{s}=3,$  where $$\overline{s}=s\mod 4.$$  

    If $i+j=0,$  then $\left[\mathfrak{g}_{i\delta},\mathfrak{g}_{j\delta}\right]=\mathbb{C}K.$  Otherwise, we have: 
     $$\left[\mathfrak{g}_{i\delta},\mathfrak{g}_{j\delta}\right] =
        \begin{cases}
        0 & i \neq 0, \, \overline{i} = 0 \\
        0 & \overline{i+j} = 0 \\
        \mathfrak{g}_{(i+j)\delta} & \overline{i} = 2, \, \overline{j} = 1,3 \\
        \mathbb{C} x, \quad x \text{ is a nonzero element in } \mathfrak{g}_{(i+j)\delta} & \overline{i} = \overline{j} = 1,3
    \end{cases}$$
    \end{enumerate}
\end{thm}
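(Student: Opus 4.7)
The plan is to reduce every claim of the theorem to a statement about zero-weight spaces of the $\dot{\mathfrak{g}}^{\sigma}$-modules $\dot{\mathfrak{g}}^{\overline{m}}$. The construction of (twisted or non-twisted) affinization gives the identification $\mathfrak{g}_{m\delta}\cong (\dot{\mathfrak{g}}^{\overline m})_{0}$ via $t^{m}\otimes v\mapsto v$, where $(\dot{\mathfrak{g}}^{\overline m})_0$ denotes the zero-weight space with respect to $\dot{\mathfrak{h}}^{\sigma}$. The loop-algebra bracket has the general shape
\[
[t^{i}x,\,t^{j}y] \;=\; t^{i+j}[x,y]_{\dot{\mathfrak{g}}} \;+\; c_{i}\,\delta_{i,-j}(x,y)\,K,
\]
for an appropriate nonzero constant $c_i$, so every assertion reduces to (a) determining when $(\dot{\mathfrak{g}}^{\overline{m}})_{0}$ is nonzero and with what parity, and (b) computing $[x,y]_{\dot{\mathfrak{g}}}$ on pairs of zero-weight vectors. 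The Heisenberg-type central relation $[\mathfrak{g}_{i\delta},\mathfrak{g}_{-i\delta}]\supset\mathbb{C}K$ for $i\neq 0$ is immediate from non-degeneracy of $(-,-)$ on $\mathfrak{h}$.

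For (ii) and the non-twisted part of (i), one has $\dot{\mathfrak{g}}^{\overline{m}}=\dot{\mathfrak{g}}$ with zero-weight space $\dot{\mathfrak{h}}$, which is purely even and abelian; the loop formula immediately yields $[\mathfrak{g}_{i\delta},\mathfrak{g}_{j\delta}]=0$ for $i+j\neq 0$ and $=\mathbb{C}K$ for $i+j=0$. For the remaining twisted algebras from (\ref{eq:Classification affine}) other than $A(2k|2l)^{(4)}$, $\dot{\mathfrak{g}}^{\sigma}$ is of type $B(k|l)$ or $D(k|l)$ and the list of $\dot{\mathfrak{g}}^{\sigma}$-modules $\dot{\mathfrak{g}}^{\overline{m}}$ that appear is a short one tabulated in \cite{vdL}. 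I would then invoke the appendix results on zero-weight spaces of these particular $B(k|l)$-- and $D(k|l)$-modules to show that in every such case the zero-weight space of $\dot{\mathfrak{g}}^{\overline{m}}$ is purely even and sits inside $\dot{\mathfrak{h}}$; abelianness of $\dot{\mathfrak{h}}$ then again kills $[x,y]_{\dot{\mathfrak{g}}}$ on zero-weight vectors and completes (ii).

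Part (iii) is the heart of the argument. Here $\dot{\mathfrak{g}}=\mathfrak{sl}(2k+1|2l+1)$ and $\sigma$ has order $4$ with $\dot{\mathfrak{g}}^{\sigma}\cong B(k|l)$. I would fix the explicit matrix realization of $\sigma$ from \cite{vdL}, producing the four-fold grading by the eigenvalues $1,i,-1,-i$ and identifying $\dot{\mathfrak{g}}^{\overline 2}$ with (a copy of) the adjoint of $B(k|l)$ and $\dot{\mathfrak{g}}^{\overline 1}$, $\dot{\mathfrak{g}}^{\overline 3}$ with the natural and conatural odd $B(k|l)$-modules. The appendix results then give: the zero-weight space of $\dot{\mathfrak{g}}^{\overline 1}$ (and of $\dot{\mathfrak{g}}^{\overline 3}$) is one-dimensional and purely odd, while that of $\dot{\mathfrak{g}}^{\overline 2}$ equals $\dot{\mathfrak{h}}^{\sigma}$ and is purely even. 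This yields the parity statement of (iii) on the spot. The five bracket cases then follow from the loop formula combined with super-commutators inside $\mathfrak{sl}(2k+1|2l+1)$: when $\overline{i}=2$ and $\overline{j}\in\{1,3\}$, the bracket is the Cartan action of $x\in\dot{\mathfrak{h}}^{\sigma}$ on the odd zero-weight vector of $\dot{\mathfrak{g}}^{\overline{j}}$ and sweeps out all of $\mathfrak{g}_{(i+j)\delta}$ as $x$ varies; when $\overline{i}=\overline{j}\in\{1,3\}$ with $i+j\neq 0$, the bracket is an odd-odd super-product landing in the zero-weight space of $\dot{\mathfrak{g}}^{\overline{i+j}}$ and picks out a specific one-dimensional subspace; the vanishing cases ($\overline{i}\neq 0$ even with $i\neq 0$, or $\overline{i+j}=0$ with $i+j\neq 0$) are settled by parity together with the fact that the relevant super-commutator lies in the orthogonal complement of the offending direction.

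The main obstacle will be the explicit super-matrix computations in the $A(2k|2l)^{(4)}$ case: producing concrete odd zero-weight vectors in the natural and conatural $B(k|l)$-modules, and checking that every bracket in (iii) lands exactly where claimed. I plan to handle this by working entirely inside the fixed four-grading of $\mathfrak{sl}(2k+1|2l+1)$, where each $[x,y]_{\dot{\mathfrak{g}}}$ is a direct super-commutator of matrices, and where the five cases of (iii) reduce to a routine, if somewhat involved, verification; finite-dimensionality of each $\mathfrak{g}_{m\delta}$ and centrality of $K$ then upgrade the resulting bracket table to the Heisenberg statement of (i).
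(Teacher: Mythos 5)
Your overall strategy---identifying $\mathfrak{g}_{m\delta}$ with the zero-weight space of $\dot{\mathfrak{g}}^{\overline{m}}$ as a $\dot{\mathfrak{g}}^{\sigma}$-module and reducing everything to brackets of zero-weight vectors inside $\dot{\mathfrak{g}}$---is exactly the paper's (Lemma \ref{centralizer} and the lemmas following it). However, there is a concrete error in your treatment of $A(2k|2l)^{(4)}$, which is the only genuinely hard case. You identify $\dot{\mathfrak{g}}^{\overline{2}}$ with ``a copy of the adjoint of $B(k|l)$'' and assert that its zero-weight space equals $\dot{\mathfrak{h}}^{\sigma}$. Both statements are false: $\dot{\mathfrak{g}}^{\overline{2}}$ is $S^{2}(L(\epsilon_1))$ (or $L(2\epsilon_1)$ when $k=l$), not $\Lambda^{2}(V_{st})\cong\operatorname{ad}$, and its zero-weight space is $\mathfrak{z}^{2}=\dot{\mathfrak{h}}\cap\dot{\mathfrak{g}}^{2}$, the complement of $\dot{\mathfrak{h}}^{\sigma}$ in $\dot{\mathfrak{h}}$, of dimension $k+l+1$ (resp. $2k$), not $k+l$. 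This is not a cosmetic slip: your stated mechanism for the case $\overline{i}=2$, $\overline{j}\in\{1,3\}$ is ``the Cartan action of $x\in\dot{\mathfrak{h}}^{\sigma}$ on the odd zero-weight vector of $\dot{\mathfrak{g}}^{\overline{j}}$,'' and a Cartan element of $\dot{\mathfrak{h}}^{\sigma}$ acting on a vector of weight zero gives identically $0$, whereas the theorem claims the bracket is all of $\mathfrak{g}_{(i+j)\delta}$. The correct argument (Lemma \ref{K-relations}(c)) takes $h$ in $\mathfrak{z}^{2}$, which lies in $\dot{\mathfrak{h}}$ but \emph{not} in $\dot{\mathfrak{h}}^{\sigma}$, and uses $\dot{\mathfrak{g}}^{\dot{\mathfrak{h}}}=\dot{\mathfrak{h}}$ together with $\dot{\mathfrak{h}}=\mathfrak{z}^{0}\oplus\mathfrak{z}^{2}$ to find $h\in\mathfrak{z}^{2}$ with $[h,a_0]\neq 0$.

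A second, smaller gap: the case $\overline{i}=\overline{j}\in\{1,3\}$, $i+j\neq 0$ hinges entirely on the non-vanishing $[a_0,a_0]\neq 0$ for the odd zero-weight vector $a_0\in\dot{\mathfrak{g}}^{1}\cong\Pi(V_{st})$, and you defer this to a ``routine, if somewhat involved'' matrix verification without giving an argument. This is the one point where something must actually be proved; the paper does it without matrices by observing that $2\delta-2\theta$ is a weight of $t^{2}\otimes\dot{\mathfrak{g}}^{2}$, hence $[e_0,e_0]\neq 0$ for the affine Chevalley generator, and then transporting this back to $a_0$ via $\dot{\mathfrak{g}}^{1}_{-\theta}=\dot{\mathfrak{g}}^{0}_{-\theta}a_0$ and $(\ad \dot{\mathfrak{g}}^{0}_{-\theta})^{2}a_0=0$. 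For part (ii) your route (case-by-case parity of zero-weight spaces from the appendix) would work, but the paper's uniform argument via the $\sigma$-invariance of the bilinear form (Lemma \ref{Properties of centralizer}, giving $\dot{\mathfrak{h}}=\mathfrak{z}^{0}\oplus\mathfrak{z}^{1}$ at once) is cleaner and avoids the tables entirely.
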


\subsection{Centralizer of $\mathfrak{\dot{h}}^\sigma$}
We start with some properties of the centralizer of $\mathfrak{\dot{h}}^\sigma$ in $\mathfrak{\dot{g}},$ defined as: $$\mathfrak{z} := \{x \in \mathfrak{\dot{g}} \mid [h, x] = 0 \text{ for all } h \in \mathfrak{\dot{h}}^\sigma\}.$$
It is easy to see that:
\begin{equation}
    \mathfrak{z} = \mathfrak{\dot{h}} + \sum_{\alpha \in\dot{\Delta}: \, \langle \alpha, \mathfrak{\dot{h}}^{\sigma} \rangle = 0} \mathfrak{g}_{\alpha},
    \label{eq:center_decomposition}
\end{equation}
where $\mathfrak{\dot{h}}$  is the Cartan subalgebra of $\mathfrak{\dot{g}}.$  In addition, since $\mathfrak{z}$  is $\sigma-$invariant, we decompose it as $$\mathfrak{z}=\bigoplus_{i=0}^{m-1}\mathfrak{z}^{i},$$  
where $\mathfrak{z}^{i}=\mathfrak{z}\cap\mathfrak{\dot{g}}^{i},$  and $m$  is the order of $\sigma.$  

\subsubsection{}
\begin{lem}{centralizer}
    Let $m\in\mathbb{Z}\backslash\left\{ 0\right\}.$ Then $\mathfrak{g}_{m\delta}=\mathbb{C}t^{m}\otimes\left(\mathfrak{z}\cap\mathfrak{\dot{g}}^{\overline{m}}\right).$
    \begin{proof}
        Let $x=\sum_{k\in\mathbb{Z}}t^{k}x_{\overline{k}}+c_{1}K+c_{2}d$, where $x_{\overline{k}}\in\mathfrak{\dot{g}}^{\overline{k}}$.
Directly from definitions, it follows that:
$x\in\mathfrak{g}_{m\delta}$ $\longleftrightarrow$ $\left[h,x\right]=m\delta\left(h\right)x$ for every $h\in\mathfrak{h}$ $\longleftrightarrow$ $\left[h,x\right]=0$ for every $h\in\mathfrak{h}\cap\left[\mathfrak{g},\mathfrak{g}\right]$ and $\left[d,x\right]=mx$ $\longleftrightarrow$ $\left[x_{\overline{k}},h\right]=0$ for every $k\in\mathbb{Z}$, $h\in\mathfrak{h}\cap\left[\mathfrak{g},\mathfrak{g}\right]$, $c_{1}=c_{2}=0$ and $x_{\overline{k}}=0$ for $k\neq m$ $\longleftrightarrow$ $x\in\mathbb{C}t^{m}\otimes(\mathfrak{z}\cap\mathfrak{\dot{g}}^{\overline{m}}).$
    \end{proof}
\end{lem}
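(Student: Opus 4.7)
The plan is to unwind the definition of the weight space $\mathfrak{g}_{m\delta}$ using the explicit decomposition $\mathfrak{h} = \mathfrak{\dot{h}}^{\sigma} \oplus \mathbb{C}K \oplus \mathbb{C}d$ together with the facts that $\delta$ vanishes on $\mathfrak{\dot{h}}^{\sigma} \oplus \mathbb{C}K$ and $\delta(d) = 1$. First I would write an arbitrary $x \in \mathfrak{g}$ in the loop-algebra form $x = \sum_{k\in\mathbb{Z}} t^k x_{\overline{k}} + c_1 K + c_2 d$ with $x_{\overline{k}} \in \mathfrak{\dot{g}}^{\overline{k}}$, and then decode the condition $[h,x] = m\delta(h)\,x$ for all $h\in\mathfrak{h}$.

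The eigenvalue equation splits into two independent parts. The requirement with $h = d$ becomes $[d,x] = m x$; using $[d,t^k a] = k t^k a$, $[d,K] = 0$, $[d,d] = 0$, and comparing coefficients in the direct sum $\bigl(\bigoplus_k t^k\otimes\mathfrak{\dot{g}}^{\overline{k}}\bigr)\oplus\mathbb{C}K\oplus\mathbb{C}d$, I would conclude that $m c_1 = m c_2 = 0$ and $(k-m) x_{\overline{k}} = 0$ for every $k$. The hypothesis $m \neq 0$ (this is exactly where it is used) then forces $c_1 = c_2 = 0$ and $x_{\overline{k}} = 0$ for $k \neq m$, so that $x = t^m x_{\overline{m}}$ with $x_{\overline{m}} \in \mathfrak{\dot{g}}^{\overline{m}}$.

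The remaining requirement, $[h,x] = 0$ for $h \in \mathfrak{\dot{h}}^{\sigma} \oplus \mathbb{C}K$, is automatic on the $K$-summand and reduces on $\mathfrak{\dot{h}}^{\sigma}$ to $t^m [h, x_{\overline{m}}] = 0$, i.e.\ $[h,x_{\overline{m}}] = 0$ for every $h \in \mathfrak{\dot{h}}^{\sigma}$. By the definition of $\mathfrak{z}$ this is exactly $x_{\overline{m}} \in \mathfrak{z}$, and combined with $x_{\overline{m}} \in \mathfrak{\dot{g}}^{\overline{m}}$ it gives the inclusion $\mathfrak{g}_{m\delta} \subseteq \mathbb{C} t^m \otimes (\mathfrak{z} \cap \mathfrak{\dot{g}}^{\overline{m}})$. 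The reverse inclusion is immediate by reading the same computation backwards: any element $t^m y$ with $y \in \mathfrak{z}\cap\mathfrak{\dot{g}}^{\overline{m}}$ lies in $\mathfrak{g}$ (since $y \in \mathfrak{\dot{g}}^{\overline{m}}$), is a $d$-eigenvector with eigenvalue $m$, and commutes with $\mathfrak{\dot{h}}^{\sigma}\oplus\mathbb{C}K$ by construction.

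The argument is essentially a bookkeeping exercise rather than a substantive computation, so I do not anticipate any genuine obstacle; the only point requiring care is the use of $m\neq 0$ to eliminate the $\mathbb{C}K$ and $\mathbb{C}d$ summands and to isolate a single loop degree. In particular, the lemma would fail without this hypothesis, as for $m = 0$ one recovers the Cartan subalgebra $\mathfrak{g}_{0} = \mathfrak{h}$, which contains both $K$ and $d$.
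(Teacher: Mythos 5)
Your proposal is correct and follows essentially the same route as the paper: both write $x$ in the loop-algebra form $\sum_k t^k x_{\overline{k}} + c_1K + c_2d$ and decode the eigenvalue condition separately for $d$ and for $\mathfrak{h}\cap[\mathfrak{g},\mathfrak{g}]=\dot{\mathfrak{h}}^{\sigma}\oplus\mathbb{C}K$, using $m\neq 0$ to kill the $K$- and $d$-components and isolate the degree-$m$ piece. Your version merely spells out the coefficient comparison that the paper compresses into a chain of equivalences.
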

  \subsubsection{}
  The following is an analogue of Lemma 8.1 in ~\cite{Kbook}.
  
  \begin{lem}{Properties of centralizer}
      Assume that $\sigma$  is of order $2r.$ 
      \begin{enumerate}
          \item For all $i$  we have $\left[\mathfrak{z}^{2r-i},\mathfrak{z}^{i}\right]=0.$
          \item 
$\text{If } \mathfrak{\dot{h}} \subset \mathfrak{z}^{0} \oplus \mathfrak{z}^{r}, \text{then  }  \mathfrak{\dot{h}} = \mathfrak{z}^{0} \oplus \mathfrak{z}^{r}.$

      \end{enumerate}
  \end{lem}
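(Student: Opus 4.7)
The plan is to exploit the non-degenerate $\sigma$-invariant bilinear form $(-,-)$ on $\mathfrak{\dot{g}}$ (which exists because $\mathfrak{\dot{g}}$ is a finite-dimensional Kac-Moody superalgebra and is preserved by $\sigma$ in the affinization setup) together with the structural input from \cite{vdL} that $\mathfrak{\dot{g}}^{\sigma}$ is isomorphic to $B(k|l)$ or $D(k|l)$. The key preliminary observation is that $\mathfrak{z}^{0} = \mathfrak{\dot{h}}^{\sigma}$: any element of $\mathfrak{z}^{0}$ lies in $\mathfrak{\dot{g}}^{\sigma}$ and commutes with the Cartan $\mathfrak{\dot{h}}^{\sigma}$, so by the weight-space decomposition of $\mathfrak{\dot{g}}^{\sigma}$ under $\mathfrak{\dot{h}}^{\sigma}$ it must lie in the zero-weight subspace, and in the orthosymplectic superalgebras $B(k|l)$, $D(k|l)$ every root (even or odd) is nonzero, so this zero-weight subspace is exactly $\mathfrak{\dot{h}}^{\sigma}$. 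Moreover, since $\sigma$ is an isometry, the $\sigma$-eigenspaces of $\mathfrak{\dot{h}}$ pair as $(\mathfrak{\dot{h}}^{i}, \mathfrak{\dot{h}}^{j}) = 0$ unless $i + j \equiv 0 \pmod{2r}$, so non-degeneracy of $(-,-)|_{\mathfrak{\dot{h}}}$ forces non-degeneracy of the restriction to $\mathfrak{\dot{h}}^{\sigma} = \mathfrak{\dot{h}}^{0}$.

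With these two facts in hand, part (i) reduces to a short invariance calculation. For $x \in \mathfrak{z}^{2r-i}$, $y \in \mathfrak{z}^{i}$ and $h \in \mathfrak{\dot{h}}^{\sigma}$, the $\sigma$-grading gives $[x, y] \in \mathfrak{z}^{0} = \mathfrak{\dot{h}}^{\sigma}$, while invariance of the form combined with $[h, x] = 0$ yields $(h, [x, y]) = ([h, x], y) = 0$. Hence $[x, y] \in \mathfrak{\dot{h}}^{\sigma}$ is orthogonal to all of $\mathfrak{\dot{h}}^{\sigma}$, and the non-degenerate restriction of $(-,-)$ to $\mathfrak{\dot{h}}^{\sigma}$ forces $[x, y] = 0$.

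For part (ii), the $\sigma$-decomposition $\mathfrak{\dot{h}} = \bigoplus_{i} \mathfrak{\dot{h}}^{i}$ combined with the hypothesis immediately forces $\mathfrak{\dot{h}}^{i} = 0$ for $i \neq 0, r$, i.e.\ $\mathfrak{\dot{h}} = \mathfrak{\dot{h}}^{0} \oplus \mathfrak{\dot{h}}^{r}$. Applying (i) with $i = r$ yields $[\mathfrak{z}^{r}, \mathfrak{z}^{r}] = 0$, so in particular $\mathfrak{z}^{r}$ commutes with $\mathfrak{\dot{h}}^{r} \subset \mathfrak{z}^{r}$. Since $\mathfrak{z}^{r}$ also commutes with $\mathfrak{\dot{h}}^{\sigma} = \mathfrak{\dot{h}}^{0}$ by the very definition of $\mathfrak{z}$, the subspace $\mathfrak{z}^{r}$ centralizes all of $\mathfrak{\dot{h}}$. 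A standard root-space argument identifies the centralizer of the Cartan $\mathfrak{\dot{h}}$ in $\mathfrak{\dot{g}}$ with $\mathfrak{\dot{h}}$ itself, so $\mathfrak{z}^{r} \subset \mathfrak{\dot{h}} \cap \mathfrak{\dot{g}}^{r} = \mathfrak{\dot{h}}^{r}$; combined with $\mathfrak{z}^{0} = \mathfrak{\dot{h}}^{0}$ this gives the desired equality $\mathfrak{z}^{0} \oplus \mathfrak{z}^{r} = \mathfrak{\dot{h}}$.

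The main hurdle is really the identification $\mathfrak{z}^{0} = \mathfrak{\dot{h}}^{\sigma}$: without it the bracket in (i) would only be known to lie in $\mathfrak{z}^{0}$ and be orthogonal to $\mathfrak{\dot{h}}^{\sigma}$, which would not suffice to conclude vanishing. The classification of $\mathfrak{\dot{g}}^{\sigma}$ as an orthosymplectic superalgebra from \cite{vdL} is precisely what pins down $\mathfrak{z}^{0}$, after which everything else collapses to a routine invariance-plus-weight computation.
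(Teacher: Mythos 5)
Your proof is correct and follows essentially the same route as the paper's: identify $\mathfrak{z}^{0}=\dot{\mathfrak{h}}^{\sigma}$ (centralizer of the Cartan in $\dot{\mathfrak{g}}^{\sigma}$ equals the Cartan), observe that the $\sigma$-eigenspaces pair trivially unless the indices sum to zero so that $(-,-)$ is non-degenerate on $\mathfrak{z}^{0}$, deduce (i) from invariance of the form, and deduce (ii) from $[\mathfrak{z}^{0}\oplus\mathfrak{z}^{r},\dot{\mathfrak{h}}]=0$ together with $\dot{\mathfrak{g}}^{\dot{\mathfrak{h}}}=\dot{\mathfrak{h}}$. The only cosmetic difference is that you obtain non-degeneracy on $\mathfrak{z}^{0}$ via the restriction to $\dot{\mathfrak{h}}$ rather than via the paper's decomposition of all of $\mathfrak{z}$, which changes nothing of substance.
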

  \begin{proof}
      The bilinear form $(-,-)$  is non-degenerate when restricted to both $\mathfrak{\dot{h}}$  and each space $\mathfrak{g}_\alpha+\mathfrak{g}_{-\alpha}$  for $\alpha\in\Delta.$ 
Consequently, by ~(\ref{eq:center_decomposition}), its restriction to $\mathfrak{z}$  remains non-degenerate. 

As $\mathfrak{\dot{h}}^{\sigma}$  is the Cartan subalgebra of $\mathfrak{\dot{g}}^{\sigma},$ we obtain $\mathfrak{z}^{0}=\mathfrak{\dot{h}}^{\sigma}.$  

For any $x\in\mathfrak{z}^{i}$  and $y\in\mathfrak{z}^{j},$ where $i+j\neq0,$  the $\sigma-$invariance of $(-,-)$  implies: 
$$\left(x,y\right)=\left(\sigma\left(x\right),\sigma\left(y\right)\right)=\left(\xi^{i}x,\xi^{j}y\right)=\xi^{i+j}\left(x,y\right).$$ 
Thus $\left(\mathfrak{z}^{i},\mathfrak{z}^{j}\right)=0,$ so the restriction of $(-,-)$  to $\mathfrak{z}^{0}$ is non-degenerate. One has 
$$\left(\left[\mathfrak{z}^{2r-i},\mathfrak{z}^{i}\right],\mathfrak{z}^{0}\right)=\left(\mathfrak{z}^{2r-i},\left[\mathfrak{z}^{i},\mathfrak{z}^{0}\right]\right)=0.$$
Since $\left[\mathfrak{z}^{2r-i},\mathfrak{z}^{i}\right]\subset\mathfrak{z}^{0},$  this implies $\left[\mathfrak{z}^{2r-i},\mathfrak{z}^{i}\right]=0$ and proves $(i).$ 

For $(ii)$, as  $\left[\mathfrak{z}^{r},\mathfrak{z}^{r}\right]=0,$ we obtain $\left[\mathfrak{z}^{0}\oplus\mathfrak{z}^{r},\mathfrak{z}^{0}\oplus\mathfrak{z}^{r}\right]=0.$ 
As $\mathfrak{\dot{h}}\subset\mathfrak{z}^{0}\oplus\mathfrak{z}^{r},$  it gives $\left[\mathfrak{\dot{h}},\mathfrak{z}^{0}\oplus\mathfrak{z}^{r}\right]=0.$  
Since $\mathfrak{\dot{g}^{\mathfrak{\dot{h}}}=\mathfrak{\dot{h}}},$ this gives $\mathfrak{z}^{0}\oplus\mathfrak{z}^{r}\subset\mathfrak{\dot{h}}$  as required.  
  \end{proof}
    \subsubsection{}
  \begin{lem}{Centralizer_of_K}
      One has $$\left\{ u\in\mathfrak{g}_{\overline{0}}\,|\,\left[u,\mathfrak{h}\cap\mathfrak{g}_{pr}\right]=0\right\} =\mathcal{K}_{\overline{0}}.$$ 
  \end{lem}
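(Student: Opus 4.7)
The proof splits into two inclusions. For $\mathcal{K}_{\overline{0}}\subseteq\{u\in\mathfrak{g}_{\overline{0}}\mid[u,\mathfrak{h}\cap\mathfrak{g}_{pr}]=0\}$, it suffices by linearity to treat $u\in\mathfrak{g}_{s\delta}\cap\mathfrak{g}_{\overline{0}}$ for a fixed $s\in\mathbb{Z}$. Take any such $u$ and any $h\in\mathfrak{h}\cap\mathfrak{g}_{pr}$. By \Prop{Cartan_intersect_pr}(i), $h$ is a $\mathbb{C}$-linear combination of the coroots $\alpha^{\vee}$ with $\alpha\in\pi$, and each such $\alpha^{\vee}$ lies in $[\mathfrak{g},\mathfrak{g}]\cap\mathfrak{h}=\dot{\mathfrak{h}}^{\sigma}\oplus\mathbb{C}K$, on which $\delta$ vanishes by its construction in the affinization section. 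Hence $\delta(h)=0$ and $[h,u]=s\delta(h)u=0$, settling this direction.

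For the reverse inclusion, I would take $u\in\mathfrak{g}_{\overline{0}}$ satisfying $[u,\mathfrak{h}\cap\mathfrak{g}_{pr}]=0$ and decompose it via the $\mathfrak{h}$-root space decomposition as $u=h_{0}+\sum_{\alpha\in\Delta_{\overline{0}}}u_{\alpha}$ with $h_{0}\in\mathfrak{h}$ and $u_{\alpha}\in\mathfrak{g}_{\alpha}$. The hypothesis forces $\sum_{\alpha}\alpha(h)u_{\alpha}=0$ for every $h\in\mathfrak{h}\cap\mathfrak{g}_{pr}$, and by linear independence of distinct root spaces this gives $\alpha|_{\mathfrak{h}\cap\mathfrak{g}_{pr}}=0$ whenever $u_{\alpha}\neq 0$. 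It now suffices to show that an even root $\alpha$ vanishing on $\mathfrak{h}\cap\mathfrak{g}_{pr}$ must be imaginary, i.e., a multiple of $\delta$; once this is in hand, $u$ consists of $h_{0}\in\mathfrak{h}=\mathfrak{g}_{0\cdot\delta}$ together with terms in various $\mathfrak{g}_{s\delta}\cap\mathfrak{g}_{\overline{0}}$, placing $u$ in $\mathcal{K}_{\overline{0}}$.

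The main obstacle is this last implication. I would argue by contrapositive: suppose $\alpha\in\Delta_{\overline{0}}$ is not imaginary, hence real. In an indecomposable Kac-Moody superalgebra the matrix conditions force $a'_{ii}\neq 0$ for every index of an even simple root in any base, so every even real root is anisotropic. Consequently $\alpha\in\Delta^{an}$ and $\alpha(\alpha^{\vee})\neq 0$. By \Prop{Cartan_intersect_pr}(i), $\alpha^{\vee}\in\sum_{\beta\in\Delta^{an}}\mathbb{C}\beta^{\vee}=\mathfrak{h}\cap\mathfrak{g}_{pr}$, which exhibits a concrete element of $\mathfrak{h}\cap\mathfrak{g}_{pr}$ on which $\alpha$ does not vanish, contradicting the assumption and completing the proof. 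The only subtlety I expect is making sure that the anisotropy assertion is invoked correctly in every indecomposable case, including the non-anisotropic non-symmetrizable affine types $\mathfrak{q}_{n}^{(2)}$ and $S(2|1;b)$ (where one should verify directly from the realizations in \ref{queer_twisted} and \ref{S(2|1;b)} that each even simple root has nonzero $a_{ii}$).
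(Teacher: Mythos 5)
Your proof follows essentially the same route as the paper's: one direction via the fact that $\delta$ vanishes on $\mathfrak{h}\cap[\mathfrak{g},\mathfrak{g}]=\dot{\mathfrak{h}}^{\sigma}\oplus\mathbb{C}K$, the other by exhibiting, for each even non-imaginary root $\gamma$ occurring in the root-space decomposition of $u$, a coroot in $\mathfrak{h}\cap\mathfrak{g}_{pr}$ on which $\gamma$ does not vanish (the paper phrases this as a contradiction for a single root vector $u\in\mathfrak{g}_{\gamma}$, using (\ref{root_space_in_pr}) to get $\gamma^{\vee}\in\mathfrak{h}\cap\mathfrak{g}_{pr}$ directly). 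The one inaccuracy is the step ``not imaginary, hence real'': by the paper's definition, $\alpha$ fails to be imaginary when $\alpha$ \emph{or} $\alpha/2$ is real, so you also need the case $\alpha=2\beta$ with $\beta$ an odd anisotropic real root (e.g.\ the even roots $2\delta_i$ coming from $\mathfrak{osp}(1|2n)$-type blocks, which are not themselves real). That case is covered by the same argument: $(2\beta)^{\vee}=\beta^{\vee}/2\in\sum_{\nu\in\Delta^{an}}\mathbb{C}\nu^{\vee}=\mathfrak{h}\cap\mathfrak{g}_{pr}$ by \Prop{Cartan_intersect_pr}(i), and $\alpha\bigl((2\beta)^{\vee}\bigr)=\beta(\beta^{\vee})\neq0$, so the proof goes through once this is added. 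Your closing remark about $\mathfrak{q}_n^{(2)}$ and $S(2|1;b)$ is not needed here, since the lemma is stated under the standing assumption that $\mathfrak{g}$ is obtained by (twisted) affinization; the extension to the queer case is handled separately later in the paper.
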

\begin{proof}
    Since $\mathfrak{\dot{h}}^\sigma$ is the Cartan subalgebra of $\mathfrak{\dot{g}}^\sigma,$ we have 
    $$\left(\mathfrak{h}\cap\mathfrak{g}_{pr}\right)\subset\left(\mathfrak{h}\cap\left[\mathfrak{g},\mathfrak{g}\right]\right)\subset\left(\mathfrak{\dot{h}}^{\sigma}\oplus\mathbb{C}K\right).$$ Hence, $\mathcal{K}_{\overline{0}}\subset\left\{ u\in\mathfrak{g}_{\overline{0}}\,|\,\left[u,\mathfrak{h}\cap\mathfrak{g}_{pr}\right]=0\right\} .$ Assume, for sake of contradiction, that there exists $u\not\in\mathcal{K}_{\overline{0}}$ and $\left[u,\mathfrak{h}\cap\mathfrak{g}_{pr}\right]=0.$  Without loss of generality, we can assume that $u\in\mathfrak{g}_\gamma$  for $\gamma\in\Delta_{\overline{0}}\backslash\mathbb{Z}\delta.$ Therefore $\gamma$  is not an imaginary root, and thus, by ~(\ref{root_space_in_pr}), $\mathfrak{g}_{\gamma}\subset\mathfrak{g}_{pr}.$ So $\gamma^{\vee}\in\left(\mathfrak{h}\cap\mathfrak{g}_{pr}\right)$  and then $$\left[\gamma^{\vee},u\right]=\gamma\left(\gamma^{\vee}\right)u\neq0,$$
a contradiction. 
\end{proof}
\subsection{Twisted affine Lie superalgebras obtained by an automorphism of order $2$ }
Every symmetrizable twisted affine Lie superalgebra, with exception of $A^{\left(4\right)}\left(2k,2l\right),$ arises from an automorphism of order 2, see (\ref{eq:Classification affine}). 
In this subsection, $\mathfrak{\dot{g}}$  denote one of the simple superalgebras from the classification, and  $\mathfrak{g}$ is the twisted Lie superalgebra associated to an automorphism of order $2.$

We begin with a lemma that provides the relations within $\mathcal{K}$ in these cases.

\subsubsection{}

\begin{lem}{g^1 in Cartan}
One has:
\begin{enumerate}
    \item   $\mathfrak{z}=\mathfrak{\dot{h}}^{\sigma}\oplus(\mathfrak{\dot{h}}\cap\mathfrak{\dot{g}}^{1}).$ 
    \item  $\left[\mathfrak{g}_{s\delta},\mathfrak{g}_{q\delta}\right]=\begin{cases}
0 & s+q\neq0\\
\mathbb{C}K & s+q=0
\end{cases}$ 
\end{enumerate}
\begin{proof}
In case that $\sigma$  is of order $2r$  with $r=1,$  it is clear that $\mathfrak{\dot{h}}\subset\mathfrak{z}^{0}\oplus\mathfrak{z}^{1},$  and then $(i)$  is given by \Lem{Properties of centralizer}. 

For $(ii)$ note that: $$\mathfrak{g}_{k\delta}=\left\{ t^{k}\otimes x\,|\,x\in\mathfrak{\dot{h}}\cap\mathfrak{\dot{g}}^{1}\right\}$$ for $\overline{k}=1$. Now the result follows from definition of the twisted affine superalgebra and commutativity of $\mathfrak{\dot{h}}$. The case $s+q=0$  follows from \cite{Kbook}, Theorem 2.2. 
\end{proof}
\end{lem}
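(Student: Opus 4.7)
The plan is to reduce (i) to \Lem{Properties of centralizer}(ii) and then to deduce (ii) as an almost immediate consequence, using \Lem{centralizer} together with the commutativity of $\mathfrak{\dot{h}}.$ The key structural input needed for (i) is that the Cartan subalgebra $\mathfrak{\dot{h}}$ is $\sigma$-invariant; with this in hand, since $\sigma^2=\id$, one has the eigenspace decomposition $\mathfrak{\dot{h}}=(\mathfrak{\dot{h}}\cap\mathfrak{\dot{g}}^0)\oplus(\mathfrak{\dot{h}}\cap\mathfrak{\dot{g}}^1)=\mathfrak{\dot{h}}^\sigma\oplus(\mathfrak{\dot{h}}\cap\mathfrak{\dot{g}}^1)$. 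Both summands commute with $\mathfrak{\dot{h}}^\sigma\subset\mathfrak{\dot{h}}$ by abelianness of $\mathfrak{\dot{h}}$, hence $\mathfrak{\dot{h}}\subset\mathfrak{z}^0\oplus\mathfrak{z}^1$. Applying \Lem{Properties of centralizer}(ii) with $r=1$ upgrades this inclusion to the equality $\mathfrak{\dot{h}}=\mathfrak{z}^0\oplus\mathfrak{z}^1$, and intersecting with the $\pm 1$-eigenspaces of $\sigma$ then identifies $\mathfrak{z}^0=\mathfrak{\dot{h}}^\sigma$ and $\mathfrak{z}^1=\mathfrak{\dot{h}}\cap\mathfrak{\dot{g}}^1$, proving (i).

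For (ii), I would first apply \Lem{centralizer} combined with (i) to write every imaginary root space $\mathfrak{g}_{k\delta}$ (with $k\neq 0$) as $\mathbb{C}t^k\otimes V_k$, where $V_k\subset\mathfrak{\dot{h}}$ ($V_k=\mathfrak{\dot{h}}^\sigma$ for $k$ even, $V_k=\mathfrak{\dot{h}}\cap\mathfrak{\dot{g}}^1$ for $k$ odd). The defining bracket of the twisted affinization gives $[t^s\otimes x,t^q\otimes y]=t^{s+q}[x,y]+s\delta_{s+q,0}(x,y)K$, and since $\mathfrak{\dot{h}}$ is abelian the non-central term vanishes for all $x\in V_s,\, y\in V_q$. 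This already yields $[\mathfrak{g}_{s\delta},\mathfrak{g}_{q\delta}]=0$ when $s+q\neq 0$. For $s+q=0$ one must still verify that the bracket hits all of $\mathbb{C}K$, i.e.\ that $(V_s,V_{-s})\neq 0$; this follows from the non-degeneracy of $(-,-)$ on $\mathfrak{z}^0$ and on $\mathfrak{z}^1$ (a byproduct of the $\sigma$-equivariance argument used in the proof of \Lem{Properties of centralizer}).

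The only delicate point I expect is the $\sigma$-invariance of $\mathfrak{\dot{h}}$ — this is the structural fact that makes the eigenspace decomposition above meaningful. For the order-$2$ automorphisms appearing in the classification~(\ref{eq:Classification affine}), $\sigma$ is prescribed to act as a signed permutation on a fixed set of Chevalley generators of $\mathfrak{\dot{g}}$ (see \cite{vdL}, Table 4, and \cite{Sint2}), so it automatically preserves the corresponding Cartan subalgebra. Once this is granted, the remainder of the argument is formal bookkeeping with the two earlier lemmas and the commutation relations of the twisted affinization.
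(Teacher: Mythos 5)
Your proposal is correct and follows essentially the same route as the paper: for (i) you establish $\dot{\fh}\subset\mathfrak{z}^{0}\oplus\mathfrak{z}^{1}$ and invoke \Lem{Properties of centralizer}(ii), and for (ii) you identify $\fg_{k\delta}$ inside $\mathbb{C}t^{k}\otimes\dot{\fh}$ and use commutativity of $\dot{\fh}$. The only divergence is cosmetic: where the paper cites \cite{Kbook}, Theorem 2.2 for the case $s+q=0$, you give a short direct argument via non-degeneracy of $(-,-)$ on $\mathfrak{z}^{0}$ and $\mathfrak{z}^{1}$, and you make explicit the $\sigma$-invariance of $\dot{\fh}$ that the paper leaves implicit — both are valid fillings of details rather than a different approach.
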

\subsubsection{}
\begin{lem}{g^1 module}
\begin{enumerate}
    \item 
    In case $\mathfrak{\dot{g}}=A\left(m|2l-1\right),$ as $\mathfrak{\dot{g}}^{\sigma}$ modules we have: $S^{2}\left(L\left(\epsilon_{1}\right)\right)=\mathfrak{\dot{g}}^{1}\oplus\text{triv},$ and $\mathfrak{\dot{g}}^{1}= L\left(2\epsilon_{1}\right)$. 
    \item 
    In case $\mathfrak{\dot{g}}=D\left(k|l\right)$, we have: $\mathfrak{\dot{g}}^{1}=L\left(\epsilon_{1}\right)$  as $\mathfrak{\dot{g}}^{\sigma}$-module.   
\end{enumerate}
\begin{proof}
    We prove this by examining each case individually:
    \begin{enumerate}
        \item Assume $\mathfrak{\dot{g}}=A\left(2k|2l-1\right)$. 
        By ~\cite{vdL}, $\mathfrak{\dot{g}}^{\sigma}\simeq B\left(k|l\right)$.
Referring to the roots table in ~\cite{vdL}, we see that $\mathfrak{\dot{g}}^{1}$, as $\mathfrak{\dot{g}}^{\sigma}$-module, has the following set of weights: $$\Omega_{\mathfrak{\dot{g}}^{1}}=\left\{ \underbrace{0;\pm\epsilon_{i}\pm\epsilon_{j};\pm\epsilon_{i};\pm\delta_{g}\pm\delta_{h};\pm2\epsilon_{i}}_{\text{even}};\underbrace{\pm\delta_{g},\pm\epsilon_{i}\pm\delta_{g}}_{\text{odd}}\right\}, 1\leq i,j\leq k,\,g,1\leq h\leq l.$$
Each weight space, aside from $0$, is one-dimensional. By \Lem{g^1 in Cartan}, it follows that $\dim\mathfrak{g}_{\delta}=\dim\mathfrak{\dot{h}}-\dim\mathfrak{\dot{h}}^{\sigma}=k+l.$ Thus, using \Prop{symmetric standard module} and \Cor{Final}, the results follow. \\
Now, assume $\mathfrak{\dot{g}}=A\left(2k-1|2l-1\right).$ By ~\cite{vdL}, we know that $\mathfrak{\dot{g}}^{\sigma}\simeq D\left(k|l\right)$ and hence $\dim\mathfrak{\dot{h}}^{\sigma}=k+l$. From the roots table, we find that the set of weights of $\mathfrak{\dot{g}}^1$ is given by: $$\Omega_{\mathfrak{\dot{g}}^{1}}=\left\{ \underbrace{0;\pm\epsilon_{i}\pm\epsilon_{j};\pm\delta_{g}\pm\delta_{h};\pm2\epsilon_{i}}_{\text{even}},\underbrace{\pm\epsilon_{i}\pm\delta_{j}}_{\text{odd}}\right\}, 1\leq i,j\leq k,\,g,1\leq h\leq l.$$
By \Lem{g^1 in Cartan}, we have $\dim\mathfrak{g}_{\delta}=\dim\mathfrak{\dot{h}}-\dim\mathfrak{\dot{h}}^{\sigma}=k+l-1.$ Thus, applying \Prop{symmetric standard module} and \Cor{Final}, the results follows. 
    \item Assume
    $\mathfrak{\dot{g}}=D(1|l)=\,C\left(l+1\right)$. By ~\cite{vdL}, $\mathfrak{\dot{g}}^{\sigma}\simeq B\left(0|l\right)$ and therefore $\dim\mathfrak{\dot{h}}^{\sigma}=l$. From the roots table we see that the set of weights of $\mathfrak{\dot{g}}^1$ is $\Omega_{\mathfrak{\dot{g}}^{1}}=\left\{ \underbrace{0}_{\text{even}},\underbrace{\pm\delta_{g}}_{\text{odd}}\right\}$, $1\leq g\leq l$. By \Lem{g^1 in Cartan}, we find that $\dim\mathfrak{g}_{\delta}=\dim\mathfrak{\dot{h}}-\dim\mathfrak{\dot{h}}^{\sigma}=1$. Thus, using \Lem{Standard module}, the result follows. \\
Now assume
$\mathfrak{\dot{g}}=D\left(k+1|l\right)$. According to ~\cite{vdL}, we have $\mathfrak{\dot{g}}^{\sigma}\simeq B\left(k|l\right),$ which implies $\dim\mathfrak{\dot{h}}^{\sigma}=k+l$. From the roots table we find that the set of weights of $\mathfrak{\dot{g}}^{1}$ is: $$\Omega_{\mathfrak{\dot{g}}^{1}}=\left\{ \underbrace{0,\pm\epsilon_{i}}_{\text{even}},\underbrace{\pm\delta_{g}}_{\text{odd}}\right\} ,\,1\leq g\leq l,1\leq i\leq k$$By \Lem{g^1 in Cartan}, we have $\mathfrak{g}_{\delta}=\dim\mathfrak{\dot{h}}-\dim \mathfrak{\dot{h}}^{\sigma}=1.$ Hence, using \Lem{Standard module}, the result follows.
    \end{enumerate}
\end{proof}
\end{lem}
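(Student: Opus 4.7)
The plan is to treat the four cases separately but by a common strategy: identify the weight decomposition of $\dot{\fg}^{1}$ as a $\dot{\fg}^{\sigma}$-module from van de Leur's root data, pin down the dimension of its zero weight space via \Lem{g^1 in Cartan}, and then invoke the appendix results on $B(k|l)$- and $D(k|l)$-representations to identify the module structure.

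First, for each case I would use van de Leur's classification to record the isomorphism class of $\dot{\fg}^{\sigma}$: $B(k|l)$ for $\dot{\fg}=A(2k|2l-1)$, $D(k|l)$ for $\dot{\fg}=A(2k-1|2l-1)$, $B(0|l)$ for $\dot{\fg}=C(l+1)$, and $B(k|l)$ for $\dot{\fg}=D(k+1|l)$. Then, reading off van de Leur's root tables, I list the weights of $\dot{\fg}^{1}$ viewed as $\dot{\fg}^{\sigma}$-module together with their parities, as written in the statement. Since the nonzero root spaces of $\dot{\fg}$ are one-dimensional and $\sigma$-invariance forces each nonzero $\dot{\fh}^{\sigma}$-weight space of $\dot{\fg}^1$ to be at most the corresponding root space of $\dot{\fg}$, every nonzero weight space is one-dimensional.

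Next I would compute the multiplicity of the zero weight. By \Lem{centralizer} one has $\mathfrak{g}_{\delta}=\mathbb{C}t\otimes(\mathfrak{z}\cap\dot{\fg}^{1})$, and \Lem{g^1 in Cartan}(i) then gives
\[
\dim\mathfrak{g}_{\delta}=\dim(\dot{\fh}\cap\dot{\fg}^{1})=\dim\dot{\fh}-\dim\dot{\fh}^{\sigma},
\]
which equals $k+l$ in the $A(2k|2l-1)$ case, $k+l-1$ in the $A(2k-1|2l-1)$ case, $1$ in the $C(l+1)$ case, and $1$ in the $D(k+1|l)$ case. This tells us the dimension of the zero weight space of $\dot{\fg}^{1}$.

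Finally, I would compare this data with the known structure of the relevant irreducibles over $B(k|l)$ and $D(k|l)$ proved in the appendix. In the $D(k|l)$ rows, the weights listed are exactly the weights of the standard module $L(\epsilon_1)$, and the zero weight multiplicity matches $\dim L(\epsilon_1)^{0}$, so \Lem{Standard module} identifies $\dot{\fg}^{1}\simeq L(\epsilon_1)$. In the $A(m|2l-1)$ rows the weight set of $\dot{\fg}^{1}$ is exactly $\Omega_{S^{2}L(\epsilon_1)}\setminus\{0\}$ with the correct parities, and the zero weight multiplicity is one less than $\dim(S^{2}L(\epsilon_1))^{0}$, so by \Prop{symmetric standard module} and \Cor{Final} we have $S^{2}L(\epsilon_1)\simeq\dot{\fg}^{1}\oplus\mathrm{triv}$, and $\dot{\fg}^{1}\simeq L(2\epsilon_1)$. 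The main subtlety is the $A$-case bookkeeping: one must make sure the parities read off the root table match the parity grading on $S^{2}L(\epsilon_1)$ and that the trivial summand accounts for precisely the ``missing'' zero-weight dimension; this is where the appendix results on $B(k|l)$ do the real work.
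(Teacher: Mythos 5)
Your proposal is correct and follows essentially the same route as the paper: identify $\dot{\mathfrak{g}}^{\sigma}$ from van de Leur's classification, read the weights of $\dot{\mathfrak{g}}^{1}$ off the root tables, compute the zero-weight multiplicity via \Lem{g^1 in Cartan}, and conclude by \Lem{Standard module}, \Prop{symmetric standard module} and \Cor{Final}. The only additions beyond the paper's argument are the explicit justification that nonzero weight spaces are one-dimensional and the remark about parity bookkeeping, both of which are consistent with (and slightly more careful than) the published proof.
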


\subsection{The twisted affine  Lie superalgebra $A^{\left(4\right)}\left(2k,2l\right)$}
 For this subsection assume $\mathfrak{\dot{g}}=A\left(2k,2l\right),$ that is $\mathfrak{\dot{g}}=\mathfrak{psl}\left(2k+1,2l+1\right)$  if $k=l$  and $\mathfrak{\dot{g}}=\mathfrak{sl}\left(2k+1,2l+1\right)$  otherwise. For brevity, we will often use the notation: $\mathfrak{\dot{g}}=\left(\mathfrak{p}\right)\mathfrak{sl}\left(2k+1,2l+1\right).$ Equipped with an automorphism of order $4$ on $\mathfrak{\dot{g}}$,  we obtain a $\mathbb{Z}_{4}-$grading of $\mathfrak{\dot{g}}$  and the associated twisted affine Lie superalgebra $A\left(2k|2l\right)^{\left(4\right)}.$ 
For clarity and ease of reference in the upcoming discussion, we define the subspace $\mathfrak{\dot{g}}^{\#}:=\mathfrak{\dot{g}}^{\sigma}\oplus\mathbb{C}t\otimes\mathfrak{\dot{g}}^{1}\oplus\mathbb{C}t^{2}\otimes\mathfrak{\dot{g}}^{2}\oplus\mathbb{C}t^{3}\otimes\mathfrak{\dot{g}},$  which can be viewed as the embedding of $\mathfrak{\dot{g}}$ as vector space, into $\mathfrak{g}.$     
\subsubsection{}
\begin{lem}{dim}
\begin{enumerate}
\item $\mathfrak{\dot{h}}=\mathfrak{\dot{h}}^{\sigma}\oplus\mathfrak{z}^{2}.$  

Additionally,     
$\dim\mathfrak{\dot{h}}^{\sigma}=k+l$  and  $\dim\mathfrak{g}_{2\delta}=\begin{cases}
k+l+1 & k\neq l\\
2k & k=l
\end{cases}$
\item  $\mathfrak{g}_{\delta},\mathfrak{g}_{3\delta}\subset\mathfrak{g}_{\overline{1}},$ 
 and $\mathfrak{z}_{\overline{1}}=\mathfrak{z}^{1}\oplus\mathfrak{z}^{3}.$  Moreover, 
$$\dim\mathfrak{g}_{\delta}=\dim\mathfrak{g}_{3\delta}=1.$$
\end{enumerate}
\begin{proof}
First, by ~\cite{vdL}, $\mathfrak{\dot{g}}^{\sigma}$ is isomorphic as Lie superalgebra to $B(k|l)$. Since $\mathfrak{\dot{h}}^{\sigma}$ is the Cartan subalgebra of $\mathfrak{\dot{g}}^{\sigma}$, we conclude that $\dim\mathfrak{\dot{h}}^{\sigma}=k+l$. 

Next, note that if $\mathfrak{\dot{h}} \not\subset \mathfrak{z}^{0} \oplus \mathfrak{z}^{2},$  then since $\mathfrak{z}$ contains $\mathfrak{\dot{h}},$  there exists some $a\in\mathfrak{z}^{i}\cap\mathfrak{\dot{h}},$  where $i$ is either $1$  or $3.$  Thus, $\Delta_{\overline{0}}^{im}$  contains $\delta$  or $3\delta$  accordingly. This is contradicts the roots table of $\mathfrak{g},$  presented in ~\cite{vdL}.

Therefore, ~\Lem{Properties of centralizer} gives $\mathfrak{\dot{h}}=\mathfrak{z}^{0}\oplus\mathfrak{z}^{2}.$ 

Thus, $$\dim\mathfrak{z}\cap\mathfrak{\dot{g}}^{2}=\dim\mathfrak{\dot{h}}-\dim\mathfrak{z}\cap\mathfrak{\dot{g}}^{0}=\begin{cases}
k+l+1 & k\neq l\\
2k & k=l
\end{cases}$$ Finally, \Lem{centralizer} implies $\dim\mathfrak{g}_{2\delta}=\dim\mathfrak{z}\cap\mathfrak{\dot{g}}^{2}.$ 

For (ii), following the roots table of $\mathfrak{g}$ presented in ~\cite{vdL}, we see that there are $4k+4l+8kl$ real odd roots within $\mathfrak{\dot{g}}^{\#}$ and $\dim\mathfrak{\dot{g}}_{\overline{1}}^{\#}=\dim\mathfrak{(p)sl}\left(2k+1,2l+1\right)_{\overline{1}}=8kl+4k+4l+2.$ By considering the difference, we conclude that $\dim\mathfrak{\dot{g}}_{\overline{1}}^{\#}\cap\mathcal{K}=2.$ Following the roots table, we see that $\mathfrak{\dot{g}}^{1} \text{ and }\mathfrak{\dot{g}}^{3}$ have odd imaginary roots. Therefore, $\dim\mathfrak{g}_{\delta}=\mathfrak{g}_{3\delta}=1,$  $\mathfrak{g}_{\delta},\mathfrak{g}_{3\delta}\subset\mathfrak{g}_{\overline{1}},$  and then by \Lem{centralizer},  $\mathfrak{z}_{\overline{1}}=\mathfrak{z}^{1}\oplus\mathfrak{z}^{3}.$   
\end{proof}
\end{lem}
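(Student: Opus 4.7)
The plan is to deduce both parts from the root table for $A(2k|2l)^{(4)}$ in \cite{vdL} together with the two structural lemmas \Lem{centralizer} and \Lem{Properties of centralizer}. The identification $\mathfrak{\dot{g}}^{\sigma}\cong B(k|l)$ (from van de Leur) immediately gives $\dim\mathfrak{\dot{h}}^{\sigma}=k+l$, so only the grading statements and dimension counts require work.

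For part (i), since $\mathfrak{\dot{h}}$ is abelian it lies in $\mathfrak{z}$, and is $\mathbb{Z}_{4}$-graded via $\sigma$. I would rule out a nonzero $a\in\mathfrak{\dot{h}}\cap\mathfrak{\dot{g}}^{i}$ for $i\in\{1,3\}$ as follows: by \Lem{centralizer}, such an $a$ produces $t^{i}\otimes a\in\mathfrak{g}_{i\delta}$, which is necessarily even (as $\mathfrak{\dot{h}}\subset\mathfrak{\dot{g}}_{\overline{0}}$), forcing $\delta$ or $3\delta$ to be an even imaginary root of $\mathfrak{g}$. The root table of $A(2k|2l)^{(4)}$ in \cite{vdL} shows this is not the case, a contradiction. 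Hence $\mathfrak{\dot{h}}\subset\mathfrak{z}^{0}\oplus\mathfrak{z}^{2}$, and \Lem{Properties of centralizer}(ii), applied with $r=2$ (since $\sigma$ has order $2r=4$), upgrades this to $\mathfrak{\dot{h}}=\mathfrak{z}^{0}\oplus\mathfrak{z}^{2}$. Then \Lem{centralizer} gives $\dim\mathfrak{g}_{2\delta}=\dim\mathfrak{z}^{2}=\dim\mathfrak{\dot{h}}-(k+l)$; substituting $\dim\mathfrak{\dot{h}}=2k+2l+1$ when $k\neq l$ (the Cartan of $\mathfrak{sl}(2k+1|2l+1)$) and $\dim\mathfrak{\dot{h}}=4k$ when $k=l$ (the Cartan of $\mathfrak{psl}(2k+1|2k+1)$, one less than that of $\mathfrak{sl}(2k+1|2k+1)$) reproduces the stated formula.

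For part (ii), the same table shows that $\delta$ and $3\delta$ occur as purely odd imaginary roots of $\mathfrak{g}$, so $\mathfrak{g}_{\delta},\mathfrak{g}_{3\delta}\subset\mathfrak{g}_{\overline{1}}$. By \Lem{centralizer} this translates to $\mathfrak{z}^{1},\mathfrak{z}^{3}\subset\mathfrak{\dot{g}}_{\overline{1}}$, and combined with $\mathfrak{z}^{0}\oplus\mathfrak{z}^{2}=\mathfrak{\dot{h}}\subset\mathfrak{\dot{g}}_{\overline{0}}$ from (i) this yields $\mathfrak{z}_{\overline{1}}=\mathfrak{z}^{1}\oplus\mathfrak{z}^{3}$. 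For the dimensions I would use the count sketched in the paper: $\dim\mathfrak{\dot{g}}^{\#}_{\overline{1}}=\dim\mathfrak{\dot{g}}_{\overline{1}}=2(2k+1)(2l+1)=8kl+4k+4l+2$, while the real odd roots in $\mathfrak{\dot{g}}^{\#}$ contribute $4k+4l+8kl$ one-dimensional root spaces (read off \cite{vdL}, Table 5). The remaining $2$ dimensions must lie in $\mathcal{K}\cap\mathfrak{\dot{g}}^{\#}_{\overline{1}}=\mathfrak{g}_{\delta}\oplus\mathfrak{g}_{3\delta}$, and since the table shows both spaces are nonzero, I obtain $\dim\mathfrak{g}_{\delta}=\dim\mathfrak{g}_{3\delta}=1$.

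The main conceptual step is the parity input from the root table — that $\delta,3\delta$ are not even imaginary roots and do appear as odd ones — after which everything reduces to direct applications of the preceding lemmas and routine dimension bookkeeping. The order-$4$ phenomenon, in contrast to the order-$2$ twisted algebras handled just before, is precisely what produces the nontrivial odd pieces of $\mathcal{K}$ recorded in \Thm{sqdelta}(iii).
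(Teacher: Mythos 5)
Your proposal is correct and follows essentially the same route as the paper: the identification $\dot{\mathfrak{g}}^{\sigma}\cong B(k|l)$, the exclusion of $\dot{\mathfrak{h}}\cap\mathfrak{z}^{i}$ for $i\in\{1,3\}$ via the parity of imaginary roots in the van de Leur table, the upgrade to equality by \Lem{Properties of centralizer}, and the same dimension bookkeeping for the odd part. Your explicit substitution of $\dim\dot{\mathfrak{h}}=2k+2l+1$ (resp.\ $4k$) is just a slightly more detailed version of the paper's case split and checks out.
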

\subsubsection{}
\begin{cor}{Dimensions}
$$\dim\mathfrak{g}_{m\delta}=\begin{cases}
k+l & \overline{m}=0,\,m\neq0\\
1 & \overline{m}=1, \overline{m}=3\\
k+l+1 & \overline{m}=2,\,k\neq l\\
2k & \overline{m}=2,k=l
\end{cases}$$
\begin{proof}
Since $\mathfrak{g}_{m\delta}$ isomorphic to $\mathfrak{g}_{q\delta}$ as vector spaces if $\overline{m}=\overline{q}$, it follows from ~\Lem{dim} immediately.  
\end{proof}
\end{cor}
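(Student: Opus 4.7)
The plan is to reduce the computation to $\dim(\mathfrak{z}\cap\mathfrak{\dot{g}}^{\overline{m}})$ for each residue $\overline{m}\in\{0,1,2,3\}$. By \Lem{centralizer}, for $m\neq 0$ one has $\mathfrak{g}_{m\delta}=\mathbb{C}t^{m}\otimes(\mathfrak{z}\cap\mathfrak{\dot{g}}^{\overline{m}})$, so $\dim\mathfrak{g}_{m\delta}$ depends only on $\overline{m}$. It therefore suffices to compute one representative in each residue class and to quote \Lem{dim}.

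For $\overline{m}=1$ and $\overline{m}=3$, part (ii) of \Lem{dim} directly records $\dim\mathfrak{g}_{\delta}=\dim\mathfrak{g}_{3\delta}=1$. For $\overline{m}=2$, part (i) of \Lem{dim} yields $\dim\mathfrak{g}_{2\delta}=k+l+1$ when $k\neq l$ and $2k$ when $k=l$.

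The only case not literally stated in \Lem{dim} is $\overline{m}=0$ with $m\neq 0$. Here $\mathfrak{g}_{m\delta}=\mathbb{C}t^{m}\otimes\mathfrak{z}^{0}$, and by definition $\mathfrak{z}^{0}=\mathfrak{z}\cap\mathfrak{\dot{g}}^{\sigma}$ is the centralizer of $\mathfrak{\dot{h}}^{\sigma}$ inside $\mathfrak{\dot{g}}^{\sigma}$. Since $\mathfrak{\dot{h}}^{\sigma}$ is the Cartan subalgebra of the contragredient Lie superalgebra $\mathfrak{\dot{g}}^{\sigma}\cong B(k|l)$, it is self-centralizing, so $\mathfrak{z}^{0}=\mathfrak{\dot{h}}^{\sigma}$ and hence $\dim\mathfrak{g}_{m\delta}=\dim\mathfrak{\dot{h}}^{\sigma}=k+l$, again by \Lem{dim}.

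No step presents a real obstacle: the corollary is purely a bookkeeping consequence of \Lem{centralizer} and \Lem{dim}. The only sentence requiring care is the identification $\mathfrak{z}\cap\mathfrak{\dot{g}}^{\sigma}=\mathfrak{\dot{h}}^{\sigma}$, which is the standard self-centralizing property of a Cartan subalgebra of a contragredient Lie superalgebra.
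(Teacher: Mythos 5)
Your proposal is correct and follows essentially the same route as the paper: the paper's one-line proof appeals to the fact that $\mathfrak{g}_{m\delta}\cong\mathfrak{g}_{q\delta}$ when $\overline{m}=\overline{q}$ (which is exactly your reduction via \Lem{centralizer}) and then reads off the dimensions from \Lem{dim}. Your extra observation that $\mathfrak{z}^{0}=\dot{\mathfrak{h}}^{\sigma}$ for the case $\overline{m}=0$, $m\neq 0$ is also the identification the paper uses (it appears explicitly in the proof of \Lem{Properties of centralizer}), so you have simply made explicit the bookkeeping the paper leaves implicit.
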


\subsubsection{}
\begin{lem}{g_i module}
  Let $L\left(\epsilon_{1}\right)$ be a simple $\mathfrak{\dot{g}}^{\sigma}$ module with the highest weight $\epsilon_{1}$. Then:
\begin{enumerate}

\item $\mathfrak{\dot{g}}^{1}$  and $\mathfrak{\dot{g}}^{3}$ are isomorphic as $\mathfrak{\dot{g}}^{\sigma}$ module to $\Pi(L\left(\epsilon_{1}\right)).$
\item If $k\not=l$, then $\mathfrak{\dot{g}}^{2}$ is isomorphic as $\mathfrak{\dot{g}}^{\sigma}$ module to $S^{2}\left(L\left(\epsilon_{1}\right)\right)$. 
\newline If $k=l$, then $\mathfrak{\dot{g}}^{2}= L\left(2\epsilon_{1}\right)$ and $S^{2}\left(L\left(\epsilon_{1}\right)\right)=\mathfrak{\dot{g}}^{2}\oplus \text{triv}$ as $\mathfrak{\dot{g}}^{\sigma}$ module.
\end{enumerate}. 
\begin{proof}
Denote by $\Omega_{\mathfrak{\dot{g}}^{i}}$ the set of weights of $\mathfrak{\dot{g}}^{i}$ as $\mathfrak{\dot{g}}^{\sigma}$ module. 
Following the roots table, presented in ~\cite{vdL}, we see that: $$\Omega_{\mathfrak{\dot{g}}^{2}}=\left\{ \underbrace{0;\pm\epsilon_{i}\pm\epsilon_{j};\pm\epsilon_{i};\pm\delta_{g}\pm\delta_{h};\pm2\epsilon_{i}}_{\text{even}};\underbrace{\pm\delta_{g},\pm\epsilon_{i}\pm\delta_{g}}_{\text{odd}}\right\}$$ and $$\Omega_{\mathfrak{\dot{g}}^{1}}=\Omega_{\mathfrak{\dot{g}}^{3}}=\left\{ \underbrace{\pm\delta_{g}}_{\text{even}};\underbrace{0;\pm\epsilon_{i}}_{\text{odd}}\right\} $$ Where $1\leq i,j\leq k, i\neq j,\,1\leq g,h\leq l,g\neq h.$
By \Lem{dim}, we know that the dimension of every weight space in $\Omega_{\mathfrak{\dot{g}}^{s}}$, for $s\in\left\{ 1,3\right\}$, is $1$. Using \Lem{Standard module}, we see that $\text{ch}L_B(\epsilon_1)=\text{ch}L(\mathfrak{\dot{g}}^s).$ However,  the parity of the weights in these modules is switched. Since $\mathfrak{\dot{g}}^{\sigma}$ isomorphic to $B(k|l)$, this gives $(i).$
Now, every weight space besides $0$ in ${\mathfrak{\dot{g}}^{2}}$ has dimension $1$, and the zero weight, which is isomorphic as vector space to $\mathfrak{g}_{2\delta}$, has dimension $k+l+1$ in case $k\not=l$ and $2k$ if $k=l$, by \Lem{dim}. Therefore, using \Prop{symmetric standard module}, we have: $\text{ch}(\mathfrak{\dot{g}}^{2})=\begin{cases}
\text{ch}(S^{2}\left(L_{B}\left(\epsilon_{1}\right)\right)) & k\neq l\\
\text{ch}(L_{B}\left(2\epsilon_{1}\right)) & k=l
\end{cases}.$ By \Cor{Final}, this gives $(ii)$. 
\end{proof}
\end{lem}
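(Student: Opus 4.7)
The plan is to follow the same character-comparison strategy already used in \Lem{g^1 module}. The proof is really a character computation for $\dot{\fg}^1$, $\dot{\fg}^2$, $\dot{\fg}^3$ as $\dot{\fg}^{\sigma}$-modules, where $\dot{\fg}^{\sigma}\simeq B(k|l)$, combined with dimension information for the zero weight space and the appendix results on representations of $B(k|l)$.

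First I would read off the weight sets $\Omega_{\dot{\fg}^i}$ from the roots table in~\cite{vdL}, exactly as displayed in the paragraph of the statement. Every nonzero weight space of $\dot{\fg}^i$ is one-dimensional (this being a property of the $\dot{\fg}^{\sigma}$-module structure on $\dot{\fg}^i$, visible root by root from the table). The dimension of the zero weight space of $\dot{\fg}^i$ equals $\dim(\mathfrak{z}\cap\dot{\fg}^i)$, and, by \Lem{centralizer} and \Lem{dim}, this is $1$ for $i=1,3$ and is $k+l+1$ (respectively $2k$) for $i=2$ in the case $k\neq l$ (respectively $k=l$). This fixes the character $\ch\dot{\fg}^i$ completely.

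For (i), the weight set $\Omega_{\dot{\fg}^1}=\Omega_{\dot{\fg}^3}$ together with the one-dimensional zero weight space agrees with the character of the standard $B(k|l)$-module $L(\epsilon_1)$ computed in \Lem{Standard module} of the appendix, \emph{except} that the parities are flipped (roots of $\dot{\fg}^1$ that are odd in $\dot{\fg}$ become odd vectors in the $\dot{\fg}^{\sigma}$-module, and vice versa, which is opposite to the natural parity on $L(\epsilon_1)$ over $B(k|l)$). Hence $\dot{\fg}^1\simeq\dot{\fg}^3\simeq\Pi(L(\epsilon_1))$ as $\dot{\fg}^{\sigma}$-modules by \Cor{Final}. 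For (ii), the weight set $\Omega_{\dot{\fg}^2}$ matches, depending on whether $k=l$, either the character of $L(2\epsilon_1)$ or that of $S^2(L(\epsilon_1))$, both provided by \Prop{symmetric standard module}: indeed, in the case $k\neq l$ the zero weight dimension $k+l+1$ matches the dimension of the zero weight space of $S^2(L(\epsilon_1))$, while in the case $k=l$ the zero weight dimension $2k$ matches that of $L(2\epsilon_1)$ (with $S^2(L(\epsilon_1))$ differing by a trivial summand). Applying \Cor{Final} once more identifies the $\dot{\fg}^{\sigma}$-module structure on $\dot{\fg}^2$.

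The main obstacle is really bookkeeping: one must be careful to (a) correctly read the parity of each weight in the roots table and translate it into the $\mathbb{Z}_2$-grading of $\dot{\fg}^i$ as a $\dot{\fg}^{\sigma}$-module, and (b) in the $k=l$ case separate the trivial summand of $S^2(L(\epsilon_1))$ from $L(2\epsilon_1)$, since this is the only place the statement truly branches. Everything else is a direct invocation of the appendix and of \Lem{dim}.
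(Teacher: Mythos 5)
Your proposal is correct and follows essentially the same route as the paper: read the weight sets from van de Leur's roots table, pin down the zero weight space dimensions via \Lem{dim}, and match characters against \Lem{Standard module} and \Prop{symmetric standard module}, invoking \Cor{Final} for $\dot{\mathfrak{g}}^2$ and accounting for the parity flip in part (i). The only nitpick is that \Cor{Final} concerns modules with the character of $S^{2}(L_{B}(\epsilon_{1}))$, so it is not the right reference for part (i); there one concludes directly from irreducibility of the standard module that a module with its character (and flipped parity) is $\Pi(L(\epsilon_{1}))$, which is what the paper does.
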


\subsubsection{}
\begin{lem}{K-relations}
The algebra  $\mathcal{K}$ has the following relations:
\begin{enumerate}
    \item If $i+j=0$, then $\left[\mathfrak{g}_{i\delta},\mathfrak{g}_{j\delta}\right] = \mathbb{C}K$.
    
    \item If $i+j \neq 0$, then:
\begin{equation}
    \left[\mathfrak{g}_{i\delta},\mathfrak{g}_{j\delta}\right] =
        \begin{cases}
        0 & i \neq 0, \, \overline{i} = 0 \\
        0 & \overline{i+j} = 0 \\
        \mathfrak{g}_{(i+j)\delta} & \overline{i} = 2, \, \overline{j} = 1,3 \\
        \mathbb{C} x, \quad x \text{ is a nonzero element in } \mathfrak{g}_{(i+j)\delta} & \overline{i} = \overline{j} = 1,3
    \end{cases}
    \label{relations_in_N}
\end{equation}

\end{enumerate}

\begin{proof}
(1)  follows from ~\cite{Kbook}, Thm. 2.2 (the details of the proof in Kac's book are essentially the same in the super case).
\newline For (2), we assume $i+j\neq0$ and divide the proof into the following parts:
\begin{enumerate}[label=\alph*)]
\item
Since $\mathfrak{g}_{i\delta}=t^{i}\otimes\mathfrak{\dot{h}}^{\sigma}$ for $\overline{i}=0, i\not=0$, first formula follows directly from Lemma  ~\ref{centralizer}. 
\item Second formula follows from \Lem{Properties of centralizer}. 
\item Let $at^{j}\in\mathfrak{g}_{j\delta},$  where $\overline{j}$  is either $1$  or $3.$ By \Lem{dim}, $a$ is odd. Thus, we have $a\notin\mathfrak{\dot{h}},$ and therefore $\left[a,\mathfrak{\dot{h}}\right]\neq0.$ By \Lem{dim}, we know that  $$\mathfrak{\dot{h}}=\mathfrak{z}^{0}\oplus\mathfrak{z}^{2}.$$  Since $\left[a,\mathfrak{\dot{h}}^{\sigma}\right]=0,$  there exists $h\in\mathfrak{z}^{2},$ such that $\left[a,h\right]\neq0.$ By \Lem{centralizer}, we have $ht^{i}\in\mathfrak{g}_{i\delta},$ where $\overline{i}=2.$ This gives the third formula. 
\item Let $\Sigma=\left\{ \alpha_{i}\right\} _{i=0}^{n},$  where $\alpha_1,....,\alpha_n$ 
 are the fix set of simple roots for $B(k|l)$ as in ~\ref{notation appendix}, and $\alpha_0=\delta-\theta,$  where $\theta$ is the highest weight of $\mathfrak{\dot{g}}^1$ as $B(k|l)$ module. Let $e_{\alpha_0},e_{\alpha_1},...,e_{\alpha_n}$ be the Chevally generators of $A\left(2k|2l\right)^{\left(4\right)}.$ 

By \Lem{g_i module}, $\theta=\epsilon_1,$ and $2\delta-2\theta$  is a weight in $t^{2}\otimes\mathfrak{\dot{g}}^{2}$  as $B(k|l)$  module. Thus, $\left[e_{0},e_{0}\right]\neq0.$ That is $\left[\mathfrak{\dot{g}}_{-\theta}^{1},\mathfrak{\dot{g}}_{-\theta}^{1}\right]\neq0,$  where $\mathfrak{\dot{g}}_{-\theta}^{1}$  is $-\theta$  weight space inside $\mathfrak{\dot{g}}^1$  as $B(k|l)$ module. 

Let $a_0$ be a non-zero element of zero weight in $\mathfrak{\dot{g}}^{1}\simeq\Pi\left(V_{st}\right).$ 
Since $\mathfrak{\dot{g}}^{1}\simeq V_{st},$  we have $$\mathfrak{\dot{g}}_{-\theta}^{1}=\mathfrak{\dot{g}}_{-\theta}^{0}a_{0}\,\text{ 
and}\,\,\,\left(\ad\mathfrak{\dot{g}}_{-\theta}^{0}\right)^{2}a_{0}=0,$$
where $\mathfrak{\dot{g}}_{-\theta}^{0}$  is the root space of the real root $-\theta$  inside $B(k|l).$ This gives $$\left(\ad\mathfrak{\dot{g}}_{-\theta}^{0}\right)^{2}\left[a_{0},a_{0}\right]=\left[\mathfrak{\dot{g}}_{-\theta}^{0}a_{0},\mathfrak{\dot{g}}_{-\theta}^{0}a_{0}\right]=\left[\mathfrak{\dot{g}}_{-\theta}^{1},\mathfrak{\dot{g}}_{-\theta}^{1}\right]\neq0.$$
Hence $\left[a_{0},a_{0}\right]\neq0$  as required. 
\end{enumerate}
The same argument works for $\mathfrak{\dot{g}}^{-1}=\mathfrak{\dot{g}}^{3}.$  This proves the last formula. 
\end{proof}
\end{lem}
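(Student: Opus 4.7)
The plan is to compute each bracket $[\mathfrak{g}_{i\delta},\mathfrak{g}_{j\delta}]$ via the affinization identity $[t^{i}x,t^{j}y]=t^{i+j}[x,y]+\delta_{i,-j}(x,y)K$ applied to elements $x\in\mathfrak{z}^{\bar i}$, $y\in\mathfrak{z}^{\bar j}$ (using \Lem{centralizer}), and then to read off the answer from two structural facts of \Lem{dim}---namely $\mathfrak{\dot h}=\mathfrak{z}^{0}\oplus\mathfrak{z}^{2}$ with $\mathfrak{z}^{0}=\mathfrak{\dot h}^{\sigma}$, and $\dim\mathfrak{z}^{1}=\dim\mathfrak{z}^{3}=1$---together with \Lem{Properties of centralizer}(i), which yields $[\mathfrak{z}^{a},\mathfrak{z}^{4-a}]=0$ for every $a$.

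Part (1) and the two vanishing sub-cases of part (2) should come out nearly formally. For $i+j=0$ the bracket reduces to $[x,y]+(x,y)K$; the first summand vanishes by \Lem{Properties of centralizer}(i), while non-degeneracy of the pairing between $\mathfrak{z}^{\bar i}$ and $\mathfrak{z}^{-\bar i}$ produces exactly $\mathbb{C}K$. If $\bar i=0$ with $i\neq 0$, then $\mathfrak{g}_{i\delta}=t^{i}\otimes\mathfrak{\dot h}^{\sigma}$ annihilates all of $\mathfrak{z}$; and if $\overline{i+j}=0$ while $i+j\neq 0$, the $K$-term drops out and $[\mathfrak{z}^{\bar i},\mathfrak{z}^{4-\bar i}]=0$ finishes it. For the case $\bar i=2$, $\bar j\in\{1,3\}$ the target space $\mathfrak{g}_{(i+j)\delta}$ is one-dimensional by \Cor{Dimensions}, so it suffices to exhibit non-vanishing: any non-zero $y\in\mathfrak{z}^{\bar j}$ is odd and lies outside $\mathfrak{\dot h}=\mathfrak{z}^{0}\oplus\mathfrak{z}^{2}$, so $[y,\mathfrak{\dot h}]\neq 0$; since $y$ centralizes $\mathfrak{\dot h}^{\sigma}$, this non-vanishing must already occur inside $[y,\mathfrak{z}^{2}]$.

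The main obstacle will be the case $\bar i=\bar j\in\{1,3\}$: here $\mathfrak{g}_{i\delta}$ is one-dimensional and odd, so everything hinges on a single super-square $[a_0,a_0]$ for which dimension-counting gives no information. My plan is to derive $[a_0,a_0]\neq 0$ from a Serre-type non-vanishing of $[e_0,e_0]$, where $e_0$ is the Chevalley generator of the affine simple root $\alpha_0=\delta-\theta$; by \Lem{g_i module}(i) one has $\theta=\epsilon_1$, and \Lem{g_i module}(ii) shows that $-2\epsilon_1$ occurs as a weight of $\mathfrak{\dot g}^{2}$, so $2\alpha_0$ is a root and hence $[e_0,e_0]\neq 0$. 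Using the isomorphism $\mathfrak{\dot g}^{1}\cong\Pi(L(\epsilon_1))$, I write the (one-dimensional) $-\theta$ weight space of $\mathfrak{\dot g}^{1}$ as $\mathbb{C} f_\theta\cdot a_0$ for a suitable $f_\theta\in\mathfrak{\dot g}^{\sigma}_{-\theta}$ and $a_0\in\mathfrak{z}^{1}$; since $-2\epsilon_1$ is not a weight of $L(\epsilon_1)$, we have $f_\theta^{2}a_0=0$, and the super-Jacobi identity collapses to $(\ad f_\theta)^{2}[a_0,a_0]=2[f_\theta a_0,f_\theta a_0]$, which is a non-zero multiple of $[e_0,e_0]\neq 0$; hence $[a_0,a_0]\neq 0$. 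The case $\bar i=\bar j=3$ is handled analogously using $\mathfrak{\dot g}^{3}$, and every remaining bracket in this range reduces at once because $\mathfrak{g}_{i\delta}=\mathbb{C}t^{i}a_0$ (resp.\ $\mathbb{C}t^{i}a_0'$ for parity $3$), so the bracket is simply $t^{i+j}[a_0,a_0]$ (resp.\ with $a_0'$), which is a non-zero element of $\mathfrak{g}_{(i+j)\delta}$ as required.
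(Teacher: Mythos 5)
Your proposal is correct and follows essentially the same route as the paper: the affinization bracket plus Lemma~\ref{centralizer} and Lemma~\ref{Properties of centralizer} for the vanishing cases, the "odd element outside $\dot{\fh}=\mathfrak{z}^{0}\oplus\mathfrak{z}^{2}$ must pair nontrivially with $\mathfrak{z}^{2}$" argument for $\bar i=2$, and the reduction of $[a_0,a_0]\neq 0$ to $[e_0,e_0]\neq 0$ via $(\ad f_\theta)^2$ applied to the super-square for $\bar i=\bar j\in\{1,3\}$. The only differences are cosmetic (you spell out case (1) instead of citing Kac's Theorem 2.2, and you note explicitly that the target space is one-dimensional in the $\bar i=2$ case).
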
 

\subsection{}
\Lem{g^1 in Cartan}, and \Prop{K-relations} prove \Thm{sqdelta}.

\section{Conjugacy theorem for Affine Kac-Moody Lie superalgebras}

\subsection{Motivation}\label{motivation}
Assume $\left(A,\tau\right)$  and $\left(A_{1},\tau_{1}\right)$  are two Cartan data, that yield Kac-Moody superalgebras $\mathfrak{g}\left(A,\tau\right)$  and $\mathfrak{g}\left(A_{1},\tau_{1}\right),$  which are isomorphic to each other. A natural question comes to mind: are these two Cartan data equal? If not, how are they related?     

Clearly, they do not need to be equal. There are two basic operations that can be applied to a Cartan datum $\left(A,\tau\right)$ to obtain a new Cartan datum $ \left(A_{1},\tau_{1}\right)$ , which yield isomorphic contragredient Lie algebra:
\begin{enumerate}
    \item \textbf{Permutation of set of indices $\tau.$} The corresponding matrix will simply be a permutation of the rows of $A$  and same permutation of the columns. 
    \item \textbf{Multiplication of the Cartan matrix by an invertible diagonal matrix $D$ from the left.} The set of indices will remain unchanged. The isomorphism (which is not unique) can be defined by rescaling the co-roots, and consequently the Chevalley generators, as follows: $$e_{i}\longrightarrow e_{i},f_{i}\longrightarrow\epsilon_{i}f_{i},$$
where $\epsilon_i$  is the i-th entry on the diagonal of the matrix $D.$
\end{enumerate}

In addition, isotropic reflexion is another operation on a Cartan datum that produces a new Cartan datum, which yields isomorphic Kac-Moody superalgebra. We would like to know whether there are other such operations? Note that non isotropic reflexions do not change the Cartan datum.

To answer this question, we need to rely on a theorem proved by Victor Kac in 1980 for the non-super case, and by Vera Serganova in 2001 for the super case, which states the following:
\textit{any root base $\Sigma'$  is contragredient and $\Sigma'\in Sk$  or $-\Sigma'\in Sk.$} (See Proposition 5.9 in ~\cite{Kbook} and Theorem 8.3 in ~\cite{Sint}). 

Having this theorem in mind, we obtain the following Corollary:

\begin{cor}{Related Cartan datum}
   Let $\mathfrak{g}\left(A,\tau\right) $ be a Kac-Moody superalgebra, and let $\phi:\mathfrak{g}\left(A,\tau\right)\longrightarrow\mathfrak{g}\left(A_{1},\tau_{1}\right)$ be an isomorphism such that $\phi\left(\mathfrak{h}\right)=\mathfrak{h}_{1}.$ Then, $\left(A_{1},\tau_{1}\right)$ is obtained from some Cartan datum $\left(A',\tau'\right)$ which correspond to a root base $\Sigma'\in Sk$,  by applying none, one, or both of the operations described above (permutation and multiplication). 
\end{cor}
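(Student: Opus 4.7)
The plan is to transport the contragredient structure of $(A_1,\tau_1)$ along $\phi$, obtaining a root base of $\mathfrak{g}(A,\tau)$, and then apply the Kac--Serganova theorem recalled in \ref{motivation} to locate this base (or its negative) in $\Sk$. Concretely, let $\Sigma_1=\{\alpha_i^{(1)}\}_{i=1}^{\ell}$ be the simple roots of $(A_1,\tau_1)$, with coroots $(\alpha_i^{(1)})^{\vee}$ and Chevalley generators $e_i^{(1)},f_i^{(1)}$. Set $\beta_i:=\phi^{*}(\alpha_i^{(1)})$, $\beta_i^{\vee}:=\phi^{-1}((\alpha_i^{(1)})^{\vee})$, $\tilde e_i:=\phi^{-1}(e_i^{(1)})$, and $\tilde f_i:=\phi^{-1}(f_i^{(1)})$. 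Since $\phi(\mathfrak{h})=\mathfrak{h}_1$, the set $\Sigma_{\phi}:=\{\beta_i\}$ is a root base of $\Delta$, and the quadruple $(\mathfrak{g}(A,\tau),\mathfrak{h},\Sigma_{\phi},\{\beta_i^{\vee}\})$ is, via $\phi$, isomorphic to the one attached to $(A_1,\tau_1)$. In particular $\Sigma_{\phi}$ is contragredient, with associated Cartan datum equal to $(A_1,\tau_1)$.

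By the Kac--Serganova theorem either $\Sigma_{\phi}\in\Sk$ or $-\Sigma_{\phi}\in\Sk$. Suppose first that $\Sigma_{\phi}\in\Sk$. Then a chain of reflexions starting from $\Sigma$ produces $\Sigma_{\phi}$ together with a Cartan datum $(A',\tau')$ via the formulas (\ref{eq:isoralphaSigma}) and (\ref{eq:isoralphah}). The labelling of $\Sigma_{\phi}$ coming from this chain need not agree with the labelling induced from $\Sigma_1$ via $\phi^{*}$, and the discrepancy is a permutation of $\{1,\dots,\ell\}$. Moreover, for each real root $\beta_i$ the space $[\mathfrak{g}_{\beta_i},\mathfrak{g}_{-\beta_i}]\cap\mathfrak{h}$ is one-dimensional (Section \ref{realroots}), so the coroot $(\alpha'_i)^{\vee}$ produced by the chain and the coroot $\beta_i^{\vee}$ delivered by $\phi^{-1}$ differ by a nonzero scalar $d_i$. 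Rescaling the rows of $A'$ by the $d_i$ is exactly left multiplication by a diagonal matrix, and composing with the permutation yields $(A_1,\tau_1)$ as required.

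Now suppose $-\Sigma_{\phi}\in\Sk$. Applying the previous case to $-\Sigma_{\phi}$ matches it, up to permutation and left diagonal multiplication, with a Cartan datum $(A',\tau')$ attached to some $\Sigma'\in\Sk$. It then remains to pass from $-\Sigma_{\phi}$ to $\Sigma_{\phi}$. As Chevalley generators for $-\beta_i$ one may take $\tilde f_i$ (in the role of $e$) and $\tilde e_i$ (in the role of $f$); a direct computation of $[\tilde f_i,\tilde e_i]$ shows that the resulting coroot equals $\pm\beta_i^{\vee}$, with sign determined solely by the parity type of $\beta_i$, since the super-bracket is antisymmetric for anisotropic simple roots and symmetric for odd isotropic ones. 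Consequently the Cartan matrix attached to $\Sigma_{\phi}$ differs from that attached to $-\Sigma_{\phi}$ by left multiplication by a diagonal matrix with entries in $\{\pm 1\}$, while $\tau$ is unchanged by negation. Composing with the permutation and rescaling from the previous step exhibits $(A_1,\tau_1)$ as a permutation of a diagonal multiple of $(A',\tau')$.

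The main obstacle I anticipate is the bookkeeping in this last step: one must check uniformly, for all flavours of simple root (anisotropic even, anisotropic odd, and odd isotropic), that the switch $e_i\leftrightarrow f_i$ alters each coroot only by an overall scalar, so that negation of the base is really an instance of the diagonal multiplication operation rather than something more complicated. Once this normalization is in place, the corollary reduces to the Kac--Serganova dichotomy together with the elementary observation that two contragredient Cartan data on the same underlying root base differ only in their ordering and in their choice of scalar multiples of the Chevalley generators.
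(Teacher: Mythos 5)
Your proposal is correct and follows essentially the same route as the paper: pull back the realization of $(A_1,\tau_1)$ along $\phi$ to obtain a contragredient root base of $\Delta$, invoke the Kac--Serganova theorem to place it or its negative in $\Sk$, and absorb the negative case into left multiplication by a diagonal matrix (together with a permutation). Your extra bookkeeping on the coroot signs is a welcome elaboration of the paper's terse treatment of the $-\Sigma$ case; the only quibble is that the sign of $[\tilde f_i,\tilde e_i]$ versus $[\tilde e_i,\tilde f_i]$ is governed by the parity of $\beta_i$ alone (odd \emph{anisotropic} generators also give a symmetric bracket), which does not affect your conclusion since the resulting diagonal entries are $\pm 1$ in every case.
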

\begin{proof}
By assumption,  $\phi^{-1}\left(\Sigma_{1}^{\vee}\right)\subset\mathfrak{h}$ and $\phi^{*}\left(\Sigma_{1}\right)\subset\mathfrak{h}^{*}.$ The triple $\left(\mathfrak{h},\phi^{*}\left(\Sigma_{1}\right),\phi^{-1}\left(\Sigma_{1}^{\vee}\right)\right)$  forms a realization of the Cartan datum $\left(A_{1},\tau_{1}\right).$  In particular, $\phi^{*}\left(\Sigma_{1}\right)$  is a root base. Therefore, by the theorem above, $\phi^{*}\left(\Sigma_{1}\right)\in Sk,$  or $-\phi^{*}\left(\Sigma_{1}\right)\in Sk.$ In the later case, there exists an invertible diagonal matrix $D$  such that $DA_1=A.$ Hence, we can assume that  $\phi^{*}\left(\Sigma_{1}\right)\in Sk.$ This completes the proof. 
\end{proof}

For the anisotropic case, given the assumption $\phi\left(\mathfrak{h}\right)=\mathfrak{h}_{1},$ the Corollary states that $\left(A,\tau\right)$  can be obtained solely by applying the two operations described above, on $\left(A_{1},\tau_{1}\right).$ 

Thus, we see that this question reduces to the problem of conjugacy of Cartan subalgebras. In this chapter, we formulate and present the proof of the main theorem of the thesis, which concerns the conjugacy theorem of Cartan subalgebras for affine Kac-Moody superalgebras. We start by formally state the main theorem. 

\subsection{}
\begin{thm}{Conjugacy}
    Let $\mathfrak{g}$ be an indecomposable affine Kac-Moody superalgebra, $\mathfrak{h}$ its Cartan subalgebra, and let $\mathfrak{a}$  be any Kac-Moody superalgebra with the Cartan subalgebra $\mathfrak{h}_{\mathfrak{a}}.$ If $\iota:\mathfrak{g}\stackrel{\sim}{\rightarrow}\mathfrak{a}$  is an isomorphism and $\mathfrak{h}'=\iota^{-1}\left(\mathfrak{h}_{\mathfrak{a}}\right)\subset\mathfrak{g},$  then $\mathfrak{h}'=\phi(\mathfrak{h})$ for some inner automorphism of $\mathfrak{g}.$  
\end{thm}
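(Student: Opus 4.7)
The plan is to reduce the theorem to the classical Kac--Peterson conjugacy result (\Thm{Conjugacy-KM}) applied to the principal subalgebra $\fg_{pr}$, using the invariant characterization of $\fg_{pr}$ provided by \Thm{prop1}(ii), and then to bootstrap the resulting partial conjugation to all of $\fh$ via a centralizer argument based on \Lem{Centralizer_of_K} and \Thm{sqdelta}.

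First, via $\iota$, the subspace $\fh'\subset\fg$ becomes a bona fide Kac--Moody Cartan subalgebra of $\fg$ for a second presentation $\fg\cong\fg(A_1,\tau_1)$, so it has its own principal subalgebra $\fg'_{pr}$. In the symmetrizable case, \Thm{prop1}(ii) gives the intrinsic identification $\fg_{pr}=\bigl[[\fg_{\overline{0}},\fg_{\overline{0}}],[\fg_{\overline{0}},\fg_{\overline{0}}]\bigr]$, hence $\fg_{pr}=\fg'_{pr}$; for $\fq_n^{(2)}$ the analogous identity $\fg_{pr}=[\fg_{\overline{0}},\fg_{\overline{0}}]$ of \Claim{form of pr} plays the same role; for $S(2|1;b)$ I would isolate $\fg_{pr}+\CC d\cong\fsl_2^{(1)}$ intrinsically from the description in \ref{structure even S(2|1;b)} (for instance, as the unique integrable Kac--Moody summand inside a suitable quotient of $\fg_{\overline{0}}$ complementing the Virasoro piece). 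By \Prop{Cartan_intersect_pr}, both $\fh\cap\fg_{pr}$ and $\fh'\cap\fg_{pr}$ are then ad-diagonalizable commutative subalgebras of the same symmetrizable Kac--Moody algebra $\fg_{pr}$, of equal dimension.

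Next I would invoke \Thm{Conjugacy-KM} applied to $\fg_{pr}$ to produce $x\in\cS_{min}$ whose adjoint action $\phi:=\Ad(x)$ on $\fg$ (defined via the action on integrable modules as in \ref{Integrable for derived B_pi}) sends $\fh'\cap\fg_{pr}$ into $\fh\cap\fg_{pr}$; equality follows by comparing dimensions. Replacing $\fh'$ by $\phi(\fh')$, I may assume $\fh'\cap\fg_{pr}=\fh\cap\fg_{pr}$. It then remains to show $\fh'=\fh$. Since $\fh'$ centralizes $\fh\cap\fg_{pr}$, \Lem{Centralizer_of_K} gives $\fh'\cap\fg_{\overline{0}}\subset\cK_{\overline{0}}$. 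Using ad-diagonalizability on $\fg$ together with the Heisenberg structure of $\cK_{\overline{0}}$ from \Thm{sqdelta}(i), I would show that the only ad-semisimple elements of $\cK_{\overline{0}}$ lie in $\fh$: a non-zero $\fg_{s\delta}$-component with $s\ne 0$, bracketed with a generic $d$-like element of $\fh'$ (present by \Lem{Codim of g_pr}) produces a non-trivial element of $\fg_{s\delta}$ via the Heisenberg relation, which combined with the original component yields a Jordan block under $\ad$. Hence $\fh'\cap\fg_{\overline{0}}\subset\fh$, and a dimension count via \Lem{Codim of g_pr} forces equality. The odd part $\fh'\cap\fg_{\overline{1}}$ lies in $\cK\cap\fg_{\overline{1}}$, which vanishes except for $\fg=A(2k|2l)^{(4)}$ by \Thm{sqdelta}(ii); in that exceptional case \Thm{sqdelta}(iii) guarantees that odd imaginary root vectors have non-zero self-bracket, contradicting the commutativity of $\fh'$.

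The main obstacle I expect is ruling out non-trivial $s\delta$-components of $\fh'\cap\fg_{\overline{0}}$ with $s\ne 0$: individual imaginary root vectors are themselves ad-semisimple on $\fg$, so the obstruction is entirely in the interaction between the $s\delta$-component and the rest of $\fh'$, and making this precise requires a careful use of the $\ad(d)$-grading combined with the Heisenberg relations of \Thm{sqdelta}(i), together with the guarantee that $\fh'$ meets the $d$-direction non-trivially. A secondary technical obstacle is the non-symmetrizable case $\fg=S(2|1;b)$: here $\cK$ is enlarged by the whole Virasoro subalgebra and the intrinsic description of $\fg_{pr}$ is more delicate, so one may first need to conjugate the Virasoro-central element $L_0-d$ into $\fh$ using Virasoro-level inner automorphisms before running the centralizer scheme above.
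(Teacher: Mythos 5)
Your overall strategy is the same as the paper's: conjugate $\fh'\cap\fg_{pr}$ into $\fh\cap\fg_{pr}$ via \Thm{Conjugacy-KM} using the invariant characterization of $\fg_{pr}$ from \Thm{prop1} (resp.\ \Claim{form of pr}, \Lem{loc.fin}), then force the rest of $\fh'$ into $\fh$ through \Lem{Centralizer_of_K}. But the second step, as you propose it, has a genuine gap, and it stems from a false premise that you state explicitly: individual imaginary root vectors are \emph{not} ad-semisimple on $\fg$ --- they are not even ad-locally finite. For $x=t^{s}\otimes h$ with $0\neq h\in\dot{\fh}$ and $s\neq 0$, pick a root vector $u$ of $\dot{\fg}$ with $[h,u]=cu$, $c\neq 0$; then $(\ad x)^{n}(u)=c^{n}\,t^{ns}\otimes u\neq 0$ lies in distinct $\delta$-graded components, so $u$ generates an infinite-dimensional $\ad x$-stable subspace on which $\ad x$ is a shift operator with no eigenvectors. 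Consequently the obstruction to $\fh'\cap\fg_{\overline{0}}\subset\fh$ is \emph{not} ``entirely in the interaction'' between the $s\delta$-component and the rest of $\fh'$: a single element of $\cK_{\overline{0}}$ with a nonzero $\fg_{s\delta}$-component, $s\neq 0$, already fails to be ad-locally finite and so cannot belong to $\fh'$. This is exactly what the paper proves (\Lem{sigma-eigenvector} together with the projection computation inside \Lem{lemghh}; the twisted case is the reason the $\sigma$-eigenvector lemma is needed at all). Your substitute mechanism --- bracketing with a ``generic $d$-like element of $\fh'$'' to contradict commutativity --- is both unnecessary and incomplete: \Lem{Codim of g_pr} is a dimension count and does not produce an element of $\fh'$ with nonzero $d$-coefficient, and without such an element your contradiction never materializes.

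Two smaller gaps. First, you apply \Thm{Conjugacy-KM} ``to $\fg_{pr}$'', but $\fg_{pr}$ is only a central quotient of $[\fg(B_\pi),\fg(B_\pi)]$ and $B_\pi$ is often decomposable (e.g.\ $\fg(B_\pi)=\fsl_m^{(1)}\times\fsl_n^{(1)}$ for $\fsl(m|n)^{(1)}$); one must lift $\fh'\cap\fg_{pr}$ to each block and re-verify commutativity and ad-diagonalizability of the lifts, whose mutual brackets a priori only vanish modulo $\CC K$ --- this is the content of the paper's \Lem{lem-diag-sub-t_i} and of property (\ref{eq:star-property}) in the $\fq_n^{(2)}$ case. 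Second, the equality $\dim(\fh\cap\fg_{pr})=\dim(\fh'\cap\fg_{pr})$ that you assert up front is obtained in the paper only from the symmetric two-sided conjugation (both hypotheses of \Lem{lemghh}, using that $\fa$ is again affine of the same kind), not a priori. The remark about $\fh'\cap\fg_{\overline{1}}$ is vacuous, since Cartan subalgebras are even and $\iota$ preserves parity.
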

\subsubsection{}
\begin{rem}{Affine-decomposable}
    Under the assumptions of the theorem above, $\mathfrak{a}$  is an image of affine indecomposable Kac-Moody superalgebra. Since $\mathfrak{g}$  is indecomposable, it is quasisimple. Since quasisimplicity is a property which does not rely on the presentation of $\mathfrak{g}$  as Kac-Moody superalgebra, it follows that $\mathfrak{a}$ is quasisimple. Thus, $\mathfrak{a}$  is indecomposable as well (see ~\cite{Sint} Lemma 2.4). In addition, $\mathfrak{a}$  is also affine Kac-Moody superalgebra. Indeed, a Lie superalgebra is indecomposable affine Kac-Moody superalgebra if and only if it is of infinite dimension but of finite Gelfand-Kirillov dimension. This is again a property which does not rely on the presentation of $\mathfrak{g}$ as Kac-Moody superalgebra. 
\end{rem}

\subsection{}
\begin{lem}{AffIso} 
$\mathfrak{g}$ is symmetrizable if and only if $\mathfrak{a}$ is symmetrizable.
\end{lem}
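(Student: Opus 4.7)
The plan is to characterize symmetrizability of an indecomposable Kac-Moody superalgebra intrinsically, so that the lemma reduces to the observation that Lie superalgebra isomorphisms preserve intrinsic properties. The intrinsic characterization I would adopt is the following: an indecomposable Kac-Moody superalgebra is symmetrizable if and only if it admits a non-degenerate even invariant supersymmetric bilinear form. Granted this, the lemma is immediate, since $\iota$ transports such a form from $\mathfrak{g}$ to $\mathfrak{a}$ and $\iota^{-1}$ transports one back, so the existence of the form is equivalent on the two sides.

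The easy half of the characterization is essentially present in the excerpt: for every symmetrizable affine Kac-Moody superalgebra, the affinization procedure of Chapter 3 extends the non-degenerate invariant form on the underlying finite-dimensional $\dot{\mathfrak{g}}$ to a non-degenerate invariant even supersymmetric form on $\mathfrak{g}$. So I only need to verify the converse for the two non-symmetrizable affine families, $S(2|1;b)$ with $b\notin\mathbb{Z}$ and $\mathfrak{q}^{(2)}_n$ with $n\ge3$. For $S(2|1;b)$, the Cartan matrix $A_b$ from \ref{Definition of S(2|1;b)} is patently not symmetrizable (a direct computation with the entries shows that no diagonal rescaling makes it symmetric), and the standard Kac-style argument -- restricting an alleged invariant form to $\mathfrak{h}$ and using $[\mathfrak{g}_{\alpha_i},\mathfrak{g}_{-\alpha_i}]=\mathbb{C}\alpha_i^{\vee}$ together with invariance -- forces the Cartan matrix to be symmetrizable, a contradiction. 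For $\mathfrak{q}^{(2)}_n$, the explicit realization in \ref{realization of queer twisted} shows that $K\in[\mathfrak{g}_{\overline{1}},\mathfrak{g}_{\overline{1}}]$ but $K\notin[\mathfrak{g}_{\overline{0}},\mathfrak{g}_{\overline{0}}]$; this asymmetry is incompatible with the existence of a non-degenerate even supersymmetric invariant form, since such a form pairs $[\mathfrak{g}_{\overline{1}},\mathfrak{g}_{\overline{1}}]$ and $[\mathfrak{g}_{\overline{0}},\mathfrak{g}_{\overline{0}}]$ with the same $K$ in a consistent way.

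To assemble the proof, I would then combine the characterization with \Rem{Affine-decomposable}, which guarantees that $\mathfrak{a}$ is itself an indecomposable affine Kac-Moody superalgebra. Thus the same characterization applies on the $\mathfrak{a}$ side, and the chain of equivalences $\mathfrak{g}$ symmetrizable $\Longleftrightarrow$ $\mathfrak{g}$ admits a non-degenerate invariant even supersymmetric form $\Longleftrightarrow$ $\mathfrak{a}$ admits such a form $\Longleftrightarrow$ $\mathfrak{a}$ symmetrizable completes the argument.

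The hard part will be a clean treatment of the converse direction for $\mathfrak{q}^{(2)}_n$, since the usual Kac argument is phrased in terms of the contragredient presentation and one must be careful about the position of the centre relative to the brackets. A workable alternative, should the form-theoretic argument turn out awkward in that case, is to invoke directly the classification of non-symmetrizable affine Kac-Moody superalgebras recalled earlier in the excerpt and distinguish $S(2|1;b)$ and $\mathfrak{q}^{(2)}_n$ from the symmetrizable families by isomorphism-invariant structural features (such as the relative position of the centre inside $[\mathfrak{g}_{\overline{0}},\mathfrak{g}_{\overline{0}}]$ versus $[\mathfrak{g}_{\overline{1}},\mathfrak{g}_{\overline{1}}]$, or the presence of a Virasoro subalgebra inside $\mathfrak{g}_{\overline{0}}$ as noted in \ref{structure even S(2|1;b)}).
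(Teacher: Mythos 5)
Your proposal is correct and follows essentially the same route as the paper: the paper's proof simply transports the non-degenerate invariant bilinear form through $\iota$ and reads off from the transported form that the Cartan matrix $\left(\left\langle \alpha^{\vee},\beta^{\vee}\right\rangle\right)_{\alpha,\beta\in\Sigma'}$ of $\mathfrak{a}$ is symmetrizable, which is exactly your ``Kac-style restriction'' argument. Your additional case-by-case verification for $S(2|1;b)$ and $\mathfrak{q}^{(2)}_n$ is superfluous, since that uniform restriction argument already establishes the converse direction without appealing to the classification.
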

\begin{proof}
    Assume $\mathfrak{g}$ is symmetrizable. Therefore, $\mathfrak{g}$ admits a non-degenerate invariant bilinear form $(-,-)$. We define a non-degenerate invariant bilinear form $\left\langle -,-\right\rangle $  on $\mathfrak{a}$  by $\left\langle x,y\right\rangle =\left(\iota^{-1}\left(x\right),\iota^{-1}\left(y\right)\right).$ So  $A=\left(\left\langle \alpha^{\vee},\beta^{\vee}\right\rangle \right)_{\alpha,\beta\in\Sigma'}$  is symmetrizable Cartan matrix of $\mathfrak{a},$  where $\Sigma'$  is a set of simple roots. Hence, $\mathfrak{a}$  is symmetrizable.  
\end{proof} 
\subsubsection{}
\begin{cor}{Non.Sym}
$\mathfrak{g}=\mathfrak{q}\left(n\right)^{\left(2\right)}$ if and only if $\mathfrak{a}=\mathfrak{q}\left(n'\right)^{\left(2\right)},$  and $\mathfrak{g}=S(2|1;b)$  if and only if ${\mathfrak{a}}=S(2|1;b').$
\end{cor}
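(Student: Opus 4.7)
The plan is to combine Lemma \ref{AffIso} with the classification of non-symmetrizable affine Kac-Moody superalgebras recalled at the start of Chapter 3, and then to separate the two remaining families by means of an intrinsic invariant of $\mathfrak{g}$ as a Lie superalgebra.

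First, by Lemma \ref{AffIso}, $\mathfrak{g}$ is non-symmetrizable if and only if $\mathfrak{a}$ is non-symmetrizable. By Remark \ref{Affine-decomposable}, $\mathfrak{a}$ is also an indecomposable affine Kac-Moody superalgebra. Since the only indecomposable non-symmetrizable affine Kac-Moody superalgebras are $\mathfrak{q}_n^{(2)}$ for $n\geq 3$ and the family $S(2|1;b)$ for $b\in\mathbb{C}\setminus\mathbb{Z}$, the assumption that $\mathfrak{g}$ belongs to one of these two families already forces $\mathfrak{a}$ to belong to their union; so it remains only to show that no $\mathfrak{q}_{n}^{(2)}$ can be isomorphic, as a Lie superalgebra, to any $S(2|1;b')$.

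For this I would use $\dim\mathfrak{c}(\mathfrak{g}_{\overline{0}})$ as the discriminating invariant. Since a Lie superalgebra isomorphism $\iota$ preserves parity, it satisfies $\iota(\mathfrak{g}_{\overline{0}})=\mathfrak{a}_{\overline{0}}$ and therefore maps $\mathfrak{c}(\mathfrak{g}_{\overline{0}})$ isomorphically onto $\mathfrak{c}(\mathfrak{a}_{\overline{0}})$, so $\dim\mathfrak{c}(\mathfrak{g}_{\overline{0}})$ is intrinsic. By \ref{codim_queer}, one has $\mathfrak{c}(\mathfrak{g}_{\overline{0}})=\mathbb{C}K$ when $\mathfrak{g}=\mathfrak{q}_{n}^{(2)}$, which is one-dimensional; on the other hand, by \ref{structure even S(2|1;b)}, $\mathfrak{c}(\mathfrak{g}_{\overline{0}})=\mathbb{C}K\oplus\mathbb{C}(L_0-d)$ when $\mathfrak{g}=S(2|1;b)$, which is two-dimensional. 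These dimensions differ, ruling out any isomorphism between the two families and giving both ``if and only if'' statements (the reverse implications follow by applying the same argument to $\iota^{-1}$). The argument is essentially bookkeeping once Lemma \ref{AffIso} is in hand, so there is no real obstacle; the only point worth flagging is the intrinsic nature of $\dim\mathfrak{c}(\mathfrak{g}_{\overline{0}})$, which, as just noted, is immediate from parity-preservation of Lie superalgebra isomorphisms.
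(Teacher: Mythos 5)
Your proof is correct, and it follows the paper's overall strategy (classification reduces the problem to the two families $\mathfrak{q}_n^{(2)}$ and $S(2|1;b)$, after which one must exhibit an intrinsic invariant separating them), but the discriminating invariant you choose is different from the paper's. The paper distinguishes the two families by observing, citing \cite{Sint}, that $\left[\mathfrak{g}_{\overline{0}},\mathfrak{g}_{\overline{0}}\right]\cap\mathfrak{c}(\mathfrak{g})$ is zero for $\mathfrak{q}_n^{(2)}$ (since $K\notin[\mathfrak{g}_{\overline{0}},\mathfrak{g}_{\overline{0}}]$ there, as noted in \ref{realization of queer twisted}) and nonzero for $S(2|1;b)$. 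You instead use $\dim\mathfrak{c}(\mathfrak{g}_{\overline{0}})$, which is $1$ for $\mathfrak{q}_n^{(2)}$ by \ref{codim_queer} and $2$ for $S(2|1;b)$ by \ref{structure even S(2|1;b)}; both facts are indeed recorded in Chapter 3, and your observation that parity-preservation of Lie superalgebra isomorphisms makes this quantity intrinsic is the right justification. The two invariants are equally valid and equally easy to verify from the realizations given in the paper; yours has the mild advantage of being self-contained within the thesis (it does not need the external reference to \cite{Sint} for the separating fact), while the paper's is marginally more robust in that it only requires knowing whether a certain intersection vanishes rather than computing a dimension.
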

\begin{proof}
   By classification, the only indecomposable non-symmetrizable affine Kac-Moody superalgebras are $\mathfrak{q}\left(n\right)^{\left(2\right)}$  and  $S(2|1;b).$ These superalgebras are not isomorphic, since by ~\cite{Sint}, the intersection of $\left[\mathfrak{g}_{\overline{0}},\mathfrak{g}_{\overline{0}}\right]$ with the centre of $\mathfrak{g}$  is zero for $\mathfrak{q}\left(n\right)^{\left(2\right)}$ and non zero for $S(2|1;b).$ 
\end{proof}

\subsubsection{}
\begin{cor}{}
    Assume $\mathfrak{g}_{\overline{0}}$  is a symmetrizable Kac-Moody algebra. Then \Thm{Conjugacy} hold. 
    \begin{proof}
        Since $\mathfrak{g}_{\overline{0}}$  is symmetrizable Kac-Moody algebra, then by \Thm{Conjugacy-KM}, there exists $x\in\mathcal{S}_{min},$ such that: $$\Ad\left(x\right)\left(\mathfrak{h'}\right)\subset\mathfrak{h},$$
where $\mathcal{S}_{min}$ is the minimal Kac-Moody group associated to $\mathfrak{g}_{\overline{0}}.$ Since $\mathfrak{h'}$  is maximal commutative subalgebra, it follows $\text{Ad}\left(x\right)\left(\mathfrak{\mathfrak{h'}}\right)=\mathfrak{h}.$  
    \end{proof}
\end{cor}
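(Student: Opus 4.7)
My plan is to reduce the problem to the Conjugacy Theorem for the symmetrizable Kac--Moody algebra $\mathfrak{g}_{\overline{0}}$, namely \Thm{Conjugacy-KM}, and then use the super-algebraic setup established earlier in the paper to lift the conjugating element from $\mathcal{S}_{min}(\mathfrak{g}_{\overline{0}})$ to an inner automorphism of $\mathfrak{g}$.

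The first step is to observe that the subalgebra $\mathfrak{h}' = \iota^{-1}(\mathfrak{h}_\mathfrak{a}) \subset \mathfrak{g}$ is a Cartan subalgebra of $\mathfrak{g}$ in the abstract sense: it is commutative, $\ad$-diagonalizable, and maximal with these properties. In particular $\mathfrak{h}'$ is purely even, so $\mathfrak{h}' \subset \mathfrak{g}_{\overline{0}}$. Using the hypothesis that $\mathfrak{g}_{\overline{0}}$ is a symmetrizable Kac--Moody algebra with Cartan subalgebra containing $\mathfrak{h}$ (which follows from the affinization description of $\mathfrak{g}_{\overline{0}}$ given in Chapter 3), I can apply \Thm{Conjugacy-KM} to the ad-diagonalizable commutative subalgebra $\mathfrak{h}'$ of $\mathfrak{g}_{\overline{0}}$. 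This produces an element $x \in \mathcal{S}_{min}$, where $\mathcal{S}_{min}$ is the minimal Kac--Moody group associated to $\mathfrak{g}_{\overline{0}}$, such that $\Ad(x)(\mathfrak{h}') \subset \mathfrak{h}$.

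The next step is to upgrade this conclusion from an inclusion inside $\mathfrak{g}_{\overline{0}}$ to an equality of Cartan subalgebras of $\mathfrak{g}$. Because $B_{\pi}$ is symmetrizable in the affine symmetrizable case (and because $\mathfrak{g}_{pr}$ governs the $\mathcal{S}_{min}$-action on all integrable $\mathfrak{g}$-modules, cf.~\ref{Integrable for derived B_pi}), the element $\Ad(x)$ extends to an automorphism of $\mathfrak{g}$, and by the definition given in the paper this extension is an inner automorphism of $\mathfrak{g}$. Setting $\phi := \Ad(x)$, the image $\phi(\mathfrak{h}')$ is a commutative $\ad$-diagonalizable subalgebra of $\mathfrak{g}$ contained in $\mathfrak{h}$. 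Since $\mathfrak{h}'$ is a maximal such subalgebra in $\mathfrak{g}$ and $\phi$ is an automorphism of $\mathfrak{g}$, the image $\phi(\mathfrak{h}')$ is again maximal, so the inclusion $\phi(\mathfrak{h}') \subset \mathfrak{h}$ must be an equality.

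The main obstacle I anticipate is the bookkeeping around the two different ambient settings: $\mathcal{S}_{min}$ is built out of $\mathfrak{g}_{\overline{0}}$, but the maximality that forces equality must be asserted inside $\mathfrak{g}$. This is why the symmetrizability of $B_{\pi}$ (as opposed to the possibly non-symmetric character of the full Cartan matrix $A$) is essential here: it is precisely what allows $\mathcal{S}_{min}$ to act on integrable $\mathfrak{g}$-modules and hence on $\mathfrak{g}$ itself via $\Ad$. Once this identification is made, the argument closes cleanly; the nontrivial remaining content of the main theorem is therefore concentrated in the non-symmetrizable cases $\mathfrak{q}_n^{(2)}$ and $S(2|1;b)$, which must be treated separately.
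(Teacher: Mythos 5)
Your proposal is correct and follows essentially the same route as the paper: apply \Thm{Conjugacy-KM} to the ad-diagonalizable commutative subalgebra $\mathfrak{h}'\subset\mathfrak{g}_{\overline{0}}$ to get $\Ad(x)(\mathfrak{h}')\subset\mathfrak{h}$, then use maximality of $\mathfrak{h}'$ to upgrade the inclusion to an equality. Your additional care in checking that $\Ad(x)$ genuinely acts as an inner automorphism of all of $\mathfrak{g}$ (via the symmetrizability of $B_\pi$ and \ref{Integrable for derived B_pi}) addresses a point the paper leaves implicit, but it does not change the argument.
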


\subsubsection{}
\begin{rem}{}
    We see that if $\mathfrak{g}_{\overline{0}}$  is a Kac-Moody algebra, then \Thm{Conjugacy} naturally follows from the non-super and symmetrizable case. This occurs, for instance, in $\mathfrak{g}=\mathfrak{osp}\left(1|2n\right)^{\left(1\right)}.$  However, in the majority of cases, $\mathfrak{g}_{\overline{0}}$  is far from being Kac-Moody. 

Consider for example the case $\mathfrak{g}=\mathfrak{osp}\left(2|2\right)^{\left(2\right)}$  (recall \ref{ex: osp(2|2)}). In this case, $N\oplus\mathbb{C}K$  is an infinite Heisenberg  subalgebra of $\mathfrak{g}_{\overline{0}}.$ 

The situation becomes even more challenging in the case $\mathfrak{g}=\mathfrak{sl}\left(m|n\right)^{\left(1\right)}$ (recall \ref{ex-sl(m|n)}).  Here,  $\bigoplus_{k\in\mathbb{Z}}\mathbb{C}t^{k}z\oplus\mathbb{C}K$  is an infinite Heisenberg subalgebra of $\mathfrak{g}_{\overline{0}}.$ Moreover,  $\mathfrak{sl}_{m}^{\left(1\right)}\times\mathfrak{sl}_{n}^{\left(1\right)}$  does not embed into $\mathfrak{g}_{\overline{0}}.$ Additionally, the "merging" of the derivations leads us to a condition where constructing a set of simple roots for this algebra becomes impossible. 

To overcome this obstacle, we will use \Thm{prop1},  which provides a nice description of $\mathfrak{g}_{\overline{0}}$  and an "invariant" description of $\mathfrak{g}_{pr},$  alongside 
with other results established in previous sections. 
\end{rem}

\subsection{Conjugacy theorem for Symmetrizable Case}
In this subsection, we will prove \Thm{Conjugacy} for affine symmetrizable $\mathfrak{g}.$  
\subsubsection{}
\begin{lem}{sigma-eigenvector}
Let $r$ be the order of the automorphism $\sigma:\dot{\mathfrak{g}}\longrightarrow\dot{\mathfrak{g}}$ that we applied in the affinization of $\mathfrak{\dot{g}}.$ If $h\in\dot{\fh}$ is a $\sigma$-eigenvector and $\sigma^j(h)=h,$ then 
$\dot{\fg}$ contains a $\sigma$-eigenvector $u$ such that $(\ad h)^j (u)=cu$ for some $c\not=0$.
\end{lem}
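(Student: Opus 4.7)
The plan is to construct $u$ as a $\sigma$-eigenvector supported on a single $\sigma^*$-orbit of roots of $\mathfrak{\dot{g}}$ under the $\mathfrak{\dot{h}}$-root space decomposition. The decomposition by $\mathfrak{\dot{h}}$-weights and by $\sigma$-eigenvalues are genuinely different gradings, but they are compatible: $\sigma$ permutes the $\mathfrak{\dot{h}}$-root spaces, so restricting to one $\sigma$-orbit of roots produces a nonzero subspace carrying both structures. Since $h\in\mathfrak{\dot{h}}$ acts by the scalar $\beta(h)$ on a root space $\mathfrak{\dot{g}}_\beta,$ it will suffice to control $\beta(h)^j$ uniformly across such an orbit, and the $\sigma$-eigenvector condition on $h$ together with $\sigma^j(h)=h$ will make this automatic.

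First, I would note that $h\neq 0$ is implicit in the statement (otherwise no nonzero $c$ can arise) and use the centerlessness of $\mathfrak{\dot{g}},$ which is either a simple finite-dimensional Kac-Moody superalgebra or $\mathfrak{psl}(n|n),$ to find a root $\alpha\in\dot{\Delta}$ with $\alpha(h)\neq 0.$ Let $\sigma^*:\mathfrak{\dot{h}}^*\to\mathfrak{\dot{h}}^*$ denote the contragredient action, $\sigma^*\beta:=\beta\circ\sigma^{-1},$ so that $\sigma(\mathfrak{\dot{g}}_\beta)=\mathfrak{\dot{g}}_{\sigma^*\beta}.$ The $\sigma^*$-orbit $\mathcal{O}$ of $\alpha$ is finite, and
$$V:=\bigoplus_{\beta\in\mathcal{O}}\mathfrak{\dot{g}}_\beta$$
is a nonzero $\sigma$-stable subspace of $\mathfrak{\dot{g}}.$ Being of finite order, $\sigma|_V$ is diagonalizable, so $V$ decomposes into $\sigma$-eigenspaces and at least one of them is nonzero; take $u$ to be any nonzero $\sigma$-eigenvector in $V.$

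It remains to verify $(\ad h)^j u = c u$ with $c\neq 0.$ Writing $u=\sum_i c_i e_i$ with $e_i\in\mathfrak{\dot{g}}_{(\sigma^*)^i\alpha}$ and letting $\xi^k$ denote the $\sigma$-eigenvalue of $h,$ the hypothesis $\sigma^j(h)=h$ forces $\xi^{kj}=1.$ Then
$$((\sigma^*)^i\alpha)(h)=\alpha(\sigma^{-i}(h))=\xi^{-ik}\alpha(h),$$
so $((\sigma^*)^i\alpha)(h)^j=\xi^{-ikj}\alpha(h)^j=\alpha(h)^j,$ independently of $i.$ Therefore $(\ad h)^j u=\alpha(h)^j u$ with $c:=\alpha(h)^j\neq 0,$ as required.

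The only step that might legitimately be called an obstacle is the very first one, the existence of $\alpha\in\dot{\Delta}$ with $\alpha(h)\neq 0,$ but this is immediate from the triviality of the center of $\mathfrak{\dot{g}}$ in all cases covered by the definition of $\mathfrak{\dot{g}}.$ Everything else is a formal consequence of the compatibility between the $\sigma$-grading and the root space decomposition, and of the diagonalizability of a finite-order automorphism.
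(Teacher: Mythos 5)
Your proof is correct and is essentially the paper's argument in mirror form: the paper takes an $\ad h$-eigenvector $g$ with nonzero eigenvalue and projects it onto the $\sigma$-eigenspaces $\dot{\fg}^i$, using $\sigma^j(h)=h$ to show that $(\ad h)^j$ commutes with these projections, while you fix a $\sigma$-stable sum of root spaces on which $(\ad h)^j$ acts by the single scalar $\alpha(h)^j$ and then pick a $\sigma$-eigenvector inside it. The key computation (the hypothesis $\sigma^j(h)=h$ forcing the $j$-th power of the eigenvalue to be constant along the $\sigma$-orbit) and the appeal to centerlessness of $\dot{\fg}$ to get a nonzero eigenvalue are the same in both.
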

\begin{proof}
For each $s\in\mathbb{Z}_r$ let  $p_s:\dot{\fg}\to \dot{\fg}^s$ be the projection
along the decomposition $\dot{\fg}=\oplus_{i\in\mathbb{Z}_r} \dot{\fg}^i$. 
Take $h\in\dot{\fh}\cap \fg^s$. Then
$p_{i+s}\circ \ad h=\ad h\circ p_i$, so
$p_i\circ(\ad h)^j=(\ad h)^j \circ p_i$.
Since $h\in\dot{\fh}$ there exists $g\in\dot{\fg}$ such that $[h,g]=cg$ for
some $c\not=0$, so $(\ad h)^j (g)=c^j g$ and 
$(\ad h)^j (p_i(g))=c^j p_i(g)$ for all $i$.
\end{proof}
\subsubsection{}
\begin{lem}{lemghh}
Assume that there exists $g,g'\in\mathcal{S}_{min}$ satisfying
$$\left(\Ad g\right)\left(\mathfrak{h'\cap\mathfrak{g}}_{pr}\right)\subset\mathfrak{h},\quad\left(\Ad g'\right)\left(\mathfrak{h\cap\mathfrak{g}}_{pr}\right)\subset\mathfrak{h'}.$$
Then, $\left(\Ad g\right)\left(\mathfrak{h'}\right)=\mathfrak{h}.$ 
\end{lem}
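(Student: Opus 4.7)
The plan is to work in three stages: first align the principal intersections using both hypotheses, then locate $\mathfrak{h}'$ inside $\mathcal{K}_{\overline{0}}$, and finally invoke ad-diagonalisability to eliminate the imaginary components. For the first stage, I observe that $\mathcal{S}_{min}$ is generated by $\exp(\ad u)$ with $u\in\mathfrak{g}_{\alpha}$ for $\alpha\in\pi\subset\Sigma_{pr}$, and since $\mathfrak{g}_{pr}$ is a subalgebra containing every such $u$, each $\Ad x$ with $x\in\mathcal{S}_{min}$ preserves $\mathfrak{g}_{pr}$. The hypotheses therefore upgrade to $(\Ad g)(\mathfrak{h}'\cap\mathfrak{g}_{pr})\subset\mathfrak{h}\cap\mathfrak{g}_{pr}$ and symmetrically for $g'$. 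The composition $\Ad(gg')$ is an injective linear endomorphism of the finite-dimensional space $\mathfrak{h}\cap\mathfrak{g}_{pr}$, hence a bijection on it, so $(\Ad g)(\mathfrak{h}'\cap\mathfrak{g}_{pr})=\mathfrak{h}\cap\mathfrak{g}_{pr}$. Replacing $\mathfrak{h}'$ by $(\Ad g)(\mathfrak{h}')$ reduces the problem to proving $\mathfrak{h}'=\mathfrak{h}$ under the additional assumption $\mathfrak{h}'\cap\mathfrak{g}_{pr}=\mathfrak{h}\cap\mathfrak{g}_{pr}$.

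For the second stage, $\mathfrak{h}'$ is an even commutative subalgebra centralising $\mathfrak{h}\cap\mathfrak{g}_{pr}$, so \Lem{Centralizer_of_K} forces $\mathfrak{h}'\subset\mathcal{K}_{\overline{0}}$. Any $h'\in\mathfrak{h}'$ decomposes as $h'=h_0+\sum_{m\neq 0}x_m$ with $h_0\in\mathfrak{h}$ and $x_m\in\mathfrak{g}_{m\delta}$, and by \Lem{centralizer} combined with the descriptions of the centralisers in \Lem{g^1 in Cartan} and \Lem{dim}, each such $x_m$ has the form $t^m\otimes h_m$ for some $h_m\in\mathfrak{\dot{h}}\cap\mathfrak{\dot{g}}^{\overline{m}}$. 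Since $\dim\mathfrak{h}'=\dim\mathfrak{h}$, it suffices to prove $x_m=0$ for every $m\neq 0$.

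The vanishing of $x_m$ is the heart of the argument and comes from ad-diagonalisability of $h'$. For an anisotropic principal root $\alpha\in\pi$, the $\delta$-string $V_\alpha=\bigoplus_k\mathfrak{g}_{\alpha+k\delta}$ is $\mathcal{K}_{\overline{0}}$-invariant, and in a natural basis $v_k$ for the one-dimensional weight spaces $\mathfrak{g}_{\alpha+k\delta}$,
\[
\ad h'(v_k)=(\alpha+k\delta)(h_0)\,v_k+\sum_{m\neq 0}c_m^{(k)}v_{k+m},
\]
where $c_m^{(k)}$ is the $\mathfrak{\dot{h}}$-weight of the generator of $\mathfrak{g}_{\alpha+k\delta}$ evaluated on $h_m$. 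For a finitely supported eigenvector $v=\sum b_k v_k$, comparing the coefficient at the index $j=N_+ + M_+$, with $N_+=\max\{k:b_k\neq 0\}$ and $M_+=\max\{m>0:c_m^{(N_+)}\neq 0\}$, yields $c_{M_+}^{(N_+)}b_{N_+}=0$, a contradiction; the symmetric argument at $N_-$ kills negative $m$. Since ad-diagonalisability requires $V_\alpha$ to be spanned by such eigenvectors, every $c_m^{(k)}$ must vanish, so $\alpha(h_m)=0$ for every $\alpha\in\pi$; running $\alpha$ through $\pi$ forces $h_m=0$. Thus $\mathfrak{h}'\subset\mathfrak{h}$, and by equality of dimensions $\mathfrak{h}'=\mathfrak{h}$.

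The hard part will be the twisted case $A(2k|2l)^{(4)}$, where $\mathfrak{\dot{h}}=\mathfrak{z}^0\oplus\mathfrak{z}^2$ by \Lem{dim}: the structure constants $c_m^{(k)}$ genuinely depend on $k$ through the differing $\mathfrak{\dot{h}}$-weights of root vectors in the various $\mathfrak{\dot{g}}^{\overline{k}}$, so one must verify that the extremal-support estimate still closes and that $\{\alpha|_{\mathfrak{\dot{h}}}:\alpha\in\pi\}$ separates points of $\mathfrak{\dot{h}}\cap\mathfrak{\dot{g}}^{\overline{m}}$ for every $m\neq 0$, using the explicit description of $\pi$ and of the $\sigma$-eigenspaces in this algebra.
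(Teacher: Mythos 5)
Your first two stages are sound and essentially parallel the paper's own argument: $\Ad x$ preserves $\fg_{pr}$, whence $(\Ad g)(\fh'\cap\fg_{pr})=\fh\cap\fg_{pr}$, and \Lem{Centralizer_of_K} reduces everything to showing that the imaginary components $x_m\in\fg_{m\delta}$, $m\ne 0$, of an element of $(\Ad g)(\fh')$ vanish. (You do assert $\dim\fh'=\dim\fh$ without justification; the paper derives it from \Lem{Codim of g_pr}, both sides of which are presentation-independent --- a small but real omission.)

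The third stage has a genuine gap: testing ad-diagonalisability only on the $\delta$-strings $V_\alpha$ with $\alpha\in\pi$ cannot detect all nonzero $h_m$. The structure constants you extract are, up to roots of unity, the values $\beta(h_m)$ where $\beta$ runs over the roots of $\dot{\fg}$ lying above $\pi$, i.e.\ over roots of the even reductive part of $\dot{\fg}$, and these do \emph{not} separate points of $\dot{\fh}$. Concretely, for $\fg=\fsl(m|n)^{(1)}$ with $m\ne n$ (see \ref{ex-sl(m|n)}) the central element $z$ of $\fsl(m|n)_{\ol{0}}$ satisfies $\beta(z)=0$ for every even root $\beta$ of $\fsl(m|n)$, so $t^kz$ acts by zero on every string $V_\alpha$ with $\alpha\in\pi$ and your criterion would accept $h_0+t^kz$ as ad-diagonalisable; yet $t^kz$ maps each odd root space $\fg_{\gamma+j\delta}$ onto $\fg_{\gamma+(j+k)\delta}$ by the nonzero scalar $\gamma(z)$ and is therefore not even ad-locally finite. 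The same failure occurs for $\fosp(2|2)^{(2)}$, where $\pi=\{2\alpha,2\delta-2\alpha\}$ and the line $\dot{\fh}\cap\dot{\fg}^{1}$ is annihilated by the unique even root of $\dot{\fg}$ lying above $\pi$ --- so the problem is not confined to $A(2k|2l)^{(4)}$ as your closing paragraph suggests, but is already present in the untwisted and order-two cases. The repair is either to run your string argument over \emph{all} real roots of $\fg$ (equivalently, all roots of $\dot{\fg}$, which do separate points of the relevant part of $\dot{\fh}$), or to argue as the paper does: \Lem{sigma-eigenvector} produces, for any $\sigma$-eigenvector $h\in\dot{\fh}$, some $u\in\dot{\fg}$ --- typically an odd root vector --- with $(\ad h)^{j}(u)=cu$, $c\ne 0$, and then the iterates $(\ad a)^{ij}(t^{m}u)$ have nonvanishing projections escaping to infinity, contradicting local finiteness. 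Your extremal-index bookkeeping would also need tightening (the coefficient at $j=N_++M_+$ can receive contributions from pairs other than $(N_+,M_+)$ when the string has gaps), but that is repairable; the restriction to $\pi$ is the essential flaw.
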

\begin{proof}
Set $\fh'':=(\Ad g) (\fh'\cap\fg_{\pr})$.
By~\Thm{prop1}, $\mathfrak{g}_{pr}$  is invariant with respect to any automorphism of $\mathfrak{g}.$  In particular $(\Ad g)  (\fg_{\pr})=\fg_{\pr}$ and hence
$$\fh''\subset \mathfrak{h}\cap\fg_{pr}.$$
In particular, $\dim\left(\mathfrak{h'}\cap\mathfrak{g}_{pr}\right)\leq\dim\left(\mathfrak{h}\cap\mathfrak{g}_{pr}\right).$  Similarly, $\dim\left(\mathfrak{h}\cap\mathfrak{g}_{pr}\right)\leq\dim\left(\mathfrak{h'}\cap\mathfrak{g}_{pr}\right),$  and thus $\dim\left(\mathfrak{h}\cap\mathfrak{g}_{pr}\right)=\dim\left(\mathfrak{h'}\cap\mathfrak{g}_{pr}\right).$ 
By \Lem{Codim of g_pr}, this gives $\dim \mathfrak{h}=\dim \mathfrak{h}'.$   

Since $$\left(\Ad g\right)\left(\mathfrak{h'\cap\mathfrak{g}}_{pr}\right)=\mathfrak{h\cap\mathfrak{g}}_{pr},$$  we have 
$$\left(\Ad g\right)\left(\mathfrak{h'}\right)=\mathfrak{h\cap\mathfrak{g}}_{pr}\oplus\mathfrak{t},$$
where $\mathfrak{t}$  acts ad-semisimply and $[t,\mathfrak{h\cap\mathfrak{g}}_{pr}]=0.$ By 
\Lem{Centralizer_of_K}, we know that $\mathfrak{t}\subset\mathcal{K}_{\overline{0}}.$ Assume, for sake of contradiction, that there exists $a\in\mathfrak{t}\backslash\mathfrak{h}.$  Thus, $a=\sum_{i\in \mathbb{Z}}a_{i},$  where $a_i\in\mathfrak{g}_{i\delta}\cap\mathfrak{g}_{\overline{0}},$  and there exists $i\not=0$  such that $a_i\not=0.$ Let $S\subset\mathbb{Z}$  be the set of all integers $s\in S$  such that $a_{i}\not=0.$ Denote $s_0=\max S.$ Take $h\in\dot{\mathfrak{h}}$  such that $a_{s_0}=ht^{s_0}.$ Let $\sigma$ be the automorphism of order $r,$ that we applied in the affinization of $\mathfrak{\dot{g}}$ (for non-twisted affinization we have $r=1$). Note that $h$ is a $\sigma-$eigenvector and there exists $j$  such that $\sigma^j(h)=h.$ Thus, by \Lem{sigma-eigenvector} there exists a $\sigma-$eigenvector $u\in\mathfrak{\dot{g}}$ such that $(\ad h)^j(u)=cu$ for some $c\not=0.$ Denote by $m$ the integer satisfies $u\in\mathfrak{\dot{g}}^{m}.$ 
For each $i,$ we define the projection map $p_{i\cdot j\cdot s_{0}+m}$ onto the subspace $\mathbb{C}t^{i\cdot j\cdot s_{0}+m}\otimes\mathfrak{\dot{g}}^{\overline{i\cdot j\cdot s_{0}+m}}$ along the decomposition $\mathfrak{g}=\bigoplus_{k\in\mathbb{Z}}\mathbb{C}t^{k}\otimes\mathfrak{\dot{g}}^{\overline{k}}\oplus\mathbb{C}K\oplus\mathbb{C}d$.  Then, for every $i,$ we have 
$$p_{i\cdot j\cdot s_{0}+m}\left(\left(\ad a\right)^{i\cdot j}\left(t^mu\right)\right)=c^{i}t^{i\cdot j\cdot s_{0}+m}u\neq0.$$ Hence, $a$ is not acting ad-locally finitely and therefore it is not ad-semisimply, which leads to contradiction. Therefore $\left(\Ad g\right)\left(\mathfrak{h'}\right)\subset\mathfrak{h}$  and since $\dim\mathfrak{h}=\dim\mathfrak{h}',$ we get $\left(\Ad g\right)\left(\mathfrak{h'}\right)=\mathfrak{h}.$

\end{proof}
We will need the following technical lemma:
\subsubsection{}
\begin{lem}{lem1}
Let $\ft$ be a Lie superalgebra and $\ft_1$, $\ft_2$ are subalgebras.
If $\ft_1$ acts $\ad$-diagonally, then 
$\ft_1\cap\ft_2$   acts $\ad$-diagonally
on $\ft_2$.
\end{lem}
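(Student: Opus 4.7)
The plan is to reduce to a standard fact from linear algebra: any commuting family of diagonalizable operators is simultaneously diagonalizable, and the restriction of a diagonalizable operator to an invariant subspace remains diagonalizable. First I would observe that since $\ft_2$ is a subalgebra and $\ft_1\cap\ft_2\subset \ft_2$, the subspace $\ft_2$ is $\ad(\ft_1\cap\ft_2)$-stable. Thus for each $h\in\ft_1\cap\ft_2$ the operator $\ad h$ on $\ft$ restricts to an operator on $\ft_2$.

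Next I would use the hypothesis that $\ft_1$ acts $\ad$-diagonally on $\ft$: for every $h\in\ft_1$, the operator $\ad h\in\End(\ft)$ is diagonalizable, and the whole family $\{\ad h\,|\,h\in\ft_1\}$ is commuting, hence simultaneously diagonalizable. In particular, for each $h\in\ft_1\cap\ft_2$, the restriction $(\ad h)|_{\ft_2}$ is the restriction of a diagonalizable operator to an invariant subspace, so it is itself diagonalizable. Moreover, any two such restrictions still commute, since their extensions to $\ft$ do.

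Finally I would invoke the standard result that a commuting family of diagonalizable operators on a vector space is simultaneously diagonalizable; applying this to the family $\{(\ad h)|_{\ft_2}\,|\,h\in\ft_1\cap\ft_2\}$ yields a weight-space decomposition
\[
\ft_2=\bigoplus_{\lambda\in (\ft_1\cap\ft_2)^*}(\ft_2)^{\lambda},
\]
which is precisely the assertion that $\ft_1\cap\ft_2$ acts $\ad$-diagonally on $\ft_2$.

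There is no real obstacle here: the only subtle point is the passage from the diagonalizability of $\ad h$ on the ambient $\ft$ to diagonalizability of its restriction on $\ft_2$, and this is taken care of by the invariance of $\ft_2$. Everything else is formal. (If one wanted a slightly more concrete route, one could instead take the weight decomposition of $\ft$ under $\ft_1$ and coarsen it by grouping weights according to their restriction to $\ft_1\cap\ft_2$; the $\ft_1\cap\ft_2$-invariance of $\ft_2$ then forces $\ft_2$ to be the direct sum of its intersections with these coarser weight spaces, giving the same conclusion.)
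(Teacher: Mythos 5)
Your proposal is correct and follows essentially the same route as the paper: restrict each $\ad h$ (for $h\in\ft_1\cap\ft_2$) to the $\ad h$-invariant subspace $\ft_2$ and note that the restriction of a diagonalizable operator to an invariant subspace is diagonalizable. Your justification of the invariance of $\ft_2$ (it is a subalgebra containing $h$) is in fact cleaner than the paper's appeal to $\ft_1\cap\ft_2$ being an ideal in $\ft_2$, and the added remark on simultaneous diagonalizability of the commuting family is a harmless strengthening.
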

\begin{proof}
 Let $t\in\mathfrak{t}_{1}\cap\mathfrak{t}_{2}.$ Since $t\in\mathfrak{t}_1,$ $t$  acts ad-diagonally on $\mathfrak{t}.$ In addition, as $\mathfrak{t}_{1}\cap\mathfrak{t}_{2}$ is an ideal in $\mathfrak{t}_2,$ we know that $\mathfrak{t}_2$  is $\ad t$ invariant and therefore $t$  acts ad-diagonally on $\mathfrak{t}_2.$  
\end{proof}

\subsubsection{Case when $B_\pi$ is indecomposable}
We aim to prove that the conditions of \Lem{lemghh} hold. For this purpose, we first consider the case where
$B_\pi$ is indecomposable. In this scenario, by~\ref{properties_by_section_9}  (ii), 
$\fg(B_\pi)$ can be consider as subalgebra of $\mathfrak{g}_{\overline{0}},$ and $\fg_{\pr}=[\fg(B_\pi),\fg(B_\pi)]$.
By~\Lem{lem1}, the subalgebra  $\fh'\cap\fg(B_\pi)$ acts $\ad$-diagonally on $\fg(B_\pi).$ Consequently, $\fh'\cap\fg_{\pr}$ acts $\ad$-diagonally on $\fg(B_\pi)$.
Using \Thm{Conjugacy-KM}, there exists $g\in \mathcal{S}_{min}$ such that:
$$(\Ad g) (\fh'\cap\fg_{\pr})\subset {\mathfrak{h}},$$  where $\mathcal{S}_{min}$  is the minimal Kac-Moody group associated to $\fg(B_\pi).$ 

\subsubsection{Case when $B_\pi$ is decomposable} \label{decomposable}
We now consider the case where $B_\pi$ is decomposable. By ~\ref{properties_by_section_9} (i), $\fg(B_\pi)=\fg(B_1)\oplus\fg(B_2)\oplus\cdot\cdot\cdot\fg(B_k)$ where $k\in\left\{2,3\right\}$. We deal with case $k=2$, the latter is similar. 
Using ~\ref{properties_by_section_9} (ii), we view $\fg(B_i)$ as subalgebras of $\mathfrak{g}_{\overline{0}}$ and set 
$$\ft_i:= [\fg(B_i),\fg(B_i)].$$
Then,
$$\fg_{\pr}=\ft_1+ \ft_2,\ \ \  \ft_1\cap \ft_2=\mathbb{C}K,\ \ \ 
[\ft_1,\ft_2]=0.$$
Thus, each element $x\in\fg_{\pr}$ can be written (non-uniquely) in the form 
$x=x_1+x_2$ for $x_i\in \ft_i$. For two such decompositions
$x_1+x_2=y_1+y_2,$ we have $x_1-y_1,x_2-y_2\in\mathbb{C}K.$ \newline\newline
Note that $\mathfrak{g}_{pr}/\mathbb{C}K=\mathfrak{t}_{1}/\mathbb{C}K\oplus\mathfrak{t}_{2}/\mathbb{C}K$  and let $p_{1}:\mathfrak{g}_{pr}/\mathbb{C}K\longrightarrow\mathfrak{t}_{1}/\mathbb{C}K$ be the projection on $\mathfrak{t}_{1}/\mathbb{C}K$ with kernel $\mathfrak{t}_{2}/\mathbb{C}K.$ We define $\mathfrak{h}'_{1}\subset\mathfrak{t}_{1}$ to be the preimage of $p_{1}\left(\mathfrak{h}\cap\mathfrak{g}_{pr}/\mathbb{C}K\right)$ under the quotient map. $\mathfrak{h}'_{2}\subseteq\mathfrak{t}_{2}$  is defined similarly.  

\subsubsection{}
\begin{lem}{lem-diag-sub-t_i}
 $\mathfrak{h}'_1$ (resp. $\mathfrak{h}'_2$) is a commutative $\ad$-diagonalizable subalgebra of
 $\ft_1$ (resp. $\mathfrak{t}_2$). 
\end{lem}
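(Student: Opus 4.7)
The plan is to verify the two required properties of $\mathfrak{h}'_1$ — commutativity and ad-diagonalizability on $\mathfrak{t}_1$ — by pulling them back from the corresponding properties of $\mathfrak{h}'$ on $\mathfrak{g}$. Unwinding the definition, $x_1 \in \mathfrak{h}'_1$ precisely when there exists $x_2 \in \mathfrak{t}_2$ with $x_1 + x_2 \in \mathfrak{h}' \cap \mathfrak{g}_{pr}$; in particular, writing $K = K + 0$, one sees $\mathbb{C}K \subset \mathfrak{h}'_1$.

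For ad-diagonalizability of any $x_1 \in \mathfrak{h}'_1$, I will use that $\ad x_2$ vanishes on $\mathfrak{t}_1$ since $[\mathfrak{t}_1, \mathfrak{t}_2] = 0$, so $\ad x_1|_{\mathfrak{t}_1} = \ad (x_1 + x_2)|_{\mathfrak{t}_1}$. The right-hand side is the restriction of a diagonalizable operator on $\mathfrak{g}$ to the invariant subspace $\mathfrak{t}_1$, hence diagonalizable.

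For closure under bracket, take $x_1, y_1 \in \mathfrak{h}'_1$ with partners $x_2, y_2 \in \mathfrak{t}_2$. Commutativity of $\mathfrak{h}'$ gives $[x_1 + x_2, y_1 + y_2] = 0$, and expanding via $[\mathfrak{t}_1, \mathfrak{t}_2] = 0$ yields $[x_1, y_1] = -[x_2, y_2]$. Since the left side lies in $\mathfrak{t}_1$ and the right side in $\mathfrak{t}_2$, both lie in $\mathfrak{t}_1 \cap \mathfrak{t}_2 = \mathbb{C}K \subset \mathfrak{h}'_1$, so $\mathfrak{h}'_1$ is a subalgebra.

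The crux — and the step I expect to be the main obstacle — is upgrading $[x_1, y_1] \in \mathbb{C}K$ to $[x_1, y_1] = 0$. The trick I propose is the following: the subspace $U := \mathbb{C}y_1 + \mathbb{C}K$ is $\ad x_1$-invariant, since $\ad x_1(y_1) \in \mathbb{C}K \subset U$ and $\ad x_1(K) = 0$. The restriction $\ad x_1|_U$ is then simultaneously diagonalizable (as the restriction of a diagonalizable operator on $\mathfrak{t}_1$ to an invariant subspace) and nilpotent (its image lies in $\mathbb{C}K$, which it annihilates). A diagonalizable nilpotent operator must vanish, forcing $[x_1, y_1] = 0$ and thereby commutativity. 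The symmetric argument, with the roles of $\mathfrak{t}_1$ and $\mathfrak{t}_2$ exchanged, handles $\mathfrak{h}'_2$.
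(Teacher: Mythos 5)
Your proof is correct and follows essentially the same route as the paper's: both obtain $\ad$-diagonalizability of $x_1$ on $\ft_1$ from $\ad x_1|_{\ft_1}=\ad(x_1+x_2)|_{\ft_1}$, both show $[x_1,y_1]\in\ft_1\cap\ft_2=\mathbb{C}K$, and both kill this bracket by observing that a diagonalizable operator that squares to zero on a vector (equivalently, is nilpotent on your invariant subspace $U=\mathbb{C}y_1+\mathbb{C}K$) must annihilate it. Your explicit verification that $\mathfrak{h}'_1$ is closed under bracket (via $\mathbb{C}K\subset\mathfrak{h}'_1$) is a small addition the paper leaves implicit, but the argument is the same.
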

\begin{proof}
We prove only for $\mathfrak{h}'_1,$ as for $\mathfrak{h}'_2$ it is exactly the same. 
Take  $h'$ in $\fh'\cap\fg_{\pr}$ and fix a decomposition
 $h'=h_1'+h_2',$  where $h_1',h_2'$  are in $\mathfrak{h}'_{1},\mathfrak{h}'_{2}$  respectively. Since $\ft_1$ and $\ft_2$ commute, it follows that $\text{ad}h_{1}'|_{\mathfrak{t}_{1}}=\text{ad}h'|_{\mathfrak{t}_{1}}$. Since $\ft_1$ is an ideal in $\mathfrak{g}_{pr},$ it is $\text{ad}h'$ invariant subspace. It follows that $\text{ad}h_{1}'$ is diagonalizable on $\ft_1$. In addition, the  elements $\{h'_1, h'\in\fh'\}$ are pairwise commute. Indeed, take $h''\in\fh'$ and fix a decomposition
 $h''=h''_1+h''_2$. Then
 $$0=[h',h'']=[   h_1'+h_2',h''_1+h''_2]=[   h_1',h''_1]+[ h_2',h''_2],$$
 so $[h_1',h''_1],[h_2',h''_2]\in\left(\mathfrak{t}_{1}\cap\mathfrak{t}_{2}\right)=\mathbb{C}K.$ Therefore $(\ad h'_1)^2(h''_1)=0$.
Since the restriction of  $(\ad h'_1)$  on $\ft_1$ is a diagonal operator, the fact that
 $(\ad h'_1)^2(h''_1)=0$ implies $(\ad h'_1)(h''_1)=0$ as required.
Therefore $\fh'_1$ is a commutative and ad-diagnosable subalgebra of $\ft_1$. 
 \end{proof}

\subsubsection{}
By ~\ref{properties_by_section_9} (ii),  $(\mathfrak{h}\cap\ft_{1})\oplus\mathbb{C}d$ is the Cartan subalgebra of $\fg(B_1).$ By ~\Lem{lem-diag-sub-t_i} and \Thm{Conjugacy-KM}, there exists $g_1\in \left(\mathcal{S}_{1}\right)_{\text{min}}$ such that:
$$(\Ad g_1) (\fh'_1)\subset(\mathfrak{h}\cap\ft_1)\oplus\mathbb{C}d,$$ where $\left(\mathcal{S}_{1}\right)_{\text{min}}$ is the minimal Kac-Moody group associated to $\fg(B_1)$. Similarly, there exists  $g_2\in \left(\mathcal{S}_{2}\right)_{\text{min}}$ such that: 
$$(\Ad g_2) (\fh'_2)\subset (\fh\cap\ft_2)\oplus\mathbb{C}d.$$ 
Since $\ft_{1}$ and $\ft_{2}$ commute, $\Ad g_i$ acts by identity on $\ft_{s}$ where $i=1,s=2$ or $i=2, s=1.$ Therefore, $\Ad g_1\circ\Ad g_2=\Ad g_1g_2$ is an inner automorphism of $\fg$ such that: $$\Ad g_1g_2(\fh'\cap\fg_{\pr})\subset\mathfrak{h}.$$ 
\subsubsection{}
Thus, we see that, whether $\mathfrak{g}\left(B_\pi\right)$  is decomposable or not, there exists $g\in\mathcal{S}_{min}$  such that: $$\text{Ad}g\left(\mathfrak{h'}\cap\mathfrak{g}_{pr}\right)\subset\mathfrak{h}.$$
By \Prop{AffIso} and \Rem{Affine-decomposable}, $\mathfrak{a}$ is also indecomposable affine and symmetrizable, and therefore we may use the same argument to show that there exists $g'\in\mathcal{A}_{min}$ such that: $$\text{Ad}g'\left(\iota\left(\mathfrak{h}\right)\cap\mathfrak{a}_{pr}\right)\subset\mathfrak{h_a},$$ where $\mathcal{\mathcal{A}}_{min}$ is Kac-Moody minimal group associated to $\mathfrak{a}\left(B_{\pi}\right).$   
This implies that there exists $g'\in\mathcal{S}_{min}$ such that: $$\text{Ad}g'\left(\mathfrak{h}\cap\mathfrak{g}_{pr}\right)\subset\mathfrak{h'}.$$
Using \Lem{lemghh}, this proves \Thm{Conjugacy} in the symmetrizable case.
\subsection{Conjugacy theorem for ${\mathfrak{q}^{(2)}_n}$}
\subsubsection{}\label{lem queer}
The proof of \Lem{lemghh} carries over to the twisted queer case, with  \Claim{form of pr} replacing \Thm{prop1}, and relying  on the fact that \Lem{Centralizer_of_K} remains valid in this setting. 

Furthermore, while \Lem{Codim of g_pr} does not hold in this case, \Cor{Non.Sym} implies: $$\dim\mathfrak{h}-\dim\mathfrak{h}\cap\mathfrak{g}_{pr}=\dim\mathfrak{h}'-\dim\mathfrak{h}'\cap\mathfrak{g}_{pr}=2,$$
which suffices for our argument.
\subsubsection{}
  Recall the surjective homomorphism $$\psi:\left[\mathfrak{g}\left(B_{\pi}\right),\mathfrak{g}\left(B_{\pi}\right)\right]\longrightarrow\mathfrak{g}_{pr}.$$
By ~\cite{Sint}, in case $\mathfrak{g}=\mathfrak{q}_{n}^{\left(2\right)}$ we have  $\mathfrak{g}\left(B_{\pi}\right)\simeq\mathfrak{sl}_{n}^{\left(1\right)}$ and  $\mathfrak{g}_{pr}\simeq\left[\mathfrak{sl}_{n}^{\left(1\right)},\mathfrak{sl}_{n}^{\left(1\right)}\right]/\mathbb{C}K.$ In particular,  $\psi$  is the quotient map with kernel $\mathbb{C}K.$   
Assume that 
\begin{equation} \tag{*} \label{eq:star-property}
    \psi^{-1}\left(\mathfrak{h'}\cap\mathfrak{g}_{pr}\right)\,\,\text{is} \ad\text{-diagnoziable and commutative.}
\end{equation}

Then, by \Thm{Conjugacy-KM}, there exists $g\in\mathcal{S}_{min},$ where $\mathcal{S}_{min}$ is the minimal Kac-Moody group associated to $\mathfrak{sl}_n^{\left(1\right)},$ such that: $$\Ad g\left(\psi^{-1}\left(\mathfrak{h'}\cap\mathfrak{g}_{pr}\right)\right)\subset\psi^{-1}\left(\mathfrak{h}\cap\mathfrak{g}_{pr}\right).$$
Since $\Ad g\left(\mathbb{C}K\right)=\mathbb{C}K,$  $\Ad g$  can be naturally induced an automorphism of $\mathfrak{sl}_{n}^{\left(1\right)}/\mathbb{C}K,$  that gives: $$\Ad g\left(\mathfrak{h'}\cap\mathfrak{g}_{pr}\right)\subset\mathfrak{h}\cap\mathfrak{g}_{pr}.$$
By \Cor{Non.Sym}, we can also find $g'\in\mathcal{S}_{min}$  such that $\Ad g'(\mathfrak{h}\cap\mathfrak{g}_{pr})\subset\mathfrak{h}\cap\mathfrak{g}_{pr}.$  
Together with Lemma ~\ref{lem queer}, we conclude that \Thm{Conjugacy} hold in the case $\mathfrak{g=\mathfrak{q}^{\left(2\right)}\left(n\right)}.$

Hence, all that remains is to prove property ~(\ref{eq:star-property}). 
\subsubsection{}
\begin{lem}{}
    Property ~(\ref{eq:star-property}) holds. 
\end{lem}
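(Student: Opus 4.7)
The plan is to reduce the statement to showing that a certain $2$-cocycle vanishes on $\mathfrak{h}'\cap\mathfrak{g}_{pr}$, and then to handle this cocycle via a Kac--Peterson style conjugacy argument inside the loop algebra underlying $\mathfrak{g}_{pr}$. To set up the reduction, I first apply \Lem{lem1} with $\mathfrak{t}_{1}=\mathfrak{h}'$ and $\mathfrak{t}_{2}=\mathfrak{g}_{pr}$, which gives that $\mathfrak{h}'\cap\mathfrak{g}_{pr}$ is commutative and acts $\ad$-diagonalizably on $\mathfrak{g}_{pr}$. Because $\ker\psi=\mathbb{C}K_{1}$ is central in $[\mathfrak{sl}_{n}^{(1)},\mathfrak{sl}_{n}^{(1)}]$, the bracket of any two elements of $\psi^{-1}(\mathfrak{h}'\cap\mathfrak{g}_{pr})$ maps to zero under $\psi$ and hence lies in $\mathbb{C}K_{1}$. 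Picking any vector-space section $\sigma\colon\mathfrak{h}'\cap\mathfrak{g}_{pr}\to[\mathfrak{sl}_{n}^{(1)},\mathfrak{sl}_{n}^{(1)}]$ of $\psi$, commutativity of $\psi^{-1}(\mathfrak{h}'\cap\mathfrak{g}_{pr})$ becomes equivalent to the vanishing on $\mathfrak{h}'\cap\mathfrak{g}_{pr}$ of the alternating form $\omega$ defined by $[\sigma h_{1},\sigma h_{2}]=\omega(h_{1},h_{2})K_{1}$; under the realization in \ref{realization of queer twisted} (identifying $\mathfrak{g}_{pr}\cong\mathfrak{sl}_{n}\otimes\mathbb{C}[u^{\pm 1}]$ with $u=t^{2}$), this form has the explicit shape $\omega(xu^{m},yu^{k})=m\,\mathrm{tr}(xy)\,\delta_{m+k,0}$.

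The core of the proof is then to show that any $\ad$-diagonalizable commutative subalgebra of the loop algebra $\mathfrak{sl}_{n}\otimes\mathbb{C}[u^{\pm 1}]$ can be conjugated, by an inner automorphism of $\mathfrak{sl}_{n}^{(1)}$, into the constant Cartan $\mathfrak{h}_{\mathfrak{sl}_{n}}\otimes 1$. The idea is a Jordan-decomposition argument: a non-constant Laurent component $h_{m}u^{m}$ acts on the loop algebra as a $u$-degree shift, which is locally nilpotent rather than semisimple, so such components must vanish for a genuinely $\ad$-semisimple element. Once this is established, $\omega$ vanishes on $\mathfrak{h}_{\mathfrak{sl}_{n}}\otimes 1$ by direct inspection, and since inner automorphisms of $\mathfrak{sl}_{n}^{(1)}$ preserve the cohomology class of $\omega$ while all coboundaries vanish on an abelian subalgebra, we conclude $\omega|_{\mathfrak{h}'\cap\mathfrak{g}_{pr}}=0$ and hence commutativity of $\psi^{-1}(\mathfrak{h}'\cap\mathfrak{g}_{pr})$. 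For the $\ad$-diagonalizability in $[\mathfrak{sl}_{n}^{(1)},\mathfrak{sl}_{n}^{(1)}]$, I decompose $\mathfrak{g}_{pr}=\bigoplus_{\lambda}\mathfrak{g}_{pr,\lambda}$ under $\mathfrak{h}'\cap\mathfrak{g}_{pr}$: for $\lambda\neq 0$, any lift $\tilde v$ of $v\in\mathfrak{g}_{pr,\lambda}$ can be corrected by a multiple of $K_{1}$ to a simultaneous eigenvector of all of $\psi^{-1}(\mathfrak{h}'\cap\mathfrak{g}_{pr})$, compatibility across different $\tilde h$ being guaranteed by the same cocycle-vanishing argument extended to $(\mathfrak{h}'\cap\mathfrak{g}_{pr})\otimes\mathfrak{g}_{pr,\lambda}$; the $\lambda=0$ case reduces to the already established commutativity (applied to the larger subalgebra $\mathfrak{h}'\cap\mathfrak{g}_{pr}$ together with any zero-weight vector).

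The hard part is the loop-algebra conjugacy step. Although it is the natural analogue of \Thm{Conjugacy-KM}, the latter is stated for Kac--Moody algebras, whereas $\mathfrak{g}_{pr}\cong[\mathfrak{sl}_{n}^{(1)},\mathfrak{sl}_{n}^{(1)}]/\mathbb{C}K_{1}$ is the derived Kac--Moody algebra modulo its one-dimensional centre and without the derivation $d_{1}$. Rigorously lifting the Jordan decomposition from $\mathfrak{g}_{pr}$ back to $\mathfrak{sl}_{n}^{(1)}$ and verifying that the conjugating element of $\mathcal{S}_{min}$ respects the preimage structure under $\psi$, so that the conjugation descends correctly to $\psi^{-1}(\mathfrak{h}'\cap\mathfrak{g}_{pr})$ without disturbing the central element $K_{1}$, is the main technical subtlety where the bulk of the work of this lemma will reside.
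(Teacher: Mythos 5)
Your reduction of commutativity to the vanishing of the cocycle $\omega$ on $\fh'\cap\fg_{pr}$ is fine as far as it goes, but the step you yourself flag as carrying ``the bulk of the work'' --- conjugating an arbitrary $\ad$-diagonalizable commutative subalgebra of $\fsl_n\otimes\mathbb{C}[u^{\pm 1}]$ into the constant Cartan --- is left unproven, and the heuristic you offer for it is incorrect. \Thm{Conjugacy-KM} applies to a symmetrizable Kac--Moody algebra together with its Cartan subalgebra (including the derivation), not to the centreless, derivation-free loop algebra $\fg_{pr}$, so it cannot simply be invoked; and the proposed ``Jordan decomposition'' shortcut fails on two counts. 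First, $\ad(h_m\otimes u^m)$ is not locally nilpotent: multiplication by $u^m$ is injective on $\mathbb{C}[u^{\pm 1}]$, so iterated brackets do not terminate unless $\ad h_m$ is already nilpotent on $\fsl_n$. Second, and more seriously, $\ad$-diagonalizable elements of the loop algebra need not have vanishing non-constant Laurent components: already in $\fsl_2\otimes\mathbb{C}[u^{\pm 1}]$ the element $x=e\otimes u+f\otimes u^{-1}$ has $\ad x$ acting as a $\mathbb{C}[u^{\pm 1}]$-linear endomorphism whose characteristic polynomial $\lambda(\lambda-2)(\lambda+2)$ has distinct constant roots, so $\ad x$ is diagonalizable, yet $x$ is visibly non-constant (and the same example embeds into $\fsl_n\otimes\mathbb{C}[u^{\pm 1}]$). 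Thus the argument ``non-constant components must vanish'' collapses, and with it the entire chain (conjugacy $\Rightarrow$ $\omega$ vanishes $\Rightarrow$ commutativity $\Rightarrow$ diagonalizability of the lifts). The conjugacy conclusion itself may well be true, but establishing it amounts to reproving a Peterson--Kac theorem for the loop group, which is precisely what needs to be avoided here.

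The paper's proof needs no conjugacy at this stage and is much more direct. For $a\in\psi^{-1}(\fh'\cap\fg_{pr})$, the element $\psi(a)$ is $\ad$-diagonalizable, hence $\ad$-locally finite, and since $\ker\psi=\mathbb{C}K$ is finite-dimensional, $a$ is $\ad$-locally finite in $\fsl_n^{(1)}$; by the Jordan decomposition for Kac--Moody algebras (\cite{Kumar}, Theorem 10.2.12) one writes $a=a_s+a_n$. Each eigenline $\mathbb{C}v_i$ of $\ad\psi(a)$ has $\ad a$-invariant preimage $\psi^{-1}(\mathbb{C}v_i)$, hence this preimage is also $\ad a_n$-invariant; therefore $\psi(a_n)$ preserves every $\mathbb{C}v_i$ while being $\ad$-locally nilpotent, which forces $\ad\psi(a_n)=0$, i.e.\ $\psi(a_n)\in\fc(\fg_{pr})=0$. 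Hence $a_n\in\mathbb{C}K$, $\ad a=\ad a_s$ is diagonalizable, and commutativity follows at once: $[a,b]\in\ker\psi=\mathbb{C}K$ gives $(\ad a)^2(b)=0$, and diagonalizability of $\ad a$ then yields $(\ad a)(b)=0$. You should replace the conjugacy-based plan by this Jordan-decomposition argument, or else supply a complete proof of the loop-algebra conjugacy statement you are relying on.
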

\begin{proof}
  Let $a\in\psi^{-1}\left(\mathfrak{h'}\cap\mathfrak{g}_{pr}\right).$ Then $\psi(a)$  is ad-diagnoziable and therefore ad-locally finite. In addition, $\ker\psi$ is finite-dimensional and thus $a$  is $\ad-$locally finite. By ~\cite{Kumar} Theorem 10.2.12, we have a decomposition 
  $$a=a_s+a_n,$$
where $a_s$ is ad-diagnoziable, $a_n$  is ad-locally nilpotent, and $[a_s,a_n]=0.$ Moreover, for any subspace $M\subset\mathfrak{sl_n}^{(1)}$  we have
$$\begin{array}{l}
\ [a,M]=M\ \ \ \Longrightarrow\ \ \ [a_s,M]=M,\\
\ [a, M]=0\ \ \ \Longrightarrow\ \ \ [a_s,M]=[a_n,M]=0\end{array}.$$
Since $\psi(a)$ is ad-diagnoziable, we can write $$\mathfrak{g}_{pr}=\bigoplus_{i=1}^{\infty}\mathbb{C}v_{i},$$  where $v_i$  is eigenvector of $\ad \psi(a).$ 

Since $[a,\psi^{-1}(\mathbb{C}v_{i})]\subset\psi^{-1}(\mathbb{C}v_{i}),$ it follows $[a_s,\mathbb{C}v_{i}]\subset\psi^{-1}(\mathbb{C}v_{i}).$  Thus, $[\psi(a_s),\mathbb{C}v_{i}]\subset \mathbb{C}v_{i},$  and $$[\psi(a_n),\mathbb{C}v_{i}]\subset \mathbb{C}v_{i}.$$
Hence, $\psi(a_n)$ acts ad-diagnoziable on $\mathfrak{g}_{pr}.$ On the other hand, since $a_n$  is ad-locally nilpotent on $\mathfrak{sl}_{n}^{\left(1\right)},$ then $\psi(a_n)$ is ad-locally nilpotent on $\mathfrak{g}_{pr}.$ Thus, $\ad \psi(a_n)$  is ad-diagnoziable and ad-nilpotent and therefore $\ad \psi(a_n)=0,$ that is, $\psi\left(a_{n}\right)\in\mathfrak{c}\left(\mathfrak{g}_{pr}\right).$ However, $\mathfrak{c}\left(\mathfrak{g}_{pr}\right)=0$  and thus $\psi(a_n)=0.$  It follows that $a_n\in \mathbb{C}K$ and hence $\ad a_n=0.$ Thus, $\ad a$ is ad-diagnoziable.  

 Next, let $a,b\in\psi^{-1}\left(\mathfrak{h'}\cap\mathfrak{g}_{pr}\right).$ Since $\mathfrak{h'}$  is commutative, we have $[a,b]\in\ker\psi=\mathbb{C}K.$  Thus, $(\ad a)^{2}\left(b\right)=0.$  Since $a$ is ad-diagnoziable, it follows $(\ad a)(b)=0.$ This completes the proof.

\end{proof}

\subsection{Conjugacy theorem for $S(2|1;b)$}
We are now assuming $\mathfrak{g}=S(2|1;b).$  
\subsubsection{}
\begin{lem}{loc.fin}
Let $x\in\mathfrak{g}_{\overline{0}}$ be an element acting ad-locally finite. Then $$x\in\mathfrak{g}_{pr}\oplus\mathbb{C}d\oplus\mathbb{C}L_0.$$ 
\begin{proof}
By \Lem{Inside pr}, $\mathfrak{g}_{pr}$ is an ideal of $\mathfrak{g}_{\overline{0}}.$ Thus, $\overline{x}:=x+\mathfrak{g}_{pr}$ is acting ad-locally finite on $\mathfrak{g}_{\overline{0}}/\mathfrak{g}_{pr}\simeq\mathcal{W}\oplus\mathbb{C}d,$ where $\mathcal{W}:=\mathcal{V}_{ir}/\mathbb{C}K$ is isomorphic to Witt algebra. Write $\bar{x}=d'+\sum_{s\in S}c_{s}L_{s},\,c_{s}\neq0,d'\in\mathbb{C}d.$ 
Note that $S$ is a finite set. If $S=\emptyset,$ then $x\in\mathfrak{g}_{pr}\oplus\mathbb{C}d$ as wished. Otherwise, denote $s_{0}=\max S$. Assume, for sake of contradiction that $s_0>0.$

Let $p_{i}:\mathcal{W}\oplus\mathbb{C}d\rightarrow\mathbb{C}L_{i}$ be the projection map with the kernel $\mathbb{C}d\oplus\bigoplus_{j\in\mathbb{Z},j\neq i}\mathbb{C}L_{j}.$ 
We aim to prove, by induction, that for every $q\in\mathbb{N}$, the following holds: 
$$\max\left\{ i\in\mathbb{N}\,|\,p_{(i+1)s_0+1}\left((\ad{\bar{x}})^{q}\left(L_{s_{0}+1}\right)\right)\neq0\right\} =q$$
For the base case $q=1$: $$\left[\bar{x},L_{s_{0}+1}\right]=\left[d',L_{s_{0}+1}\right]+\left[\sum_{s\in S}c_{s}L_{s},L_{s_{0}+1}\right]=-\left(s_{0}+1\right)L_{s_{0}+1}+\sum_{s\in S}c_{s}\left[L_{s},L_{s_{0}+1}\right]$$$$=-\left(s_{0}+1\right)L_{s_{0}+1}+\sum_{s\in S\backslash s_{0}}c_{s}\left[L_{s},L_{s_{0}+1}\right]-c_{s_{0}}L_{2s_{0}+1}.$$
Thus, $$p_{2s_{0}+1}\left(\ad{\bar{x}}\left(L_{s_{0}+1}\right)\right)=-c_{s_{0}}L_{2s_{0}+1}\neq0.$$ Additionally, clearly for any $t>1$: $$p_{(t+1)s_0+1}\left(\ad{\bar{x}}\left(L_{s_{0}+1}\right)\right)=0.$$
Now for general $q$: $$(\ad{\bar{x}})^{q}\left(L_{s_{0}+1}\right)=\left[d',\left[(\ad{\bar{x}})^{q-1},L_{s_{0}+1}\right]\right]+\left[\sum_{s\in S}c_{s}L_{s},\left[(\ad{\bar{x}})^{q-1},L_{s_{0}+1}\right]\right]$$
Thus, by induction hypothesis, the result follows. 
Therefore, $\bar{x}$ is not acting ad-locally finite and thus $x$  is not acting ad-locally finite, which contradicts our assumption. Therefore $s_{0}=\max S\leq0.$ Similarly, one can show that $\min S\geq0.$ Thus, $S=\left\{ 0\right\} .$ 
Hence, $\sum_{s\in S}c_{s}L_{s}$  is proportional to $L_0$ and thus $x\in\mathfrak{g}_{pr}\oplus\mathbb{C}d\oplus\mathbb{C}L_0$. 
\end{proof}
\end{lem}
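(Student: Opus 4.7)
The plan is to reduce the statement to a computation inside the quotient $\mathfrak{g}_{\overline{0}}/\mathfrak{g}_{pr}$. By \Lem{Inside pr}, $\mathfrak{g}_{pr}$ is an ideal of $\mathfrak{g}_{\overline{0}}$, so if $x$ acts ad-locally finitely on $\mathfrak{g}_{\overline{0}}$ then $\bar x:=x+\mathfrak{g}_{pr}$ acts ad-locally finitely on the quotient, which by the description in~\ref{structure even S(2|1;b)} is isomorphic to $\mathcal W\oplus\mathbb{C}d$, where $\mathcal W=\mathcal{V}_{ir}/\mathbb{C}K$ is the Witt algebra. Thus it suffices to show that any ad-locally finite element of $\mathcal W\oplus\mathbb{C}d$ lies in $\mathbb{C}d\oplus\mathbb{C}L_0$.

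First I would note that an ad-locally finite element $\bar x=d'+\sum_{s\in\mathbb{Z}} c_s L_s$ of $\mathcal W\oplus\mathbb{C}d$ must have finite support $S:=\{s: c_s\neq 0\}$; indeed, $[\bar x, L_1]$ has nontrivial components in all degrees $s+1$ for $s\in S$, and already the ad-orbit of $L_1$ would then be infinite-dimensional if $S$ were infinite. Set $s_0:=\max S$ and assume, for contradiction, that $s_0>0$. The key step is to apply $\mathrm{ad}\bar x$ iteratively to the test element $L_{s_0+1}$ and track the highest degree component using the Witt relation $[L_i,L_j]=(i-j)L_{i+j}$. Using the projection $p_m$ onto $\mathbb{C}L_m$ along $\mathbb{C}d\oplus\bigoplus_{j\neq m}\mathbb{C}L_j$, I would show by induction on $q$ that
\[
p_{(q+1)s_0+1}\bigl((\mathrm{ad}\bar x)^q(L_{s_0+1})\bigr)=\lambda_q L_{(q+1)s_0+1},\qquad \lambda_q\neq 0,
\]
the base case following from the direct calculation given in the statement and the inductive step reducing to the fact that only the summand $c_{s_0}L_{s_0}$ of $\bar x$ can raise the maximal degree. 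This implies that $(\mathrm{ad}\bar x)^q(L_{s_0+1})$ are nonzero in mutually distinct degrees for every $q\in\mathbb{N}$, so $\bar x$ is not ad-locally finite, contradicting the hypothesis. Hence $\max S\le 0$, and a symmetric argument using $L_{s_0-1}$ with $s_0=\min S$ forces $\min S\ge 0$.

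Combining, $S\subseteq\{0\}$, so $\bar x\in\mathbb{C}d\oplus\mathbb{C}L_0$ and therefore $x\in\mathfrak{g}_{pr}\oplus\mathbb{C}d\oplus\mathbb{C}L_0$, as required. The only delicate point — and the main obstacle — is verifying the inductive claim about the leading coefficient $\lambda_q$, since one must make sure that contributions from other summands $c_s L_s$ (with $s<s_0$) and from $d'$ cannot cancel the leading term produced by $c_{s_0}L_{s_0}$; this cancellation is ruled out because any such contribution lands in a strictly lower degree than $(q+1)s_0+1$, so it is killed by the projection $p_{(q+1)s_0+1}$.
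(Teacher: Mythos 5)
Your proposal is correct and follows essentially the same route as the paper: pass to the quotient $\mathfrak{g}_{\overline{0}}/\mathfrak{g}_{pr}\simeq\mathcal W\oplus\mathbb{C}d$ via \Lem{Inside pr}, and rule out $\max S>0$ by iterating $\ad\bar x$ on the test element $L_{s_0+1}$ and tracking the top-degree projection. If anything, your write-up is slightly more careful than the paper's at the inductive step, since you explicitly isolate the leading coefficient $\lambda_q$ and note that the Witt coefficient $s_0-(qs_0+1)=s_0(1-q)-1$ never vanishes for $s_0>0$, $q\geq 1$, while lower-degree contributions are annihilated by the projection.
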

\subsubsection{}
We are now ready to prove \Thm{Conjugacy} for case $S(2|1;b).$
\begin{proof}
Recall that, by ~\cite{Sint}, $\mathfrak{g}_{pr}\oplus\mathbb{C}d\simeq\mathfrak{sl}_{2}^{\left(1\right)}.$ Since ad-diagonalizable elements are in particular ad-locally finite elements, it follows by \Lem{loc.fin} that $\mathfrak{h}'\subset\mathfrak{sl}_{2}^{\left(1\right)}\oplus\mathbb{C}L_{0}.$ Recall that $L_0-d$  lies in $\mathfrak{c}(\mathfrak{g}_{\overline{0}}).$ Thus, by maximality of $\mathfrak{h}'$ we must have $L_{0}-d\in\mathfrak{h'}.$ Therefore: 
$$\mathfrak{h'}=\mathfrak{h'}\cap\mathfrak{sl}_{2}^{\left(1\right)}\oplus\mathbb{C}\left(L_{0}-d\right).$$
In addition, $$\mathfrak{h}=\mathfrak{h}\cap\mathfrak{sl}_{2}^{\left(1\right)}\oplus\mathbb{C}\left(L_{0}-d\right),$$
where $\mathfrak{h}\cap\mathfrak{sl}_{2}^{\left(1\right)}$  is the Cartan subalgebra of $\mathfrak{sl}_{2}^{\left(1\right)}.$ Note that $\mathfrak{h'\cap\mathfrak{g}}_{pr}$ is also ad-diagonalizable commutative subalgebra in $\mathfrak{sl}_{2}^{\left(1\right)}$. Indeed, for every $h\in\mathfrak{h'}\cap\mathfrak{sl}_{2}^{\left(1\right)},$ the subspace $\mathfrak{sl}_{2}^{\left(1\right)}$ is $\ad{h}-$invariant and therefore $\ad{h}$ is diagonalizable on $\mathfrak{sl}_{2}^{\left(1\right)}$.
By ~\Thm{Conjugacy-KM}, there exists $g\in\mathcal{S}_{min}$ such that $\Ad g\left(\mathfrak{h'}\cap\mathfrak{sl}_{2}^{\left(1\right)}\right)\subset\mathfrak{h}\cap\mathfrak{sl}_{2}^{\left(1\right)}.$ Denote this inner automorphism by $\phi.$ Since $L_{0}-d\in\mathfrak{c}\left(\mathfrak{g}_{\overline{0}}\right),$ we have $\phi\left(L_{0}-d\right)\in\mathbb{C}\left(L_{0}-d\right).$ Thus, $\phi\left(\mathfrak{h'}\right)\subseteq\mathfrak{h}.$ 
Note that $\mathfrak{h}'\subset\phi^{-1}\left(\mathfrak{h}\right)$ is ad-diagonalizable and commutative subalgebra of $\mathfrak{g}$ as $\phi$  is an automorphism of $\mathfrak{g}.$ By maximality of $\mathfrak{h}'$ it follows  $\mathfrak{h}'=\phi^{-1}\left(\mathfrak{h}\right)$  and therefore $\phi\left(\mathfrak{h'}\right)=\mathfrak{h},$  as wished.  
\end{proof}

\subsection{}
\begin{cor}{}
$\mathfrak{q}_{n}^{\left(2\right)}$  is isomorphic to $\mathfrak{q}_{n'}^{\left(2\right)}$  if and only if $n=n'.$ 

Similarly,  $S(2|1;b)$  is isomorphic to $S(2,1;b')$ if and only if $b\in\mathbb{Z}\pm b'$   
\end{cor}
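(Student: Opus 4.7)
The plan is to derive both statements from the Conjugacy Theorem (\Thm{Conjugacy}) together with \Cor{Related Cartan datum} from the motivation section. One implication (``if'') is immediate in each case: $\mathfrak{q}_n^{(2)}=\mathfrak{q}_{n'}^{(2)}$ when $n=n'$, and $S(2|1;b)\cong S(2|1;b')$ whenever $b'=\pm b+k$ with $k\in\mathbb{Z}$ will follow from exhibiting the appropriate chain of permutations, diagonal rescalings, and isotropic reflexions on the Cartan datum $(A_b,\tau_b)$. The real content is the converse.

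Suppose $\iota:\mathfrak{g}\iso\mathfrak{a}$ is an isomorphism, where $(\mathfrak{g},\mathfrak{a})$ is either a pair $(\mathfrak{q}_n^{(2)},\mathfrak{q}_{n'}^{(2)})$ or a pair $(S(2|1;b),S(2|1;b'))$; by \Cor{Non.Sym} these families are mutually distinguishable, so such an $\iota$ cannot mix them. Applying \Thm{Conjugacy} to $\iota$, we obtain an inner automorphism $\phi$ of $\mathfrak{g}$ with $\iota\circ\phi(\mathfrak{h})=\mathfrak{h}_{\mathfrak{a}}$, so we may assume from the outset that $\iota(\mathfrak{h})=\mathfrak{h}_{\mathfrak{a}}$. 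Then \Cor{Related Cartan datum} tells us that the Cartan data on the two sides are related by a permutation of indices, a left-multiplication by an invertible diagonal matrix, and a chain of isotropic reflexions.

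For $\mathfrak{g}=\mathfrak{q}_n^{(2)}$, the size $\ell$ of the Cartan matrix equals $n$ (this can be read off from $\dim(\mathfrak{h}\cap[\mathfrak{g},\mathfrak{g}])$, which is intrinsic, together with~\ref{codim_queer}); since permutations, diagonal rescalings, and isotropic reflexions all preserve $\ell$, we conclude $n=n'$. Alternatively, one may observe that $\mathfrak{g}_{pr}\simeq[\mathfrak{sl}_n^{(1)},\mathfrak{sl}_n^{(1)}]/\mathbb{C}K$ is intrinsically defined and visibly determines $n$.

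For $\mathfrak{g}=S(2|1;b)$ the analysis is more delicate and is the main technical step. Starting from $(A_b,\tau_b)$, one must enumerate the isotropic reflexions at $\alpha_1$ and $\alpha_2$ and record the resulting Cartan matrices. A direct computation via~(\ref{eq:isoralphah}) shows that each such reflexion sends $b$ to an element of $\{\pm b+k:k\in\mathbb{Z}\}$ (informally, reflecting at $\alpha_1$ shifts $b\mapsto -b$ up to a rescaling, while the ``non-isotropic'' node $\alpha_3$ contributes integer shifts through successive reflexions), and permutations and diagonal rescalings do not enlarge this set. Hence $b'$ must lie in $\mathbb{Z}\pm b$, as claimed. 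The hard part here will be verifying that the orbit of $b$ under these combined operations is exactly $\mathbb{Z}\pm b$, which requires closing the enumeration under compositions; the key observation making this finite is that the defining parity conditions on $\tau$ together with the shape of $A_b$ tightly constrain which reflexions are isotropic at each stage, so that the orbit closes after inspecting only a small number of explicit cases.
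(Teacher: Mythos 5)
Your plan follows the paper's proof exactly: combine \Thm{Conjugacy} with \Cor{Related Cartan datum} to reduce the question to comparing Cartan data under permutations, diagonal rescalings and isotropic reflexions, which preserve the matrix size (giving $n=n'$ in the queer case) and move $b$ only within $\mathbb{Z}\pm b$. The one step you explicitly leave open --- that the orbit of $(A_b,\tau_b)$ under these operations is exactly $\left\{(A_{b'},\tau_{b'})\,:\,b'\in\mathbb{Z}\pm b\right\}$ --- is precisely what the paper cites from \cite{GHS}, Lemma 10.4.1, instead of carrying out the enumeration by hand.
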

\begin{proof}
    By ~\cite{GHS} Lemma 10.4.1, the Cartan datum $(A_b',\tau_b')$  is obtained from $(A_b,\tau_b)$ through the three operations described in \ref{motivation}, if and only if $b\in\mathbb{Z}\pm b'.$   
    Thus, the corollary is now follows directly from \Thm{Conjugacy} and \Cor{Related Cartan datum}.
\end{proof}

\section{Appendix }
We note and prove several facts about the representations of \( B(k|l) \) and \( D(k|l) \). While most of the proofs are presented for \( \mathfrak{g} = B(k|l) \), they can be adapted to \( \mathfrak{g} = D(k|l) \) with only minor adjustments.
  
\subsection{Notation} \label{notation appendix}
\begin{enumerate}
    \item We fix the following set of simple roots for \( B(k|l) \):

$$\Sigma_{B}:=\left\{ \epsilon_1 - \delta_1, \delta_1 - \epsilon_2, \epsilon_2 - \epsilon_3, \epsilon_3 - \epsilon_4, \dots, \epsilon_{k-1} - \epsilon_k, \epsilon_k - \delta_2, \delta_2 - \delta_3, \dots, \delta_{l-1} - \delta_l, \delta_l \right\}.$$

Similarly, we fix the following set of simple roots for \( D(k|l) \):
\[
\Sigma_D := \left\{ \epsilon_1 - \epsilon_2, \dots, \epsilon_{k-1} - \epsilon_k, \epsilon_k - \delta_1, \delta_1 - \delta_2, \dots, \delta_{l-1} - \delta_l, \delta_{l-1} + \delta_l \right\}.
\]
        \item We denote by \( L_B(\alpha) \) the simple \( B(k|l) \)-module with highest weight \( \alpha \). Similarly, we use the notation \( L_D(\alpha) \) for the simple \( D(k|l) \)-module with highest weight \( \alpha \).
        \item Denote by \( V_{st}^B \) the standard module for \( B(k|l) \) and by \( Ad_B \) the adjoint representation. Similarly, we use the notation \( V_{st}^D \) and \( Ad_D \) for the corresponding representations of \( D(k|l) \). We denote the trivial representation by \( \text{triv} \). For a module \( U \), we denote by \( U_0 \) the zero-weight space of the module.
        \item We also fix a set of simple roots for $B_k$  and $C_l$: $$\Sigma_{B_{k}}:=\left\{ \epsilon_{1}-\epsilon_{2},\epsilon_{2}-\epsilon_{3},...,\epsilon_{k-1}-\epsilon_{k},\epsilon_{k}\right\}\text{, } \Sigma_{C_{l}}:=\left\{ \delta_{1}-\delta_{2},\delta_{2}-\delta_{3},...,\delta_{l-1}-\delta_{l},2\delta_{l}\right\} .$$

\end{enumerate} 

\subsection{}
\begin{lem}{Standard module}
\begin{enumerate}
    \item $\text{ch}(L_B(\epsilon_1))=\text{ch}(V_{st}^{B})=\sum_{\alpha\in\Omega_{B}}e^{\alpha}$ , where  $\Omega_{B}=\left\{ \underbrace{\pm\delta_{g}}_{\text{odd}},\underbrace{0,\pm\epsilon_{i}}_{\text{even}}\right\} $. 
    \item $\text{ch}(L_D(\epsilon_1))=\text{ch}(V_{st}^{D})=\sum_{\alpha\in\Omega_{D}}e^{\alpha}$ , where  $\Omega_{D}=\left\{ \underbrace{\pm\delta_{g}}_{\text{odd}},\underbrace{\pm\epsilon_{i}}_{\text{even}}\right\} $. 

\end{enumerate}
\begin{proof}
   Consider the natural embedding of \( B(k|l) \) into \( \mathfrak{gl}(2k+1, 2l) \). The Cartan subalgebra of \( B(k|l) \) under this embedding, denoted by \( \mathfrak{h}_B \), is spanned by the basis 
\[
\Sigma_B := \left\{ e_i - e_{i+k} \right\}_{i=2}^{k+1} \cup \left\{ e_{i+2k+1} - e_{i+2k+l} \right\}_{i=1}^{l},
\]
where \( (e_s)_{ij} = \delta_{ij,ss} \).

Let \( V_{st} \) be the standard representation of \( \mathfrak{gl}(2k+1, 2l) \), so the character of \( V_{st} \) is given by
\[
\text{ch}(V_{st}) = \sum_{i=1}^{2k+1} e^{\epsilon_i} + \sum_{i=1}^{2l} e^{\delta_i}.
\]
We compute \( V_{st}^B \) by restricting the weights of \( V_{st} \) to \( \mathfrak{h}_B \). Denote the restricted weights as \( \epsilon_i' := \epsilon_i|_{\mathfrak{h}_B} \), \( \delta_i' := \delta_i|_{\mathfrak{h}_B} \), and let \( \{ \epsilon_i^B \}_{i=1}^k \cup \{ \delta_i^B \}_{i=1}^l \) be the dual basis to \( \Sigma_B \). Then, the following relations hold:
\( \epsilon_1' = 0 \),
 \( \epsilon_i' = \epsilon_{i-1}^B \) for \( 2 \leq i \leq k \),
 \( \epsilon_{i+k}' = -\epsilon_{i-1}^B \) for \( 2 \leq i \leq k \),
 \( \delta_i' = \delta_i^B \) for \( 1 \leq i \leq l \),
 \( \delta_{i+l}' = -\delta_i^B \) for \( 1 \leq i \leq l \).

Since \( \epsilon_1^B \) is the highest weight and \( V_{st}^B \) is irreducible, this proves (1). A similar approach can be used to prove (2).
\end{proof}
\end{lem}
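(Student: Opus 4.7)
The plan is to establish each character identity by realising $V_{st}^B$ (resp.\ $V_{st}^D$) concretely, computing its weights, and showing that it is an irreducible highest-weight module with highest weight $\epsilon_1$. Once this is done, $L_B(\epsilon_1)\cong V_{st}^B$ (resp.\ $L_D(\epsilon_1)\cong V_{st}^D$), and the character equality follows immediately.

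For $B(k|l)$ I would use the natural embedding $B(k|l)=\mathfrak{osp}(2k+1|2l)\hookrightarrow \mathfrak{gl}(2k+1|2l)$ as the stabiliser of a non-degenerate even super-symmetric form on $\mathbb{C}^{2k+1|2l}$, so that $V_{st}^B$ is just the restriction of the natural $\mathfrak{gl}$-module. The Cartan subalgebra $\mathfrak{h}_B\subset\mathfrak{h}_{\mathfrak{gl}}$ has the basis $\{e_i-e_{i+k}\}_{i=2}^{k+1}\cup\{e_{i+2k+1}-e_{i+2k+l}\}_{i=1}^{l}$ as in the author's setup; restricting the standard $\mathfrak{gl}$-weights $\epsilon_i,\delta_j$ to $\mathfrak{h}_B$ via this basis yields exactly the multiset $\Omega_B=\{0,\pm\epsilon_i,\pm\delta_g\}$ with all multiplicities equal to $1$ (the zero weight coming from the single unpaired middle basis vector). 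This establishes the second equality $\ch V_{st}^B=\sum_{\alpha\in\Omega_B}e^{\alpha}$.

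The remaining task is to verify that $V_{st}^B$ is simple with highest weight $\epsilon_1$ relative to $\Sigma_B$. Since $\epsilon_1$ is maximal in $\Omega_B$ with respect to the dominance order coming from $\Sigma_B$, the weight vector $v_1$ of weight $\epsilon_1$ is annihilated by all positive root vectors and is therefore a highest weight vector. Simplicity follows by showing $U(\mathfrak{n}^-)v_1=V_{st}^B$: the weights of $\Omega_B$ can be reached one by one from $\epsilon_1$ by applying the negative simple root vectors in the natural order dictated by $\Sigma_B$, and since each weight space is one-dimensional, one thereby obtains a basis of $V_{st}^B$. This forces $V_{st}^B\cong L_B(\epsilon_1)$.

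The $D(k|l)$ case is treated identically via $D(k|l)=\mathfrak{osp}(2k|2l)\hookrightarrow \mathfrak{gl}(2k|2l)$, with the only difference that $\mathbb{C}^{2k|2l}$ has no unpaired basis vector, so the zero weight does not appear and $\Omega_D=\{\pm\epsilon_i,\pm\delta_g\}$. The main obstacle I anticipate is the self-contained verification of irreducibility: the chain argument through the negative simple root vectors works cleanly but must be organised around the specific $\Sigma_B$ (resp.\ $\Sigma_D$) so that each step lowers to an accessible neighbour in $\Omega_B$ (resp.\ $\Omega_D$). If this becomes cumbersome, one can instead invoke the classical irreducibility of the standard $\mathfrak{osp}$-module and use only the explicit weight computation to close the argument.
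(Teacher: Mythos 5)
Your proposal is correct and takes essentially the same route as the paper: the natural embedding of $\mathfrak{osp}$ into $\mathfrak{gl}$, restriction of the standard $\mathfrak{gl}$-weights to the chosen basis of $\mathfrak{h}_B$ to obtain $\Omega_B$ (resp.\ $\Omega_D$), and the identification with $L(\epsilon_1)$ via the highest weight and irreducibility of the standard module. The paper simply invokes irreducibility of $V_{st}$ rather than re-deriving it by the $U(\mathfrak{n}^-)$-chain argument you sketch, but that is the fallback you yourself propose.
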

\subsection{}
\begin{lem}{conditions for semi-simple}
 Let \( N \) be a finite-dimensional self-dual module with exactly three non-isomorphic simple subquotients: \( L_1, L_2, L_3 \), each occurring with multiplicity 1. Furthermore, each of these simple subquotients is self-dual. Then \( N \) is completely reducible.
   \begin{proof}
       First, note that every module has at least one irreducible submodule, since the socle is never zero for finite dimensional modules. Assume, without loss of generality, that \( L_1 \) is a submodule of \( N \). Since \( L_1 \) and \( N \) are self-dual, it follows that \( L_1 \) is also a quotient of \( N \), so there exists a surjective homomorphism \( \varphi_1 : N \longrightarrow L_1 \). Now, \( \ker \varphi_1 \) has irreducible subquotients \( L_2 \) and \( L_3 \). As before, one of them must be a submodule of \( \ker \varphi_1 \); suppose it is \( L_2 \). Since \( \ker \varphi_1 \) is a submodule of \( N \), it follows that \( L_1 \oplus L_2 \) is a submodule of \( N \). Again, since \( L_1 \oplus L_2 \) is self-dual, there exists a surjective homomorphism \( \varphi_2 : N \longrightarrow L_1 \oplus L_2 \). Therefore, \( \ker \varphi_2 \) has only \( L_3 \) as a simple subquotient, and hence \( \ker \varphi_2 = L_3 \). Since \( \varphi_2 \) is surjective, it follows that \( N / L_3 \simeq L_1 \oplus L_2 \), and therefore \( N = L_1 \oplus L_2 \oplus L_3 \).
\end{proof}
\end{lem}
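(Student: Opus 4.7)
The plan is to proceed by iteratively extracting simple direct summands from $N$, using self-duality to turn submodules into quotients and a short multiplicity argument to force splittings.

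Since $N$ is finite-dimensional, its socle is nonzero, so after relabeling I may assume $L_1 \hookrightarrow N$. Dualizing this inclusion and using $L_1^* \cong L_1$, $N^* \cong N$, I obtain a surjection $\varphi_1 : N \twoheadrightarrow L_1$. The key observation is that the composition $L_1 \hookrightarrow N \stackrel{\varphi_1}{\twoheadrightarrow} L_1$ cannot be zero. Indeed, were it zero, the embedded $L_1$ would lie in $\ker\varphi_1$; but from $N/\ker\varphi_1 \cong L_1$ together with the multiplicity-one hypothesis, the composition factors of $\ker\varphi_1$ are precisely $L_2$ and $L_3$ (each once), contradicting $L_1 \subset \ker\varphi_1$. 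By Schur's lemma the composition is therefore an isomorphism, so $L_1 \cap \ker\varphi_1 = 0$ and we get a splitting $N = L_1 \oplus \ker\varphi_1$.

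Iterating: $\ker\varphi_1$ has length two and nonzero socle, so (after relabeling) $L_2 \hookrightarrow \ker\varphi_1 \subset N$. Combined with the already-embedded $L_1$, and using $L_1 \cap \ker\varphi_1 = 0$, this gives $L_1 \oplus L_2 \hookrightarrow N$ as a submodule. Since $L_1 \oplus L_2$ is self-dual, the same dualization yields a surjection $\varphi_2 : N \twoheadrightarrow L_1 \oplus L_2$, and the same multiplicity-based Schur argument shows $(L_1 \oplus L_2) \cap \ker\varphi_2 = 0$ and $\ker\varphi_2 \cong L_3$, producing the desired decomposition $N = L_1 \oplus L_2 \oplus L_3$.

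The main technical point, and essentially the only place the multiplicity-one hypothesis is used, is the nonvanishing of the composition $L_1 \hookrightarrow N \twoheadrightarrow L_1$ (and its analogue at the second step); without the multiplicity bound one could not rule out the embedded copy of $L_1$ sitting inside $\ker\varphi_1$. Everything else reduces to routine bookkeeping with short exact sequences and Schur's lemma, so I expect no further obstruction.
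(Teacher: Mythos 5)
Your proof is correct and follows essentially the same route as the paper's: extract a simple submodule from the socle, use self-duality to convert it into a quotient, use the multiplicity-one hypothesis on composition factors to force the splitting, and iterate. The only difference is presentational — you make the splitting $N = L_1 \oplus \ker\varphi_1$ explicit already at the first step via the nonvanishing of the composition $L_1 \hookrightarrow N \twoheadrightarrow L_1$, whereas the paper defers the splitting argument to the end — so this is the same proof in substance.
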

\subsection{}
\label{ch_of_2epsilon_1}
For the next proposition, we need to know the character of the \( 2\epsilon_1 \)-highest weight module under the action of \( B_k \), denoted by \( L_{B_k}(2\epsilon_1) \), and the character of the \( \delta_1 + \delta_2 \)-highest weight module under the action of \( C_l \), denoted by \( L_{C_l}(\delta_1 + \delta_2) \).

By~\cite{Onishchik and Vinberg}, Table 5, as \( B_k \)-modules, we have \( S^p(R(\epsilon_1)) = \sum_{i=0}^{\infty} R((p - 2i) \epsilon_1) \), where \( R(\alpha) \) is the highest-weight module with weight \( \alpha \). For \( p = 2 \), it follows that \( S^2(R(\epsilon_1)) = R(2\epsilon_1) + R(0) \). Since \( R(\epsilon_1) \) is the standard \( B_k \)-module, we know its character, and thus we also know \( \text{ch}(S^2(R(\epsilon_1))) \). Therefore, we have:
\[
\text{ch}(L_{B_k}(2\epsilon_1)) = \sum_{\alpha \in \Omega_{B_k}} e^{\alpha} + k \cdot e^0, \quad \Omega_{B_k} = \left\{ \pm 2\epsilon_i, \pm \epsilon_i, \pm \epsilon_i \pm \epsilon_j \right\}_{1 \leq i \neq j \leq k}.
\]

For \( C_l \), again by~\cite{Onishchik and Vinberg}, Table 5, as \( C_l \)-modules, we have \( \Lambda^p R = \sum_{i \geq 0} R(\pi_{p - 2i}) \), where \( \pi_{p - 2i} \) is the \( p - 2i \) fundamental weight. Therefore, \( \Lambda^2 R(\delta_1) = R(\delta_1 + \delta_2) \oplus R(0) .\) Since \( R(\delta_1) \) is the standard module, we know its character, and thus we also know \( \text{ch}(\Lambda^2 R(\delta_1)) \). Therefore, we have:
\[
\text{ch}(L_{C_l}(\delta_1 + \delta_2)) = \sum_{\alpha \in \Omega_{C_l}} e^{\alpha} + (l - 1) e^0, \quad \Omega_{C_l} = \left\{ \pm \delta_i \pm \delta_j \right\}_{1 \leq i \neq j \leq l}.
\]
\subsection{}
\begin{lem}{In different blocks}
    Let $V$  be a $B(k|l)$ (resp. $D(k|l))$ module. Let $L_{B}\left(2\epsilon_{1}\right)$ (resp. $L_D(2\epsilon_1)$) be the highest weight $B(k|l)$ (resp. $D(k|l)$) module with weight $2\epsilon_1.$ Assume $V$  has only $\text{triv}$ and  $L_{B}\left(2\epsilon_{1}\right)$ (resp. $L_D(2\epsilon_1)$)  as its simple subquotients, of multiplicity $1$. Then $V=\text{triv}\oplus L_{B}\left(2\epsilon_{1}\right) $ (resp. $V=\text{triv}\oplus L_D(2\epsilon_1)$).
\end{lem}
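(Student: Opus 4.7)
The plan is to exhibit a splitting of $V$ by producing a central element of $U(\fg)$ that separates the two composition factors. Since $V$ has finite length its socle is non-zero, so at least one of $\text{triv}$ or $L := L_B(2\epsilon_1)$ (resp.\ $L_D(2\epsilon_1)$) appears as a submodule of $V$; if both appear then $V = \text{triv}\oplus L$ and we are done, so it suffices to rule out a non-split two-step extension in either direction.

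The natural candidate is the quadratic Casimir $C\in Z(U(\fg))$ attached to the non-degenerate invariant form on $\fg$; it acts on a simple highest weight module $L(\lambda)$ by the scalar $(\lambda,\lambda+2\rho)$, which is $0$ on $\text{triv}$. To compute the value on $L$, I would pass to the distinguished positive system, in which the underlying simple module has highest weight $2\delta_1$, and evaluate $\rho=\rho_0-\rho_1$ by an explicit sum over positive even and odd roots. This yields Casimir scalars $c_L = 4(k-l)-2$ for $B(k|l)$ and $c_L = 4(k-l-1)$ for $D(k|l)$; the first is never zero for integer $k,l$, and the second vanishes only in the exceptional family $k=l+1$.

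Once $c_L\neq c_{\text{triv}}$, the splitting is standard: the element $(C-c_{\text{triv}})(C-c_L)$ kills each composition factor and hence annihilates $V$, so $C$ is diagonalisable on $V$ with two distinct eigenvalues. Each eigenspace is a $\fg$-submodule (since $C$ is central), and by comparing composition factors the two eigenspaces must coincide with $\text{triv}$ and $L$, giving $V=\text{triv}\oplus L$.

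The main obstacle is the family $D(l+1|l)$, where the quadratic Casimir itself does not separate the two factors. In that case I would either appeal to a higher-order central element of $Z(U(\osp(2k|2l)))$, whose structure (cf.\ Sergeev's description of the centre) is rich enough to distinguish the two central characters, or exploit the fact that in every application of the lemma within the paper the module $V$ carries a non-degenerate invariant bilinear form --- appearing either as $S^2(V_{st})$ (cf.\ \Prop{symmetric standard module}) or as a graded piece $\dot{\fg}^i$ with $2i$ divisible by the order of the automorphism --- and is therefore self-dual, as are both $L$ and $\text{triv}$. Self-duality then yields the splitting directly: if $\text{triv}$ is the socle of $V$, dualising shows it is also a quotient, and the composite $\text{triv}\hookrightarrow V\twoheadrightarrow\text{triv}$ must be a non-zero scalar (otherwise $\text{triv}$ would embed into the simple module $L$, a contradiction), which produces the direct sum decomposition.
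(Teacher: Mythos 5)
Your splitting mechanism is sound and your Casimir eigenvalues are correct: with the normalization $(\epsilon_i,\epsilon_j)=\delta_{ij}$, $(\delta_i,\delta_j)=-\delta_{ij}$ and the distinguished Borel (where the module in question has highest weight $2\delta_1$), one indeed gets $(\lambda,\lambda+2\rho)=4(k-l)-2$ for $B(k|l)$ and $4(k-l-1)$ for $D(k|l)$. So for $B(k|l)$ and for $D(k|l)$ with $k\neq l+1$ your argument is complete, and it is essentially the paper's argument in coarser form: the paper also separates the two composition factors by the centre of $\cU(\fg)$, using the ``core'' invariant of the central character (the multisets $\overline{E},\overline{D}$) instead of the quadratic Casimir.

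The genuine gap is the family $D(l+1|l)$, which you correctly flag, but your first proposed repair cannot work. For $\fg=\osp(2l+2|2l)$ the two central characters actually coincide, so no element of $Z(\cU(\fg))$ -- quadratic or higher -- separates the factors. Concretely, for $\osp(4|2)$ one has $\rho=-\delta_1+\epsilon_1$ and $\lambda+\rho=\delta_1+\epsilon_1$; both weights are orthogonal to an isotropic root along which they can be shifted to $0$ ($\rho+(\delta_1-\epsilon_1)=0$ and $\lambda+\rho-(\delta_1+\epsilon_1)=0$), so by the Sergeev--Kac description of the Harish--Chandra image every central element takes the same value on $\text{triv}$ and on $L_D(2\epsilon_1)$; the same computation goes through for all $l$. (This also exposes a gap in the paper's own proof: it carries out the core computation only for $B(k|l)$ and asserts the $D(k|l)$ case is ``similar,'' but for $D(l+1|l)$ the cores $\overline{E}_0,\overline{D}_0$ and $\overline{E}_{2\epsilon_1},\overline{D}_{2\epsilon_1}$ are equal, so the block-separation argument fails there too.)

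Your second repair -- self-duality of $V$ -- is the one to commit to, and it in fact renders the Casimir computation unnecessary: it is exactly the paper's own Lemma ``conditions for semi-simple'' specialized to two self-dual factors of multiplicity one, and the socle/quotient argument you sketch is complete (if the composite $S\hookrightarrow V\twoheadrightarrow S$ were zero for the simple socle $S$, then $S$ would embed into the other, non-isomorphic, simple factor). The caveat is that this proves a weaker statement than the lemma as written, since self-duality is an added hypothesis; it suffices for the paper because every module to which the lemma is applied is self-dual ($S^2(V_{st})$ via the invariant form on $V_{st}$, and the eigenspaces $\dot{\fg}^i$ via the restriction of the invariant form on $\dot{\fg}$), but the lemma should then be restated with that hypothesis, as it is not clear that the bare statement survives in the $D(l+1|l)$ case where the two simples lie in the same block.
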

\begin{proof}
    We prove this Lemma for $B(k|l)$ case, the proof for $D(k|l)$  is similar.  
    
    It is sufficient to show that $L_{B}\left(2\epsilon_{1}\right)$  and $\text{triv}=L_{B}\left(0\right)$  are belong to different blocks in the category $\mathcal{O}.$  Equivalently, we shall show that $\chi_{0}\neq\chi_{2\epsilon_{1}},$   where $\chi_{\alpha}$  is the central character associated with the weight $\alpha.$ 

First, assume $k>l.$  

For the purpose of this proof only, we choose the following set of simple roots: $$\left\{ \underbrace{\epsilon_{1}-\delta_{1},\delta_{1}-\epsilon_{2},\epsilon_{2}-\delta_{2},\delta_{2}-\epsilon_{3},\epsilon_{3}-\delta_{3},...,\delta_{l}-\epsilon_{l+1}}_{2+2\left(l-1\right)=2l\,\text{elements}},\underbrace{\epsilon_{l+1}-\epsilon_{l+2},...,\epsilon_{k-1}-\epsilon_{k}}_{k-1-l-1+1=k-l-1\,\text{elements}},\epsilon_{k}\right\}.$$
    By direct computation, it follows: $$\rho=x_{1}\left(\epsilon_{1}-\delta_{1}\right)+...+x_{2l}\left(\delta_{l}-\epsilon_{l+1}\right)+y_{1}\left(\epsilon_{l+1}-\epsilon_{l+2}\right)+...+y_{k-l-1}\left(\epsilon_{k-1}-\epsilon_{k}\right)+y_{k-l}\cdot\epsilon_{k},$$where: $$\begin{cases}
x_{1}=x_{3}=...=x_{2l-1}=k-l-\frac{1}{2}\\
x_{2}=x_{4}=...=x_{2l}=0\\
y_{i}=\left(k-l-\frac{1}{2}i\right)-\frac{1}{2}\left(i-1\right)i & 1\leq i\leq k-l
\end{cases}.$$ 
Now we compute the multisets $E_{2\epsilon_{1}}:=\left\{ \left|\left(2\epsilon_{1}+\rho,\epsilon_{i}\right)\right|\right\} _{i=1}^{k} \text{ and } D_{2\epsilon_{1}}:=\left\{ \left|\left(2\epsilon_{1}+\rho,\delta_{i}\right)\right|\right\} _{i=1}^{l},$  and find the sets $\overline{E}_{2\epsilon_{1}}=E_{2\epsilon_{1}}\backslash\left(E_{2\epsilon_{1}}\cap D_{2\epsilon_{1}}\right)$  and $\overline{D}_{2\epsilon_{1}}=D_{2\epsilon_{1}}\backslash\left(D_{2\epsilon_{1}}\cap E_{2\epsilon_{1}}\right).$  One can see that: $\overline{E}_{2\epsilon_{1}}=\left\{ 2+\left(k-l-\frac{1}{2}\right),\left(y_{2}-y_{1}\right),\left(y_{3}-y_{2}\right),...,\left(y_{k-l}-y_{k-l-1}\right)\right\} ,\,\overline{D_{\lambda}}=\emptyset.$
Note that for every $1\leq i\leq k-l-1,$ one has $y_{i+1}-y_{i}=k-l-\frac{1}{2}-i.$
Similarly: $$\overline{E_{0}}=\left\{ \left(k-l-\frac{1}{2}\right),\left(y_{2}-y_{1}\right),\left(y_{3}-y_{2}\right),...,\left(y_{k-l}-y_{k-l-1}\right)\right\} ,\,\overline{D_{0}}=\emptyset.$$ So as $\overline{E_{0}}\neq\overline{E}_{2\epsilon_{1}},$  it follows $\chi_{0}\neq\chi_{2\epsilon_{1}}$ (see ~\cite{Cores}).   
\\Second, assume $k\leq l.$  We take the following set of simple roots: $$\left\{ \underbrace{\epsilon_{1}-\delta_{1},\delta_{1}-\epsilon_{2},\epsilon_{2}-\delta_{2},\delta_{2}-\epsilon_{3},\epsilon_{3}-\delta_{3},...,\delta_{k-1}-\epsilon_{k},\epsilon_{k}-\delta_{k}}_{1+2\left(k-1\right)=2k-1\,\text{elements}},\underbrace{\delta_{k}-\delta_{k+1},...,\delta_{l-1}-\delta_{l}}_{l-1-k+1=l-k\,\text{elements}},\delta_{l}\right\} .$$
Then: $$\rho=x_{1}\left(\epsilon_{1}-\delta_{1}\right)+x_{3}\left(\epsilon_{2}-\delta_{2}\right)+...+x_{2k-1}\left(\epsilon_{k}-\delta_{k}\right)+y_{1}\left(\delta_{k}-\delta_{k+1}\right)+...+y_{l-k}\left(\delta_{l-1}-\delta_{l}\right)+y_{l-k+1}\delta_{l},$$ where: $$\begin{cases}
y_{i}=\left(l-k+\frac{1}{2}\right)i-\left(l-k+\frac{1}{2}\right)-\frac{1}{2}\left(i-1\right)i & 1\leq i\leq l-k+1\\
0=x_{2}=...=x_{2k-2}\\
x_{1}=x_{3}=...=x_{2k-1}=-l+k-\frac{1}{2}
\end{cases}$$ By direct computation, we get: $$\overline{D}_{2\epsilon_{1}}=\begin{cases}
\left\{ \left|-l+k-\frac{1}{2}\right|,\left|l-k+\frac{1}{2}-1\right|,\left|l-k+\frac{1}{2}-3\right|,...,\left|l-k+\frac{1}{2}-l+k\right|\right\}  & l-k\geq2\\
\left\{ \left|-l+k-\frac{1}{2}\right|,\left|l-k+\frac{1}{2}-1\right|,...,\left|l-k+\frac{1}{2}-l+k\right|\right\}  & else
\end{cases}$$
And $$\overline{D}_{0}=\left\{ \left|l-k+\frac{1}{2}-1\right|,...,\left|l-k+\frac{1}{2}-l+k\right|\right\} .$$  Therefore $\overline{D_{0}}\neq\overline{D}_{2\epsilon_{1}}$  and again it follows $\chi_{0}\neq\chi_{2\epsilon_{1}}$  as wished.
\end{proof}
\subsection{}
\begin{prop}{symmetric standard module}
\begin{enumerate}
    \item $\text{ch}S^{2}\left(L_{B}\left(\epsilon_{1}\right)\right)=\left(k+l+1\right)e^{0}+\sum_{\alpha\in\Omega'_{B}}e^{\alpha}$ 
and $S^{2}\left(L_{B}\left(\epsilon_{1}\right)\right)=L_{B}\left(2\epsilon_{1}\right)\oplus\text{triv},$ where: $$\Omega'_{B}=\left\{ \underbrace{\pm\epsilon_{i}\pm\epsilon_{j};\pm\epsilon_{i};\pm\delta_{g}\pm\delta_{h};\pm2\epsilon_{i}}_{\text{even}};\underbrace{\pm\delta_{g};\pm\epsilon_{i}\pm\delta_{g}}_{\text{odd}}\right\}.$$
    \item   $\text{ch}S^{2}\left(L_{D}\left(\epsilon_{1}\right)\right)=\left(k+l\right)e^{0}+\sum_{\alpha\in\Omega'_{D}}e^{\alpha}$ and $S^{2}\left(L_{D}\left(\epsilon_{1}\right)\right)=L_{D}\left(2\epsilon_{1}\right)\oplus\text{triv},$  where: $$\Omega'_{D}=\left\{ \underbrace{\pm\epsilon_{i}\pm\epsilon_{j};\pm\delta_{g}\pm\delta_{h};\pm2\epsilon_{1}}_{\text{even}};\underbrace{\pm\epsilon_{i}\pm\delta_{j}}_{\text{odd }}\right\} .$$
\end{enumerate}
\end{prop}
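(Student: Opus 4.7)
I will prove (1) for $B(k|l)$ in detail; part (2) for $D(k|l)$ follows by the identical strategy, with $V_{st,\bar 0}^D$ having weights $\{\pm\epsilon_i\}_{i=1}^k$ and no zero weight, so the zero-weight multiplicity in $S^2(V_{st}^D)$ becomes $k+l$ rather than $k+l+1$.

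For the character, I would use the super-symmetric power decomposition
$$S^2(V) \;=\; S^2(V_{\bar 0}) \oplus \bigl(V_{\bar 0}\otimes V_{\bar 1}\bigr) \oplus \Lambda^2(V_{\bar 1}),$$
where $S^2$ and $\Lambda^2$ on the right denote ordinary symmetric/exterior powers, with the middle summand sitting in odd parity. Using the weights of $V_{st}^B$ from \Lem{Standard module}, a direct count gives each non-zero weight of $\Omega'_B$ with multiplicity one; the zero weight picks up $1+k$ contributions from $S^2(V_{st,\bar 0}^B)$ (one from $0+0$, and one from each $\epsilon_i+(-\epsilon_i)$) and $l$ contributions from $\Lambda^2(V_{st,\bar 1}^B)$ (one from each $\delta_g+(-\delta_g)$), for a total of $k+l+1$.

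To identify the two simple summands, I would first observe that $\fosp(2k+1|2l)$ preserves a non-degenerate super-symmetric invariant bilinear form on $V_{st}^B$; dualising (using self-duality of $V_{st}^B$) gives a non-zero $\fg$-invariant element of $S^2(V_{st}^B)$ spanning a $\text{triv}$ submodule. The weight $2\epsilon_1$ is maximal in $\Omega'_B$ with respect to the partial order induced by $\Sigma_B$ (verified by direct inspection) and has multiplicity one, so any non-zero $v^+ \in S^2(V_{st}^B)_{2\epsilon_1}$ is a highest weight vector whose cyclic submodule $N := \langle v^+\rangle_\fg$ has $L_B(2\epsilon_1)$ as simple top.

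Finally, to conclude via \Lem{In different blocks} I must check that $\text{triv}$ and $L_B(2\epsilon_1)$ exhaust the list of simple subquotients, each with multiplicity one. The multiplicity of $L_B(2\epsilon_1)$ is at most one because $2\epsilon_1$ has weight multiplicity one, and is at least one by the previous paragraph. The main obstacle is ruling out further simple subquotients, which I would handle by restriction to $\fg_{\bar 0}=B_k\oplus C_l$: the classical identities $S^2(V_{st}^{B_k})=L_{B_k}(2\epsilon_1)\oplus\text{triv}$ and $\Lambda^2(V_{st}^{C_l})=L_{C_l}(\delta_1+\delta_2)\oplus\text{triv}$ (with characters from~\ref{ch_of_2epsilon_1}) together with irreducibility of the outer tensor product $V_{st}^{B_k}\otimes V_{st}^{C_l}$ split $S^2(V_{st}^B)|_{\fg_{\bar 0}}$ into exactly five $\fg_{\bar 0}$-simple pieces (two copies of $\text{triv}$ and three non-trivial ones). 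Applying odd root vectors (for instance $f_{\epsilon_1-\delta_1}$ and subsequent elements of $\fg_{\bar 1}$) to $v^+$ produces explicit non-zero vectors in each of the three non-trivial $\fg_{\bar 0}$-summands, so $N$ fills them out entirely; together with the $\text{triv}$ submodule this covers all five $\fg_{\bar 0}$-summands, leaving no room for a third simple $\fg$-subquotient and yielding $S^2(V_{st}^B)=L_B(2\epsilon_1)\oplus\text{triv}$.
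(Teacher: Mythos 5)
Your character computation, the identification of the unique trivial summand via the invariant form, and the idea of restricting to $\fg_{\overline{0}}=B_k\oplus C_l$ and using the classical plethysms are all correct and closely parallel the paper's proof. The gap is in your last inference. You produce nonzero vectors of the three non-trivial $\fg_{\overline{0}}$-types inside the cyclic submodule $N=U(\fg)v^+$ and conclude that, since $N$ together with the trivial submodule covers all five $\fg_{\overline{0}}$-summands, there is "no room" for a third composition factor. But a further composition factor needs no room outside $N$: it could sit inside the radical of $N$. A priori $N$ is a highest weight module whose head $L_B(2\epsilon_1)$ might restrict to $L_{B_k}(2\epsilon_1)\boxtimes\text{triv}$ plus trivial pieces only, while its radical contains $V_{st}^{B_k}\boxtimes V_{st}^{C_l}$ and $\text{triv}\boxtimes L_{C_l}(\delta_1+\delta_2)$ and contributes a second non-trivial factor such as $L(\epsilon_1+\delta_1)$. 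Your covering count does not exclude this; what it shows is that all composition factors other than those of $N$ are trivial, not that $N$ has only one non-trivial factor.

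What is actually needed is that the weights $\epsilon_1+\delta_1$ and $\delta_1+\delta_2$ (and a $(k+l)$-dimensional zero weight space) occur in the simple module $L_B(2\epsilon_1)$ itself, not merely in $N$. This is precisely where the paper works: it verifies $f_{\epsilon_1-\delta_1}v_{2\epsilon_1}\neq 0$ inside $L_B(2\epsilon_1)$ (the $\fsl(1|1)$-computation $e_{\epsilon_1-\delta_1}f_{\epsilon_1-\delta_1}v_{2\epsilon_1}=(2\epsilon_1,\epsilon_1-\delta_1)v_{2\epsilon_1}\neq 0$), so that $L_{B_k}(\epsilon_1)\boxtimes L_{C_l}(\delta_1)$ occurs in the simple module, and it uses the isotropic reflexion $r_{\epsilon_1-\delta_1}$ to identify $L_{\Sigma_B}(2\epsilon_1)=L_{\Sigma'_B}(\delta_1+\delta_2)$, which places $L_{C_l}(\delta_1+\delta_2)$ inside the simple module as well; combined with the upper bound $\dim L_B(2\epsilon_1)_0\leq k+l$ coming from the unique trivial constituent, this forces $\ch L_B(2\epsilon_1)=\ch S^2(L_B(\epsilon_1))-e^0$ and hence exactly two composition factors. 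Your argument is repairable along the same lines (an analogous computation shows $f_{\epsilon_1-\delta_2}f_{\epsilon_1-\delta_1}v^+$ survives in the head), but as written the step ``no room for a third subquotient'' does not follow. The remaining difference --- you split the two factors using \Lem{In different blocks}, while the paper applies its self-duality lemma to $V_{st}^B\otimes V_{st}^B$ --- is immaterial once the list of composition factors is established.
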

\begin{proof}
    Using \Lem{Standard module}, the character of \( S^2\left(L_B\left(\epsilon_1\right)\right) \) can be calculated straightforwardly. It follows immediately that the highest weight is \( 2\epsilon_1 \). Therefore, \( \left[S^2\left(L_B\left(\epsilon_1\right)\right) : L_B\left(2\epsilon_1\right)\right] = 1 \). 

Next, let us demonstrate that \( \dim \text{Hom}_{B(k|l)}\left(\text{triv}, S^2\left(V_{st}^B\right)\right) = 1 \). Since \( V_{st}^B \) and its dual have the same character and are irreducible modules, it follows that they are isomorphic.
Therefore: \begin{align*}
\dim \text{Hom}_{B\left(k,l\right)}\left(\text{triv},V_{st}^{B}\otimes V_{st}^{B}\right) 
&= \dim \text{Hom}_{B\left(k,l\right)}\left(\text{triv},V_{st}^{B}\otimes\left(V_{st}^{B}\right)^{*}\right) \\
&= \dim \text{Hom}_{B\left(k,l\right)}\left(V_{st}^{B},V_{st}^{B}\right) = 1.
\end{align*}

So, $V_{st}^{B}\otimes V_{st}^{B}$ contains a trivial submodule. Note that $V_{st}^{B}\otimes V_{st}^{B}=S^{2}\left(V^B_{st}\right)\oplus\Lambda^{2}\left(V^B_{st}\right).$  By direct calculation, we find that $\text{ch}\Lambda^{2}\left(V_{st}^{B}\right)=\text{ch}(ad_{B})$, and the zero weight space of $\Lambda^{2}\left(V_{st}^{B}\right)$ has dimension $k+l$ . Therefore, $\Lambda^{2}\left(V_{st}\right)$ cannot have a trivial submodule. It follows that $\dim Hom_{B(k|l)}\left(\text{triv},S^{2}\left(V_{st}^{B}\right)\right)=1$. In particular, this implies that there is a unique $\text{triv}$ submodule within $S^{2}\left(L_{B}\left(\epsilon_{1}\right)\right).$

Recall that $\mathfrak{g}:=B(k|l)=\mathfrak{osp}(2k+1,2l)$ and  $\mathfrak{osp}(2k+1,2l)_{\overline{0}}=\mathfrak{o}_{2k+1}\times\mathfrak{sp}_{2l}.$ Let $V_{B_k}$ be the module generated by the highest weight vector $v_{2\epsilon_1}$ in  $ L_{B}\left(2\epsilon_{1}\right),$ under the action of the subalgebra $\mathfrak{o}_{2k+1}\times\left\{ 0\right\}\simeq B_{k}.$ This leads to $\left[V_{B_{k}}:L_{B_{k}}\left(2\epsilon_{1}\right)\right]=1,$ where $L_{B_{k}}\left(2\epsilon_{1}\right)$  is the irreducible $B_k$  module with highest weight $2\epsilon_1.$   
By acting with the isotropic reflexion $r_{\epsilon_{1}-\delta_{1}},$  we obtain a new set of simple roots for $B(k|l)$:  $$\Sigma'_{B}:=r_{\epsilon_{1}-\delta_{1}}\Sigma_{B}=\left\{ \delta_{1}-\epsilon_{1},\delta_{2}-\epsilon_{2},\epsilon_{1}-\epsilon_{2},..,\epsilon_{k-1}-\epsilon_{k},\epsilon_{k}-\delta_{3},\delta_{3}-\delta_{4},...,\delta_{l-1}-\delta_{l},\delta_{l}\right\} $$ Therefore, $L_{\Sigma_{B}}\left(2\epsilon_{1}\right)=L_{\Sigma'_{B}}\left(\delta_{1}+\delta_{2}\right).$ Let $V_{C_{l}}$ be the module generated by the highest weight vector in $L_{\Sigma'_{B}}\left(\delta_{1}+\delta_{2}\right),$  under the action of the subalgebra $\left\{ 0\right\} \times\mathfrak{sp}_{2l}\simeq C_{l}.$ Then, with respect to the set of simple roots $\Sigma'_B$, we have $\left[V_{C_{l}}:L_{C_{l}}\left(\delta_{1}+\delta_{2}\right)\right]=1,$ where $L_{C_{l}}\left(\delta_{1}+\delta_{2}\right)$  is an irreducible $C_l$  module with highest weight $\delta_{1}+\delta_{2}.$

Now, by  \ref{ch_of_2epsilon_1},  we observe that: $$\Omega_{B_{k}}\cup\Omega_{C_{l}}=\left\{ \pm2\epsilon_{i},\pm\epsilon_{i},\pm\epsilon_{i}\pm\epsilon_{j}\right\} _{1\leq i\neq j\leq k}\cup\left\{ \pm\delta_{i}\pm\delta_{j}\right\} _{i\neq j}\subset\Omega\left(L_{B}\left(2\epsilon_{1}\right)\right).$$ All these weights are of dimension $1$  and furthermore,  $\dim\left(L_{B}\left(2\epsilon_{1}\right)\right)_{0}\geq k+l-1.$ 

Note that $\left[S^{2}\left(V_{st}^{B}\right)/\text{triv}:L_{B}\left(2\epsilon_{1}\right)\right]=1$ and  $\dim (S^{2}\left(V_{st}^{B}\right)/\text{triv})_0=k+l.$ Hence, $\dim L_{B}\left(2\epsilon_{1}\right)_{0}\leq k+l.$  

Let $f_{\epsilon_{1}-\delta_{1}}\in\mathfrak{g}_{-\left(\epsilon_{1}-\delta_{1}\right)}.$  Clearly $v_{\epsilon_{1}+\delta_{1}}:=f_{\epsilon_{1}-\delta_{1}}v_{2\epsilon_{1}}\neq0.$  Since $\mathfrak{n}_{\overline{0}}^{+}v_{\epsilon_{1}+\delta_{1}}=0,$  it follows that $\left[L_{B}\left(2\epsilon_{1}\right),L_{\mathfrak{g}_{\overline{0}}}\left(\epsilon_{1}+\delta_{1}\right)\right]=1,$  where $L_{\mathfrak{g}_{\overline{0}}}\left(\epsilon_{1}+\delta_{1}\right)$  is the submodule generated by $v_{\epsilon_{1}+\delta_{1}},$  under the action of $\mathfrak{g}_{\overline{0}}=B_{k}\oplus C_{l}.$ Now, since $L_{\mathfrak{g}_{\overline{0}}}\left(\epsilon_{1}+\delta_{1}\right)=L_{B_{k}}\left(\epsilon_{1}\right)\otimes L_{C_{l}}\left(\delta_{1}\right),$  it follows that  $\left\{ \pm\delta_{g},\pm\epsilon_{i}\pm\delta_{g}\right\} \subset\Omega\left(L_{B}\left(2\epsilon_{1}\right)\right).$ Therefore, we conclude that: $$\Omega\left(L_{B}\left(2\epsilon_{1}\right)\right)=\Omega_B',$$
where $\dim S^{2}\left(L_{B}\left(\epsilon_{1}\right)\right)_{\alpha}=\dim\left(L_{B}\left(2\epsilon_{1}\right)\right)_{\alpha}=1$ , for every $\alpha\in\Omega'_{B}.$  Since there exists a unique trivial submodule in $S^2(L_B(\epsilon_1),$ it follows $\dim L_{B}\left(2\epsilon_{1}\right)_{0}=k+l.$  Therefore,  $\text{triv}$  and $L_{B}\left(2\epsilon_{1}\right)$  must be the only simple subquotients of $S^{2}\left(L_{B}\left(\epsilon_{1}\right)\right).$
Thus, $V_{st}^{B}\otimes V_{st}^{B}$  has three non-isomorphic simple subquotients, each of multiplicity $1$, which are self-dual: $L_{B}\left(2\epsilon_{1}\right),\text{triv},\Lambda^{2}\left(V_{st}^{B}\right).$ By \Lem{conditions for semi-simple}, it follows $V_{st}^{B}\otimes V_{st}^{B}=L_{B}\left(2\epsilon_{1}\right)\oplus\Lambda^{2}\left(V_{st}^{B}\right)\oplus\text{triv}.$ Consequently, $S^{2}\left(L_{B}\left(\epsilon_{1}\right)\right)=L_{B}\left(2\epsilon_{1}\right)\oplus\text{triv}.$ 
This gives (1).
\\ (2) can be proved in a very similar approach. 
\end{proof}
\subsection{}
\begin{cor}{Final}
    Let $V$  be a $B(k|l)$  (resp. $D(k|l)$) module, such that $\text{ch}V=\text{ch}S^{2}\left(L_{B}\left(\epsilon_{1}\right)\right)$ (resp. $\text{ch}S^{2}\left(L_{D}\left(\epsilon_{1}\right)\right)$).  Then, $V=\text{triv}\oplus L_{B}\left(2\epsilon_{1}\right)$ (resp.  $L_{D}\left(2\epsilon_{1}\right)).$
\end{cor}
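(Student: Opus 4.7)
The plan is to reduce everything to block decomposition in category $\mathcal{O}$. I will carry out the argument for $B(k|l)$; the case $D(k|l)$ is word-for-word identical, using the $D(k|l)$ part of the preceding proposition and the corresponding statement of \Lem{In different blocks}.

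First I would use the character hypothesis to pin down the simple subquotients of $V$. By \Prop{symmetric standard module}, the weights appearing in $\text{ch}V=\text{ch}S^{2}(L_{B}(\epsilon_{1}))$ all lie in $\Omega'_{B}\cup\{0\}$, and the unique maximal weight (with respect to the fixed positive system) is $2\epsilon_{1}$. Hence $V$ has a highest-weight subquotient isomorphic to $L_{B}(2\epsilon_{1})$, which occurs with multiplicity $1$ since the $2\epsilon_{1}$-weight space of $V$ is one-dimensional. Subtracting $\text{ch}\,L_{B}(2\epsilon_{1})$ from $\text{ch}V$ and using the equality $\text{ch}S^{2}(L_{B}(\epsilon_{1}))=\text{ch}\,L_{B}(2\epsilon_{1})+\text{ch}\,\text{triv}$ furnished by \Prop{symmetric standard module}, I get that the remaining contribution to the character equals that of the trivial module. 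Any further simple subquotient would contribute a nonzero character summand of its own, so the list of composition factors of $V$ must be exactly $\{L_{B}(2\epsilon_{1}),\text{triv}\}$, each with multiplicity $1$.

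Next I would invoke block decomposition. By \Lem{In different blocks}, the central characters $\chi_{0}$ and $\chi_{2\epsilon_{1}}$ are distinct, so $\text{triv}$ and $L_{B}(2\epsilon_{1})$ lie in different blocks of category $\mathcal{O}$. Consequently, the projectors onto these two blocks decompose any module whose composition factors are exhausted by $\{\text{triv},L_{B}(2\epsilon_{1})\}$ into a direct sum of its $\chi_{0}$-isotypic and $\chi_{2\epsilon_{1}}$-isotypic parts. Applying this to $V$ yields $V=V_{0}\oplus V_{2\epsilon_{1}}$ with $V_{0}$ (resp.\ $V_{2\epsilon_{1}}$) having only $\text{triv}$ (resp.\ $L_{B}(2\epsilon_{1})$) as composition factor, and multiplicity $1$ by the previous step. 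A module in a single block with only one simple composition factor of multiplicity one must itself be that simple module, so $V_{0}\cong\text{triv}$ and $V_{2\epsilon_{1}}\cong L_{B}(2\epsilon_{1})$, giving $V=\text{triv}\oplus L_{B}(2\epsilon_{1})$ as required.

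The only place where anything nontrivial happens is the block separation, and that work has already been done in \Lem{In different blocks}; the rest is formal bookkeeping with characters and multiplicities. The $D(k|l)$ case proceeds identically with $L_{D}(\epsilon_{1})$, $L_{D}(2\epsilon_{1})$, $\Omega'_{D}$ in place of their $B$-counterparts.
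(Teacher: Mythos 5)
Your proposal is correct and follows the paper's proof exactly: the paper also deduces the composition factors $\{\text{triv}, L_{B}(2\epsilon_{1})\}$ (each with multiplicity one) from the character identity of \Prop{symmetric standard module}, and then concludes the splitting from \Lem{In different blocks}. The only cosmetic difference is that you re-derive the block-decomposition argument inline, whereas the paper simply cites the conclusion of \Lem{In different blocks}, whose statement already packages that step.
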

\begin{proof}
    This is immediate from \Prop{symmetric standard module}   and \Lem{In different blocks}. 
\end{proof}


\begin{thebibliography}{MMM}


\bibitem{Cores} M.Gorelik, {\em Depths and Cores in the Light of DS-Functor}, arXiv: 2010.05721.

\bibitem{GHS} M.~Gorelik, V. Hinich, V.~Serganova, {\em Root groupoid and related Lie superalgebras}, arXiv: 2209:06253.



\bibitem{Shay} M. Gorelik, Sh.~C.~Kerbis {\em On the root system of a Kac-Moody superalgebra}, arXiv:2311.17803

\bibitem{Gorelik and Serganova} M. Gorelik and V. Serganova, {\em On the representations of the Affine superalgebra $\mathfrak{q}^{(2)}(n)$}, J. Algebra {\bf 324} (2010), no. 12, 3308–-3354.


\bibitem{H} C.~Hoyt, {\em Regular Kac-Moody superalgebras and integrable highest weight modules}, J. Algebra {\bf 324} (2010), no. 12, 3308–-3354.

\bibitem{H-V} C. Hoyt and V. Serganova, {\em Classification of finite-growth general Kac-Moody superalgebras}, Communications in
Algebra {\bf 35} (2007), no. 3, 851-874.

\bibitem{Humphry} J.~E.~ Humprheys, {\em Introduction to Lie Algebras and Representation Theory}, Third edition, Springer, 2012. 

\bibitem{Ksuper} V.~G.~Kac, {\em Lie superalgebras},  Adv.
  in Math., \textbf{26}, no.~1 (1977), 8--96.
  
  
\bibitem{Kbook} V.~G.~Kac, {\em Infinite-dimensional Lie algebras},
 Third edition, Cambridge University Press, 1990.



\bibitem{K78} V.~G.~Kac, {\em Infinite-dimensional algebras, Dedekind's $\eta$-function,
	classical M\"obius function and very strange formula},  Adv. in Math.,  {\bf 30},  (1978), 85--136.	

\bibitem{Kac and Peterson} V.G.~Kac and D.H. Peterson, {\em Infinite flag varieties and cojugacy theorems}, Proceedings of the National Academy of Sciences - PNAS {\bf 80} (1983), 1778–1782.

\bibitem{Kumar} S.~Kumar, {\em Kac-Moody groups, their flag varieties and representation theory},
Progr. Math., 204, Birkh\"auser Boston, Inc., Boston, MA, 2002.



\bibitem{Onishchik and Vinberg} A.~L.~Onishchik and E.~B.~ Vinberg, {\em Lie groups and Algebraic groups}, First edition, Nauka, Moscow, 1998.  


\bibitem{Sint} V.~Serganova,
{\em Kac-Moody superalgebras and integrability}, in
Developments and trends in infinite-dimensional Lie theory, 169--218,
Progr. Math., 288, Birkh\"auser Boston, Inc., Boston, MA, 2011.



\bibitem{Sint2} V. Serganova, {\em Automorphisms of simple Lie superalgebras}, Mathematics of the USSR-Izvestiya {\bf 24} (1985), 539-551. 



\bibitem{vdL} J. van de Leur, {\em A classification of contragredient Lie superalgebras of finite
growth}, Comm. Algebra, {\bf 17} (1989), 1815--1841.

\bibitem{vdLbook} J. van de Leur, {\em Contragredient Lie Superalgebras of Finite Growth}, Utrecht, Netherlands, Elinkwijk B.V, 1986.

\bibitem{Yamane} H.~Yamane, {\em Generalized root systems and the affine Lie
superalgebra $G(3)^{(1)}$}, Sao
Paulo J. Math. Sci. 10 (2016), no. 1, 9--19.  


\end{thebibliography}
\end{document}